\newcommand{\gra}[1]{\raisebox{-.3cm}{\includegraphics[height=.7cm]{YB#1.pdf}}}
\newcommand{\graa}[1]{\raisebox{-.6cm}{\includegraphics[height=1.5cm]{YB#1.pdf}}}
\newcommand{\grb}[1]{\raisebox{-.8cm}{\includegraphics[height=2cm]{YB#1.pdf}}}
\newcommand{\grc}[1]{\raisebox{-1.3cm}{\includegraphics[height=3cm]{YB#1.pdf}}}
\newcommand{\grd}[1]{\raisebox{-1.8cm}{\includegraphics[height=4cm]{YB#1.pdf}}}
\newcommand{\gre}[1]{\raisebox{-2.3cm}{\includegraphics[height=5cm]{YB#1.pdf}}}
\newcommand{\grf}[1]{\raisebox{-3.9cm}{\includegraphics[height=9cm]{YB#1.pdf}}}
\newcommand\numberthis{\addtocounter{equation}{1}\tag{\theequation}}
\begin{document}

\theoremstyle{plain}
\newtheorem{theorem}{Theorem~}[section]
\newtheorem{main}{Main Theorem~}
\newtheorem{lemma}[theorem]{Lemma~}
\newtheorem{proposition}[theorem]{Proposition~}
\newtheorem{corollary}[theorem]{Corollary~}
\newtheorem{definition}[theorem]{Definition~}
\newtheorem{notation}[theorem]{Notation~}
\newtheorem{example}[theorem]{Example~}
\newtheorem*{remark}{Remark~}
\newtheorem*{question}{Question}
\newtheorem*{claim}{Claim}
\newtheorem*{ac}{Acknowledgement}
\newtheorem*{conjecture}{Conjecture~}
\renewcommand{\proofname}{\bf Proof}

\newpage
\title{Yang-Baxter relation planar algebras}
\author{Zhengwei Liu}
\address{Harvard University, Cambridge, MA 02138}
\date{}

\begin{abstract}
We give a new type of Schur-Weyl duality for the representations of a family of quantum subgroups and their centralizer algebra.
We define and classify singly-generated, Yang-Baxter relation planar algebras.
We present the skein theoretic construction of a new parameterized planar algebra.
We construct infinitely many new subfactors and unitary fusion categories, and compute their trace formula as a closed-form expression, in terms of Young diagrams.
\end{abstract}
\maketitle

\section{Introduction}
There are three different aspects to this paper.
(1)~We discover a new parameterized algebra which is the centralizer algebra for a family of quantum subgroups.
We study the representations of the centralizer algebra and these quantum subgroups with a flavor reminiscent of Schur-Weyl duality.
(2)~We define and classify {\it singly-generated, Yang-Baxter relation planar algebras}.
This can be interpreted as an initial step toward Bisch and Jones suggested skein theoretic classification.
(3)~We use skein theory to construct our new parameterized algebra as a planar algebra.
In this construction, we overcome the three fundamental problems in skein theory: evaluation, consistency, and positivity.

We now outline these aspects, their connections with other areas, and our main results one by one.

\subsection{Schur-Weyl duality}
Schur studied the representations of $SU(N)$, for all $N$, by studing the representations of its centralizer algebra, which is the symmetric group algebra. This correspondence is well known as Schur-Weyl duality \cite{Sch27,Wey46}. The centralizer algebra and Schur-Weyl duality had been well understood in the last century both for Lie groups \cite{Bra37,Wey46,Wen88}, and for quantum groups \cite{Jim85,Dri86,TemLie71,Jon87,BirWen,Mur87}. These centralizer algebras are known as Brauer algebras, Hecke algebras, and Birman-Murakami-Wenzl (BMW) algebras.

Quantum subgroups were investigated soon afterward. For quantum $SU(2)$, the classification of its subgroups was clarified in the early 90's, namely the $A_n$, $D_{2n}$, $E_6$, $E_8$ classification. This is a quantum version of the McKay correspondence. See \S 3.1 in the survey paper \cite{JMS} and further references cited in that review. Quantum subgroups and their representations (or modules) were defined in \cite{Ocn00,Xu98,Ost03} using different terminologies. In the first two papers, Ocneanu and Xu constructed representations of subgroups of some small rank quantum groups.

Here we extend the Schur-Weyl duality to two families of subgroups of quantum groups, whose rank approaches infinity.
We give the first example of a parameterized centralizer algebra $\mathscr{C}_{\bullet}=\mathscr{C}(q)_{\bullet}$ defined over the field $\mathbb{C}(q)$ for a family of subgroups of quantum groups. We construct all irreducible representations of the centralizer algebra $\mathscr{C}_{\bullet}$ and of these quantum subgroups. We compute a closed-form of the quantum dimensions of these representations. Our study of a family of quantum subgroups by its parameterized centralizer algebra follows the philosophy of Schur-Weyl duality. In this way, we can study analytic properties of the parameterized centralizer algebra, and capture information that would not be accessible if only worked at roots of unity.

To illustrate these points, we summarize a few main results for Schur-Weyl duality in Figure \ref{Figure:SW}.
We conjecture that one can define a parameterized centralizer algebra for quantum subgroups from any family of conformal inclusions.

\begin{figure}[h]
\begin{tabular}[Schur-Weyl duality]{|p{2.8in}|p{2.8in}|}

  \hline
  \begin{center} {\it Centralizer algebras} \end{center} & \begin{center}  {\it Lie groups, quantum groups or subgroups} \end{center} \\ \hline
  the symmetric group algebra  & $SU(N)$ \\ \hline
  the Brauer algebra  & $O(N)$ or $Sp(2N)$ \\ \hline
  the Temperley-Lieb algebra  & quantum $SU(2)$  \\  \hline
  the type A Hecke algebra  & quantum $SU(N)$  \\ \hline
  the Birman-Murakami-Wenzl algebra  & quantum $O(N)$ or $Sp(2N)$  \\ \hline
  the new algebra $\mathscr{C}_{\bullet}$ in this paper & quantum subgroups of $SU(N)_{N\pm2}$   \\ \hline
  \begin{center} {\large ?} \end{center} & quantum subgroups arising from families of conformal inclusions  \\ \hline
\end{tabular}
\caption{Schur-Weyl duality, with ``?" indicating the conjectured, parameterized centralizer algebras.}\label{Figure:SW}
\end{figure}

\subsection{Yang-Baxter relation planar algebras}
Subfactor theory provides a framework to generalize quantum groups, quantum subgroups, representations and centralizer algebras.
(Representations are given by Hilbert space bimodules over factors. Morphisms are bimodule maps. Connes introduced the tensor functor, called Connes fusion \cite{Pop86}.)
The centralizer algebra of a subfactor is called the {\it standard invariant}. A deep theorem of Popa showed that the standard invariant is a complete invariant of strongly amenable subfactors of the hyperfinite factor of type II$_1$ \cite{Pop94}. One can consider this result as the Tannaka-Krein duality for subfactors.

Jones introduced planar algebras for subfactors \cite{JonPA} as an axiomatization of the standard invariant. In the planar algebra framework, one can study the standard invariant by skein theory, in analogy with the presentation theory of geometric groups.
Here we have topological relations for the generators in addition to the algebraic ones.
Bisch and Jones suggested to classify planar algebras by skein theory \cite{BisJon03}.

In this paper, we introduce a new type of relation, which we call the Yang-Baxter relation. We are motivated by the Yang-Baxter equation \cite{Yan67,Bax07}, which is significant in mathematics and physics.
The Yang-Baxter relation generalizes the Yang-Baxter equation and the star triangle equation \cite{Ons44}.

Haagerup constructed the first exotic subfactor, well known as the Haagerup subfactor \cite{Haa94}.
It is considered to be exotic, since it remains an open question, can it be constructed from conformal field theory?
The Yang-Baxter relation provides skein theory for exotic subfactors, such as the Haagerup factor, while the Yang-Baxter equation does not \cite{LiuPen}.

Using the Yang-Baxter relation, we define {\it Yang-Baxter relation planar algebras}.
We give a complete classification of {\it singly-generated}, Yang-Baxter relation planar algebras.
The Yang-Baxter relation is a {\it global} skein relation for planar algebras.
It generalizes the {\it local} skein relations appearing in the small-dimension classification \cite{BisJon00,BisJon03,BJL}.
The critical dimension of these {\it local} skein relations is no longer an upper bound in our classification.
We discover our new algebra $\mathscr{C}_{\bullet}$ from this classification.

\subsection{Skein theoretic construction}
Planar algebras that appeared in previous small-dimension classification were all known, so one do not need to construct them for the classification. The skein theory of these planar algebras is still interesting.
In our classification result, we find an unexpected family of generators and relations, different from those of known planar algebras.
The bulk of this paper involves the construction of this new family.

When Jones introduced planar algebras, there were three fundamental problems for the skein theoretic construction: evaluation, consistency, and positivity \cite{JonPA}-the last being the most difficult. Overcoming these difficulties were thought to be mission impossible at the time. A few methods were explored to avoid these problems. One powerful method is the embedding theory \cite{JonPen,MorWal}, which was successfully used in the construction of the extended Haagerup subfactor planar algebra \cite{BMPS}. However, all these methods fail with a parameterized family.

Here we give new methods to overcome the three fundamental problems in a straight forward manner.
These methods benefit from results in analysis, algebra, and topology. Our methods also work for other cases.

For example, one problem for proving positivity is to determine the semisimple quotient of the centralizer algebra for non-generic case, i.e., the parameter is a root of unity.
Actually, Brauer and Weyl were interested in determining the semisimple quotient of the Brauer algebra. Wenzl resolved this problem in \cite{Wen88}. His proof utilized the representation theory for Lie groups.
We can apply our method to determine the semisimple quotient of the Brauer algebra and the BMW algebra without using the representation theory of Lie groups and quantum groups. Our method also works for Bisch-Jones planar algebras \cite{BisJonFC} \footnote{Bisch-Jones planar algebras are called Fuss-Catalan planar algebras in \cite{BisJonFC}}.
Beliakova and Blanchet gave some ideas on this problem for the BMW algebra in Section 8 in \cite{BelBla}.
Our method is slightly different from theirs.

\subsection{Connections with other areas}
The centralizer algebras for quantum groups are closed related to link invariants, the Jones polynomial \cite{Jon85}, the HOMFLY-PT polynomial \cite{Homfly,PT88} and the Kauffman polynomial \cite{Kau90}.
The topological interpretation of these invariants was pointed out by Witten in topological quantum field theory (TQFT) \cite{Ati88,Wit88}, and formalized by Reshetikhin and Turaev \cite{ResTur91}.

From the new algebra $\mathscr{C}_{\bullet}$, we construct unitary fusion categories \cite{ENO} $\mathscr{C}^{N,k,l}$, for $N,k,l\in \mathbb{N}^{+}$, therefore 3D TQFT by Turaev-Viro construction \cite{TurVir92}. It is not clear that whether the 3D TQFT can be parameterized by three parameters.

The unitary fusion categories $\mathscr{C}^{N,1,0}$ and $\mathscr{C}^{N,1,1}$ are representation categories of subgroups of quantum groups (or module categories of commutative algebras in modular tensor categories) \cite{Ocn00,Ost03}.
When $N=3,4$, they are isomorphic to the bimodule categories constructed by Xu from conformal inclusions \cite{Xu98} by Ocneanu's classification result \cite{Ocn00}.
We conjecture that this is true for all $N$.

We also obtain many new subfactors: the usual subfactors from the Bernoulli shift, the Goodman-Harpe-Jones construction
\cite{GHJ}, and others from the dihedral group symmetry of certain sub lattices of the Young lattice.
In particular, we obtain a sequence of subfactors which is an extension of the $\mathbb{Z}_4$ near-group subfactor \cite{Izu93}.
It remains open that whether near-group subfactors live in an infinite family and whether they can be constructed from conformal field theory.
Our result answers this question positively for the $\mathbb{Z}_4$ near-group subfactor.

The dihedral group symmetry was studied by Suter in algebra \cite{Sut02} and in geometry \cite{Sut12}. We expect to see more relations between these ideas.

\subsection{Definitions}
For a finite dimensional Hilbert space $V$, a one parameter family of (unitary) matrices $R(\cdot)$ on $V\otimes V$ satisfy the Yang-Baxter equation, if for any $s,t$,
$$(1\otimes R(t))(R(s+t)\otimes 1)(1\otimes R(s))= (R(s)\otimes 1)(1\otimes R(s+t))(R(t)\otimes 1).$$
By taking the limit at infinity, one obtain a solution of the parameter independent Yang-Baxter equation,
$$(1\otimes R)(R\otimes 1)(1\otimes R)= (R\otimes 1)(1\otimes R)(R\otimes 1).$$
It can be interpreted as the Reidemester move of type III:
$$\gra{yb2}=\gra{yb1}.$$

We generalize $V$ as a Hilbert space bimodule over von-Neumann algebras.
Inspired by the Yang-Baxter equation, we introduce the Yang-Baxter relation for bimodule maps on $\hom(V\otimes V)$.
\begin{definition}
We say a triple of bimodule maps $R_i, R_j, R_k\in \hom(V\otimes V)$ has a Yang-Baxter relation, if
$$(1\otimes R_i)(R_j\otimes 1)(1\otimes R_k)= \sum_{\substack{i',j',k'}} c_{i,j,k}^{i',j',k'} (R_{k'}\otimes 1)(1\otimes R_{j'})(R_{i'}\otimes 1),$$
for some \emph{duality coefficients} $c_{i,j,k}^{i',j',k'}$ and $R_{i'}, R_{j'}, R_{k'}\in \hom(V\otimes V)$.
We say the space of bimodule maps $\hom(V\otimes V)$ has a Yang-Baxter relation, if any triple of bimodule maps has.
\end{definition}
The diagrammatic interpretation of the Yang-Baxter relation is
$$\grb{par4}= \sum_{\substack{i',j',k'}} c_{i,j,k}^{i',j',k'} \grb{par5}.$$

A planar algebra $\mathscr{P}$ consists of graded vector spaces $\{\mathscr{P}_{m,\pm}\}_{m\in \mathbb{N}}$ whose elements can be combined naturally in multilinear operations indexed by {\it planar tangles}. (See \cite{JonPA} and an example in Figure \ref{Figure:tangle}.)

\begin{figure}[h]
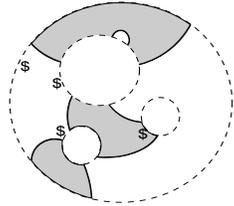

$$\grc{tangle1}.$$
\caption{An example of planar tangles}\label{Figure:tangle}
\end{figure}

The vector in $\mathscr{P}_{m,\pm}$ can be considered as a diagram with $2m$-boundary points, called a $m$-box. Acting by a multiplication tangle, $\mathscr{P}_{m,\pm}$ forms an algebra, which is usually realized as bimodule maps on the $m$th (alternating) tensor power of a Hilbert space bimodule by the reconstruction theorem \cite{Ocn88,Pop95,GJS}.

In this paper, we study the planar algebras which can be presented by 2-box generators and Yang-Baxter relations. Furthermore, if the planar algebra has a \emph{positive definite Markov trace}, then we call it a Yang-Baxter relation planar algebra. The positivity is desired for many reasons.

\subsection{Main results}
We prove that the partition function of a planar algebra can be \emph{evaluated} by the intrinsic 2-box structure and the Yang-Baxter relation. (See Theorem \ref{evaluable}.) Therefore one can expect a classification result based on this type of skein theory.

The 2-box space of a planar algebra always contains two Temperley-Lieb diagrams $\gra{21}$, $\gra{22}$.
We give a classification of Yang-Baxter relation planar algebras with one more generator in the 2-box space, so that the 2-box space is three dimensional. (See Theorems \ref{classification1}, \ref{class-1}.) We call them singly-generated Yang-Baxter relation planar algebras.
From this classification, we obtain an unexpected set of $q$-parameterized relations in addition to the previously known examples. To complete the classification, one needs to work more to prove the \emph{consistency} of the relations and to construct the corresponding $q$-parameterized planar algebras $\mathscr{C}_{\bullet}$ by skein theory. Furthermore, one needs to determine all values of $q$, for which the planar algebra has a positive semi-definite Markov trace. Then one obtains a Yang-Baxter relation planar algebra as the quotient of the planar algebra by the null space of the Markov trace.

We give a new method to reduce the algorithmic complexity, and prove consistency. (See Theorem \ref{consistency}.) Then we obtain the $q$-parameterized planar algebra $\mathscr{C}_{\bullet}$.

The positivity is proved by the following three steps:

(1) We construct all matrix units of $\mathscr{C}_{\bullet}$ over the field $\mathbb{C}(q)$. Its principal graph is Young's lattice. Consequently, the irreducible representations of its $m$-box algebra are indexed by Young diagrams with $m'$ cells, $m'\leq m$ with the same parity. (See Theorem \ref{P=YL}.)

(2) We compute the trace formula of minimal idempotents. (See Theorem \ref{trace formula} which we restate here.)
\begin{theorem}[Trace formula]\label{main theorem trace formula}
The quantum dimension of an irreducible representation indexed by a Young diagram $\lambda$ is given by
$$<\lambda>=\prod_{c\in\lambda} \frac{i(q^{h(c)}+q^{-h(c)})}{q^{h(c)}-q^{-h(c)}},$$
where $h(c)$ is the hook length of a cell $c$ in $\lambda$.
\end{theorem}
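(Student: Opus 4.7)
The plan is to prove the formula by induction on $n = |\lambda|$. The base case $n=0$ gives the identity of $\mathscr{C}_{0,+}$ with trace $1$, matching the empty product.

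For the inductive step, pick a removable corner cell $c_0$ of $\lambda$ at position $(r,s)$ and set $\mu = \lambda \setminus \{c_0\}$. By Theorem \ref{P=YL}, the principal graph is Young's lattice, so the minimal idempotent $p_\lambda \in \mathscr{C}_{n,\pm}$ can be constructed inductively, in the Jones-Wenzl spirit, as the corner of $p_\mu \otimes \text{id}$ that projects onto the summand labelled by $\lambda$. This is the standard matrix-unit recursion on a Bratteli diagram: $p_\lambda$ is obtained from $p_\mu \otimes \text{id}$ by subtracting off its components along the other Young diagrams $\lambda' \ne \lambda$ covering $\mu$. Taking Markov traces gives a recurrence
\[
\frac{\langle \lambda\rangle}{\langle \mu\rangle} \;=\; f(\lambda, c_0),
\]
where $f(\lambda,c_0)$ is a local quantity determined by how $p_\lambda$ sits inside $p_\mu \otimes \text{id}$, i.e.\ by the eigenvalues of the conditional-expectation/rotation operators associated to the added strand.

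The computation of $f(\lambda, c_0)$ is the heart of the proof. Using the Yang-Baxter relation to normal-order strands and to diagonalize the generating $2$-box on the appropriate subspace, the eigenvalues that appear should be rational functions in $q^{\pm h(c)}$, where $h(c)$ runs over cells in the arm and leg of $c_0$ in $\lambda$ (these are precisely the cells whose hook lengths differ in $\lambda$ and $\mu$). Thus I expect to obtain
\[
f(\lambda, c_0) \;=\; \frac{i(q^{h(c_0)} + q^{-h(c_0)})}{q^{h(c_0)} - q^{-h(c_0)}} \;\prod_{c \in \mathrm{arm}(c_0) \cup \mathrm{leg}(c_0)} \frac{F(h_\lambda(c))}{F(h_\mu(c))},
\]
with $F(h) = i(q^{h}+q^{-h})/(q^{h}-q^{-h})$. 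Granting this, the inductive step closes at once, since adding $c_0$ at $(r,s)$ increases by one the hook lengths of exactly the cells strictly above $c_0$ in column $s$ and strictly left of $c_0$ in row $r$ (the hooks of $c_0$ in $\lambda$) and leaves every other hook unchanged, so the ratio of the full products $\prod_{c\in\lambda} F(h_\lambda(c))/\prod_{c\in\mu} F(h_\mu(c))$ equals $f(\lambda, c_0)$.

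The main obstacle is therefore the explicit skein-theoretic evaluation of $f(\lambda, c_0)$: one needs to manipulate a Jones-Wenzl-type projection inside $\mathscr{C}_\bullet$ using the previously established Yang-Baxter relations, compute the relevant partial traces, and identify the resulting eigenvalues as the stated trigonometric expressions in hook lengths. To keep the bookkeeping manageable, I would organize the calculation by fixing a path in Young's lattice from $\emptyset$ to $\lambda$ and expressing $p_\lambda$ as an ordered product of Wenzl-style corner projections along this path; the Yang-Baxter relation then gives a recursion for moving the added strand past previous projections, at each step producing exactly one factor of the form $F(h)^{\pm 1}$ corresponding to a cell in the arm or leg of $c_0$. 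Once this diagrammatic eigenvalue calculation is in place, induction and the hook-length bookkeeping above complete the proof.
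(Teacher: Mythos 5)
The overall scaffolding of your argument — induction on $|\lambda|$, reducing to a ratio $\langle\lambda\rangle/\langle\mu\rangle$, and the combinatorial observation that adding a corner cell $c_0$ increments exactly the hook lengths in the arm and leg of $c_0$ — is sound, and is also how the paper closes its proof. But the central step, the explicit evaluation of $f(\lambda,c_0)$, is left as a placeholder (``I expect to obtain\dots'', ``Granting this\dots''), and this is precisely where all the work lies. The paper's actual mechanism for computing $\langle\lambda\rangle/\langle\mu\rangle$ is a $q$-Murphy operator $\tau_{n+1}$ built from the half-braiding (Proposition \ref{taub}, Proposition \ref{taubd}), fed into the generating function $Z_{n+1}(u)=\Phi_{n+1}\bigl(u/(u-\tau_{n+1})\bigr)$; the ratio is then extracted as a residue (Lemma \ref{traceZ}), and a closed form for $Z(\mu,u)$ is obtained by solving a recursion in $n$ (Lemmas \ref{ZZ}, \ref{Z=}). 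Your sketch of ``moving the added strand past previous projections via the Yang-Baxter relation, producing one $F(h)^{\pm1}$ factor per step'' does not describe an actual algorithm and there is no reason to expect it to localize to the arm and leg directly: the formula (\ref{trace1}) that comes out of the Murphy-operator computation is written in terms of \emph{contents} $b_c$ of the addable and removable boxes of $\mu$, over the whole boundary of the diagram, and only after a nontrivial telescoping argument (the passage via $s,s',t,t'$ in the proof of Theorem \ref{trace formula}) does it collapse to a product over the arm and leg of $c_0$ in hook-length form.

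A second structural gap: your recursion treats the branching as if it lived purely in the Hecke algebra (always going up in Young's lattice). But $\mathscr{C}_\bullet$ has Bratteli diagram the string algebra of $YL$ with edges in both directions, so $\tilde y_\mu\otimes 1$ decomposes via Wenzl's formula (\ref{wenzlformula}) into summands indexed by $\lambda>\mu$ \emph{and} $\lambda<\mu$ (the latter built from caps and cups through the basic construction ideal). The ``down'' summands contribute poles at $u=-b_{\mu-\lambda}$ in $Z(\mu,u)$, and ignoring them would give the wrong residue calculus and hence the wrong ratio. So the proposal as stated would not produce a correct recursion even in principle; you would need to replace the informal diagonalization sketch with something equivalent to the Murphy-operator/generating-function machinery, which is the actual content of the paper's proof.
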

(3) We still have to resolve a special situation: if $q$ is a root of unity, then the planar algebra that we obtain is not semisimple over the filed $\mathbb{C}$.
How can one determine the semisimple quotient?
Here we have neither the presumed semisimple quotients nor the trace formula. We resolve this problem by constructing the matrix units and computing the trace in a specific (and very delicate) order.
Consequently, we show that the planar algebra has a positive Markov trace if and only if $q=e^{\frac{2\pi i}{2N+2}}$, for $N=1,2,3,\cdots$.
(See Theorem \ref{Positivity}.)

Now we accomplish the goal and construct a sequence of Yang-Baxter relation planar algebras $\mathscr{C}^{N}_{\bullet}$.
Thereby we achieve the classification result:
\newpage
\begin{theorem}[Classification]\label{main theorem classification}
Yang-Baxter relation planar algebras generated by a 3 dimensional 2-box space are either:
\begin{itemize}
\item[(1)] Bisch-Jones planar algebras;

\item[(2)] Birman-Murakami-Wenzl planar algebras;

\item[(3)] or the new sequence $\mathscr{C}^{N}_{\bullet}$, $N\geq2$, $N\in \mathbb{N}$.
\end{itemize}
\end{theorem}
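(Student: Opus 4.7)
The plan is to combine the algebraic classification of the 2-box relations with the existence and positivity results for each resulting family, all of which are supplied by earlier theorems in this paper or by the literature. Let $\mathscr{P}$ denote a Yang-Baxter relation planar algebra with $\dim \mathscr{P}_{2,+}=3$. Since $\mathscr{P}_{2,+}$ already contains the two Temperley-Lieb diagrams, one additional 2-box generator $R$ accounts for the remaining direction; hence $\mathscr{P}$ is singly-generated in the sense of the introduction. First I would reduce $R$ to a canonical form using the intrinsic 2-box structure: rotation, involution, the multiplication tangle, and the two partial traces normalize $R$ up to finitely many algebraic degrees of freedom (roughly one continuous parameter together with a discrete choice of type, such as biprojection versus braid-like).

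Next I would impose the Yang-Baxter relation on every triple of basis elements of $\mathscr{P}_{2,+}$, expand both sides in terms of a basis of $\mathscr{P}_{3,\pm}$, and match coefficients. By Theorem \ref{evaluable} this expansion is controlled purely by the 2-box data, so the constraints become a finite polynomial system in the duality coefficients $c_{i,j,k}^{i',j',k'}$ and in the remaining parameter of $R$. This system has already been analyzed in Theorems \ref{classification1} and \ref{class-1}, and its solution set splits into exactly three branches:
(a) the Bisch-Jones branch, in which $R$ is a biprojection;
(b) the Birman-Murakami-Wenzl branch, in which $R$ satisfies a quadratic braid relation and the Yang-Baxter relation reduces to the ordinary braid Yang-Baxter equation;
(c) a genuinely new $q$-parameterized branch, whose relations coincide with those defining the planar algebra $\mathscr{C}_\bullet$.

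For branches (a) and (b), the corresponding planar algebras and the precise positivity regions are classical: see \cite{BisJonFC} for Bisch-Jones, and \cite{BirWen,Wen88,BelBla} for Birman-Murakami-Wenzl. So for these two cases the theorem follows immediately from the algebraic classification. For branch (c), I would invoke the consistency result (Theorem \ref{consistency}) to conclude that the relations extracted from the polynomial system do define a well-formed planar algebra $\mathscr{C}_\bullet$ over $\mathbb{C}(q)$, then use the principal graph identification (Theorem \ref{P=YL}) together with the trace formula (Theorem \ref{main theorem trace formula}) as the input to Theorem \ref{Positivity}, which pins down the positivity locus to the discrete values $q=e^{2\pi i/(2N+2)}$ for $N\geq 2$. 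This yields the sequence $\mathscr{C}^{N}_{\bullet}$ and completes the list.

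The main obstacle is branch (c), which cannot be handled by inheritance from any previously understood structure. Consistency of the new relations requires the algorithmic reduction behind Theorem \ref{consistency}, positivity fails at generic $q$ and is only recovered at a discrete set, and verifying this requires constructing matrix units over $\mathbb{C}(q)$ and then passing delicately to the semisimple quotient at each root of unity. Once branch (c) is under control, the rest of the proof is bookkeeping: identifying branches (a) and (b) with their known realizations, and confirming via Theorems \ref{classification1} and \ref{class-1} that no fourth branch survives the Yang-Baxter constraints.
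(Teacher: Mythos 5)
Your proposal reproduces the paper's high-level architecture: normalize the generator, impose the Yang-Baxter constraint, reduce to the polynomial systems solved in Theorems \ref{classification1} and \ref{class-1}, and then handle the new branch via consistency (Theorem \ref{consistency}), matrix units (Theorem \ref{P=YL}), the trace formula (Theorem \ref{trace formula}), and positivity (Theorem \ref{Positivity}). However, there is a step you omit that the paper must handle separately. The coefficient-matching argument that makes the polynomial system well-posed relies on the fifteen 3-box diagrams into which both sides of the Yang-Baxter relation are expanded being linearly independent, i.e.\ $\dim\mathscr{P}_{3,+}=15$. For a non-degenerate planar algebra generated by a three-dimensional 2-box space one only has the a priori bound $\dim\mathscr{P}_{3,+}\leq 15$, and Theorems \ref{classification1} and \ref{class-1}, which you invoke, are stated explicitly under the hypothesis $\dim\mathscr{P}_{3,\pm}=15$. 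When $\dim\mathscr{P}_{3,+}\leq 14$ there are linear dependencies among those diagrams, so ``matching coefficients'' does not determine the duality coefficients, and a different argument is required; the paper disposes of this subcase by invoking the local skein-theoretic classification of \cite{BJL} (whose output includes $\mathbb{Z}_3=\mathscr{C}^{2}_{\bullet}$ as the exceptional example) and checking that nothing outside the claimed list occurs there. Without some substitute for this, your proof is incomplete.

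Two smaller mismatches are worth flagging. After normalization, the uncappable generator $R$ with $R^2=aR+\mathrm{id}-e$ is determined up to sign, so the dichotomy organizing the fifteen-dimensional analysis is $\overline{R}=R$ versus $\overline{R}=-R$, not ``biprojection versus braid-like'' for $R$ itself: both BMW and Bisch-Jones arise from the self-contragredient case, with Bisch-Jones appearing as the degeneration $E=1$, and $R$ is never a biprojection in this normalized picture. Likewise, the element that actually satisfies a quadratic braid relation (in both the BMW and the $\mathscr{C}_\bullet$ cases) is a linear combination of $R$ with the two Temperley-Lieb diagrams, not $R$ alone. These are framing issues rather than fatal errors, but they matter for identifying branches (a) and (b) with the planar algebras in \cite{BisJonFC} and the BMW literature, and for seeing why Theorem \ref{classification1} produces two of the three families from a single algebraic case.
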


The principal graph of $\mathscr{C}^{N}_{\bullet}$ is the sub lattice of the Young lattice consisting of Young diagrams whose $(1,1)$ cell has hook length at most $N$. (See Theorem \ref{Positivity} and Figure \ref{Figure:Pinciplegraph}.)
We prove that the principal graph of $\mathscr{C}^{N}_{\bullet}$ has an dihedral group $D_{2(N+1)}$ symmetry, and construct more new subfactors by this symmetry. (See Proposition \ref{Prop:symmetry}, Theorem \ref{Thm:subfactors1} and \ref{Thm:subfactors2}.)
We also construct unitary fusion categories $\mathscr{C}^{N,k,l}$ and compute their branching formulas. (See Theorems \ref{Thm:UFC} and Figure \ref{Figure:Branchingformula}.)
In particular, $\mathscr{C}^{N,1,0}$ and $\mathscr{C}^{N,1,1}$ are representation categories of subgroups of $SU(N)_{N+2}$ and $SU(N+2)_N$. (See Theorems \ref{Cor:quantumsubgroup1} and \ref{Cor:quantumsubgroup2}.)
We give an algebraic presentation of the algebra $\mathscr{C}_{\bullet}$ in Appendix \ref{Appendix:algebraic presentation}. (See Theorem \ref{Thm:algebraic presentation}.)

\begin{figure}[h]
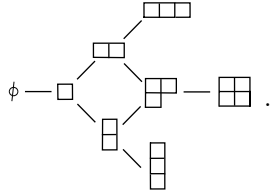

$$\grc{principalgraph4young}.$$
\caption{The principal graph of $\mathscr{C}^{3}_{\bullet}$} \label{Figure:Pinciplegraph}
\end{figure}

\begin{figure}[h]
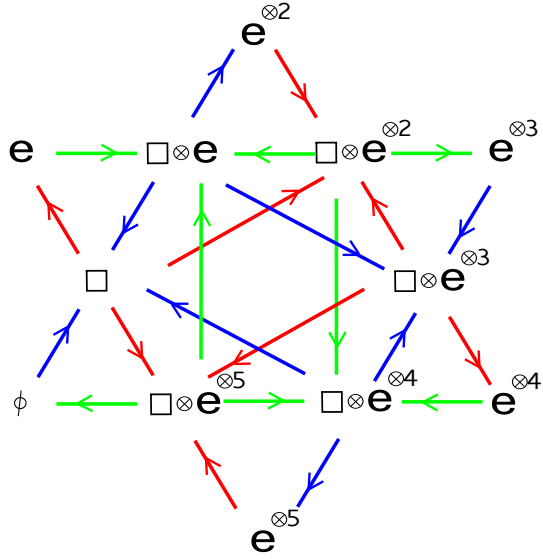

$$\grf{colorgraph3}.$$
  \caption{The branching formula for $\mathscr{C}^{3,1,0}$, the representation category of the quantum subgroup of $SU(3)_5$} \label{Figure:Branchingformula}
\end{figure}

\subsection{Organization of the paper}
The paper is organized as follows.
In Section \ref{section preliminary}, we define the Yang-Baxter relation and briefly review subfactor theory and Hecke algebras.
In Section \ref{YBR}, we introduce Yang-Baxter relation planar algebras, an evaluation algorithm and some examples. In Section \ref{classification}, we give a classification for singly-generated Yang-Baxter relation planar algebras. In the classification, we find a $q$-parameterized family of generators and relations. In Section \ref{subsection consistency}, we prove the consistency of the relations and construct a planar algebra $\mathscr{C}_{\bullet}$ over the field $\mathbb{C}(q)$. In Section \ref{subsection matrix units}, we construct the matrix units of $\mathscr{C}_{\bullet}$, therefore its representations. In Section \ref{subsection trace formula}, we compute the trace formula, i.e. proving Theorem \ref{main theorem trace formula}. In Section \ref{subsection positivity}, we study $\mathscr{C}_{\bullet}$ over the field $\mathbb{C}$ and prove the positivity: find all $q\in \mathbb{C}$, such that the quotient of $\mathscr{C}_{\bullet}$ is a subfactor planar algebra. Then we obtain a sequence of subfactor planar algebras $\mathscr{C}^{N}_{\bullet}$ and complete the classification, i.e. proving Theorem \ref{main theorem classification}. In Section \ref{section dihedral}, we prove the dihedral group symmetry for $\mathscr{C}^{N}_{\bullet}$ and construct more subfactors. In Section \ref{section quantum subgroups}, we construct unitary fusion categories $\mathscr{C}^{N,k,l}$ involving representation categories of two families of quantum subgroups. In the Appendix we give an algebraic presentation for the centralizer algebra $\mathscr{C}_{\bullet}$, and prove some technical results, which are parts of the proofs of the main theorems.

\begin{ac}
The author would like to thank Dietmar Bisch and Vaughan Jones for their guidance for this paper, to thank Arthur Jaffe for many helpful comments, and to thank Noah Snyder, Hans Wenzl, Feng Xu for helpful discussions about quantum groups, BMW algebras and conformal inclusions.
The author thanks the FIM of the ETH Zurich for hospitality.
The author was supported by NSF Grant DMS-1001560, DOD-DARPA Grant HR0011-12-1-0009 and a grant from Templeton Religion Trust.
\end{ac}

\newpage

\section{Preliminaries}\label{section preliminary}

\subsection{Yang-Baxter equations and relations}
The Yang-Baxter equation plays an important role in lattice models \cite{Yan67,Bax07}.
It is given by
$$(1\otimes R(t))(R(s+t)\otimes 1)(1\otimes R(s))= (R(s)\otimes 1)(1\otimes R(s+t))(R(t)\otimes 1),$$
where $R(\cdot)$ is one parameter family of (unitary) matrixes on $V\otimes V$ for a finite dimensional Hilbert space $V$.

The parameter independent Yang-Baxter equation
$$(1\otimes R)(R\otimes 1)(1\otimes R)= (R\otimes 1)(1\otimes R)(R\otimes 1).$$
can be interpreted as the Reidemester move of type III:
$$\gra{yb2}=\gra{yb1}.$$
We refer the reader to \cite{Per06} for a review and interesting examples.

One can find out more solutions when $V$ is a Hilbert space bimodule over von-Neumann algebras. The morphisms are given by bimodule maps. The tensor functor of bimodule categories is known as the Connes fusion \cite{Pop86}.
When the vou-Neumann algebras are the ground field $\mathbb{C}$, the Connes fusion of bimodules is the usual tensor of Hilbert spaces.

Inspired by the Yang-Baxter equation, we introduce the Yang-Baxter relation for bimodule maps on the tensor of bimodules.

\begin{definition}
We say a triple of bimodule maps $R_i, R_j, R_k\in \hom(H\otimes H)$ has a Yang-Baxter relation, if
$$(1\otimes R_i)(R_j\otimes 1)(1\otimes R_k)= \sum_{\substack{i',j',k'}} c_{i,j,k}^{i',j',k'} (R_{k'}\otimes 1)(1\otimes R_{j'})(R_{i'}\otimes 1).$$
for some scalar $c_{i,j,k}^{i',j',k'}$ and $R_{i'}, R_{j'}, R_{k'}\in \hom(H\otimes H)$.
We say the algebra $\hom(H\otimes H)$ has a Yang-Baxter relation, if any triple of bimodule maps $R_i, R_j, R_k\in \hom(H\otimes H)$ has.
\end{definition}
By linearity, the Yang-Baxter relation is defined by the coefficients $c_{i,j,k}^{i',j',k'}$ for a basis $\{R_{i}\}$ of $\hom(H\otimes H)$, and we call $c_{i,j,k}^{i',j',k'}$ {\it duality coefficients}.

The diagrammatic interpretation of the Yang-Baxter relation is
$$\grb{par4}= \sum_{\substack{i',j',k'}} c_{i,j,k}^{i',j',k'} \grb{par5}.$$
This provides a generalization of the Yang-Baxter equation and the star-triangle equation \cite{Ons44}.

We can also define the Yang-Baxter relation for bimodule maps on the tensor of bimodules $H_1$, $H_2$, $H_3$. We say a triple of bimodule maps $R_i\in \hom(H_1\otimes H_2, H_2\otimes H_1)$, $R_j \in \hom(H_1\otimes H_3, H_3\otimes H_1)$, $R_k \in \hom(H_2\otimes H_3, H_3 \otimes H_2)$ has a Yang-Baxter relations, if
$$(1\otimes R_i)(R_j\otimes 1)(1\otimes R_k)= \sum_{\substack{i',j',k'}} c_{i,j,k}^{i',j',k'} (R_{k'}\otimes 1)(1\otimes R_{j'})(R_{i'}\otimes 1).$$
for some duality coefficient $c_{i,j,k}^{i',j',k'}$ and $R_{i'}\in \hom(H_1\otimes H_2, H_2\otimes H_1)$, $R_{j'} \in \hom(H_1\otimes H_3, H_3\otimes H_1)$, $R_{k'} \in \hom(H_2\otimes H_3, H_3 \otimes H_2)$.

\begin{remark}
The Yang-Baxter relation can be defined on a monoidal category over a field, if the positivity (or unitary) condition is not required.
\end{remark}

Inspired by the star-triangle equation for checkerboard lattice models \cite{Ons44}, we also study Yang-Baxter relations with alternating shading which is intrinsic in subfactor theory.

\subsection{Subfactors}
Modern subfactor theory was initiated by Jones \cite{Jon83} and developed by many others to study quantum symmetries \cite{EvaKaw98}.
We hope that a brief review will be helpful for understanding the motivation and terminology in this paper.

Suppose $\mathcal{N}\subset \mathcal{M}$ is a subfactor with finite index.
Then the standard representation $L^2(\mathcal{M})$ forms an irreducible $(\mathcal{N},\mathcal{M})$ bimodule, denoted by $X$. Its conjugate $\overline{X}$ is an $(\mathcal{M},\mathcal{N})$ bimodule.
The bimodule tensor products $X\otimes\overline{X}\otimes\cdots\otimes \overline{X}$, $X\otimes\overline{X}\otimes\cdots\otimes X$, $\overline{X}\otimes X\otimes\cdots\otimes X$ and $\overline{X}\otimes X\otimes\cdots\otimes\overline{X}$ are decomposed into irreducible bimodules over $(\mathcal{N},\mathcal{N})$, $(\mathcal{N},\mathcal{M})$, $(\mathcal{M},\mathcal{N})$ and $(\mathcal{M},\mathcal{M})$ respectively, where $\otimes$ is the Connes fusion of bimodules.
\begin{definition}\label{def principal graph}
The principal graph of a subfactor $\mathcal{N}\subset \mathcal{M}$ is an induction-restriction graph. Its vertices are equivalence classes of irreducible bimodules over $(\mathcal{N},\mathcal{N})$ and $(\mathcal{N},\mathcal{M})$ appeared in the above tensor powers.
The number of edges between two vertices corresponding to an $(\mathcal{N},\mathcal{N})$ bimodule $Y$ and an $(\mathcal{N},\mathcal{M})$ bimodule $Z$ is the multiplicity of $Z$ in $Y\otimes X$, (or $Y$ in $Z\otimes \overline{X}$ by Frobenius reciprocity) .
\end{definition}

We call a subfactor \emph{finite depth}, if its principal graph is a finite graph. In this case, the statistical dimensions of the bimodules form a vector on the vertices which is the unique Perron-Frobenius eigenvector of the adjacent matrix.

The centralizer algebra of a subfactor is called the \emph{standard invariant}:
$$\begin{array}{ccccccccc}
\mathbb{C}&\subset&\hom(X)&\subset&\hom(X\otimes \overline{X})&\subset&\hom(X\otimes \overline{X} \otimes X)&\subset&\cdots\\
&&\cup&&\cup&&\cup&&\\
&&\mathrm{C}&\subset&\hom(\overline(X))&\subset&\hom(\overline(X) \otimes X)&\subset&\cdots\\
\end{array}$$

(See \cite{Xu98S} for examples from quantum groups.)

A deep theorem of Popa showed that the standard invariant is a complete invariant of strongly amenable subfactors of the hyperfinite factor of type II$_1$ \cite{Pop94}. This involves the finite depth case.

The axiomatizations of the standard invariant was given by Ocneanu's paragroups for finite depth case \cite{Ocn88}; by Popa's standard $\lambda$-lattices \cite{Pop95} in general.

In this paper, we use Jones' axiomatization: subfactor planar algebras \cite{JonPA}. From planar algebras perspective, one can study subfactors and bimodule categories by skein theory.

\subsection{Planar Algebras}
We refer the reader to Section 2 in \cite{Jon12} for the definitions of planar tangles, (subfactor) planar algebras.
For reader's convenience, we give a brief review here.
\begin{definition}[Planar tangles]
A shaded planar tangle has

$\bullet$ finite ``input" discs

$\bullet$ one ``output" disc

$\bullet$ non-intersecting strings

$\bullet$ alternating shading

$\bullet$ a distinguished interval of each disc marked by $\$$
\end{definition}

We define unshaded planar tangles by ignoring shading.

\begin{definition}[Composition of tangles]
For two planar tangles
$T=\grb{tangle1}$,
$S=\gra{tangle2}$, such that the output disc $S$ is identical to $i$th input disc (the top one) of $T$, we define the composition as
$$\grc{tangle1}\circ_i\gra{tangle2}=\delta \grc{tangle4},$$
where $\delta$ is the circle parameter in the ground field.
\end{definition}

\begin{definition}
A shaded \underline{general planar algebra} $\mathscr{P}_{\bullet}=\{\mathscr{P}_{m,\pm}\}_{m\in \mathbb{N}_0}$, $\mathbb{N}_0=\{0,1,2,\cdots\}$, is a family of $\mathbb{Z}_2$ graded vector spaces with multilinear maps of $\mathscr{P}_{\bullet}$ indexed by (shaded) planar tangles subject to

$\bullet$ Isotopy invariance

$\bullet$ Naturality

\begin{center}
\xymatrix{
    \textcolor{red}{\mathscr{P}_{2,-}}\otimes\mathscr{P}_{2,+}\otimes\mathscr{P}_{1,-} \ar[dr]_{\delta\grb{tangle4}} \ar[r]^{\gra{tangle2}}
                & \textcolor{red}{\mathscr{P}_{3,+}}\otimes\mathscr{P}_{2,+}\otimes\mathscr{P}_{1,-} \ar[d]^{\quad\quad\quad\quad\grb{tangle1}}  \\
                & \mathscr{P}_{2,+}     }
\end{center}

If $\mathscr{P}_{0,\pm}$ is one dimensional, and $\mathscr{P}_{\bullet}$ is unital, i.e., the empty diagram is in $\mathscr{P}_{0,\pm}$, then $\mathscr{P}_{\bullet}$ is called a \underline{planar algebra}.
\end{definition}

\begin{definition}
An unshaded (general) planar algebra $\mathscr{P}_{\bullet}=\{\mathscr{P}_{m}\}_{m\in \mathbb{N}_0}$ is a family of vector spaces with multilinear maps of $\mathscr{P}_{\bullet}$ indexed by unshaded planar tangles subject to isotopy invariance
and naturality.
\end{definition}

\begin{definition}
  A planar algebra is called a planar *-algebra, if an involution $*$ is defined on each $\mathscr{P}_{m,\pm}$ as an anti-linear map, and it is compatible with the vertical reflection of planar tangles.
\end{definition}

\begin{definition}
A subfactor planar algebra is an evaluable spherical planar *-algebra over $\mathbb{C}$ with a positive definite Markov trace.

$\bullet$ Evaluable: $\dim(\mathscr{P}_{0,\pm})\cong\mathbb{C}$, $\dim(\mathscr{P}_{m,\pm})<\infty$

$\bullet$ Spherical: $\gra{lefttrace}=\gra{righttrace}$

$\bullet$ the Markov trace $tr(z^*y)=\grb{markovtrace}$ is positive definite.
\end{definition}

\begin{theorem}[Theorem 4.21, 4.31 \cite{JonPA}]
The standard invariant of a finite index extremal subfactor is a subfactor planar algebra. The converse statement is true.
\end{theorem}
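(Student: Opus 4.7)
The plan for the forward direction is to exploit the Jones tower $\mathcal{N}\subset \mathcal{M}\subset \mathcal{M}_{1}\subset \mathcal{M}_{2}\subset\cdots$ obtained by iterating the basic construction. I would set $\mathscr{P}_{m,+}=\mathcal{N}'\cap \mathcal{M}_{m-1}$ and $\mathscr{P}_{m,-}=\mathcal{M}'\cap \mathcal{M}_{m-1}$, which are automatically finite-dimensional because $[\mathcal{M}:\mathcal{N}]$ is finite. The main task is then to specify how each planar tangle acts on these relative commutants. Multiplication tangles correspond to the algebra products; inclusion tangles correspond to the natural inclusions $\mathscr{P}_{m,\pm}\hookrightarrow \mathscr{P}_{m+1,\pm}$; while cup and cap tangles are realized from the Jones projections $e_{n}$ and the normalized conditional expectations $E_{\mathcal{M}_{n-1}}$. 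Since every planar tangle can be decomposed into these elementary pieces, defining the action amounts to checking the result is independent of the decomposition.

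To establish isotopy invariance and naturality, the strategy is to verify a short list of local relations and then invoke a presentation result for the operad of planar tangles. The needed relations are the Temperley-Lieb identities $e_{n}e_{n\pm 1}e_{n}=\tau e_{n}$ and $[e_{n},e_{m}]=0$ for $|n-m|\ge 2$, the pull-down relation $x e_{n}=E_{\mathcal{M}_{n-1}}(x)e_{n}$ for $x\in \mathcal{M}_{n}$, and the Markov property $\mathrm{tr}(x e_{n})=\tau\,\mathrm{tr}(x)$. Together with the standard bimodule structure, these suffice to match any two tangle decompositions.

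The two genuine obstacles in the forward direction are sphericity and positivity. Sphericity (closing off a loop on either side gives the same scalar) uses the extremality hypothesis, which says the traces on $\mathcal{N}' \cap \mathcal{M}$ coming from $\mathcal{N}'$ and from $\mathcal{M}$ agree; this translates into left-right symmetry of the planar trace. Positivity of the Markov trace descends immediately from positivity of the faithful normal tracial state on each II$_{1}$ factor $\mathcal{M}_{n}$, combined with Markov compatibility.

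For the converse, the plan is to build, from an abstract subfactor planar algebra $\mathscr{P}_{\bullet}$, an extremal II$_{1}$ subfactor whose standard invariant recovers $\mathscr{P}_{\bullet}$. One route, due to Popa, is to translate $\mathscr{P}_{\bullet}$ into a standard $\lambda$-lattice and assemble the symmetric enveloping inclusion from commuting squares and amalgamated free products; a more recent route, due to Guionnet--Jones--Shlyakhtenko, equips the graded $*$-algebra $\mathrm{Gr}(\mathscr{P})=\bigoplus_{m}\mathscr{P}_{m,+}$ with a product given by concatenating boxes and the inner product coming from the Markov trace, and then takes the von Neumann algebra completion. The hardest step is to verify that the resulting algebra is indeed a factor whose standard invariant equals $\mathscr{P}_{\bullet}$ on the nose rather than a proper quotient of it; this relies essentially on positive-definiteness of the Markov trace to control the GNS construction and to identify the relative commutants with the original $\mathscr{P}_{m,\pm}$.
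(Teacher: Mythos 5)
This theorem is quoted in the paper from Jones's \cite{JonPA} (Theorems~4.21 and~4.31) and is not proved in this paper, so there is no internal proof to compare against. Your sketch reproduces the standard argument from the literature: realize the planar algebra on the tower of relative commutants of the Jones tower, derive sphericality from extremality, derive positivity from the II$_1$ trace, and obtain the converse from Popa's $\lambda$-lattice reconstruction or the Guionnet--Jones--Shlyakhtenko graded-algebra construction. Two small cautions. First, with the convention $\mathcal{M}_{-1}=\mathcal{N}\subset\mathcal{M}_{0}=\mathcal{M}\subset\mathcal{M}_{1}\subset\cdots$, one has $\mathscr{P}_{m,+}=\mathcal{N}'\cap\mathcal{M}_{m-1}$ but $\mathscr{P}_{m,-}=\mathcal{M}'\cap\mathcal{M}_{m}$; your formula for $\mathscr{P}_{m,-}$ is off by one, so be sure your convention for the tower matches. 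Second, the step ``check a short list of local relations and invoke a presentation result for the operad of planar tangles'' is more than a formality: Jones's own argument sidesteps any generators-and-relations theorem for the colored planar operad by choosing a normal form for tangles (essentially the standard multiplication form used in Proposition~\ref{stdform} of the present paper) and verifying well-definedness directly via the Temperley--Lieb, pull-down and Markov relations. Your route is viable, but you need to cite or prove the presentation theorem you lean on; as written it is a gap in the argument rather than a proof. The converse-direction remarks are accurate: positivity of the Markov trace is what guarantees that the reconstructed standard invariant is $\mathscr{P}_{\bullet}$ itself and not a proper quotient.
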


Therefore the vectors in $\mathscr{P}_{m,\pm}$ can be realized as bimodule maps.

Let us give some planar tangles and notations.
\begin{notation}\label{Notation:diagrams}
Usually we draw the input and output discs of a planar tangle as rectangles with the same number of boundary points on the top and the bottom, and the $\$$ sign on the left. Under this convention, we can omit the $\$$ signs and the output disc of a planar tangle.
\enumerate{
\item For two vectors $x\in\mathscr{P}_{m,\pm}$ , $y\in\mathscr{P}_{m',\pm}$, their tensor product $x\otimes y$ is $\grb{tensor}$.
\item the $k$th tensor power of $x$ is denoted by $x^{\otimes k}$.
\item When $m'=m$, their multiplication $xy$ is
$\grb{multiplication}$.
\item The contragredient of $x$, i.e. the $180^\circ$ rotation of $x$, is denoted by $\overline{x}$.
\item The right (or left) side inclusion $\cdot \otimes 1$ (or $1 \otimes \cdot$) is adding a through string to the right (or left). We identify an $m$-box $x$ as the $(m+1)$-box $x\otimes 1$, if there is no confusion.
\item The Fourier transform $\mathcal{F}$ is given by the 1-click rotation $\graa{fourier}$.
\item A shift is a composition of left and right side inclusions, i.e.
\graa{shift}

\item The right side conditional expectation is adding a right cap to a (rectangle) diagram, i.e.
    \graa{rightcap}.
\item The Markov trace on the $m$-box space is defined by adding $m$ right caps to $m$-boxes, denoted by $tr_m$. We write $tr$ for $tr_2$.

\item The circle parameter of the planar algebra is denoted by $\delta$.

\item A cap is the diagram $\cap$; a cup is the diagram $\cup$. The multiplication of $\cap$ and $\cup$ is given by the 2-box \gra{22}. The multiplication of $\cup$ and $\cap$ is $\delta$.
\item We write a labeled 2-box as a crossing with the label located at the position of the $\$$.
}
\end{notation}

With the above notation, one can construct a unitary fusion category \cite{ENO} from a (finite depth) unshaded spherical planar algebra, see Section 4.1 in \cite{MPSD2n}. We will use this identification in Section \ref{section quantum subgroups}.

\begin{definition}
A diagram is called a standard multiplication form, if it is obtained by adding right caps to the multiplication of $n$-box shift tangles partially labeled by \gra{22}, e.g.
$$\gre{pastandardform}.$$
\end{definition}

\begin{proposition}\label{stdform}
Any (shaded) planar tangle is isotopic to a standard multiplication form by adding some closed circles.
\end{proposition}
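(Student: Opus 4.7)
The plan is to prove this by Morse theory applied to the vertical height function, decomposing the tangle into horizontal strips each of which is manifestly a shifted generator.

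First I would isotope the tangle into generic position: every input disc and the output disc becomes a rectangle with horizontal top and bottom sides and distinguished interval on the left edge (as in Notation \ref{Notation:diagrams}); every string is smooth and its restriction of the height function has only non-degenerate critical points, i.e.\ local maxima (caps) and local minima (cups); all critical heights, meaning heights of input disc boundaries and of critical points of strings, are pairwise distinct; and the alternating shading is preserved throughout the isotopy. Any planar tangle admits such an isotopy because the strings are disjoint smooth arcs and the generic-position conditions are generic in the usual $C^\infty$ sense.

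Second, I would slice the tangle by horizontal lines at non-critical heights. This gives a finite vertical stack of rectangular strips; by genericity each strip contains exactly one ``event'', namely either a single input disc surrounded by vertical through-strings, a single cap, or a single cup. Let $n$ be the maximum number of strings crossing any horizontal slice. I would pad each strip to width $n$ by adjoining monotone through-strings on its right edge. Then each padded strip is exactly an $n$-box shift tangle: a shift of one of the input discs of the original tangle when the event is a disc, and a shift of the Temperley-Lieb 2-box \gra{22} when the event is a cap paired (by vertical sliding along the adjoining strings) with the cup immediately below it or the cup paired with the cap above it.

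Third, the vertical composition of these padded strips is literally a multiplication tangle of $n$-box shifts partially labeled by \gra{22}, and adjoining right caps on top reduces the width from $n$ back to the number of boundary points of the output disc of the original tangle. Up to isotopy this reproduces the original tangle, except that the padding and the pairing steps may create some contractible loops; each such loop contributes a factor of $\delta$, which is the ``adding some closed circles'' in the statement. Shading and the distinguished intervals are preserved automatically since all operations involved (planar isotopy, side inclusion, shift, insertion of \gra{22}, and right cap) respect the alternating shading and the $\$$ convention.

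The main obstacle is the bookkeeping for pairing caps with cups. In the original tangle, caps and cups need not be adjacent in height, and a cap on strands $i,i+1$ need not sit directly above a cup on the same pair. I would handle this by sliding critical points along their strings to bring each unpaired cap or cup into a position where it can be absorbed into a shifted \gra{22}, recording the closed loops this produces. Since caps and cups along any region of the complement are ultimately balanced (the tangle is planar and its output disc is fixed), every internal critical point can be paired in this way, and the remaining boundary-adjacent caps on the right are absorbed by the final right caps.
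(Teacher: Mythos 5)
Your slicing strategy is the same Morse-theoretic approach the paper uses, which also cuts the tangle into horizontal strips and then uniformizes the widths; the difference is that the paper cuts only around input discs and treats the intervening Temperley--Lieb pieces wholesale, while you slice finer so that each cap and cup is isolated. The gap in your sketch is that you have the causality of the closed circles backwards. You propose that ``sliding critical points'' is what produces the closed loops, but planar isotopy preserves the number of connected components, so sliding cannot create loops. The circles have to be introduced as new disjoint components \emph{before} isotoping, and their purpose is precisely what makes the cap/cup pairing globally possible. The paper's steps (3) and (4) do this explicitly: circles are added straddling each cut line so that every slice has $n$ intersection points, and $\frac{n-m}{2}$ more circles are added over the output disc; the auxiliary caps and cups they supply are what let each strip be read as a shift of $\gra{22}$.

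Without those auxiliary circles the pairing can genuinely fail. If a cap's two legs descend directly to the top boundary of an input disc, the local maximum is pinned above that disc---it cannot drop below the heights of its own endpoints---and the matching cup may be symmetrically pinned below the same disc, so no isotopy brings the pair adjacent. A related problem is your padding scheme: a strip containing one cap has two more strands at its bottom than at its top, so adjoining monotone through-strings on the right cannot equalize the two sides, and the padded strip is not an $n$-box tangle of anything. The two extra strands must terminate in a cup, which again has to come from an added circle. Once you make the circle-adding step explicit---where the circles go and how many are needed so that, after the addition, each strip's cap/cup content is balanced and adjacent---the rest of your decomposition goes through and reproduces the paper's argument.
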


\begin{proof}
Suppose the number of boundary points of the output disc of the planar tangle is $2m$. Now let us construct a diagram from the planar tangle as follows:
\begin{itemize}
\item[(1)] Draw each output disc and input disc of the planar tangle as a rectangle with the same number of boundary points on the top and the bottom, and a $\$$ sign on the left.
\item[(2)] Cut the tangle into pieces by pairs of ``horizontal" lines around input discs, such that the tangle between each pair of lines is a shift tangle. Note that the planar tangle admits alternating shading, so intersection points on each line has the same parity as $m$.

\item[(3)] Add circles over each pair of lines like
\grb{circleoverline},
such that every horizontal line passes through $n$ intersection points, for a fixed $n\geq m$.
\item[(4)] Add $\displaystyle \frac{n-m}{2}$ circles over the output disc: make up $\frac{n-m}{2}$ cups on the right top and caps on the right bottom and add $\displaystyle \frac{n-m}{2}$ pairs of right caps.
\end{itemize}
The final diagram is isotopic to the disjoint union of original tangle and some closed circles. Moreover, this diagram is a multiplication of $n$-box shift tangles and $n$-box Temperley-Lieb diagrams. Note that each $n$-box Temperley-Lieb diagram is a $n$-box shift tangle labeled by $\gra{22}$. Therefore the diagram is a standard multiplication form.
\end{proof}

Suppose $T$ is a (shaded) planar tangle. Let us draw each output disc and input disc of $T$ as a rectangle with the same number of boundary points on the top and the bottom, and a $\$$ sign on the left. Moreover, the strings and the discs intersect orthogonally. Let the strings move along shaded regions counterclockwise, and the total winding angle is denoted by $\theta$. By Proposition \ref{stdform}, $\theta=2m\pi$, for some $m\in\mathbb{Z}$. Let us define $g(T)=e^{\frac{i\theta}{2}}=(-1)^m$. If we switch the shading of $T$, then the winding number is $-2m\pi$ and $g(T)$ does not change.

More precisely, if $S$ is a standard form isotopic to the disjoint union of $T$ and $k_1$ closed circles. Then the total winding angle of $S$ is the sum of those of $T$ and $k_1$ closed circles. Let the number of labels \gra{22} and right caps in $S$ be $k_1$ and $k_2$, respective. Note that the winding angle of a closed circle, a right cap or \gra{22}, is $\pm2\pi$. They all contribute $-1$ to $g(T)$, thus $g(T)=(-1)^{k_1+k_2+k_3}$.

\begin{definition}[Gauge transformation]
Given a (shaded) planar algebra $\mathscr{P}_{\bullet}$ with the circle parameter $\delta$, we obtain a (shaded) planar algebra $\mathscr{P}^-_{\bullet}$ with the circle parameter $-\delta$ by changing the action $Z_T$ of a planar tangle $T$ to $Z^-_T:=g(T)Z_T$, called the gauge transformation of $\mathscr{P}_{\bullet}$.
Furthermore, if $\mathscr{P}_{\bullet}$ is unshaded, then $\mathscr{P}^-_{\bullet}$ is unshaded.
\end{definition}

\subsection{Skein theory}\label{subsection skein theory}
We refer the reader to \cite{JonPA} for the skein theory of planar algebras (in Section 1) and many interesting examples (in Section 2).

Given a generating set $S$, one intermediately obtains a planar algebra $\mathscr{U}(S)$, called the \emph{universal planar algebra} (Definition 1.10 \cite{JonPA}). Its vector space $\mathscr{U}(S)_{m,\pm}$ is the linear span of labeled $2m$-tangles, i.e., planar tangles with $2m$ boundary points whose input discs are labeled by elements in the generating set. Planar tangles act on $\mathscr{U}(S)$ in the natural way.
The partition function $Z$ is a linear functional on the 0-box space.
The kernel of the partition function $\bigcup_{m,\pm}\{x\in \mathscr{U}(S)_{m,\pm} |Z(tr_m(xy))=0, \forall y\in \mathscr{U}(S)_{m,\pm}\}$ is an ideal of $\mathscr{U}(S)$.
Modulo this ideal, the action of planar tangles is well-defined on the quotient of the universal planar algebra. If the partition function is multiplicative, then the quotient is unital. If $S$ have an involution $*$, then $\mathscr{U}(S)$ is a planar *-algebra. The partition function should satisfy the condition $Z(B^*)=\overline{Z(B)}$, for any 0-box $B$, so that the involution is well defined on the quotient.

The main difficulty is defining a \underline{positive} partition function for the universal planar algebra, i.e., the Markov trace is positive semidefinite with respect to an involution $*$.
The strategy of skein theory is to derive a partition function by a proper set of relations of the universal planar algebra.
We will encounter three fundamental problems:

\begin{itemize}
\item[(1)] Evaluation: Enough relations are required, such that the $0$-box space is reduced to a scalar in the ground field. (Usually, we also require that the $n$-box space is reduced to be finite dimensional.) Then we may define the partition function of the 0-box as this scalar.

\item[(2)] Consistency: Different evaluation processes a $0$-box give the same scalar. Then the partition function is well defined.

\item[(3)] Positivity: The partition function derived from the evaluation is positive semi-definite with respect to an involution. Then the quotient of the universal planar algebra by the null space of the partition function is a subfactor planar algebra.
\end{itemize}

\begin{remark}
If the consistency holds, then kernel of the partition function contains the ideal generated by the relations, but not necessarily equal. It is hard to find out the extra relations in the kernel.
In other words, the planar algebra defined by generators modulo relations may be non-semisimple. One needs to determined the relevant semisimple quotient. We refer the reader to \cite{Wen88} for the case of Brauer's centralizer algebras.
\end{remark}

Now let us talk about one example for the three problems. (Example 2.2 \cite{JonPA}.)

If the generating set is given by 1-boxes with the same shading, then one can introduce relations to reduce 1-boxes to the ground field:
$\gra{lefttrace}=\gra{righttrace}=f(x),$
where $f$ a linear functional on the linear span these 1-boxes.
Furthermore, one can introduce relations to reduce two adjacent 1-box generators $x_i$, $x_j$ to a linear sum of 1-boxes generators, i.e., a multiplication $x_ix_j=\sum_k c_{ij}^k x_k$, for some formal parameters $c_{ij}^k$.

These relations are enough to reduce any 0-box to a polynomial over the parameters $f(x)$, $c_{ij}^k$.

Consistency is equivalent to a set of equations over the parameters which means that the multiplication is associative and $f$ is a trace.

Positivity was proved in Theorem 3.16 in \cite{JonPA}. Positivity is already hard for Temperley-Lieb-Jones planar algebra which has neither generators nor relations. This was achieved in Jones' remarkable rigidity result \cite{Jon83}:
Positivity holds iff the circle parameter $\delta$ belongs to $$\{2\cos\frac{\pi}{n}, n=3,4,\cdots\}\cup[2,\infty].$$

In general, for a given set of generators, one need to find out a type of relations based on formal parameters such that any 0-box can be evaluated. Then the planar algebra is completely determined by these parameters. One can ask for a \underline{classification} of subfactor planar algebras \underline{by} this type of \underline{skein theory}. The consistency is the constraint of these parameters. The positivity is a stronger constraint, but rarely used in the skein theoretic classification. See \cite{Liuex,BJL,JLW16} for an application of the positivity in the study of subfactors from the point of view of harmonic analysis.

\begin{definition}
We define the complexity for a universal planar algebra to be a map from labeled tangles to the partially ordered set $\mathbb{N}^m$, for some $m\in \mathbb{N}$. The complexity is called local, if the order is strictly preserved under the action of labeled annular tangles.
\end{definition}

\begin{definition}
Given a finite set of generators and relations of a planar algebra and a complexity, an evaluation algorithm is called local or $n$-local, for $n\in\mathbb{N}$, if the complexity is local and
\itemize{
\item[(1)] for any $k\leq 2n$, there are finitely many least complex labeled $k$-tangles, (i.e., tangles can not be reduced to linear sums of less complex tangles;)
\item[(2)] any non-least complex labeled $k$-tangle can be reduced to a linear sum of less complex tangles by applying the relations to an at most $n$-box part of the $k$-tangle.

\item[(3)] the empty diagram is the unique least complex labeled 0-tangle.
}

Otherwise the evaluation algorithm is called global.
\end{definition}

For example, the number of labels of a tangle is a local complexity. One can find interesting local evaluation algorithms by the discharge method, see \cite{BJL,MPS15} for 3 or 4-valent graphs.

For an evaluation algorithm, there is a supremum for the dimension of the $m$-box space. We call this supremum the {\it critical dimension}.
The critical dimension for the 3-box space is 14 for the local evaluation algorithm of 4-valent graphs in \cite{BJL}.
One needs an global evaluation algorithm to go beyond the critical dimension of the local evaluation algorithm.
In Section \ref{YBR}, we give a global evaluation algorithm for 4-valent graphs based on the Yang-Baxter relation.
One known global evaluation algorithm is given by Thurston for 6-valent graphs \cite{Thu04}. Another global one is the Jellyfish evaluation algorithm \cite{BMPS}.

Now let us give a general method to prove the consistency for a local evaluation algorithms based on solving polynomial equations. Of course, solving these equations could be hard. The method will not give a proof of the consistency of our global evaluation algorithm, but the idea will be used (see Section \ref{subsection consistency}).

\textbf{Consistency for a local evaluation algorithm:}
By condition (2), let us define the partition function as the average of all complexity reducing evaluations. We can prove the consistency by showing that the relations are in the kernel of the partition function.

We want to prove the following statement by induction based on solving polynomial equations: given a $m$-box relation $R$ and an labeled annular tangle $\Phi$ mapping from the $m$-box space to the $0$-box space, the partition function of $\Phi(R)$ is 0.

If the complexity of $\Phi(R)$ is minimal, then by condition (3), $\Phi$ is empty and $R=0$. So $\Phi(R)=0$.

We assume that the statement is true for any $\Phi'(R')$ whose complexity is less than that of $\Phi(R)$. Let us compute $\Phi(R)$. For a complexity reducing evaluation, if $\Phi(R)$ is reduced by applying the relations to a part in $\Phi$, then this evaluation is zero by induction. If $\Phi(R)$ is reduced by applying the relations to a part in $\Phi(R)$ which overlaps $R$, then the union of this part and $R$ is in a $k$-box space, $k\leq m+n$. The union can be reduced to a linear sum of least complex tangles in the $k$-box space by condition (2). By induction, it is enough to show that the polynomial coefficients of these least complex tangles are zero.

\subsection{The Hecke algebra of type A and the HOMFLY-PT polynomial}\label{hecke}
Let us recall some results about the Hecke algebra of type A which will be used in Section \ref{subsection matrix units}, \ref{subsection trace formula}, \ref{subsection positivity}, \ref{section quantum subgroups}. The HOMFLY-PT polynomial will be used in Section \ref{subsection consistency}.

The HOMFLY-PT polynomial is a link invariant given by a braid $\gra{b+}$ satisfying Reidemeister moves I, II, III and the Hecke relation.

\begin{align*}
&\text{the Hecke relation:}    & \gra{b+}-\gra{b-}&=(q-q^{-1})\gra{b2id}, && \\
&\text{Reidemeister moves I:}   & \gra{homfly11}&=r\gra{b1id}; & \gra{homfly12}&=r^{-1}\gra{b1id}; \\
                                && \gra{homfly13}&=r \gra{b1id-}; & \gra{homfly14}&=r^{-1}\gra{b1id-},\\
&\text{Reidemeister moves II:}  & \gra{homfly21}&=\gra{homfly22}; &  \gra{homfly25}&=\gra{homfly26}; \\
                                && \gra{homfly23}&=\gra{homfly24}; &  \gra{homfly27}&=\gra{homfly28},\\
&\text{Reidemeister moves III:}  & \gra{homfly31}&=\gra{homfly32}; &  \gra{homfly33}&=\gra{homfly34},\\
&\text{circle parameter:} && \gra{r0+}=\gra{r0-}=\frac{r-r^{-1}}{q-q^{-1}}.
\end{align*}

Let $\sigma_i$, $i\geq1$, be the diagram by adding $i-1$ oriented (from bottom to top) through strings on the left of $\gra{b+}$.
The Hecke algebra of type $A$ is a (unital) filtered algebra $H_{\bullet}$. The algebra $H_n$ is generated by $\sigma_i$, $1\leq i \leq n-1$ and $H_n$ is identified as a subalgebra of $H_{n+1}$ by adding an oriented through string on the right.
Over the field $\mathbb{C}(r,q)$, rational functions over $r$ and $q$, the matrix units of $H_{\bullet}$ were constructed in \cite{Yok97,Ais98}.  A skein theoretic proof of the trace formula via the $q$-Murphy operator was given in \cite{Ais97}.

For readers' convenience, let us sketch the construction of the matrix units in \cite{Yok97} with slightly different notations. The ($l$-box) symmetrizer $f^{(l)}$ and antisymmetrizer $g^{(l)}$, for $l\geq1$, are constructed inductively as follows,
\begin{align}
f^{(l)}&=f^{(l-1)}-\frac{[l-1]}{[l]}f^{(l-1)}(q-\sigma_{l-1})f^{(l-1)}; \label{fl}\\
g^{(l)}&=g^{(l-1)}-\frac{[l-1]}{[l]}g^{(l-1)}(q^{-1}+\sigma_{l-1})g^{(l-1)},\label{gl}
\end{align}
where $f^{(1)}=g^{(1)}=1$.

\begin{proposition}
\begin{align}
f^{(l)}&=1\otimes f^{(l-1)}-\frac{[l-1]}{[l]}(1\otimes f^{(l-1)})(q-\sigma_1)(1\otimes f^{(l-1)}); \label{fl1}\\
g^{(l)}&=1\otimes g^{(l-1)}-\frac{[l-1]}{[l]}(1\otimes g^{(l-1)})(q^{-1}+\sigma_1)(1\otimes g^{(l-1)}).  \label{gll}
\end{align}
\end{proposition}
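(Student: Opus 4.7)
The plan is to derive the left-sided recursions from the right-sided ones in \eqref{fl} and \eqref{gl} by exploiting the strand-reversal symmetry of the Hecke algebra. Concretely, I introduce the algebra automorphism $\psi_l\colon H_l\to H_l$ defined on generators by $\psi_l(\sigma_i) = \sigma_{l-i}$ for $1\leq i\leq l-1$. Since the braid relations, the distant-commutation relations, and the Hecke quadratic relation are all symmetric under $i \mapsto l-i$, one checks immediately that $\psi_l$ is a well-defined algebra automorphism.

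Next I would record the compatibility with inclusions: writing $\iota_R,\iota_L\colon H_{l-1}\hookrightarrow H_l$ for the standard embeddings with $\iota_R(\sigma_i)=\sigma_i$ and $\iota_L(\sigma_i)=\sigma_{i+1}$, a direct check on generators yields $\psi_l\circ\iota_R=\iota_L\circ\psi_{l-1}$. Thus $\psi_l$ transports an element of $H_{l-1}$ sitting on the right via $\iota_R$ to the image of that element under $\psi_{l-1}$ sitting on the left via $\iota_L$.

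The core step is to show that $\psi_l(f^{(l)})=f^{(l)}$ and $\psi_l(g^{(l)})=g^{(l)}$. For this I would characterize $f^{(l)}$ as the unique nonzero idempotent in $H_l$ satisfying $\sigma_i f^{(l)} = q f^{(l)}$ for all $1\leq i\leq l-1$, and $g^{(l)}$ as the unique nonzero idempotent satisfying $\sigma_i g^{(l)}=-q^{-1}g^{(l)}$. The absorption property itself is established by a short induction on $l$ using \eqref{fl} and \eqref{gl}; uniqueness comes from the semisimplicity of $H_l$ over $\mathbb{C}(q)$ together with the fact that the trivial and sign isotypic components of the regular representation are each one-dimensional. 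Since $\psi_l$ only permutes the set of generators $\{\sigma_1,\ldots,\sigma_{l-1}\}$, the element $\psi_l(f^{(l)})$ inherits exactly the same characterization and hence equals $f^{(l)}$; the same argument works for $g^{(l)}$.

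Finally, applying $\psi_l$ to \eqref{fl} (with $f^{(l-1)}$ embedded via $\iota_R$) and invoking the compatibility, the inductive invariance $\psi_{l-1}(f^{(l-1)})=f^{(l-1)}$, and $\psi_l(\sigma_{l-1})=\sigma_1$ produces exactly the right-hand side of \eqref{fl1}; the analogous application to \eqref{gl} yields \eqref{gll}. The main obstacle is the absorption/uniqueness characterization of $f^{(l)}$ and $g^{(l)}$ from the one-sided recursions — once those are in hand, the symmetry argument runs automatically.
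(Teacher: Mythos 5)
Your argument is correct, but it follows a genuinely different route from the paper's. The paper works with an \emph{inner} symmetry: it conjugates the one-sided recursion \eqref{fl} by the shift braid $u_l=\sigma_1\sigma_2\cdots\sigma_{l-1}$, using that $u_l\iota_R(x)u_l^{-1}=\iota_L(x)$ for $x\in H_{l-1}$, that $f^{(l)}$ is central in $H_l$ (so $u_l f^{(l)} u_l^{-1}=f^{(l)}$), and a short braid-move computation to convert the middle factor $f^{(l-1)}\sigma_{l-1}f^{(l-1)}$ into $(1\otimes f^{(l-1)})\sigma_1(1\otimes f^{(l-1)})$. You instead use the \emph{outer} strand-reversal automorphism $\psi_l\colon\sigma_i\mapsto\sigma_{l-i}$, which trivially sends $\sigma_{l-1}$ to $\sigma_1$ and intertwines the two embeddings $\iota_R,\iota_L$ of $H_{l-1}$, so the transport of the recursion is immediate; the nontrivial step is pushed entirely into the claim $\psi_l(f^{(l)})=f^{(l)}$, which you settle via the absorption characterization $\sigma_i f^{(l)}=qf^{(l)}$ for all $i$ together with the one-dimensionality of the trivial isotypic component in the semisimple algebra $H_l$ over $\mathbb{C}(q)$. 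This is a clean replacement for the paper's centrality-plus-braid-moves argument, with one trade-off: the paper gets $u_l f^{(l)} u_l^{-1}=f^{(l)}$ for free from centrality, while you must separately establish the absorption property of the recursively-defined $f^{(l)},g^{(l)}$ — a standard fact, and you correctly flag it as the main thing needing work. You are also right that this detour through absorption and uniqueness is unavoidable: one cannot prove $\psi_l(f^{(l)})=f^{(l)}$ by naively applying $\psi_l$ to \eqref{fl} by induction, as the resulting identity is exactly \eqref{fl1}, which is what is to be shown.
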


\begin{proof}
Take $u_1=1$ and $\displaystyle u_l=\prod_{i=1}^{l-1} \sigma_i$, for $l\geq 2$. Then $u_l$ is a unitary.
By Reidemeister moves of $\gra{b+}$, we have
\begin{align}
  u_l(f_{l-1})f_l^*&=1\otimes f_{l-1}. \label{equbraidf2}
\end{align}

Since $f_l$ is a central minimal idempotent in the Hecke algebra $H_{l}$, we have $u_lf_l=\chi_l f_l$, for some $\chi$, $|\chi_l|=1$. Then $f_lu_l^*=\overline{\chi_l}f_l$.
Thus
\begin{align*}
  &u_lf_{l-1}\sigma_{l-1}f_{l-1}u_l^*\\
  =&u_lf_{l-1}u_{l-1}\sigma_{l-1}u_{l-1}^*f_{l-1}u_l^*\\
  =&(1\otimes f_{l-1})\sigma_1 (1\otimes f_{l-1}) && \text{by Reidemeister moves of \gra{b+}}. \numberthis \label{equbraidf1}
\end{align*}
Therefore
\begin{align*}
  f^{(l)}&=u_lf_lu_l^*\\
         &=u_l\left(f^{(l-1)}-\frac{[l-1]}{[l]}f^{(l-1)}(q-\sigma_{l-1})f^{(l-1)}\right)u_l^* && \text{by Equation (\ref{fl}),}\\
         &=1\otimes f^{(l-1)}-\frac{[l-1]}{[l]}(1\otimes f^{(l-1)})(q-\sigma_1)(1\otimes f^{(l-1)}) && \text{by Equations (\ref{equbraidf1}), (\ref{equbraidf2}}). \numberthis \label{fll}
\end{align*}

We can prove Equation (\ref{gll}) in a similar way.
\end{proof}

Given a Young diagram $\lambda$, we can construct an idempotent by inserting the symmetrizers in each row on the top and the bottom and the antisymmetrizers in each column in the middle as follows.
For example, $\lambda=\graa{young12345}$, take
\begin{align}
\dot{y}_{\lambda}&=\grc{youngidempotent}, \label{equyoungidempotent}
\end{align}
where the black boxes and white boxes indicate symmetrizers and antisymmetrizers respectively.
Then $\dot{y}_{\lambda}^2=m_{\lambda}\dot{y}_{\lambda}$. The coefficent $m_{\lambda}$ was computed in Proposition 2.2 in \cite{Yok97}. Over $\mathbb{C}(q,r)$, $m_{\lambda}$ is non-zero. We can normalize $\dot{y}_{\lambda}$ to $y_{\lambda}$ by
$\displaystyle y_{\lambda}=\frac{1}{m_{\lambda}}\dot{y}_{\lambda}$. Then $y_{\lambda}$ is an idempotent. Moreover, $\{y_{\lambda} ~|~ |\lambda|=n\}$ are inequivalent minimal idempotents in $H_n$.

\begin{notation}\label{Notation:Young diagrams}
  If a Young diagram $\lambda$ is obtained by adding one cell to a Young diagram $\mu$, then we say $\lambda>\mu$ or $\mu<\lambda$.
\end{notation}

For $\lambda>\mu$, the morphisms $\dot{\rho}_{\mu<\lambda}$ from $y_{\mu} \otimes 1$ to $y_\lambda$ and $\dot{\rho}_{\lambda>\rho}$ from $y_\lambda$ to $y_{\mu} \otimes 1$ were constructed in Lemma 2.10 in \cite{Yok97}.
Moreover, $(\dot{\rho}_{\mu<\lambda}\dot{\rho}_{\lambda>\rho})^2=m_{[\mu|\lambda|\mu]}\dot{\rho}_{\mu<\lambda}\dot{\rho}_{\lambda>\rho}$ and the coefficient $m_{[\mu|\lambda|\mu]}$ was also computed there. Over $\mathbb{C}(q,r)$, $m_{[\mu|\lambda|\mu]}$ is non-zero. We normalize $\dot{\rho}_{\mu<\lambda}$ and $\dot{\rho}_{\lambda>\rho}$ by
$\displaystyle \rho_{\mu<\lambda}=\frac{1}{m_{[\mu|\lambda|\mu]}}\dot{\rho}_{\mu<\lambda}$ and $\rho_{\lambda>\rho}=\dot{\rho}_{\lambda>\rho}$. Then
$\rho_{\mu<\lambda}\rho_{\lambda>\rho}$ is an idempotent and $\rho_{\lambda>\rho}\rho_{\mu<\lambda}=y_\lambda$. The branching formula is proved in Proposition 2.11 in \cite{Yok97},
\begin{align}\label{branching formula}
y_{\mu}\otimes 1&=\sum_{\lambda>\mu} \rho_{\mu<\lambda}\rho_{\lambda>\mu}.
\end{align}
Therefore the Bratteli diagram of $H_{\bullet}$ over $\mathbb{C}_{q,r}$ is the Young's lattice, denoted by $YL$.

For each length $n$ path $t$ in $YL$ from $\emptyset$ to $\lambda$, $|\lambda|=n$, $n\geq1$, i.e., a standard tableau $t$ of the Young diagram $\lambda$,
take $t'$ to be the first length $(n-1)$ sub path of $t$ from $\emptyset$ to $\mu$.
There are two elements $P^+_t$, $P^-_t$ in $H_n$ defined by the following inductive process,
\begin{align*}
P^{\pm}_\emptyset&=\emptyset, \\
P^+_t&=(P^+_{t'}\otimes 1)\rho_{\mu < \lambda} ,\\
P^-_t&= \rho_{\lambda>\mu} (P^-_{t'}\otimes 1).
\end{align*}
The matrix units of $H_{n}$ are given by $P^+_tP^-_\tau$, for all Young diagrams $\lambda$, $|\lambda|=n$, and all pairs of length $n$ paths $(t,\tau)$ in $YL$ from $\emptyset$ to $\lambda$.
Moreover, the multiplication of these matrix units coincides with the multiplication of loops, i.e.,
\begin{align*}
P^+_tP^-_\tau P^+_sP^-_\sigma=\delta_{\tau s} P^+_tP^-_\sigma,
\end{align*}
where $\delta_{\tau s}$ is the Kronecker delta.

Furthermore, when $|q|=|r|=1$, $H_{\bullet}$ admits an involution $*$, which is an anti-linear anti-isomorphism mapping $\gra{b-}$ to $\gra{b-}$, ($q$ to $q^{-1}$ and $r^{-1}$ to $r^{-1}$) over the field $\mathbb{C}$.
The symmetrizer $f^{(l)}$ and antisymmetrizer $g^{(l)}$ can be constructed by Equation (\ref{fl}) and (\ref{gl}) inductively whenever $[l]\neq 0$.
Note that $[l]^*=[l]$. By the Hecke relation of $\gra{b+}$, we have $(q-\sigma_i)^*=q-\sigma_i$.
So $(f^{(l)})^*=f^{(l)}$ and $(g^{(l)})^*=g^{(l)}$ by the inductive construction.
Then $y_{\lambda}$ can be constructed if the required symmetrizers and antisymmetrizers are well-defined and $m_{\lambda}\neq0$.
For $\lambda>\mu$, $\dot{\rho}_{\lambda>\rho}$ and $\dot{\rho}_{\mu<\lambda}$ can be constructed if $y_{\lambda}$ and $y_{\mu}$ are well-defined.
If $m_{[\mu|\lambda|\mu]}> 0$, then we have (different) normalization
$\displaystyle \rho'_{\mu<\lambda}=\sqrt{\frac{1}{m_{[\mu|\lambda|\mu]}}}\dot{\rho}_{\mu<\lambda}$ and $\displaystyle \rho'_{\lambda>\rho}=\sqrt{\frac{1}{m_{[\mu|\lambda|\mu]}}}\dot{\rho}_{\lambda>\rho}$.
By this normalization process (which is permitted over $\mathbb{C}$, but not over $\mathbb{C}(q,r)$), we have
$(\rho_{\mu<\lambda}')^*=\rho_{\lambda>\rho}'.$
Similarly we can define the matrix unit $P^+_tP^-_\tau$ for a loop $t\tau^{-1}$ when the morphisms along the paths $t$ and $\tau$ are defined.
Then $(P^+_tP^-_\tau)^*=P^+_\tau P^-_t$.

We will construct a $q$-parameterized planar algebra $\mathscr{C}_{\bullet}$ in Section \ref{subsection consistency}.
We will use the matrix units of $H_{\bullet}$ to construct the matrix units of $\mathscr{C}_{\bullet}$ in Section \ref{subsection matrix units}.

We will determine the semisimple quotient of $\mathscr{C}_{\bullet}$ for the case $q=e^{\frac{i\pi}{2N+2}}, r=q^N$ in Section \ref{subsection positivity}.
For all Young diagrams whose (1,1) cell has hook length at most $N+1$, one can check that all the corresponding coefficients $[l]$, $m_{\lambda}$, $m_{[\mu|\lambda|\mu]}$ are positive.
So the corresponding minimal idempotents $y_{\lambda}$ and morphisms $\rho_{\mu<\lambda}$, $\rho_{\lambda>\rho}$ are well-defined.
We will construct the null space ideal of $\mathscr{C}_{\bullet}$ and matrix units of the semisimple quotient modulo the ideal by these well-defined matrix units of $H_{\bullet}$.
These quotients are subfactor planar algebras in (3) of Theorem \ref{main theorem classification}.

\section{Yang-Baxter relation planar algebras}\label{YBR}

\subsection{General theory}
\begin{definition}
For a planar algebra $\mathscr{P}_{\bullet}$ over a field $k$, if
for any 2-boxes $R_i, R_j, R_k$ with compatible shading, we have
$$(1\otimes R_i)(R_j\otimes 1)(1\otimes R_k)= \sum_{\substack{i',j',k'}} c_{i,j,k}^{i',j',k'} (R_{k'}\otimes 1)(1\otimes R_{j'})(R_{i'}\otimes 1),$$
for some $c_{i,j,k}^{i',j',k'}\in k$ and 2-boxes $R_{i'}, R_{j'}, R_{k'}$, then its 2-box space is said to have a Yang-Baxter relation.
\end{definition}

\begin{definition}
We call $\mathscr{P}$ a Yang-Baxter relation planar algebra, if $k=\mathbb{C}$ and $\mathscr{P}$ is a subfactor planar algebra generated by its 2-boxes with a Yang-Baxter relation.
\end{definition}

\begin{remark}
There are two different kinds of equations due to the two choices of shadings. This behavior is the same as the star-triangle equation for checkerboard lattice models \cite{Ons44}.
\end{remark}
The diagrammatic interpretation of the Yang-Baxter relation is
$$\grb{par4}= \sum_{\substack{i',j',k'}} c_{i,j,k}^{i',j',k'} \grb{par5}.$$
It can be considered as a generalization of the Reidemester move of type III.

The generalized Reidemester moves of type I, II are intrinsic in the action of planar tangles as follows.
Given a basis $s_i$ and $x_j$ for 1-boxes and 2-boxes respectively, we have the following relations.

Move I: For any 2-box $x_j$,
$$
\raisebox{-.47cm}{
\tikz{
\draw (0,0)--(2/3,2/3);
\draw (2/3,2/3) arc (135:-135: 1.414/6);
\draw (0,1)--(2/3,1/3);
\node at (1/6,1/2) {$x_j$};
}}
=\sum_{\substack{i}} b_{j}^{i}
\raisebox{-.47cm}{
\tikz{
\draw (0,0)--(0,1);
\node at (-1/3,1/2) {$s_i$};
}},
$$
for some constants $b_{j}^{i}$.

Move I': For any 1-box $s_i$,
$$
\raisebox{-.47cm}{
\tikz{
\draw (0,0) arc (180:-180: 1/2);
\node at (-1/3,0) {$s_i$};
}}
=b_i,
$$
for some constant $b_i$.

Move II: For any 2-boxes $x_j$, $x_k$ with compatible shading, we have
$$
\raisebox{-.95cm}{
\tikz{
\draw (0,0)--(1,1)--(0,2);
\draw (1,0)--(0,1)--(1,2);
\node at (1/6,1/2) {$x_j$};
\node at (1/6,1+1/2) {$x_k$};
}}
=\sum_{\substack{l}} b_{jk}^l
\raisebox{-.47cm}{
\tikz{
\draw (0,0)--(1,1);
\draw (1,0)--(0,1);
\node at (1/6,1/2) {$x_l$};
}},
$$
for and constants $b_{jk}^l$.

Move II': For any 1-box $s_i$ and 2-box $x_j$,
$$
\raisebox{-.95cm}{
\tikz{
\draw (0,0)--(1,1)--(1,2);
\draw (1,0)--(0,1)--(0,2);
\node at (1/6,1/2) {$x_j$};
\node at (-1/3,1+1/2) {$s_j$};
}}
=\sum_{\substack{k}} b(1)_{ij}^k
\raisebox{-.47cm}{
\tikz{
\draw (0,0)--(1,1);
\draw (1,0)--(0,1);
\node at (1/6,1/2) {$x_k$};
}},
$$
for and constants $b(1)_{ij}^k$. Similarly, we have other relations and constants $b(l)_{ij}^k$, $l=1,2,3,4$, when $s_i$ and $x_j$ are connected at four different boundary points.

\begin{notation}
Given bases of the 1-box space and the 2-box space, we call the coefficients arising from above moves I, I', II, II', the structure constants of 2-boxes.
\end{notation}

A Yang-Baxter relation planar algebra is not determined by the structure constants of 2-boxes, but it is determined by the structure constants of 2-boxes and the duality coefficients of the Yang-Baxter relation.

\begin{theorem}[Evaluation]\label{evaluable}
If a planar algebra is generated by its 2-box space with a Yang-Baxter relation, then it is evaluable by the type I, II, III moves of 2-boxes.
Consequently, the planar algebra is determined by the structure constants of 2-boxes and the duality coefficients of the Yang-Baxter relation.
\end{theorem}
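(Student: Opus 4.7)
The plan is to prove evaluability by combining Proposition \ref{stdform} (reduction to standard multiplication form) with a systematic ``braid-like'' normalization using the Yang-Baxter relation, together with the intrinsic type I and II moves of 2-boxes.

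\textbf{First step: reduce to standard form.} By Proposition \ref{stdform}, every planar tangle, and in particular every closed diagram, is isotopic (up to a scalar from closed circles, which evaluate by the type I$'$ move) to a standard multiplication form: a vertical stack of $n$-box shift tangles, each labeled by a single 2-box (either one of the generators or the Temperley-Lieb diagram \gra{22}), closed off at the top and bottom by caps and cups. I regard such a stack as a word $w = g_{i_1}(R_1) g_{i_2}(R_2) \cdots g_{i_k}(R_k)$, where $g_i(R)$ denotes the shift tangle placing the 2-box $R$ at strand positions $(i,i+1)$ with through strings elsewhere.

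\textbf{Second step: normalization.} The plan is to induct on the width $n$. Note that words in the $g_i(R)$ satisfy three kinds of relations that together give a ``braid-like'' calculus: (i) far-commutation $g_i(R)g_j(R')=g_j(R')g_i(R)$ for $|i-j|\ge 2$, which is just planar isotopy; (ii) the Yang-Baxter relation (type III), which replaces $g_i(R_j)g_{i+1}(R_k)g_i(R_l)$ by a linear combination of terms $g_{i+1}g_ig_{i+1}$ with suitable duality coefficients $c_{j,k,l}^{j',k',l'}$; and (iii) the merging relation from the type II move, which collapses any consecutive pair $g_i(R)g_i(R')$ into a linear sum $\sum_l b^l g_i(R_l)$. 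Using (i)--(iii), I rearrange and collapse $w$ into a normal form with bounded complexity at each position, analogous to the reduction of a braid word to a reduced expression.

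\textbf{Third step: width reduction.} Once $w$ is in normal form, the right-most strand carries at most a controlled collection of 2-boxes at position $(n-1,n)$. The bottom cap on this strand, together with the type I move (which reduces a 2-box capped on one side to a 1-box) and the type II$'$ move (which absorbs a 1-box into an adjacent 2-box and returns a 2-box), propagates the capping through the word and eliminates the right-most strand altogether. This reduces the width of the standard form by one, so by induction we arrive at the $n=0$ case, where the diagram is a scalar. Every reduction step uses only type I, II, and III moves, and the final scalar is a polynomial expression in the structure constants $b_j^i$, $b_i$, $b_{jk}^l$, $b(l)_{ij}^k$ together with the duality coefficients $c_{i,j,k}^{i',j',k'}$; hence the planar algebra is determined by these data.

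\textbf{Main obstacle.} The nontrivial point is termination of the normalization in the second step. Neither the Yang-Baxter relation nor the far-commutation decreases the number of 2-box labels, and only the type II merge actually shortens $w$. The hard part is therefore to exhibit a complexity measure on words in the $g_i(R)$ which strictly decreases under some composition of types I, II, III moves --- so that braid-like rearrangements can always be arranged to create a collision at some position, or to expose a label to capping on the right. This is precisely what makes the Yang-Baxter evaluation algorithm \emph{global} rather than local in the sense of the preceding subsection, and is the delicate combinatorial core of the argument.
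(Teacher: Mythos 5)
Your overall outline matches the paper's: reduce a closed diagram to standard multiplication form via Proposition~\ref{stdform}, then use type I, II, III moves to peel off the rightmost strand and induct on width. The gap you flag at the end is real, and it is exactly the point where the paper departs from you: the paper cites \emph{Alexander's argument} \cite{Ale23} and does not try to produce a global normal form of bounded complexity at every position. The weaker statement the paper needs is only that the word $w$ can be rewritten as a linear combination of words each containing \emph{at most one} shift of a $2$-box at position $n-1$ (the position touching the rightmost strand). That claim does have a terminating algorithm, and it is worth seeing why your stronger formulation makes termination harder than it needs to be.

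Concretely: if two shifts at position $n-1$ are separated by a subword $v$ at positions $\le n-2$, first commute the part of $v$ at positions $\le n-3$ out through the flanking $g_{n-1}$'s (far-commutation). You are left with a subword involving only position $n-2$ between the two $g_{n-1}$'s. Reduce that subword (recursively, by the same argument one position lower) to contain at most one $g_{n-2}$. If zero, the two $g_{n-1}$'s are adjacent and merge by the type~II move, decreasing the $g_{n-1}$ count. If one, apply the Yang-Baxter relation to $g_{n-1}(A)\,g_{n-2}(C)\,g_{n-1}(B)$ to obtain $\sum c\cdot g_{n-2}(A')\,g_{n-1}(C')\,g_{n-2}(B')$, which drops the $g_{n-1}$ count by one in every summand (at the cost of raising the $g_{n-2}$ count, which is harmless since you do not need to control it). The recursion bottoms out at position $1$, where the subwords are just products of $2$-boxes and reduce by type~II alone. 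The number of $g_{n-1}$'s is a strictly decreasing termination measure for the outer loop, and the inner recursions over positions $n-2, n-3, \ldots, 1$ each have their own decreasing measure. After this, the right cap hits a single $2$-box, the type~I move turns it into a $1$-box, the type~II$'$ (and type~I$'$) moves absorb or evaluate the $1$-box, and the width drops by one. Your step~3 already has this right; the missing ingredient is to replace ``bounded complexity at every position'' with ``at most one generator at position $n-1$,'' which is what makes the Alexander-style induction close.
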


We use the standard multiplication form to describe the complexity of $m$-tangles and evaluate $m$-tangles by generalized type I, II, III moves of 2-boxes.
\begin{proof}
Note that any vector is a finite linear sum of labeled tangles. By Proposition \ref{stdform}, we may assume that these tangles are standard multiplication forms.
For each diagram, when we ignore the right caps and view the Temperley-Lieb-Jones 2-boxes as generators, it is a multiplication of shifts of 2-box generators.
Similar to Alexander's argument \cite{Ale23},
applying type II and III moves,
the multiplication part could be replaced by a linear sum of multiplications of shifts of generators with only one generator on the right most. If there is a cap on the right in the standard multiplication form, then it acts on the rightmost generator. By type I move, the cap is reduced. Repeating this process, we reduce all the right caps. Therefore the vector is reduced to a linear sum of multiplications of shifts of generators.
By the above process, we can assume that there is at most one generator on the rightmost. By induction, we have that the $m$-box space is reduced to be finite dimensional, and the $0$-box space is at most one dimensional, i.e., the planar algebra is evaluable.
\end{proof}

From the above proof, we have
\begin{proposition}\label{algebragenerated}
If a planar algebra is generated by its 2-box space with a Yang-Baxter relation, then its $m$-box space, $m\geq1$, is generated by shifts of 1-boxes and 2-boxes as an algebra.
\end{proposition}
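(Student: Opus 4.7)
The plan is to derive Proposition \ref{algebragenerated} directly from the evaluation procedure established in the proof of Theorem \ref{evaluable}. The claim is not that a new reduction is needed, but rather that the terminal form produced by that reduction is already visibly a product, in the $m$-box algebra, of shifts of 1-box and 2-box generators; the task is therefore to track which kinds of generators appear at the end of the reduction.

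Concretely, I would fix $m\geq 1$ and an element $v\in \mathscr{P}_{m,\pm}$. Since $\mathscr{P}_\bullet$ is generated by its 2-box space, $v$ is a linear combination of labeled $2m$-tangles. By Proposition \ref{stdform}, each such tangle is isotopic, up to scalar factors from closed circles, to a standard multiplication form: a product of $n$-box shift tangles (some labeled by the Temperley--Lieb diagram \gra{22}) together with a collection of right caps attached on the right. Running the Alexander-style argument from the proof of Theorem \ref{evaluable}, type II moves and the Yang--Baxter (type III) relation allow me to rearrange the product so that, in each shift layer, at most one generator occupies the rightmost position while all remaining labels are pushed to the left.

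The right caps are then absorbed one by one. A cap applied to a 2-box generator is rewritten by Move I as a linear combination of 1-boxes in the same layer; a cap applied to a 1-box is rewritten by Move I$'$ as a scalar; and Move II$'$ handles the interaction between a 1-box and an adjacent 2-box when a cap is peeled off. Alternating the two operations -- concentrate a generator on the right by II and III moves, then absorb the rightmost cap -- strictly decreases the number of right caps. Once no right cap remains, every surviving term is a product, in the $m$-box algebra, of shifts of 1-box and 2-box elements, which is precisely the conclusion.

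The main point to check, rather than a substantive obstacle, is that this rearrange-and-absorb loop genuinely terminates and that no step inflates the data being tracked. Termination follows because the number of right caps is a strict monovariant under Moves I, I$'$, II$'$, while type II and Yang--Baxter moves preserve both the number of shift layers and the number of right caps. With these two bookkeeping observations recorded, the proposition is immediate from the proof of Theorem \ref{evaluable}: every labeled $2m$-tangle is reduced to a linear combination of products of shifts of 1-box and 2-box generators, and these shifts therefore generate $\mathscr{P}_{m,\pm}$ as an algebra.
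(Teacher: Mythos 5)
Your proposal is correct and follows the same route as the paper: the paper derives Proposition~\ref{algebragenerated} as an immediate corollary of the proof of Theorem~\ref{evaluable}, and you simply make explicit that the terminal form of that reduction is a product, in the $m$-box algebra, of shifts of 1-box and 2-box elements, together with a termination check via the cap count. The extra bookkeeping (strict monovariant on right caps; II and III moves preserving cap and layer counts) is a sensible way to spell out what the paper leaves implicit, but it does not constitute a different argument.
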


\subsection{Examples}\label{subsection example}
We give some examples of Yang-Baxter relation planar algebras.

(1) The first ``exotic" subfactor was constructed by Haagerup in \cite{Haa94}.
Its even part is an unshaded subfactor planar algebra with 4 dimensional 2-box space. By a direct computation, one can show that there is no non-zero solution of the parameter-independent Yang-Baxter equation in its 2-box space.
We prove that the planar algebra of the even part of the Haagerup subfactor is a Yang-Baxter relation planar algebra in \cite{LiuPen}.
The proof also works for generalized Haagerup subfactors $3^G$ \cite{Izu93}, for ordered groups $G$.

\begin{remark}
Peters gave a skein relation for the planar algebra of the Haagerup subfactor \cite{Pet10}.
\end{remark}

(2) Another example is the Birman-Murakami-Wenzl (BMW) planar algebra \cite{BirWen,Mur87}. It is generated by a three dimensional 2-boxes $\text{span}\{\gra{21},\gra{22},\gra{24}\}$.
The element $\gra{24}$ is the universal $R$ matrix for quantum groups $O(N)$ and $Sp(2N)$. The generator $\gra{24}$ satisfies the following relations,

\begin{align*}
&\text{the BMW relation:}    && \gra{24}-\gra{25}=(q-q^{-1})(\gra{21}-\gra{22}), \\
&\text{Reidemeister moves I:}   && \gra{26}=r\gra{1id}; \quad \gra{27}=r^{-1}\gra{1id},\\
&\text{Reidemeister moves II:}  && \gra{yb3}=\gra{yb4},    \\
&\text{Reidemeister moves III:}  && \gra{yb2}=\gra{yb1},  \\
&\text{circle parameter:} && \gra{r0}=\frac{r-r^{-1}}{q-q^{-1}}+1.
\end{align*}

\begin{remark}
For quantum group $Sp(2N)$,
the corresponding planar algebra has a negative circle parameter. One needs to apply the gauge transformation to get a planar algebra with a positive circle parameter.
\end{remark}

(3) The Bisch-Jones planar algebras \cite{BisJonFC} are Yang-Baxter relation planar algebras.

(4) One complex conjugate pair of Yang-Baxter relation planar algebras were discovered in \cite{LMP13}. They have the following principal graph. The two depth-two vertices are dual to each other.

$$\grc{principalgraph4}.$$

(5) All depth two subfactors planar algebras are Yang-Baxter relation planar algebras.

Given a unitary fusion category \cite{ENO}, take the direct sum of its simple objects and dual objects, denoted by $X$. Then $\mathscr{S}_n=\hom(X^n)$ forms an unshaded depth two subfactor planar algebra whose even part recovers the original fusion category.

(6) The tensor product of Yang-Baxter relation planar algebras is a Yang-Baxter relation planar algebras.

(7) A non-Yang-Baxter relation planar algebra is given by the group subgroup subfactor planar algebra $S_2\times S_3 \subset S_5$. Its 2-box space has a \emph{one way Yang-Baxter relation} \cite{Ren}.

\section{Classifications}\label{classification}
Bisch and Jones suggested the skein theoretic classification of subfactor planar algebras \cite{BisJon03}.
Thanks to Theorem \ref{evaluable}, a Yang-Baxter relation planar algebra is determined by the structure constants of 2-boxes and duality coefficients of the Yang-Baxter relation. Therefore one can ask for a classification of Yang-Baxter relation planar algebras.

The 2-box space of a planar algebra always contains the two Temperley-Lieb diagrams $\gra{21}$ and $\gra{22}$. Let us classify Yang-Baxter relation planar algebras with one more 2-box generator, so that the 2-box space is three dimensional.
We call them {\it singly-generated, Yang-Baxter relation planar algebras.}

The evaluation algorithm for Yang-Baxter relation planar algebras is global. As an advantage, the critical dimension of the local evaluation algorithm in \cite{BisJon00,BisJon03,BJL} is no longer an restriction.
We have to pay the cost while proving the consistency for the Yang-Baxter relation of the unexpected solution in Section \ref{non-self}.

Suppose $\mathscr{P}_{\bullet}$ is a non-degenerate planar algebra generated by three dimensional 2-boxes with a Yang-Baxter relation.
By Proposition \ref{algebragenerated}, we have $\dim(\mathscr{P}_{3,+})\leq 15$.
Actually any 3-box is reduced to a linear sum of the following 15 diagrams,
\begin{align*}
&\gra{1}, \gra{2}, \gra{3}, \gra{4}, \gra{5};\\
&\gra{6}, \gra{7}, \gra{8}, \gra{9}, \gra{10}, \gra{11};\\
&\gra{12}, \gra{13}, \gra{14};\\
&\gra{15}.
\end{align*}
When $\dim(\mathscr{P}_{3,+})\leq 14$, the planar algebra has a local evaluation algorithm. The subfactor planar algebras were classified in \cite{BJL}. Thus we only consider the case $\dim(\mathscr{P}_{3,+})=15$.
In this case, the first 15 diagrams forms a basis of the 3-box space.
Applying the same argument in the dual space $\mathscr{P}_{3,-}$, we have that the first 14 diagrams and $\gra{16}$ also form a basis of the 3-box space.
From this duality, one can prove that the 2-box space of an unshaded planar algebra with at most 15 dimensional 3-boxes has an Yang-Baxter relation.

Now we assume $\dim(\mathscr{P}_{3,+})=15$.
Let us set up the structure constants of 2-boxes and the duality coefficients of the Yang-Baxter relation.
Suppose $\mathscr{P}_{\bullet}$ is a non-degenerate planar algebra generated by a 2-box with a Yang-Baxter relation and $\dim(\mathscr{P}_{2,\pm})=15$.
Then $\delta\neq 0,\pm1$, otherwise the 5 Temperley-Lieb-Jones 3-boxes are linearly dependent and $\dim(\mathscr{P}_{2,\pm})<15$.
Let $\displaystyle e=\frac{1}{\delta}\gra{22}$, $P$, $Q$ be the three minimal idempotents of $\mathscr{P}_{2,+}$.
Let $x,y$ be the solution of
$$\left\{
\begin{aligned}
xtr(P)+ytr(Q)&=0 \\
xy&=-1
\end{aligned}
\right.$$
Take
\begin{align}\label{rxpyq}
R=xP+yQ.
\end{align}
Then $R$ is determined up to a $\pm$ sign.
Moreover, $R$ is uncappable, i.e.,
$$\raisebox{-.47cm}{
\tikz{
\draw (0,0)--(2/3,2/3);
\draw (2/3,2/3) arc (135:-135: 1.414/6);
\draw (0,1)--(2/3,1/3);
\node at (-1/6+1/2,1/2) {\tiny{$R$}};
}}=
\raisebox{-.47cm}{
\tikz{
\draw (0,0)--(2/3,2/3);
\draw (2/3,2/3) arc (135:-135: 1.414/6);
\draw (0,1)--(2/3,1/3);
\node at (1/2,1/2+1/6) {\tiny{$R$}};
}}=
\raisebox{-.47cm}{
\tikz{
\draw (0,0)--(2/3,2/3);
\draw (2/3,2/3) arc (135:-135: 1.414/6);
\draw (0,1)--(2/3,1/3);
\node at (1/6+1/2,1/2) {\tiny{$R$}};
}}=
\raisebox{-.47cm}{
\tikz{
\draw (0,0)--(2/3,2/3);
\draw (2/3,2/3) arc (135:-135: 1.414/6);
\draw (0,1)--(2/3,1/3);
\node at (1/2,1/2-1/6) {\tiny{$R$}};
}}=0.$$
And $R^2=aR+id-e$, where $a=x+y$, i.e.,
$$
\raisebox{-.95cm}{
\tikz{
\draw (0,0)--(1,1)--(0,2);
\draw (1,0)--(0,1)--(1,2);
\node at (-1/6+1/2,1/2) {\tiny{$R$}};
\node at (-1/6+1/2,1+1/2) {\tiny{$R$}};
}}
=a
\raisebox{-.47cm}{
\tikz{
\draw (0,0)--(1,1);
\draw (1,0)--(0,1);
\node at (-1/6+1/2,1/2) {\tiny{$R$}};
}}
+
\raisebox{-.47cm}{
\tikz{
\draw (0,0)--(0,1);
\draw (2/3,0)--(2/3,1);
}}
-\frac{1}{\delta}
\raisebox{-.47cm}{
\tikz{
\draw (0,0) arc (180:0:1/3);
\draw (0,1) arc (180:360:1/3);
}}
.$$

Recall that the Fourier transform $\mathcal{F}$ is given by the 1-click rotation.
By isotopy, we have $tr(\mathcal{F}(R)\mathcal{F}^3(R))=tr(R^2)$. Note that $tr(R^2)=tr(id-e)=\delta^2-1\neq0$,
so $\mathcal{F}(R)\mathcal{F}^3(R)=a'\mathcal{F}(R)+id-e$, for some $a'\in \mathbb{C}$, i.e.,
$$
\raisebox{-.95cm}{
\tikz{
\draw (0,0)--(1,1)--(0,2);
\draw (1,0)--(0,1)--(1,2);
\node at (1/2,+1/6+1/2) {\tiny{$R$}};
\node at (1/2,-1/6+1+1/2) {\tiny{$R$}};
}}
=a
\raisebox{-.47cm}{
\tikz{
\draw (0,0)--(1,1);
\draw (1,0)--(0,1);
\node at (1/2,1/6+1/2) {\tiny{$R$}};
}}
+
\raisebox{-.47cm}{
\tikz{
\draw (0,0)--(0,1);
\draw (2/3,0)--(2/3,1);
}}
-\frac{1}{\delta}
\raisebox{-.47cm}{
\tikz{
\draw (0,0) arc (180:0:1/3);
\draw (0,1) arc (180:360:1/3);
}}
.$$
We will deal with the two cases for $\overline{R}=\pm R$ in the next two subsections.

We have reduced the structure constants of 2-boxes to $\delta,a,a',\pm1$.
Let us define the duality coefficients for the Yang-Baxter relation for the basis $\{\gra{21}.\gra{22},\gra{23}\}$. If the triple are not $R,R,R$, then the Yang-Baxter relation is determined by the structure constants of 2-boxes.
We set up the duality coefficients for the only non-trivial Yang-Baxter relation as follows.
\begin{align*}
\gra{15}&=A\gra{1}+B\gra{2}+C(\gra{3}+\gra{4}+\gra{5})\\
&+D(\gra{6}+\gra{7}+\gra{8})+E(\gra{9}+\gra{10}+\gra{11})\\
&+F(\gra{12}+\gra{13}+\gra{14})+G\gra{16},\\
\end{align*}

\subsection{The generator is self-contragredient}
In this section, we deal with the case $R=\overline{R}$.
We eliminate the structure constants and duality coefficients by solving the polynomial equations derived from the consistency condition.
These formal parameters reduce to two parameters. Then we identify this two-parameter family of planar algebras as BMW planar algebras with a singularity which is identified as unshaded Bisch-Jones planar algebras.
These unshaded Bisch-Jones planar algebras are \emph{limits} of BMW planar algebras. The limit is defined by the convergence of the structure constants and duality coefficients of these Yang-Baxter relation planar algebras.

When $R=\overline{R}$, we have $\mathcal{F}(R)^2=a'\mathcal{F}(R)+id-e$.
So $R*R=a'R+\delta e-\frac{1}{\delta}id$.
\begin{theorem}[Classification1]\label{classification1}
Suppose $\mathscr{P}_{\bullet}$ is a non-degenerate planar algebra generated by $R$ in $\mathscr{P}_{2,+}$ with a Yang-Baxter relation, such that
$\dim(\mathscr{P}_{3,\pm})=15$,
$R$ is uncappable,
$R=\overline{R}$,
$R^2=aR+id-e$,
$\mathcal{F}(R)^2=a'\mathcal{F}(R)+id-e$,
and
\begin{align}\label{YBR1}
\gra{15}&=A\gra{1}+B\gra{2}+C(\gra{3}+\gra{4}+\gra{5})\\
&+D(\gra{6}+\gra{7}+\gra{8})+E(\gra{9}+\gra{10}+\gra{11})\\
&+F(\gra{12}+\gra{13}+\gra{14})+G\gra{16},\\
\end{align}
then

$$\left\{
  \begin{aligned}
    G&=\pm1\\
    A&=G\frac{a}{\delta}\\
    B&=-A\\
    C&=0\\
    (G&\delta^2-2\delta)D=1-Ga^2\delta\\
    E&=-GD\\
    F&=0\\
    a'&=Ga\\
  \end{aligned}
\right.
$$
\end{theorem}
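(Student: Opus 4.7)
The goal is to extract the eight stated constraints on \(A, B, C, D, E, F, G, a'\) as the compatibility conditions forced by repeatedly projecting the Yang-Baxter identity onto \(2\)-box identities, where the basis \(\{\mathrm{id}, e, R\}\) of \(\mathscr{P}_{2, \pm}\) permits coefficient matching.

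The first step is to exploit symmetry. Since \(R = \overline{R}\), the left-hand side \(\gra{15} = (1 \otimes R)(R \otimes 1)(1 \otimes R)\) is invariant under the \(180^{\circ}\) rotation of a \(3\)-box. This rotation permutes the fifteen basis diagrams and pairs them into orbits (e.g.\ \(\gra{6} \leftrightarrow \gra{8}\), \(\gra{9} \leftrightarrow \gra{11}\), and so on), which already explains the grouping in the ansatz \(C(\gra{3} + \gra{4} + \gra{5})\), \(D(\gra{6} + \gra{7} + \gra{8})\), etc. The same symmetry interchanges \(\gra{15}\) and \(\gra{16}\) up to a sign, so squaring the rotation forces \(G^{2} = 1\), giving \(G = \pm 1\).

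Next I would apply capping operations. Because \(R\) is uncappable, capping the top-right two strands of \(\gra{15}\) kills the top \(R\), so the left-hand side vanishes; that same cap reduces each of the fifteen basis diagrams on the right-hand side to a \(2\)-box which, by \(R^{2} = aR + \mathrm{id} - e\) and \(\mathcal{F}(R)^{2} = a'\mathcal{F}(R) + \mathrm{id} - e\), collapses into \(\{\mathrm{id}, e, R\}\). Matching the three coefficients against zero gives three scalar equations per cap. Repeating with caps on the bottom-right, top-left, bottom-left, and middle positions, and separately composing on either side by \(e\) or \(R\), produces the whole family of linear equations. The vanishings \(C = 0\) and \(F = 0\) come from caps whose image isolates those coefficients alone; \(B = -A\) and \(E = -GD\) drop out of the \(180^{\circ}\)-pairing; \(A = Ga/\delta\) is read off from the coefficient of the identity after the appropriate cap; and \(a' = Ga\) appears as the consistency between the two quadratic relations for \(R\) and \(\mathcal{F}(R)\), both viewed inside the collapsed \(2\)-box identity.

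The most delicate constraint is \((G\delta^{2} - 2\delta)D = 1 - Ga^{2}\delta\). I would derive this by a second-order consistency check: substitute the Yang-Baxter relation into itself inside a diagram such as \((1 \otimes R)(R \otimes 1)(1 \otimes R)(R \otimes 1)\), or compare the two ways of simplifying \(\gra{15} \cdot (1 \otimes R)\) using YB together with \(R^{2} = aR + \mathrm{id} - e\); after full collapse to a \(2\)-box, the coefficient of one basis element is precisely the expression \((G\delta^{2} - 2\delta)D - (1 - Ga^{2}\delta)\), which must vanish. The main obstacle is organizational rather than conceptual: many caps produce redundant combinations, and one has to choose a minimal, independent set of projections that decouples the unknowns in the right order — first \(G\) from rotational symmetry, then \(A, B, C, F\) from single caps, then \(D, E\) from double caps, and finally the quadratic-looking equation from a second application of YB — so that the resulting system is exactly the one stated rather than a larger, possibly overdetermined one. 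Each individual reduction is short, but the bookkeeping of which cap produces which equation is where the length of the proof lies.
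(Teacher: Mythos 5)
Your proposal has a genuine gap: the cap operations and the rotational symmetry cannot deliver the constraints $F=0$, $C=0$, $B=-A$, $A=Ga/\delta$, $E=-GD$, $a'=Ga$ the way you claim. When you add a cap to relation~(\ref{YBR1}), the left side does vanish by uncappability, but the right side collapses to a single $2$-box identity whose $\{\mathrm{id},e,R\}$-coefficients each mix several unknowns. For instance the right-cap gives
$0=A+C\delta+F-Gaa'/\delta$, $0=B\delta+2C-F/\delta+Ga$, $0=D\delta+2E+Fa+Gaa'-G/\delta$,
and the remaining cap positions produce only repeats of these by the symmetry built into the ansatz. Nothing here isolates $C$ or $F$; these three equations are only consequences of uncappability and the $2$-box multiplication and rotation rules, and they carry no information about the self-consistency of the Yang-Baxter move. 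All eight of the stated constraints ultimately require the comparison of two different ways of evaluating a diagram (the paper's $\gra{17}$) in which the YB relation is applied twice in incompatible orders. Your ``second-order consistency check'' is therefore not an addendum for a single delicate equation; it is the entire engine, and restricting it to one coefficient leaves the rest underdetermined.

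There is a second, independent gap: you treat $F=0$ as something that will drop out. It does not. The consistency equations only force $(a-F)F=-1+G^2$, which leaves open the case $F\neq 0$. The paper handles this by a separate and nontrivial argument: in the $F\neq 0$ branch one can solve for $C,D,E$, plug into the cap equation, and then exploit the fact that the generator is only determined up to sign. Replacing $R$ by $-R$ flips $a,A,B,C,F$ while leaving $\delta,D,E,G$ fixed, and comparing the two resulting cap identities forces $a=0$, contradicting $a=F\neq 0$. Nothing in your outline anticipates this case split or the sign-flip trick, so even with a complete second-order consistency analysis the argument would not close. Your $G^2=1$-from-rotation observation is correct but gives strictly less than the paper obtains from the same amount of work on the double evaluation.
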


\begin{proof}
See Appendix \ref{Appendix:classification1}
\end{proof}

\begin{corollary}
The planar algebra $\mathscr{P}_{\bullet}$ is unshaded with the relation $G\mathcal{F}(R)=R$.
\end{corollary}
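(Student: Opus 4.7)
The plan is to pin down the action of the Fourier transform $\mathcal{F}$ on the generator $R$, and then conclude that the two shadings can be canonically identified.

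First, I would show that $\mathcal{F}(R)$ is a scalar multiple of $R$. Since $\dim\mathscr{P}_{2,\pm}=3$ and the Temperley-Lieb diagrams $\gra{21},\gra{22}$ span the cappable subspace (each being sent to a nonzero 1-box by some capping), the uncappable subspace of each $\mathscr{P}_{2,\pm}$ is one-dimensional, spanned by $R$ (respectively by $\mathcal{F}(R)$). Because $\mathcal{F}$ intertwines capping tangles, $\mathcal{F}(R)$ remains uncappable, so under the natural identification of the two uncappable lines we may write $\mathcal{F}(R)=cR$ for some scalar $c$.

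Next, I would determine $c=G$ from the quadratic identities. Squaring $\mathcal{F}(R)=cR$ and using $R^{2}=aR+id-e$ together with the independent relation $\mathcal{F}(R)^{2}=a'\mathcal{F}(R)+id-e$ established just before the theorem gives
$$c^{2}aR+c^{2}(id-e)=c^{2}R^{2}=\mathcal{F}(R)^{2}=a'cR+id-e.$$
Matching coefficients yields $c^{2}=1$ and $a'=ca$. Combined with the theorem's conclusion $a'=Ga$ (and $G^{2}=1$), this forces $c=G$ whenever $a\neq 0$, so $G\mathcal{F}(R)=G^{2}R=R$.

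Finally, I would deduce unshadedness from the evaluation theorem. By Theorem \ref{evaluable}, $\mathscr{P}_{\bullet}$ is completely determined by the structure constants of its 2-boxes and the duality coefficients of the Yang-Baxter relation. The identity $G\mathcal{F}(R)=R$, together with the manifest shading symmetry of the Temperley-Lieb diagrams and the fact (read off from Theorem \ref{classification1}) that the nonzero structure constants and duality coefficients are shading-symmetric, shows that all defining data descends through the forgetful map from shaded tangles to unshaded tangles. After a gauge transformation to absorb the sign when $G=-1$, this promotes $\mathscr{P}_{\bullet}$ to an unshaded planar algebra.

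The main obstacle is the degenerate sub-case $a=0$, where the quadratic argument of the second step fixes only $c^{2}=1$ but not the sign. Here one has to dig into the Yang-Baxter relation itself: capping the identity $\gra{15}=\cdots+G\gra{16}$ along a single strand collapses both three-crossing tangles to expressions involving $R$ on one side and $\mathcal{F}(R)$ on the other, and the coefficient $G$ of $\gra{16}$ transcribes directly into the sign matching $c$ with $G$.
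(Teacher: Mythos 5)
Your proposal reaches the right conclusion and the right mechanism (a self-duality sending $R$ to $G\mathcal{F}(R)$), but it is structured differently from the paper and over-engineers the argument. The paper's proof is a pure \emph{verification}: $G\mathcal{F}(R)\in\mathscr{P}_{2,-}$ satisfies, with shading switched, the same type I, II, III moves as $R\in\mathscr{P}_{2,+}$ — the cap relations, the quadratic relation with the same coefficient $a$ (using $a'=Ga$ and $G^2=1$), and the Yang-Baxter relation with the same duality coefficients — so the assignment $R\mapsto G\mathcal{F}(R)$ extends to a symmetric self-duality, making $\mathscr{P}_{\bullet}$ unshaded with $G\mathcal{F}(R)=R$ by construction. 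You instead try to \emph{derive} the coefficient $c$ from the quadratic relations and only verify afterward. The derivation (Step 1) has a real imprecision: the uncappable lines in $\mathscr{P}_{2,+}$ and $\mathscr{P}_{2,-}$ are abstractly one-dimensional but there is no canonical isomorphism between them before the self-duality is constructed, so ``$\mathcal{F}(R)=cR$'' is only meaningful relative to a choice you have not yet specified; this is the identification the corollary is asserting exists. You are aware of this and aware of the second cost of the derivation route — it only pins $c^2=1$ when $a=0$, and you correctly patch that by looking at the coefficient $G$ in the Yang-Baxter relation. But those two steps are exactly the content of the paper's one-line verification, so the extra derivation buys nothing. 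Your Step 3 is the operative step and matches the paper's argument, though it is asserted rather than checked (the fact that replacing $R$ by $G\mathcal{F}(R)$ and switching shading reproduces the same structure constants $a$, and duality coefficients $A,\dots,G$, does need at least a brief computation using $a'=Ga$, $B=-A$, $E=-GD$). One minor error: the closing gauge-transformation remark is spurious. When $G=-1$ the relation $\mathcal{F}(R)=-R$ is a perfectly admissible rotation relation in an unshaded planar algebra, and the corollary's statement explicitly keeps the $G$; no renormalization is needed. (The gauge transformation in this paper is used elsewhere, to flip the sign of the circle parameter, e.g. for the $Sp(2N)$ BMW case.)
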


\begin{proof}
Note that $G\mathcal{F}(R)$ in $\mathscr{P}_{2,-}$ satisfies the same type I, II, III moves as $R$ by switching shading. Thus the identification between $R$ and $G\mathcal{F}(R)$ extends to a symmetric self-duality, i.e., a planar algebra isomorphism $\phi_{\pm}$ from $\mathscr{P}_{m,\pm}$ to the dual $\mathscr{P}_{m,\mp}$, such that $\phi_{\pm}\phi_{\mp}$ is the identity, (similar to the case in Theorem 3.10 \cite{LMP13}). Therefore $\mathscr{P}_{\bullet}$ is unshaded by identifying $G\mathcal{F}(R)$ as $R$.
\end{proof}

Note that the parameterized BMW planar algebra is generated by a self-contragredient braid satisfying type I, II, III Reidemester moves and the BMW relation (example (2) in Section \ref{subsection example}). Let us find such a braid which satisfies these relations in $\mathscr{P}_{\bullet}$. Then $\mathscr{P}_{\bullet}$ is BMW.

Let $z_1$, $z_2$ be the solution of
\begin{align}
\left\{\label{sumproduct}
\begin{aligned}
z_1+z_2G&=-a \\
z_1z_2G&=-E
\end{aligned}
\right.
\end{align}
For $a_3\neq 0$, take $a_1=z_1a_3$, $a_2=z_2a_3$;
\begin{align*}
R_U&=a_1\gra{21}+a_2\gra{22}+a_3\gra{23};
\end{align*}

\begin{lemma}[bi-invertible]\label{biunitary}
The element $R_U$ satisfies
$$\mathcal{F}(R_U)R_U=G(1-E)a_3^2\gra{21}.$$
\end{lemma}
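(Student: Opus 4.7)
The plan is to expand $\mathcal{F}(R_U)R_U$ directly in the basis $\{\gra{21},\gra{22},\gra{23}\}$ and show that the two equations defining $z_1,z_2$ are exactly what kill the coefficients of $\gra{23}$ and $\gra{22}$, leaving a scalar multiple of the identity.

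First I would use the unshaded structure identified in the corollary to Theorem \ref{classification1}: the relation $G\mathcal{F}(R)=R$, combined with $G=\pm1$, gives $\mathcal{F}(R)=GR$. Together with the evident rotations $\mathcal{F}(\gra{21})=\gra{22}$ and $\mathcal{F}(\gra{22})=\gra{21}$, this yields
$$\mathcal{F}(R_U)=a_2\gra{21}+a_1\gra{22}+Ga_3\gra{23}.$$
Next, I would multiply using the 2-box table. Beyond the standard Temperley--Lieb products, the only inputs needed are $\gra{22}\cdot\gra{23}=\gra{23}\cdot\gra{22}=0$ (uncappability of $R$) and $\gra{23}\cdot\gra{23}=a\gra{23}+\gra{21}-\tfrac{1}{\delta}\gra{22}$. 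Expanding and collecting gives
\begin{align*}
\mathcal{F}(R_U)R_U &= (a_1a_2+Ga_3^2)\gra{21}
+ \Bigl(a_1^2+a_2^2+\delta a_1a_2-\tfrac{Ga_3^2}{\delta}\Bigr)\gra{22} \\
&\quad + a_3\bigl(Ga_1+a_2+Gaa_3\bigr)\gra{23}.
\end{align*}

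Substituting $a_i=z_ia_3$ reduces the problem to three scalar checks. (i) The coefficient of $\gra{23}$ factors as $a_3^2(Gz_1+z_2+Ga)$, which vanishes iff $z_1+Gz_2=-a$, i.e.\ the first defining equation. (ii) The coefficient of $\gra{21}$ is $(z_1z_2+G)a_3^2$, which equals $G(1-E)a_3^2$ iff $z_1z_2G=-E$ (using $G^2=1$), which is the second defining equation.

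The only nontrivial step, and the main obstacle, is (iii): checking that the coefficient of $\gra{22}$ vanishes. Squaring $z_1+Gz_2=-a$ gives $z_1^2+z_2^2=a^2-2Gz_1z_2=a^2+2E$, so the condition reduces to the identity
$$a^2+E(2-\delta G)=\tfrac{G}{\delta}.$$
This is no longer formal — it genuinely uses the classification data. Substituting $E=-GD$ with $D=\dfrac{1-Ga^2\delta}{\delta(G\delta-2)}$ from Theorem \ref{classification1} and simplifying with $G^2=1$ yields $E(2-\delta G)=(G-a^2\delta)/\delta$, from which the required identity follows. This completes the computation, showing that with $z_1,z_2$ as specified the product $\mathcal{F}(R_U)R_U$ collapses onto $G(1-E)a_3^2\gra{21}$.
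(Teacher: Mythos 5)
Your proof is correct and follows essentially the same route as the paper: expand $\mathcal{F}(R_U)R_U$ in the basis $\{\gra{21},\gra{22},\gra{23}\}$ using the 2-box multiplication rules (uncappability, $R^2=aR+\text{id}-e$), and kill the $\gra{23}$ and $\gra{22}$ coefficients by invoking the defining equations (\ref{sumproduct}) together with $E=-GD$ and $(G\delta^2-2\delta)D=1-Ga^2\delta$. Your explicit verification of the $\gra{22}$ coefficient is in fact more careful than the paper's intermediate line $(-a)^2+(\delta-2G)(-E)-\tfrac{G}{\delta}$, which should read $(-a)^2+E(2-G\delta)-\tfrac{G}{\delta}$ (the two agree only when $G=1$), but the final conclusion is the same.
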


\begin{proof}
By Equation (\ref{sumproduct}), $E=-GD$ and $(G\delta^2-2\delta)D=1-Ga^2\delta$, we have
\begin{align*}
\mathcal{F}(R_U)R_U
=&(a_1a_2+a_3^2G)\gra{21}+(a_1^2+a_2^2+a_1a_2\delta-a_3^2G\frac{1}{\delta})\gra{22}+(a_1Ga_3+a_2a_3+a_3^2Ga)\gra{23}\\
=&(a_1a_2+a_3^2G)\gra{21}+((-a)^2+(\delta-2G)(-E)-\frac{G}{\delta})a_3^2\gra{22}+(-aG+Ga)\gra{23}\\
=&(a_1a_2+a_3^2G)\gra{21}
\end{align*}
\end{proof}

\begin{lemma}[Yang-Baxter equation]\label{solution of YBE 1} We identify $R_U$ in the 3-box space as $R_U\otimes 1$, then
$$R_U(1\otimes R_U)R_U=(1\otimes R_U)R_U(1\otimes R_U).$$
\end{lemma}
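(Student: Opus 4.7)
The plan is to verify the identity by direct expansion in the fixed 15-element basis of $\mathscr{P}_{3,+}$ from Section \ref{classification}, applying the 2-box structure constants together with the Yang-Baxter relation of Theorem \ref{classification1}. Writing $R_U=a_1 X_1+a_2 X_2+a_3 X_3$ with $X_1=\gra{21}$, $X_2=\gra{22}$, $X_3=R=\gra{23}$, trilinearity gives
$$R_U(1\otimes R_U)R_U=\sum_{i,j,k=1}^{3} a_i a_j a_k\,(X_i\otimes 1)(1\otimes X_j)(X_k\otimes 1)$$
and analogously for the right-hand side, producing $27$ triple products on each side which I would organize by the number of $R$'s they contain.

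First I would reduce the triple products containing at most two $R$'s. When a factor equals $X_1=\gra{21}$ it simply drops out; when a factor equals $X_2=\gra{22}$ it contributes cups and caps that act trivially on adjacent $R$'s by the uncappability of $R$; two adjacent $R$'s in the same tensor position are absorbed by $R^2=aR+\text{id}-e$; and an $R$ flanked by two Jones projections is reduced by $\mathcal{F}(R)^2=Ga\,\mathcal{F}(R)+\text{id}-e$, using $a'=Ga$ from Theorem \ref{classification1}. Combined with the $\tau_1\leftrightarrow\tau_2$ exchange symmetry between the two sides and the self-duality $G\mathcal{F}(R)=R$ (corollary to Theorem \ref{classification1}), most of these contributions cancel in pairs, and the residue is a polynomial in $(a_1,a_2,a_3)$ with coefficients involving $a,\delta,D,E$.

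Next I would apply the Yang-Baxter relation of Theorem \ref{classification1} to the unique all-$R$ triple product on the LHS, rewriting $\gra{15}$ as $A\gra{1}+B\gra{2}+D(\gra{6}+\gra{7}+\gra{8})+E(\gra{9}+\gra{10}+\gra{11})+G\gra{16}$, while the all-$R$ contribution to the RHS is $a_3^3\gra{16}$. Collecting all contributions in the basis $\{\gra{1},\ldots,\gra{14},\gra{16}\}$, the coefficient of $\gra{16}$ is $a_3^3(G-1)$, which vanishes when $G=1$ and is absorbed into a gauge when $G=-1$. After substituting the explicit values $A=-B=Ga/\delta$, $C=F=0$, $E=-GD$, and $(G\delta^2-2\delta)D=1-Ga^2\delta$, the remaining coefficient equations collapse to scalar multiples of the two polynomials $z_1+G z_2+a$ and $G z_1 z_2+E$, where $z_i=a_i/a_3$. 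These vanish by the defining equations (\ref{sumproduct}), completing the verification.

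The main obstacle is the combinatorial bookkeeping: although each individual reduction is mechanical, tracking $54$ triple products through their basis decompositions and confirming that the $15$ resulting coefficient conditions reduce to precisely the two quadratic constraints on $(z_1,z_2)$ requires careful organization. The conceptual point that makes this manageable is that the Yang-Baxter relation of Theorem \ref{classification1} already encodes how $\gra{15}$ and $\gra{16}$ differ, so the remaining work only concerns the cancellation of ``correction'' terms generated by mixing $R$ with the Temperley-Lieb generators, and these cancel exactly under the imposed quadratic conditions on $z_1,z_2$.
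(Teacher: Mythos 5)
Your overall strategy—expanding both triple products trilinearly in the $15$-element basis, reducing the mixed $R$/Temperley--Lieb terms with the move I--II relations, applying the Yang--Baxter relation to the all-$R$ term, and comparing coefficients—is exactly the approach the paper takes in Appendix~\ref{Appendix:solution of YBE 1}. However, there is a concrete error in the way you account for the all-$R$ contributions, and it matters.

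You claim the coefficient of $\gra{16}$ after collecting terms is $a_3^3(G-1)$, and you propose to ``absorb into a gauge'' the residual $-2a_3^3$ when $G=-1$. This cannot work: the lemma asserts an exact identity of $3$-boxes, and once you express both sides in a fixed basis, every coefficient must literally agree—there is no gauge freedom to invoke mid-verification. The actual resolution is that your expansion
$$\sum_{i,j,k} a_i a_j a_k\,(X_i\otimes 1)(1\otimes X_j)(X_k\otimes 1)$$
is missing factors of $G$ that arise whenever $R=\gra{23}$ appears in a shifted tensor slot: the shift rotates the $2$-box relative to the drawing convention used for the basis $\gra{1},\dots,\gra{16}$, so $R$ must be replaced by $\mathcal{F}(R)=GR$ in those slots. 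In the paper's computation these factors appear as, e.g., $a_1Ga_3a_1\gra{9}$ and $a_3Ga_3a_3\gra{15}$ on the left, and $Ga_3a_3Ga_3\gra{16}$ on the right. Equation~(\ref{gg15}) is then $G^2a_3^3=G^2a_3^3$—a tautology, so the $\gra{16}$ coefficients match with no leftover $(G-1)$ term and no gauge is needed. The analogous $G$ factors thread through equations (\ref{gg14})--(\ref{gg1}), so you must carry them throughout, not only in the all-$R$ term. With them in place, the reduction proceeds as you describe, except that (\ref{gg2}) is not merely a scalar multiple of the two quadratics $z_1+Gz_2+a$ and $Gz_1z_2+E$; it requires a separate cubic computation in $z_1,z_2,\delta,D$ using $(G\delta^2-2\delta)D=1-Ga^2\delta$, after which (\ref{gg1}) follows from (\ref{gg2}) via $B=-A$.
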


\begin{proof}
See Appendix \ref{Appendix:solution of YBE 1}
\end{proof}

\begin{theorem}
The relations for $R$ in Theorem \ref{classification1} are consistent.
The planar algebra given by the generator and relations is BMW when $E\neq1$; Bisch-Jones when $E=1$.
\end{theorem}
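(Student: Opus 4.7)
The plan is to identify the planar algebra defined by the generator $R$ and the relations of Theorem \ref{classification1} with a known planar algebra. Since BMW and Bisch--Jones are both known to be consistent, this will yield consistency of the relations as an automatic corollary of the identification.

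Assume first that $E \ne 1$. I would use $R_U = a_1\gra{21} + a_2\gra{22} + a_3\gra{23}$ as a candidate BMW braid, choosing the free scale $a_3$ so that $(1-E)a_3^2 = 1$. Then Lemma \ref{biunitary} gives $\mathcal{F}(R_U) R_U = G\gra{21}$, so $R_U^{-1} = G\mathcal{F}(R_U)$; this is precisely Reidemeister move~II. Lemma \ref{solution of YBE 1} supplies Reidemeister move~III. For Reidemeister~I, I would compute the right-cap of $R_U$ using that $R = \gra{23}$ is uncappable; the closure becomes a scalar $r$ times $\gra{1id}$, with $r$ an explicit function of $a_1, a_2, \delta$. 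For the BMW skein relation, I would expand $R_U - R_U^{-1} = R_U - G\mathcal{F}(R_U)$ in the basis $\{\gra{21},\gra{22},\gra{23}\}$; the $\gra{23}$-coefficient cancels because $G^2 = 1$, and the remaining terms yield $(q - q^{-1})(\gra{21} - \gra{22})$, possibly after the gauge transformation when $G = -1$. One then checks that the extracted pair $(q,r)$ obeys $\delta = (r - r^{-1})/(q - q^{-1}) + 1$, the BMW circle-parameter constraint. This produces a surjective planar algebra map from BMW at $(q,r)$ onto $\mathscr{P}_\bullet$; since $R$ is recovered from $R_U$ and $\mathcal{F}(R_U)$ by an invertible linear combination in the 2-box space, the map is an isomorphism, and consistency follows from the established consistency of BMW \cite{BirWen,Mur87}.

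In the degenerate case $E = 1$, Lemma \ref{biunitary} collapses to $\mathcal{F}(R_U) R_U = 0$, so $R_U$ is no longer invertible and the BMW identification breaks down. I would instead show, using Theorem \ref{classification1} specialized to $E = 1$, that $\mathscr{P}_{2,+}$ carries a pair of commuting biprojections whose joint structure is exactly the data of an unshaded Bisch--Jones (Fuss--Catalan) planar algebra \cite{BisJonFC}. Identifying $R$ and its Yang--Baxter relation with the corresponding generator and relation in Bisch--Jones then gives the isomorphism and hence consistency in this case too.

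The main obstacle I anticipate is the Bisch--Jones identification in the degenerate case. Unlike in the BMW regime there is no invertible braid to anchor the identification, so one must extract the biprojection pair directly from the coefficients $A, B, C, D, E, F, G$ of Theorem \ref{classification1} at $E = 1$ and verify by hand that their Yang--Baxter duality coefficients agree with those computed in Bisch--Jones. A secondary concern is to ensure that no intermediate one-parameter family bridges the BMW and Bisch--Jones regimes; this should follow from the normalization $(G\delta^2 - 2\delta)D = 1 - Ga^2\delta$ in Theorem \ref{classification1}, which degenerates in a controlled way precisely at $E = 1$ and leaves no room for a spurious third family.
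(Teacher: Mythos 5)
Your plan for the non-degenerate case matches the paper's proof quite closely: normalize $R_U$ so that Lemma~\ref{biunitary} makes $\mathcal{F}(R_U)R_U$ a scalar multiple of the identity, invoke Lemma~\ref{solution of YBE 1} for Reidemeister~III, read off the BMW skein relation from $R_U - G\mathcal{F}(R_U)$, and conclude $\mathscr{P}_\bullet$ is BMW (from $O(N)$ when $G=1$, from $Sp(2N)$ when $G=-1$). The one technical wrinkle is your normalization $(1-E)a_3^2 = 1$ versus the paper's $a_3^2 = \tfrac{1}{G(1-E)}$; yours gives $\mathcal{F}(R_U)R_U = G\cdot\mathrm{id}$ rather than $\mathrm{id}$, so the inverse carries an extra sign when $G=-1$, but this is cosmetic and absorbed by the gauge transformation you already allow for.

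Where your plan diverges is the degenerate case $E=1$. You correctly observe that $\mathcal{F}(R_U)R_U = 0$ kills the braid interpretation, but then you abandon $R_U$ and propose to extract ``a pair of commuting biprojections'' directly from the coefficient system of Theorem~\ref{classification1} — which you yourself flag as the main obstacle. The paper avoids this entirely: it keeps $R_U$ and shows that with the specific choice $z_1 = -x$, $z_2 = -Gy$ (where $x,y$ come from $R = xP + yQ$), $R_U$ collapses to the scalar multiple $(y-x)Q$ of one of the three minimal $2$-box projections. Then Lemmas~\ref{biunitary} and~\ref{solution of YBE 1} — the \emph{same} lemmas used in the BMW regime — degenerate into $\mathcal{F}(Q)Q = 0$ and $Q(1\otimes Q)Q = (1\otimes Q)Q(1\otimes Q)$, together with $tr(P) = G\delta - 1$, $tr(Q) = \delta^2 - G\delta$, and these are verified to be exactly the structure constants of $id\otimes(id - e)$ in the unshaded Bisch--Jones planar algebra with parameters $(\delta_a,\delta_a)$, $\delta_a^2 = \delta$. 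So the ``obstacle'' you anticipate is not there: there is no need to re-derive biprojections from the $A,\dots,G$ coefficients, because the $R_U$-based machinery of the non-degenerate case, pushed to its singular limit, already produces the Bisch--Jones generator and its defining relations. This also handles your secondary concern — the normalization constraint $(G\delta^2-2\delta)D = 1-Ga^2\delta$ combined with $E=-GD$ forces $\delta^2 - (2+a^2)\delta G + 1 = 0$, i.e. $(\delta - x^2G)(\delta - y^2G) = 0$, which is precisely what makes $R_U$ proportional to $Q$; there is no room for a third family because the factorization is exact.
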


(The dimension of the 3-box space of Bisch-Jones planar algebras is at most 12. All BMW subfactor planar algebras are listed in Section 2.4 in \cite{BJL}, based on the results in \cite{Wen90,Saw95}.)

\begin{proof}

When $E\neq 1$, let us take $a_3$ to be a square root of $\frac{1}{G(1-E)}$.
Then $\mathcal{F}(R_U)R_U=id$ and $R_U(1\otimes R_U)R_U=(1\otimes R_U)R_U(1\otimes R_U).$ In this case, when $G=1$, we have $R_U-\mathcal{F}(R_U)=(a_1-a_2)(\gra{21}-\gra{22})$, so $\mathscr{P}_{\bullet}$ is BMW from $O(N)$.
When $G=-1$, we have $R_U+\mathcal{F}(R_U)=(a_1+a_2)(\gra{21}+\gra{22})$, so $\mathscr{P}_{\bullet}$ is BMW from $Sp(2N)$.
Consequently the relation for $R$ is consistent.

When $E=1$, recall that
$(G\delta^2-2\delta)D=1-Ga^2\delta$ and $E=-GD$, we have
$$\delta^2-(2+a^2) \delta G+1=0.$$
Recall that $R=xP+yQ$ (\ref{rxpyq}) and
\begin{align*}
\left\{
\begin{aligned}
x+y&=a \\
xy&=-1,
\end{aligned}
\right.
\end{align*}
so $$(\delta-x^2G)(\delta-y^2G)=0.$$
Without loss of generality, we assume that $y^2=G\delta$. Then $xG\delta=-y$. Note that
\begin{align}\label{q1}
\left\{
\begin{aligned}
xtr(P)+ytr(Q)&=0\\
tr(P)+tr(Q)&=\delta^2-1,
\end{aligned}
\right.
\end{align}
so
$$\left\{
\begin{aligned}
tr(P)&=G\delta-1\\
tr(Q)&=\delta^2-G\delta
\end{aligned}
\right.$$

Recall that $z_1$, $z_2$ are the solution of
\begin{align*}
\left\{
\begin{aligned}
z_1+z_2G&=-a \\
z_1z_2G&=-1
\end{aligned}
\right..
\end{align*}
Let us take $z_1=-x$, $z_2=-Gy$.
Then
\begin{align*}
R_U=&(-x-Gy\delta)e+(-x+x)P+(-x+y)Q \\
=&(y-x)Q.
\end{align*}
Note that $R_U\neq 0$, so $y-x\neq0$.
By Lemma \ref{biunitary}, we have
\begin{align}
F(Q)Q=0 \label{q2}
\end{align}
By Lemma \ref{solution of YBE 1}, we have
\begin{align}
Q(1\otimes Q)Q=(1\otimes Q)Q(1\otimes Q). \label{q3}
\end{align}

Observe that the type I, II, III moves of $Q$ are determined by Equation (\ref{q1}), (\ref{q2}), (\ref{q3}). Moreover,
the relation is the same as that of the 2-box $id\otimes(id-e)$ in the Bisch-Jones planar algebra with parameters $(\delta_a,\delta_a)$, where $\delta_a$ is a square root of $\delta$.
Therefore $\mathscr{P}_{\bullet}$ is Bisch-Jones and $\dim(\mathscr{P}_3)\leq 12$.
\end{proof}

\begin{remark}
The Bisch-Jones planar algebra with parameters $(\delta_a,\delta_a)$ is unshaded. It is a limit of BMW planar algebras as in the above proof.
\end{remark}

Recall that $R$ is determined up to a $\pm$ sign. However, the duality coefficients $D$, $E$ and $G$ in the Yang-Baxter relation \ref{YBR1} are independent of the choice of $\pm$. So they are invariants of the planar algebra.
Moreover, the condition $E=1$ distinguishes BMW planar algebras and Bisch-Jones planar algebras. Furthermore, the value of $G=\pm 1$ distinguishes $O(N)$ and $Sp(2N)$ for BMW; distinguishes the two unshaded Bisch-Jones planar algebras.

When $\delta\neq2G$, we have $\displaystyle E=\frac{a^2\delta-1}{G\delta^2-2\delta}$. Then the planar algebra $\mathscr{P}_{\bullet}$ is uniquely determined by $a$, $\delta$, $G$. Note that $a$, $\delta$ are derived from the traces of the one 1-box and two 2-box minimal idempotents. Thus we can distinguish BMW planar algebras and Bisch-Jones planar algebras by the trace.

When $\delta=2G$, we have $a^2=\frac{1}{2}$. Up to the choice of $\pm R$, $a$ is unique. In this case $E$ is a free parameter. When $\delta=2$, the planar algebra parameterized by $E$ is BMW planar algebras subject to $r=q$.
We cannot distinguish BMW planar algebras and Bisch-Jones planar algebras by $\delta$ and $a$ in this case. The extended $D$ subfactor planar algebra is both BMW and Bisch-Jones.
The case $\delta=-2$ reduces to the case $\delta=2$. Precisely, by the gauge transformation, the trace on $m$-boxes $tr_m$ is replaced by $(-1)^m tr_m$. In particular, we can change $\delta,a$ to $-\delta,a$.

\subsection{The generator is non-self-contragredient}\label{non-self}
In this section, we deal with the case $R=-\overline{R}$. We eliminate the structure constants and duality coefficients by solving the polynomial equations derived from the consistency condition.
It is unexpected that the circle parameter $\delta$ survives after solving several equations.
Only two subfactor planar algebra were known in this family, the group subfactor planar algebra $\mathbb{Z}_3$ and the example (4) in Section \ref{subsection example}.
One has to construct these (subfactor) planar algebras by skein theory.
We prove the consistency of this parameterized relation in Section \ref{subsection consistency} and construct a new parameterized planar algebra $\mathscr{C}_{\bullet}$. We find out all values of the parameter for which $\mathscr{C}_{\bullet}$ has positivity and constructed a sequence of subfactor planar algebras in Section \ref{subsection positivity}, based on the results in Sections \ref{subsection matrix units} and \ref{subsection trace formula}.
The algebraic presentation of $\mathscr{C}_{\bullet}$ is given in the Appendix \ref{Appendix:algebraic presentation}.

When $R=-\overline{R}$,
we have $R^2=\overline{R^2}=a\overline{R}+id-e=-aR+id-e$.
So $a=0$ and $R^2=id-e$.
Similarly we have $a'=0$ and $\mathcal{F}(R)^2=-id+e$.
So $\displaystyle R*R=-\delta e+\frac{1}{\delta}id$.

\begin{theorem}[{\bf Classification 2}]\label{class-1}
Suppose $\mathscr{P}_{\bullet}$ is a non-degenerate planar algebra generated by $R$ in $\mathscr{P}_{2,+}$ with a Yang-Baxter relation, $\dim(\mathscr{P}_{3,\pm})=15$,
$R$ is uncappable,
$R=-\overline{R}$
$R^2=id-e$,
$\mathcal{F}(R)^2=-id+e$, and
\begin{align*}
\gra{15}&=A\gra{1}+B\gra{2}+C(\gra{3}+\gra{4}+\gra{5})\\
&+D(\gra{6}+\gra{7}+\gra{8})+E(\gra{9}+\gra{10}+\gra{11})\\
&+F(\gra{12}+\gra{13}+\gra{14})+G\gra{16}.\\
\end{align*}
Then
$$\left\{
  \begin{aligned}
    G&=\pm i\\
    A&=0\\
    B&=0\\
    C&=0\\
    D&=-\frac{1}{G\delta^2}\\
    E&=-\frac{1}{\delta^2}\\
    F&=0\\
  \end{aligned}
\right.$$
\end{theorem}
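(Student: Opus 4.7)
The plan is to mirror the strategy already used for the proof of Theorem \ref{classification1} (which is deferred to Appendix \ref{Appendix:classification1}). I would evaluate one carefully chosen 3-box, namely the diagram $\gra{17}$ obtained by stacking three copies of $R$ in the Yang-Baxter configuration, in two different ways: once by reducing its upper half via the Yang-Baxter relation $\gra{15} = A\gra{1} + \cdots + G\gra{16}$, and once by reducing its lower half via the dual reading of the same relation. In each case one then uses the three structural relations currently in force, $R^2 = id - e$, $\mathcal{F}(R)^2 = -id + e$, and the uncappability of $R$, to reduce everything to a linear combination of the fifteen basis 3-boxes enumerated in Section \ref{classification}. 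Equating coefficients of matching basis elements in the two expressions yields a system of polynomial equations in $A, B, C, D, E, F, G$ and $\delta$.

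The crucial departure from the self-contragredient case is the sign flip $\mathcal{F}(R)^2 = -id + e$ (rather than $+id - e$), together with the vanishing $a = a' = 0$ that was derived in the paragraph preceding the theorem. This flip has two immediate consequences when one writes down the analogues of the seven equations labelled $\gra{1}$--$\gra{16}$ in the Classification 1 computation. First, the $\gra{16}$-equation $(a-F)G = GF + G^2 a'$ collapses to $-FG = GF$, forcing $F = 0$. Second, the $\gra{13}$-equation that previously gave $-1 + G^2 = 0$ now has its constant term flipped to give $+1 + G^2 = 0$, so $G^2 = -1$ and $G = \pm i$.

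With $F = 0$ and $G^2 = -1$ established, the remaining equations decouple cleanly. The coefficients of $\gra{1}$ and $\gra{2}$ have no constant source once $a = a' = F = 0$, giving $A = 0$ and $B = 0$. The coefficient of $\gra{3}$ then reads $0 = D + GE$, and the surviving equations from the $\gra{6}$--$\gra{11}$ rows pin down $D = -1/(G\delta^2)$ and $E = -1/\delta^2$; one checks $D + GE = 0$ directly using $G^{-1} = -G$. Finally, the $\gra{5}$-equation together with the rightward cap of the Yang-Baxter relation (i.e.\ the analogue of the closure computation at the end of the Classification 1 proof) forces $C = 0$ and verifies the solution is consistent with uncappability.

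The main obstacle, as in Classification 1, is bookkeeping rather than conceptual content: one must carefully track every sign induced by $R = -\overline{R}$ (so that $\mathcal{F}^2(R) = -R$) and by the sign change in $\mathcal{F}(R)^2$ throughout the expansion of $\gra{17}$. Whenever the Classification 1 computation invoked $R \cdot R = aR + id - e$ or $\mathcal{F}(R)^2 = a'\mathcal{F}(R) + id - e$, one must here substitute the relation with the flipped sign and with the $a, a'$ terms dropped. Once the sign accounting is done correctly, the algebraic solving is routine and the system admits the unique solution stated. The far deeper question of whether these formal parameters correspond to an actually consistent planar algebra, and for which values of $\delta$ positivity holds, is addressed separately in Section \ref{subsection consistency} and Section \ref{subsection positivity}.
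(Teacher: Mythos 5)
Your overall strategy is exactly the paper's: evaluate the 3-box $\gra{17}$ once via the Yang-Baxter relation on one internal $\gra{15}$ and once on $\gra{16}$, expand both using $R^2 = id - e$, $\mathcal{F}(R)^2 = -id + e$, and uncappability, then equate coefficients of the fifteen basis 3-boxes. Your two key structural observations are also right: the $\gra{16}$ row yields $FG = -GF$ and hence $F=0$, and the $\gra{13}$ row has its constant term flipped to $-1 - G^2 = 0$, giving $G^2 = -1$ rather than $G^2 = 1$.

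However, the middle of your sketch misattributes which rows determine which parameters, and a careful executor would stall. With $F=0$ forcing the entire left-hand side $F\gra{15}$ to vanish, the $\gra{1}$ and $\gra{2}$ rows read $0 = -E + G^2/\delta^2$ and $0 = 1/\delta^2 + GD$; these determine $E = -1/\delta^2$ and $D = -1/(G\delta^2)$, not $A$ and $B$. Conversely it is the $\gra{10}$ and $\gra{7}$ rows ($0 = -A$, $0 = -GB$) that kill $A$ and $B$, and it is the $\gra{6},\gra{8},\gra{9},\gra{11}$ rows ($0 = C + GF$, etc., with $F=0$) that give $C=0$; the $\gra{3}$ and $\gra{5}$ rows only supply the consistency check $D + GE = 0$, which you verify correctly at the end. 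You have essentially swapped the roles of the two groups of equations. Also, the right-cap closure computation you invoke at the end of your fourth paragraph has no counterpart here: in the Classification 1 proof that step is specific to the $F \neq 0$ subcase, which in the present setting is ruled out immediately by the $\gra{16}$ row, so nothing beyond straight coefficient comparison is needed.

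One further caution: the expansion of $\gra{17}$ in the $R = -\overline{R}$ case differs from the Classification 1 expansion by sign flips in essentially every line, not just in the constant appearing in the $\gra{13}$ row; you cannot safely read off the Classification 2 system by formally substituting $a = a' = 0$ into the Classification 1 equations. Your $\gra{16}$ equation happens to agree after that substitution, but this is accidental and must be re-derived from the actual expansion, tracking the extra minus signs that come from $\overline{R} = -R$ (contragredient) and from $\mathcal{F}(R)^2 = -id + e$ (rotation-squared).
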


\begin{proof}
See Appendix \ref{Appendix:class-1}.
\end{proof}

\begin{corollary}\label{Cor:FourierRelation}
The planar algebra $\mathscr{P}_{\bullet}$ is unshaded with the relation $G\mathcal{F}(R)=R$.
\end{corollary}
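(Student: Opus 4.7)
The plan is to upgrade the classification in Theorem \ref{class-1} into a symmetric self-duality between the two shadings: I will show that the element $G\mathcal{F}(R)\in\mathscr{P}_{2,-}$, viewed as a generator of the oppositely-shaded planar algebra, satisfies exactly the same list of defining relations that $R\in\mathscr{P}_{2,+}$ satisfies. Once this is done, the universal property of the presentation by generators and relations extends the assignment $R\mapsto G\mathcal{F}(R)$ to a planar algebra isomorphism $\phi_+:\mathscr{P}_{m,+}\to\mathscr{P}_{m,-}$; the same construction with shadings reversed yields $\phi_-$, and $\phi_+\phi_-=\mathrm{id}$ holds because $(G\mathcal{F})^2=G^2\mathcal{F}^2=-\mathcal{F}^2$ acts as the identity on $R$ by $\mathcal{F}^2(R)=-R$. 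This is the symmetric self-duality used in Theorem 3.10 of \cite{LMP13}, and it identifies $\mathscr{P}_\bullet$ with an unshaded planar algebra under the relation $R=G\mathcal{F}(R)$.

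The concrete verifications use $G^2=-1$ (since $G=\pm i$ by Theorem \ref{class-1}) together with the identities $R^2=\mathrm{id}-e$, $\mathcal{F}(R)^2=-\mathrm{id}+e$, and $\mathcal{F}^2(R)=-R$ recalled in the paragraph preceding the theorem. First, $G\mathcal{F}(R)$ is uncappable because the four cap tangles applied to $\mathcal{F}(R)$ are one-click rotations of the four cap tangles applied to $R$, all of which vanish by hypothesis. Second, the type II move transports as $(G\mathcal{F}(R))^2=G^2\mathcal{F}(R)^2=(-1)(-\mathrm{id}+e)=\mathrm{id}-e$. Third, the self-negative-contragredience survives, since $\overline{G\mathcal{F}(R)}=G\mathcal{F}^3(R)=G\mathcal{F}(\mathcal{F}^2(R))=-G\mathcal{F}(R)$. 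Fourth, the Fourier-dual type II becomes $\mathcal{F}(G\mathcal{F}(R))^2=G^2\mathcal{F}^2(R)^2=-\overline{R^2}=-(\mathrm{id}-e)=-\mathrm{id}+e$, matching the original hypothesis on $\mathcal{F}(R)^2$ after the shading switch.

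For the Yang-Baxter relation itself I would avoid a direct diagrammatic rotation and argue by uniqueness: once the four items above are in place, $G\mathcal{F}(R)\in\mathscr{P}_{2,-}$ is a non-degenerate, uncappable, self-negative-contragredient $2$-box whose generated planar algebra still has $15$-dimensional $3$-box spaces, so Theorem \ref{class-1} applies verbatim in the opposite shading and forces its duality coefficients to be the same tuple $(0,0,0,-G^{-1}\delta^{-2},-\delta^{-2},0,G)$ as those of $R$. The only delicate point is ensuring that the seventh coefficient $G=\pm i$ (not just $G^2$) is preserved under the substitution $R\mapsto G\mathcal{F}(R)$; this is the main, and really the only nontrivial, obstacle, and it is handled by tracking the sign in a single evaluation of the Yang-Baxter tangle, using $\mathcal{F}^2(R)=-R$ to pin down the sign. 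With this in hand, $\phi_+$ and $\phi_-$ are well defined and mutually inverse, and the corollary follows.
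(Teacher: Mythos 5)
Your overall strategy---show that $G\mathcal{F}(R)\in\mathscr{P}_{2,-}$ satisfies the same defining relations as $R$, and then build a symmetric self-duality $\phi_{\pm}:\mathscr{P}_{m,\pm}\to\mathscr{P}_{m,\mp}$ identifying the two shadings---is exactly the paper's argument, and your verifications of the type I and type II data are correct (uncappability, $(G\mathcal{F}(R))^2=\mathrm{id}-e$, $\overline{G\mathcal{F}(R)}=-G\mathcal{F}(R)$, $\mathcal{F}(G\mathcal{F}(R))^2=-\mathrm{id}+e$, and $\phi_+\phi_-=\mathrm{id}$ from $\mathcal{F}^2(R)=-R$).

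The genuine gap is precisely where you flag it, and flagging is not the same as closing. Applying Theorem \ref{class-1} in the opposite shading only tells you that the Yang--Baxter relation for $G\mathcal{F}(R)$ has duality coefficients of the form $(0,0,0,-(G')^{-1}\delta^{-2},-\delta^{-2},0,G')$ with $G'\in\{i,-i\}$; the theorem's conclusion is a disjunction, so quoting it ``verbatim'' does not, as you first assert, ``force the same tuple.'' The whole content of the corollary is that $G'=G$ rather than $G'=-G$; if $G'=-G$, then no normalisation $\pm G\mathcal{F}(R)$ of the generator in $\mathscr{P}_{2,-}$ would satisfy the Yang--Baxter relation with coefficient $G$ (since $G$ is invariant under $R\mapsto -R$), and the identification $\phi_+$ would fail to respect the skein relation, so $\mathscr{P}_\bullet$ would not be unshaded. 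Your proposed remedy (``tracking the sign in a single evaluation of the Yang--Baxter tangle, using $\mathcal{F}^2(R)=-R$'') is the right idea but is stated as a promissory note; the actual work is to apply a one-click rotation to the Yang--Baxter relation in $\mathscr{P}_{3,+}$, re-express $\mathcal{F}(\gra{15})$, $\mathcal{F}(\gra{16})$, and $\mathcal{F}(\gra{6}),\dots,\mathcal{F}(\gra{11})$ as $R$- and $\mathcal{F}(R)$-labelled tangles in $\mathscr{P}_{3,-}$, and compare coefficients. (This is the same computation that underlies the remark ``$\mathcal{F}(R_3)=-R_3$'' made in Section \ref{subsection consistency}.) Until that computation is actually carried out, the step ``the universal property of the presentation extends $R\mapsto G\mathcal{F}(R)$ to a planar algebra isomorphism'' is not justified, because it requires the images of \emph{all} the relations, including the Yang--Baxter relation, to hold.
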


\begin{proof}
Note that $G\mathcal{F}(R)$ in $\mathscr{P}_{2,-}$ satisfies the same type I, II, III moves as $R$.  Thus the identification between $R$ and  $G\mathcal{F}(R)$ extends to a symmetric self-duality.
Therefore $\mathscr{P}_{\bullet}$ is unshaded by identifying $G\mathcal{F}(R)$ as $R$.
\end{proof}

\section{Consistency}\label{subsection consistency}
In this section, we are going to construct the one-parameter family of unshaded planar algebras whose generator and relations are given in Theorem \ref{class-1} and Corollary \ref{Cor:FourierRelation}. We only work for the case $G=i$. The case $G=-i$ is given by its complex conjugate.

First we construct the planar algebra $\mathscr{C}_{\bullet}$ in terms of $\delta$, and $\delta \in \mathbb{R}$.
Then we replace $\delta$ by $q$, and the planar algebra can be defined over the field $\mathbb{C}(q)$.
(The relation between $\delta$ and $q$ is given by $\displaystyle  q=\frac{i+\delta }{\sqrt{1+\delta^2}}$ and
$\displaystyle \delta=\frac{i(q+q^{-1})}{q-q^{-1}}$.)

\begin{definition}\label{Def:Centralizer algebra}
Let us define $\mathscr{C}_{\bullet}$ to be the unshaded general planar algebra generated by a 2-box $R=\gra{23}$ with the following relations:
$\mathcal{F}(R)=-iR$;
$R$ is uncappable;
$R^2=id-e$;
and
\begin{align}\label{YBrelation}
\gra{15}=\frac{i}{\delta^2}(\gra{6}+\gra{7}+\gra{8})-\frac{1}{\delta^2}(\gra{9}+\gra{10}+\gra{11})+i\gra{16}.
\end{align}
\end{definition}

To show the consistency, it is enough to find a partition function for the universal planar algebra generated by a 2-box $R$, such that (type I, II moves and) the Yang-Baxter relation are in the kernel of the partition function. As we mentioned, the evaluation algorithm of the Yang-Baxter relation is global. We can not defined the partition function as the average of all complexity reducing evaluations. This is different from the case for local evaluation algorithms. We will define the partition function globally on closed diagrams by induction on the number of labels $R$.
To introduce the globally defined partition function, let us solve the Yang-Baxter equation for $\mathscr{C}_{\bullet}$ first.

\begin{lemma}\label{solution of YBE -1}
Take $\tilde{A}\in\mathscr{C}_{2}, \tilde{B}\in\mathscr{C}_{2},$
\begin{align*}
\tilde{A}&=a_1\gra{21}+a_2\gra{22}+a_3\gra{23}, &a_3&\neq0; \\
\tilde{B}&=b_1\gra{21}+b_2\gra{22}+b_3\mathcal{F}(\gra{23}),&b_3&\neq 0,
\end{align*}
and $A=\tilde{A}\otimes 1$,  $B=1\otimes \tilde{B}$.
If $\dim(\mathscr{C}_3)=15$, then $ABA=BAB$ if and only if
$$ a_1=b_1, a_2=b_2, b_3=ia_3, a_1^2=-\frac{a_3^2}{\delta^2}, a_2^2=\frac{a_3^2}{\delta^2}.$$
\end{lemma}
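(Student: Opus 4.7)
The plan is to expand both sides of $ABA = BAB$ in the fifteen--dimensional basis of $\mathscr{C}_3$ and equate coefficients. First I would use the relation $\mathcal{F}(R) = -iR$ from Definition \ref{Def:Centralizer algebra} to rewrite $\tilde{B} = b_1\gra{21} + b_2\gra{22} - ib_3\gra{23}$, so that both $\tilde{A}$ and $\tilde{B}$ live in the three--dimensional 2-box space with the common basis $\{\gra{21},\gra{22},\gra{23}\}$. This puts the problem into a purely computational form.

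Next I would expand $ABA$ trilinearly as a sum of $27$ terms, each of the form $(X\otimes 1)(1\otimes Y)(Z\otimes 1)$ with $X,Y,Z\in\{\gra{21},\gra{22},\gra{23}\}$. The great majority simplify immediately by isotopy, by uncappability of $R$, and by $R^2 = id - e$; in particular any term with at most two crossings lives in the span of $\gra{1}$--$\gra{14}$ with explicit Temperley--Lieb coefficients. The only genuinely new contribution is the triple--crossing term $(\gra{23}\otimes 1)(1\otimes\gra{23})(\gra{23}\otimes 1) = \gra{15}$, which I convert to the fifteen--element basis using the Yang--Baxter relation (\ref{YBrelation}). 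The analogous expansion of $BAB$ produces $27$ terms as well, but now the triple--crossing term is $\gra{16}$ directly. Collecting, $ABA$ and $BAB$ become explicit polynomials in $a_i,b_i$ with elements of the basis as coefficients.

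Equating the two expressions and matching coefficients in the fifteen--dimensional basis (using the hypothesis $\dim(\mathscr{C}_3)=15$ for linear independence) produces a system of polynomial equations in $a_1,a_2,a_3,b_1,b_2,b_3$. The coefficient of $\gra{16}$, where the Yang--Baxter relation introduces the scalar $i$ on one side but not the other, is what pins down $b_3 = ia_3$. The coefficients of the middle--symmetric basis elements such as $\gra{12}+\gra{14}$ and $\gra{9}+\gra{11}$ force $a_1 = b_1$ and $a_2 = b_2$. Finally, the coefficients of the pure Temperley--Lieb 3-boxes $\gra{1}$ and $\gra{2}$, which receive contributions only from the cap/cup parts of $R^2 = id - e$ and from the $(i/\delta^2)$ and $(-1/\delta^2)$ terms of (\ref{YBrelation}), give the quadratic conditions $a_1^2 = -a_3^2/\delta^2$ and $a_2^2 = a_3^2/\delta^2$. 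The converse direction is then a direct verification: substituting back, the expansion of $ABA - BAB$ collapses term by term.

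The main obstacle is careful bookkeeping. The Yang--Baxter relation (\ref{YBrelation}) has seven terms on its right--hand side, each of which must be combined with contributions from the $27$ expanded products on each side; tracking how each piece lands in the fifteen--dimensional basis is error--prone. A subsidiary difficulty is exploiting the symmetry between ``top'' and ``bottom'' rows of the 3-boxes so that one does not compute the same coefficient twice. I expect the resulting linear system to decouple into block form corresponding to the number of crossings in each basis element, which will make the final system tractable once the bookkeeping is done carefully.
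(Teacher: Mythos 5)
Your plan is essentially the paper's own proof: expand $ABA$ and $BAB$ trilinearly, use the Yang--Baxter relation (\ref{YBrelation}) to convert the unique $\gra{15}$ term to the basis, and equate coefficients across the fifteen basis elements; your preliminary substitution $\mathcal{F}(\gra{23})=-i\gra{23}$ is a harmless reparameterization of the same computation (the paper instead keeps $\mathcal{F}(\gra{23})$ explicit and absorbs the resulting signs into the expansion). One small imprecision: in the paper's bookkeeping the quadratic constraints $a_1^2=-a_3^2/\delta^2$ and $a_2^2=a_3^2/\delta^2$ fall out of the coefficients of $\gra{9}$ and $\gra{11}$ rather than of the pure Temperley--Lieb elements $\gra{1}$, $\gra{2}$ (those equations are satisfied automatically once the other constraints hold), but since the system is over-determined this does not affect the validity of the approach.
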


\begin{proof}
See Appendix \ref{Appendix:solution of YBE -1}
\end{proof}

Observe that the 2-box solution of the Yang-Baxter equation in Lemma \ref{solution of YBE -1} satisfies the Hecke relation in Section \ref{hecke}.
We will define the partition function on the universal planar algebra generated by $R$ using the HOMFLY-PT polynomial.
Note that the braid generator $\gra{b+}$ of the Hecke algebra and the Jones projection $\displaystyle \frac{1}{\delta} \gra{22}$ have incompatible orientations on the boundary. So the braid and the Jones projection cannot be interpreted as diagrams simultaneously. These make the definition of the partition function complicated.
If one wants to prove that the Yang-Baxter relation is in the kernel of the partition function directly, the proof will be incredibly tedious.

We give a new method to \underline{reduce the algorithmic complexity} by constructing several intermediate quotients from the universal planar generated by $R$ to the quotient $\mathscr{C}_{\bullet}$. We prove that the relations of the generator $R$ are in the kernel of the partition function on these quotients step by step. This method helps us to \underline{utilize repeating data} in the proof.
One should keep in mind that the 2-box solution of the Yang-Baxter equation in $\mathscr{C}_{\bullet}$ no longer satisfies the Yang-Baxter equation on these intermediate quotients.

Now let us assume $\delta\in \mathbb{R}$ and define these intermediate quotients from the universal planar generated by the 2-box to the quotient $\mathscr{C}_{\bullet}$.
\begin{definition}
Let $\mathscr{C}_{\bullet}'$ be the universal planar algebra generated by a single 2-box $R$.
\end{definition}

\begin{definition}
Let $Ann_i^j(n)$ be the set of annular tangles labeled by $n$ copies of $R$ from $\mathscr{C}_{i}'$ to $\mathscr{C}_{j}'$.
\end{definition}

\begin{definition}
Let $\mathscr{C}_{\bullet}''$ be the general planar algebra generated by a single 2-box $R$ such that
$$\gra{r0}=\delta, \quad \mathcal{F}(R)=-iR.$$
\end{definition}

\begin{definition}
Let us define $\gra{b1id}=\gra{1id}$,
\begin{align}
\gra{b+}&=\frac{i}{\sqrt{1+\delta^2}}\gra{21}+\frac{1}{\sqrt{1+\delta^2}}\gra{22}+\frac{\delta}{\sqrt{1+\delta^2}}\gra{23},\label{braid+}\\
\gra{b-}&=-\frac{i}{\sqrt{1+\delta^2}}\gra{21}+\frac{1}{\sqrt{1+\delta^2}}\gra{22}+\frac{\delta}{\sqrt{1+\delta^2}}\gra{23}. \label{braid-}
\end{align}

\end{definition}

\begin{notation}
Take $\mathcal{D}=\displaystyle \frac{\delta}{\sqrt{1+\delta^2}}$, $\displaystyle r=\frac{\delta i+1}{\sqrt{1+\delta^2}}$, $\displaystyle q=\frac{i+\delta }{\sqrt{1+\delta^2}}$, we have $|r|=|q|=1$.
\end{notation}

\begin{definition}
Let us define
\begin{align*}
R_1&=\gra{r1},\\
R_2&=\gra{r21}-(\gra{21}-\frac{1}{\delta}\gra{22}), \\
R_3&=\gra{15}-(\frac{i}{\delta^2}(\gra{6}+\gra{7}+\gra{8})-\frac{1}{\delta^2}(\gra{9}+\gra{10}+\gra{11})+i\gra{16}),
\end{align*}
then $\mathcal{F}(R_3)=-R_3$ in $\mathscr{C}_{\bullet}''$.
\end{definition}

\begin{notation}
Let us define the general planar algebras
$\mathscr{C}_{\bullet}'''=\mathscr{C}_{\bullet}''/\{R_1\},$
$\mathscr{C}_{\bullet}''''=\mathscr{C}_{\bullet}'''/\{R_2\}.$
Then $\mathscr{C}_{\bullet}=\mathscr{C}_{\bullet}''''/\{R_3\}.$
\end{notation}

On these intermediate quotients, we have the following relations for \gra{r21}.

\begin{lemma}\label{Move I}
The following relations hold in $\mathscr{C}_{\bullet}''$,
\begin{align*}
&\text{the Fourier relation:}   & \gra{b+}&=i\gra{b-right}, &&\\
&\text{the Hecke relation:}    &  \gra{b+}-\gra{b-}&=(q-q^{-1})\gra{b2id}, &&\\
&\text{Reidemeister moves I:}   & \gra{homfly11}-r\gra{b1id}&=\displaystyle \mathcal{D}R_1; & \gra{homfly12}-r^{-1}\gra{b1id}&=\displaystyle \mathcal{D}R_1; \\
                        && \gra{homfly13}-r \gra{b1id-}&=\displaystyle \mathcal{D}i^2R_1; & \gra{homfly14}-r^{-1}\gra{b1id-}&=\displaystyle \mathcal{D}i^2R_1.
\end{align*}
\end{lemma}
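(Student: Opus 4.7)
The plan is to verify each of the six identities by direct substitution of the definitions (\ref{braid+}), (\ref{braid-}) of $\gra{b+}$ and $\gra{b-}$ into the stated equation, and then to exploit the only two nontrivial relations available in $\mathscr{C}_{\bullet}''$, namely $\gra{r0}=\delta$ and $\mathcal{F}(R)=-iR$. Since $R_1$, $R_2$, $R_3$ are not yet set to zero at this stage, the identities will generally have nonvanishing ``correction terms'' only of type $R_1$; the crucial arithmetic is that none of the Reidemeister–I corrections require $R_2$ or $R_3$.

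For the Fourier relation, I will apply $\mathcal{F}$ to each summand in (\ref{braid+}): the first two Temperley–Lieb summands exchange under the $1$-click rotation, and $\mathcal{F}(R)=-iR$ by hypothesis. Factoring out $i$ gives exactly the combination defining $\gra{b-right}$ via (\ref{braid-}). For the Hecke relation, subtracting (\ref{braid-}) from (\ref{braid+}) cancels the $\gra{22}$ and $\gra{23}$ terms and leaves $\frac{2i}{\sqrt{1+\delta^2}}\gra{21}$; on the other side, since $|q|=1$ one has $q^{-1}=\bar q$, and a one-line computation with $q=\frac{i+\delta}{\sqrt{1+\delta^2}}$ gives $q-q^{-1}=\frac{2i}{\sqrt{1+\delta^2}}$, matching the coefficient of $\gra{b2id}=\gra{21}$.

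For the four Reidemeister–I identities, I expand $\gra{b+}$ or $\gra{b-}$ in the basis $\{\gra{21},\gra{22},\gra{23}\}$ and apply the corresponding annular closure to each basis vector. The closure turns $\gra{21}$ into a single strand $\gra{b1id}$, turns $\gra{22}$ into $\delta$ times $\gra{b1id}$ by $\gra{r0}=\delta$, and turns $R=\gra{23}$ into a copy of $R_1$ (up to rotation, which by the Fourier relation costs only a factor of $\pm i$). Collecting the scalar coefficients gives $\bigl(\tfrac{i\cdot 1+1\cdot\delta}{\sqrt{1+\delta^2}}\bigr)\gra{b1id}+\tfrac{\delta}{\sqrt{1+\delta^2}}R_1=r\,\gra{b1id}+\mathcal{D}R_1$, and the minus version is obtained by replacing $i$ by $-i$, which replaces $r$ by $r^{-1}$. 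The identities $\gra{homfly13}$, $\gra{homfly14}$ with opposite strand orientation follow either directly by the same computation or by applying the already-established Fourier relation and accounting for the extra factor of $i^2=-1$ absorbed into $R_1$.

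The main obstacle is purely bookkeeping: keeping track of which boundary pair is being closed in each of the four Reidemeister–I diagrams, and hence which rotation of $R_1$ appears on the right-hand side. Once the convention for $\gra{homfly1\cdot}$ is fixed, every assertion reduces to a single scalar check involving $\delta$, $q$, $r$, and $\mathcal{D}$, with no need to invoke uncappability, the quadratic relation on $R$, or the Yang–Baxter relation.
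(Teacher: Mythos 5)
Your approach is exactly the paper's: the paper's own proof is literally ``They follow from the definitions,'' and you carry out the direct substitution using only $\gra{r0}=\delta$ and $\mathcal{F}(R)=-iR$, with $R_1$ as the correction term. The Fourier, Hecke, and general structure of the RI verification are all sound.

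There is, however, a compensating pair of arithmetic slips in the Reidemeister-I step that should be flagged. You assign the closures $\gra{21}\mapsto\gra{b1id}$ and $\gra{22}\mapsto\delta\,\gra{b1id}$, but these are reversed: capping the identity $\gra{21}$ closes one strand into a circle, contributing $\delta\,\gra{b1id}$, while capping the cap--cup $\gra{22}$ just traces a single path through and contributes $\gra{b1id}$. With your (incorrect) assignment you arrive at the scalar $\frac{i\cdot 1 + 1\cdot\delta}{\sqrt{1+\delta^2}}=\frac{i+\delta}{\sqrt{1+\delta^2}}$, which you then identify with $r$; but this quantity is $q$, not $r=\frac{\delta i+1}{\sqrt{1+\delta^2}}$. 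Both errors cancel, so the final identity $\gra{homfly11}=r\,\gra{b1id}+\mathcal{D}R_1$ you claim is correct, but the step as written does not actually verify it. The correct bookkeeping gives $\frac{i\delta+1}{\sqrt{1+\delta^2}}=r$ directly. Everything else, including the $i^2$ factor in the reversed-orientation identities coming from an extra $\mathcal{F}^2$ applied to $R$, is fine.
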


\begin{proof}
They follow from the definitions.
\end{proof}

\begin{lemma}\label{Move II}
The following relations hold in $\mathscr{C}_{\bullet}'''$,
\begin{align*}
&\text{Reidemeister moves II:}   & \gra{homfly21}-\gra{homfly22}&=\displaystyle \mathcal{D}^2R_2; & \gra{homfly25}-\gra{homfly26}&=\displaystyle \mathcal{D}^2R_2; \\
                        && \gra{homfly23}-\gra{homfly24}&=\displaystyle \mathcal{D}^2R_2; & \gra{homfly27}-\gra{homfly28}&=\displaystyle \mathcal{D}^2R_2.
\end{align*}
The other four Reidemeister moves II can be obtained by a 2-click rotation.
\end{lemma}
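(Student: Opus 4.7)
The plan is to verify each of the four Reidemeister II identities by direct expansion of the braid elements from (\ref{braid+})--(\ref{braid-}) as linear combinations of $\gra{21}$, $\gra{22}$, and $R=\gra{23}$ inside the two-box space of $\mathscr{C}_{\bullet}'''$. The only multiplication rules needed are $\gra{22}\cdot\gra{22}=\delta\gra{22}$ (from the circle parameter), $\gra{22}\cdot R=R\cdot\gra{22}=0$ (holding in $\mathscr{C}_{\bullet}'''$ because $R_1$ has been set to zero, so $R$ is uncappable), together with
\[
R^2=\gra{21}-\tfrac{1}{\delta}\gra{22}+R_2,
\]
which is just the definition of $R_2$.

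For the first identity, I would compute $\gra{b+}\cdot \gra{b-}$ directly via
\[
\gra{b+}\cdot \gra{b-}=\tfrac{1}{1+\delta^2}\bigl(i\gra{21}+\gra{22}+\delta R\bigr)\bigl(-i\gra{21}+\gra{22}+\delta R\bigr).
\]
Distributing, the $R\cdot \gra{22}$ and $\gra{22}\cdot R$ cross-terms vanish by uncappability and the two $i\gra{22}$ contributions cancel by the opposite signs of the $\gra{21}$-coefficients. Substituting the formula for $R^2$ then collapses the $\gra{22}$ coefficient (since $\delta\gra{22}$ from $\gra{22}^2$ is cancelled by $-\delta\gra{22}$ coming from $\delta^2\cdot(-\tfrac{1}{\delta}\gra{22})$), leaving
\[
\gra{b+}\cdot \gra{b-}=\gra{21}+\tfrac{\delta^2}{1+\delta^2}R_2=\gra{21}+\mathcal{D}^2R_2.
\]
This is the first identity, and the one with the braids composed in the reverse order follows from the identical expansion.

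For the remaining two identities, corresponding to antiparallel configurations (or, equivalently, to one-click rotations of the parallel cases), I would invoke the Fourier relation $\mathcal{F}(R)=-iR$ of $\mathscr{C}_{\bullet}''$ together with $\mathcal{F}(\gra{21})=\gra{22}$ and $\mathcal{F}(\gra{22})=\gra{21}$ to rewrite the rotated braids $\mathcal{F}(\gra{b\pm})$ as explicit linear combinations of $\gra{21},\gra{22},R$, and then repeat the same expansion. In each case, uncappability again kills the $\gra{22}\cdot R$ and $R\cdot\gra{22}$ cross-terms, the internal cancellations reduce the $\gra{22}$ coefficient, and the $R^2$ substitution produces $\mathcal{D}^2R_2$ as the discrepancy between the two-braid composition and its expected no-crossings form.

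The only real obstacle is bookkeeping: one must correctly transcribe each of the four tangles $\gra{homfly2i}$ as an explicit element of the span $\{\gra{21},\gra{22},R\}$ before expanding, paying attention to where each braid sits and which of $\mathcal{F}(\gra{b\pm})$ vs.\ $\gra{b\pm}$ is appropriate. Once done, the four calculations are structurally identical; the uniform appearance of $\mathcal{D}^2$ on the right-hand side is just the combinatorial shadow of the single substitution $R^2=\gra{21}-\tfrac{1}{\delta}\gra{22}+R_2$ being applied to the one $\delta R\cdot\delta R=\delta^2 R^2$ term in each expansion, normalized by the prefactor $\tfrac{1}{1+\delta^2}$ from the definitions of $\gra{b\pm}$.
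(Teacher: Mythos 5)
Your proposal is correct and takes essentially the same approach as the paper: expand $\gra{b+}\cdot\gra{b-}$ in the basis $\{\gra{21},\gra{22},R\}$ using uncappability and $\gra{22}^2=\delta\gra{22}$ to kill cross-terms and collapse the $\gra{22}$ coefficient, then identify the leftover $\delta^2R^2$ contribution (normalized by $1/(1+\delta^2)$) as $\mathcal{D}^2R_2$, and obtain the remaining three identities by symmetry. The only cosmetic difference is that the paper handles the second identity by complex conjugation and the last two by applying the Fourier relation $\gra{b+}=i\gra{b-right}$ to the already-derived parallel case rather than re-expanding rotated braids from scratch, but your final paragraph correctly flags that the bookkeeping of which rotated braid sits at each crossing is the only point requiring care.
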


\begin{proof}
\begin{align*}
\gra{homfly21}-\gra{homfly22}
&=\displaystyle\left(\frac{i}{\sqrt{1+\delta^2}}\gra{21}+\frac{1}{\sqrt{1+\delta^2}}\gra{22}+ \mathcal{D}\gra{23}\right)\times\\
&\times\left(-\frac{i}{\sqrt{1+\delta^2}}\gra{21}+\frac{1}{\sqrt{1+\delta^2}}\gra{22}+ \mathcal{D}\gra{23}\right)-\gra{21}\\
&=\displaystyle \mathcal{D}^2\gra{r21}+\left(\left(\frac{1}{\sqrt{1+\delta^2}}\right)^2-1\right)\gra{21}+\left(\frac{1}{\sqrt{1+\delta^2}}\right)^2\delta\gra{22}\\
&=\displaystyle \mathcal{D}^2\left(\gra{r21}-\gra{21}+\frac{1}{\delta}\gra{22}\right)\\
&= \mathcal{D}^2R_2
\end{align*}
Taking the complex conjugate of the above equation, we have
$$\gra{homfly25}-\gra{homfly26}=\displaystyle \mathcal{D}^2R_2.$$
Applying the Fourier relation in Lemma \ref{Move I}, we have
$$\gra{homfly23}-\gra{homfly24}=\displaystyle \mathcal{D}^2R_2; \quad \gra{homfly27}-\gra{homfly28}=\displaystyle \mathcal{D}^2R_2.$$

\end{proof}

\begin{lemma}\label{Move III}
The following relations hold in $\mathscr{C}_{\bullet}'''$,
\begin{align*}
&\text{Reidemeister moves III:}  & \gra{homfly31}-\gra{homfly32}&=\displaystyle \mathcal{D}^3i^3R_3; & \gra{homfly33}-\gra{homfly34}&=\displaystyle \mathcal{D}^3i^3R_3.
\end{align*}
The other 46 Reidemeister moves III also hold.
\end{lemma}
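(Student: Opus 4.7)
The plan is to prove the two displayed identities by direct algebraic expansion in $\mathscr{C}_{\bullet}'''$, in the same spirit as the proof of Lemma \ref{Move II} but with three crossings instead of two, and then to derive the remaining $46$ Reidemeister III configurations by Fourier and rotational symmetry. Write $\alpha=i/\sqrt{1+\delta^{2}}$, $\beta=1/\sqrt{1+\delta^{2}}$, $\gamma=\mathcal{D}$, so that $\gra{b+}=\alpha\gra{21}+\beta\gra{22}+\gamma\gra{23}$ and $\gra{b-}$ is obtained by flipping the sign of $\alpha$. First I would expand $\gra{homfly31}=(\gra{b+}\otimes 1)(1\otimes\gra{b+})(\gra{b+}\otimes 1)$ as a sum of $27$ monomials $(X\otimes 1)(1\otimes Y)(Z\otimes 1)$ with $X,Y,Z\in\{\gra{21},\gra{22},\gra{23}\}$, and likewise $\gra{homfly32}=(1\otimes\gra{b+})(\gra{b+}\otimes 1)(1\otimes\gra{b+})$.

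The uncappability relation $R_{1}=0$ available in $\mathscr{C}_{\bullet}'''$ forces $R\cdot\gra{22}=\gra{22}\cdot R=0$ at any shared strand, which annihilates every monomial in which a $\gra{23}$-factor is adjacent to a $\gra{22}$-factor on the common strand; this kills the bulk of the $27$ terms on each side, exactly as in the proof of Move II. The surviving monomials reduce to explicit linear combinations of the $15$ basis $3$-boxes using $(\gra{22})^{2}=\delta\gra{22}$ together with, in the monomials that contain an $R^{2}$ along one pair of strands, the substitution $R^{2}=1-\tfrac{1}{\delta}\gra{22}+R_{2}$. Subtracting the two expansions, the structural claim is that every $R_{2}$-correction produced on the left is matched by an identical $R_{2}$-correction on the right, so the net $R_{2}$-contribution vanishes; this comes from the left-right reflection symmetry interchanging the two triple products and is precisely what permits the identity to be stated in $\mathscr{C}_{\bullet}'''$ rather than only in the further quotient $\mathscr{C}_{\bullet}''''$. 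What remains is the $\gamma^{3}=\mathcal{D}^{3}$ coefficient of $\gra{15}-\gra{16}$ together with mixed contributions of type $\alpha\beta\gamma^{2}$ and $\alpha^{2}\beta\gamma$, which, after inserting $\alpha=i\beta$ and $\beta^{2}=1/(1+\delta^{2})$, reassemble precisely into $\mathcal{D}^{3}i^{3}R_{3}$. The second displayed identity is handled identically after the appropriate $\alpha\mapsto-\alpha$ substitution in one factor.

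For the remaining $46$ Reidemeister III configurations, I would invoke the Fourier relation $\gra{b+}=i\gra{b-right}$ of Lemma \ref{Move I} together with $\mathcal{F}(R_{3})=-R_{3}$, supplemented where necessary by Lemma \ref{Move II}: every other configuration is obtained from one of the two proved cases by a rotation, a crossing flip, or a composition of these, and the $R_{2}$-corrections introduced by applying Move II drop out for the same symmetry reason as in the main computation. The main obstacle is the book-keeping verification that the $R_{2}$-corrections cancel exactly in the subtraction, as opposed to merely modulo $R_{2}$; confirming this requires tracking every occurrence of $R^{2}$ in the two triple products and matching them term-by-term under the left-right reflection. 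This delicate bookkeeping is the essential content that distinguishes the Yang-Baxter relation $R_{3}$ from a generic global evaluation rule.
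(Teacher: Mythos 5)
Your outline follows the same skeleton as the paper: establish the first displayed identity by direct expansion in the $\{\gra{21},\gra{22},\gra{23}\}$ basis (this is exactly the calculation of Lemma~\ref{solution of YBE -1} in Appendix~\ref{Appendix:solution of YBE -1}, which the paper simply cites), then propagate to the remaining $46$ configurations using the Hecke relation, Move~II, the Fourier relation, and $\mathcal{F}(R_3)=-R_3$. The bootstrapping step matches the paper almost verbatim, so the overall route is the same.

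The weak point is the stated mechanism for the $R_2$-cancellation. You assert that ``every $R_2$-correction produced on the left is matched by an \emph{identical} $R_2$-correction on the right,'' attributing this to left--right reflection symmetry. That is not what happens. Take the simplest $R^2$-bearing pair: in one triple product the monomial $(\gra{23}\otimes 1)(1\otimes\gra{21})(\gra{23}\otimes 1)=R^2\otimes 1$ contributes $a_1a_3^2(R_2\otimes 1)$ after substituting $R^2=\mathrm{id}-e+R_2$, while the mirrored monomial in the other triple product is $1\otimes(\mathcal{F}(R))^2=-(1\otimes R^2)$, whose correction is $a_1a_3^2(1\otimes R_2)$ (the coefficient picks up $(ia_3)^2=-a_3^2$ that cancels the overall sign). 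The subtraction therefore leaves $a_1a_3^2\bigl(R_2\otimes 1-1\otimes R_2\bigr)$, a genuinely nonzero residue in $\mathscr{C}_3'''$ since $R_2\otimes 1\neq 1\otimes R_2$. The reflection symmetry \emph{permutes} the strand positions of the corrections; it does not equate them. If the total $R_2$-contribution does vanish, it must come from a global cancellation across all six $R^2$-bearing monomials, not from the term-by-term matching you describe. As written, your argument for the ``in $\mathscr{C}_{\bullet}'''$'' claim therefore does not go through, and the missing verification is precisely the content you defer to bookkeeping. (For what it is worth, the paper's own proof also compresses this point into a one-line citation of the appendix computation, which is carried out with $R^2=\mathrm{id}-e$ in force, so the issue is delicate there as well; but a correct justification cannot rest on the pairwise-identity claim.)
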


\begin{remark}
There are 8 different orientations of the three strings, but only 2 up to rotations.
For each orientation, there are 8 choices of the three braids, but only 6 of them admit a Reidemeister move III.
So we have 48 Reidemeister moves III in total.
\end{remark}

\begin{proof}
By the computation in Lemma \ref{solution of YBE -1}, we have
$\gra{homfly31}-\gra{homfly32}=\displaystyle \mathcal{D}^3i^3R_3$.
By the Hecke relation in Lemma \ref{Move I} and the Reidemeister moves II in Lemma \ref{Move II}, we can change the layer of strings and obtain the other 5 Reidemeister moves III with the same boundary orientation, such as
$$\gra{homfly35}-\gra{homfly36}=\displaystyle \mathcal{D}^3i^3R_3.$$
Applying the Fourier relation in Lemma \ref{Move I}, we can switch the orientation of the string at the bottom of a Reidemeister moves III, such as
$$\gra{homfly37}-\gra{homfly38}=\displaystyle \mathcal{D}^3i^3R_3.$$
Once again applying the Hecke relation in Lemma \ref{Move I} and the Reidemeister moves II in Lemma \ref{Move II}, we obtain the other 5 Reidemeister moves III with the same boundary orientation but different layers of strings, such as
$$\gra{homfly33}-\gra{homfly34}=\displaystyle \mathcal{D}^3i^3R_3.$$

Note that $\mathcal{F}(R_3)=-R_3$, we can derive the other Reidemeister moves III with different orientations by rotations.
\end{proof}

\begin{proposition}
The following relations hold in $\mathscr{C}_{\bullet}$.
\begin{align*}
&\text{the Hecke relation:}    & \gra{b+}-\gra{b-}&=(q-q^{-1})\gra{b2id}, && \\
&\text{Reidemeister moves I:}   & \gra{homfly11}&=r\gra{b1id}; & \gra{homfly12}&=r^{-1}\gra{b1id}; \\
                                && \gra{homfly13}&=r \gra{b1id-}; & \gra{homfly14}&=r^{-1}\gra{b1id-},\\
&\text{Reidemeister moves II:}  & \gra{homfly21}&=\gra{homfly22}; &  \gra{homfly25}&=\gra{homfly26}; \\
                                && \gra{homfly23}&=\gra{homfly24}; &  \gra{homfly27}&=\gra{homfly28},\\
&\text{Reidemeister moves III:}  & \gra{homfly31}&=\gra{homfly32}; &  \gra{homfly33}&=\gra{homfly34}.
\end{align*}
Other Reidemeister moves II, III with different layers and orientations of strings also hold.
\end{proposition}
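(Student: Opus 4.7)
The plan is to deduce each of the three kinds of Reidemeister moves, together with the Hecke relation, directly from Lemmas \ref{Move I}, \ref{Move II}, \ref{Move III} by observing that the ``error terms'' on the right-hand sides are exactly the relations $R_1$, $R_2$, $R_3$ that are killed in passing through the tower of quotients $\mathscr{C}'' \twoheadrightarrow \mathscr{C}''' \twoheadrightarrow \mathscr{C}'''' \twoheadrightarrow \mathscr{C}$. The whole argument is bookkeeping, with no new skein computation needed.

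First I would handle the Hecke relation: by Lemma \ref{Move I} it already holds in $\mathscr{C}''_\bullet$ with zero remainder, so it persists in every further quotient, in particular in $\mathscr{C}_\bullet$. Next, since $\mathscr{C}'''_\bullet = \mathscr{C}''_\bullet/\langle R_1\rangle$, the four Reidemeister I identities of Lemma \ref{Move I} become genuine equalities in $\mathscr{C}'''_\bullet$, hence in $\mathscr{C}_\bullet$. Similarly, $\mathscr{C}''''_\bullet = \mathscr{C}'''_\bullet/\langle R_2\rangle$ forces the four Reidemeister II identities of Lemma \ref{Move II} to hold exactly in $\mathscr{C}''''_\bullet$ and hence in $\mathscr{C}_\bullet$. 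Finally, $\mathscr{C}_\bullet = \mathscr{C}''''_\bullet/\langle R_3\rangle$ converts the two Reidemeister III identities of Lemma \ref{Move III} into strict equalities in $\mathscr{C}_\bullet$.

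It remains to propagate the basic oriented moves to all other orientations and layer configurations. For Reidemeister II, I would apply the Fourier relation $\gra{b+} = i\,\gra{b-right}$ of Lemma \ref{Move I} to flip the orientation of individual strands, and take 2-click rotations, to recover the remaining four unoriented variants. For Reidemeister III, I would follow the strategy already used inside the proof of Lemma \ref{Move III}: given one Reidemeister III with a fixed boundary orientation and fixed choice of over/under crossings, use the Hecke relation together with the (already established) Reidemeister II moves to swap the height of any pair of strands, producing the other five configurations with the same boundary orientation; then apply the Fourier relation to reverse the orientation of a strand at one disc, and repeat, exhausting all $48$ oriented variants. Because we are now working in $\mathscr{C}_\bullet$, every intermediate move is a true equality rather than an equality modulo a residual $R_j$, so the argument reduces to a straightforward combinatorial enumeration.

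The main conceptual obstacle, if any, is to notice and exploit the layered structure of the quotients: each $R_j$ was built so that the $j$th family of Reidemeister moves becomes exact precisely at the $j$th stage of the tower, which is why the partition-function construction in Section~\ref{subsection consistency} can be carried out step by step. Once this is recognized, the proposition is essentially a bookkeeping corollary of Lemmas \ref{Move I}--\ref{Move III}; the only non-trivial step is the combinatorial derivation of all oriented variants of Reidemeister II and III, and this has already been templated inside the proof of Lemma \ref{Move III}.
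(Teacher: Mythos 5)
Your proposal is correct and is exactly the paper's argument: the Proposition is stated with a one-line proof (``They follow from Lemmas \ref{Move I}, \ref{Move II}, \ref{Move III}''), and your write-up simply makes explicit the bookkeeping that the error terms $R_1$, $R_2$, $R_3$ appearing in those lemmas vanish at the corresponding stages $\mathscr{C}'''_\bullet$, $\mathscr{C}''''_\bullet$, $\mathscr{C}_\bullet$ of the tower of quotients, with the Hecke relation already exact in $\mathscr{C}''_\bullet$ and the remaining oriented variants handled by the Fourier relation, the Hecke relation, and rotations as already set out inside Lemmas~\ref{Move II} and~\ref{Move III}.
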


\begin{proof}
They follow from Lemmas \ref{Move I}, \ref{Move II}, \ref{Move III}.
\end{proof}

Our purpose is to construct a partition function of $\mathscr{C}_{\bullet}'$, such that it is well-defined on the quotient $\mathscr{C}_{\bullet}$.
By Proposition \ref{Move I}, the restriction of the partition function on link diagrams in $\mathscr{C}_{\bullet}'$ has to be the HOMFLY-PT polynomial.
Due to the relations $\gra{r0}=\delta, \mathcal{F}(R)=-iR$ and linearity, the partition function is uniquely determined by $\delta$.
By this observation, we can define the partition function inductively.
By linearity, we only need to define the partition function on closed diagrams labeled by $R$.

Now let us define a partition function $\zeta$ of $\mathscr{C}_{\bullet}'$ by induction on the number $n$ of labels $R$ in a closed diagram.

When $n=0$, we define $\zeta$ on closed Templey-Lieb digrams to be the evaluation map with respect to the relation $\gra{r0}=\delta$.

Suppose $\zeta$ is defined on any closed diagram with at most $n-1$ labels $R$, $n\geq1$.
Let us define $\zeta(T)$ for a closed diagram $T$ with $n$ labels $R$ by the following process.

Considering $R$ in the diagram $T$ as $\gra{23}$, a crossing with a label $R$ indicating the position of $\$ $.
Then $T$ consists of $k$ immersed circles intersecting at $R$'s. Let $\pm(T)$ be the set of $2^k$ choices of orientations of the $k$ circles.
For an orientation $\sigma\in \pm(T)$, let $T_\sigma$ be the corresponding oriented diagram.
Let $\pm(\sigma)$ be the set of $2^n$ choices of replacing the $n$ copies of the oriented crossing $\gra{bb}$ of $T(\sigma)$ by a braid $\gra{b+}$ or $\gra{b-}$. For a choice $\gamma \in \pm(\sigma)$, we obtain an oriented link $T_{\sigma,\gamma}$ by replacing the crossings.

Substituting $\gra{b+}$ and $\gra{b-}$ of $T_{\sigma,\gamma}$ by Equations (\ref{braid+}) and (\ref{braid-}), i.e.,
$$\gra{b+}=\frac{i}{\sqrt{1+\delta^2}}\gra{21}+\frac{1}{\sqrt{1+\delta^2}}\gra{22}+ \mathcal{D}\gra{23};$$
$$\gra{b-}=-\frac{i}{\sqrt{1+\delta^2}}\gra{21}+\frac{1}{\sqrt{1+\delta^2}}\gra{22}+ \mathcal{D}\gra{23},$$
we have a decomposition of $T_{\sigma,\gamma}$ as
$$T_{\sigma,\gamma}=\sum_{j=1}^{3^n}T_{\sigma,\gamma}(j),$$
such that each $T_{\sigma,\gamma}(j)$, $2\leq j\leq 3^n$ , is a scalar multiple of a diagram with at most $n-1$ labels $R$, and $T_{\sigma,\gamma}(1)$ is $\displaystyle \mathcal{D}^{n}$ times a diagram with $n$ labels $R$.
Moreover, we can apply the Fourier transform to the $n$ labels $R$ of this diagram $W_\sigma$ times in total, such that this diagram becomes $T$. Note that $W_\sigma$ mod 4 only depends on $\sigma$.

Recall that $Z(T_{\sigma,\gamma}(j))$, for $2\leq j\leq3^n$, have been defined by induction.
Let us define $\zeta_{\sigma,\gamma}(T)$ by the following equality,
\begin{equation}\label{zetadef1}
\text{HOMFLY}_{q,r}(T_{\sigma,\gamma})= \mathcal{D}^{n}i^{W_\sigma}\zeta_{\sigma,\gamma}(T)+\sum_{j=2}^{3^n}\zeta(T_{\sigma,\gamma}(j)).
\end{equation}
Let us define $\zeta(T)$ as
\begin{equation}\label{zetadef2}
\zeta(T)=\frac{1}{2^n 2^k}\sum_{\sigma\in\pm(T)}\sum_{\gamma\in\pm(\sigma)} \zeta_{\sigma,\gamma}(T).
\end{equation}
By induction and a linear extension, we obtain a function $\zeta$ on $\mathscr{C}_{\bullet}'$.

Now let us prove that the function $\zeta$ is a partition function on $\mathscr{C}_{\bullet}$ by passing to the intermediate quotients one by one.

\begin{lemma}
The element $\gra{r0}-\delta$ is in $\text{Ker}(\zeta)$, the kernel of $\zeta$.
Thus the function $\zeta$ is a partition function of $\mathscr{C}_{\bullet}'$ with circle parameter $\delta$.
\end{lemma}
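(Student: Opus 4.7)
The plan is to reduce the statement $R_1 \in \mathrm{Ker}(\zeta)$, via the definition of the kernel and linearity, to showing that for every closed diagram $Z \in \mathscr{C}_0'$ one has $\zeta(\gra{r0} \sqcup Z) = \delta\, \zeta(Z)$. I would argue by induction on the number $n$ of labels $R$ appearing in $Z$. The base case $n=0$ is immediate, since $\gra{r0} \sqcup Z$ is then a closed Temperley-Lieb diagram, which $\zeta$ evaluates via the rule $\gra{r0}=\delta$.

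The numerical input for the inductive step is that the HOMFLY-PT circle parameter coincides with $\delta$. Since $|r|=|q|=1$, a direct computation gives $r-r^{-1} = \frac{2\delta i}{\sqrt{1+\delta^2}}$ and $q - q^{-1} = \frac{2i}{\sqrt{1+\delta^2}}$, so $\frac{r-r^{-1}}{q-q^{-1}} = \delta$ and hence $\mathrm{HOMFLY}_{q,r}(L \sqcup \bigcirc) = \delta\, \mathrm{HOMFLY}_{q,r}(L)$ for every oriented link $L$. Now write $T = \gra{r0} \sqcup Z$ with $Z$ having $n \geq 1$ labels and $k$ immersed circles. Every orientation $\sigma \in \pm(T)$ factors as a pair (orientation of $\gra{r0}$, restriction $\sigma' \in \pm(Z)$); because the added circle carries no label, $\pm(\sigma)$ is canonically identified with $\pm(\sigma')$ and $W_\sigma = W_{\sigma'}$. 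Each $T_{\sigma,\gamma}$ is then the disjoint union of an oriented unknot with $Z_{\sigma',\gamma'}$, and the three-term braid expansion yields $T_{\sigma,\gamma}(j) = \gra{r0} \sqcup Z_{\sigma',\gamma'}(j)$ for every $j$. For $j\geq 2$ the term $Z_{\sigma',\gamma'}(j)$ has at most $n-1$ labels, so by the induction hypothesis $\zeta(T_{\sigma,\gamma}(j)) = \delta\, \zeta(Z_{\sigma',\gamma'}(j))$.

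Plugging these data into (\ref{zetadef1}) for $T$ and comparing with $\delta$ times (\ref{zetadef1}) for $Z$, the HOMFLY-PT leading terms agree by multiplicativity and the $j \geq 2$ sums agree termwise by induction, forcing $\zeta_{\sigma,\gamma}(T) = \delta\, \zeta_{\sigma',\gamma'}(Z)$. Averaging via (\ref{zetadef2}), the normalization for $T$ is $\frac{1}{2^n 2^{k+1}}$ against $\frac{1}{2^n 2^k}$ for $Z$, but the sum over $\sigma$ picks up precisely an additional factor of $2$ from the two orientations of the added circle; these cancel, and $\zeta(T) = \delta\, \zeta(Z)$, closing the induction. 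The main obstacle is bookkeeping: aligning the index sets $\pm(T), \pm(\sigma)$ with $\pm(Z), \pm(\sigma')$; checking that the disjoint circle survives every term of the braid resolution so that the leading HOMFLY-PT contribution and all subleading diagrammatic contributions scale by the \emph{same} factor $\delta$; and verifying that the doubled normalization is exactly compensated by the doubled orientation sum. Once these compatibilities are in place, the inductive identity and hence $R_1 \in \mathrm{Ker}(\zeta)$ follow.
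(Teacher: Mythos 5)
Your argument is correct and takes essentially the same route as the paper's proof: the same induction on the number of $R$-labels, HOMFLY-PT multiplicativity over disjoint unions, and factorization of the orientation and braid sums over disjoint components, which is the paper's ``one component is Temperley--Lieb'' case specialized to $T^1=\gra{r0}$ (the paper actually proves the more general multiplicativity $\zeta(T^1\sqcup T^2)=\zeta(T^1)\zeta(T^2)$, but that extra generality is not needed for the lemma as stated, and your version spells out the inductive bookkeeping in the identity $\zeta_{\sigma,\gamma}(T)=\delta\,\zeta_{\sigma',\gamma'}(Z)$ more explicitly). One small naming slip: you open by writing $R_1\in\mathrm{Ker}(\zeta)$, but the paper reserves $R_1$ for the uncappable relation $\gra{r1}$ treated in the subsequent lemma; your argument clearly targets $\gra{r0}-\delta$, so the content is unaffected.
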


\begin{proof}
Let $T$ be a disjoint union of two closed diagram $T^1$ and $T^2$.

Case 1: $T^1$ and $T^2$ are Temperley-Lieb. Obviously $\zeta(T)=\zeta(T^1)\zeta(T^2)$.

Case 2: $T^1$ (or $T^2$) is Temperley-Lieb.
Note that
$$\text{HOMFLY}_{q,r}(\gra{consistent01})=\text{HOMFLY}_{q,r}(\gra{consistent02})=\frac{r-r^{-1}}{q-q^{-1}}=\delta=\zeta(\gra{r0}),$$
so $\text{HOMFLY}_{q,r}$ coincide with $\zeta$ on closed Temperley-Lieb-Jones diagrams.
By an induction on the number of $R$'s in $T_2$, we have that $\zeta(T)=\zeta(T^1)\zeta(T^2)$.

The general case:
Note that the choices of orientations and braids in the definition of $\zeta$ are independent on disjoint components.
Moreover, the value of the HOMFLY-PT polynomial of the union of two disjoint links is the multiplication of that of the two links.
By an induction on the number of $R$'s in $T_1$ and $T_2$, we have that $\zeta(T)=\zeta(T^1)\zeta(T^2)$.

Recall that $\zeta(\gra{r0})=\delta$, so $\gra{r0}-\delta\in\text{Ker}(\zeta)$.
\end{proof}

\begin{lemma}\label{'to''}
The element $R-i\mathcal{F}(R)$ is in $\text{Ker}(\zeta)$.
Therefore $\zeta$ passes to the quotient $\mathscr{C}_{\bullet}''$.
\end{lemma}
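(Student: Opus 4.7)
The plan is to prove, by induction on the number $n$ of $R$-labels in a closed diagram, that replacing any single distinguished copy of $R$ by $\mathcal{F}(R)$ scales $\zeta$ by $-i$. By linearity, this scaling identity immediately gives $R - i\mathcal{F}(R) \in \ker(\zeta)$. The second assertion is then automatic: the Fourier relation $\mathcal{F}(R) = -iR$ is, together with the already-handled circle parameter, the only relation defining the quotient $\mathscr{C}''_\bullet$ from $\mathscr{C}'_\bullet$.

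The central ingredient is the isotopy invariance of the HOMFLY--PT polynomial. Fix a closed diagram $T$ with a distinguished $R$ at position $p$, and let $T'$ be the diagram obtained by replacing that one $R$ with $\mathcal{F}(R)$. For each orientation $\sigma \in \pm(T)$ and crossing resolution $\gamma \in \pm(\sigma)$ entering the recipe for $\zeta$, the oriented link diagrams $T_{\sigma,\gamma}$ and $T'_{\sigma,\gamma}$ differ only by a $90^{\circ}$ rotation of the local $\$$-sign at $p$; as oriented link diagrams they are isotopic, so the HOMFLY--PT polynomial assigns them the same value. The winding-angle count $W_\sigma$ appearing in (\ref{zetadef1}), on the other hand, shifts by $\pm 1 \pmod{4}$, because passing from the all-standard form to the actual diagram now requires one extra Fourier rotation at $p$. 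Equating the HOMFLY--PT values and subtracting the two instances of (\ref{zetadef1}) yields the desired factor of $i$ between $\zeta_{\sigma,\gamma}(T)$ and $\zeta_{\sigma,\gamma}(T')$, modulo the lower-order remainder $\sum_{j\geq 2}\bigl(\zeta(T'_{\sigma,\gamma}(j))-\zeta(T_{\sigma,\gamma}(j))\bigr)$.

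The inductive step then consists of checking that the lower-order remainder vanishes. Each lower-order term arises by expanding the braid at $p$ into the Temperley-Lieb diagrams $\gra{21}$ and $\gra{22}$ (the third summand $\mathcal{D}\gra{23}$ feeds into the main term). Under the Fourier rotation at $p$, the coefficients $i/\sqrt{1+\delta^2}$ and $1/\sqrt{1+\delta^2}$ of $\gra{21}$ and $\gra{22}$ in the expansion of $\gra{b+}$ or $\gra{b-}$ are permuted exactly in sync with the permutation of the two Temperley-Lieb diagrams; the resulting closed diagrams carry at most $n-1$ labels of $R$, so the inductive hypothesis applies term by term. The sets $\pm(T)$ and $\pm(\sigma)$ are canonically in bijection with $\pm(T')$ and $\pm(\sigma')$, so the averaging (\ref{zetadef2}) preserves the pairing and one concludes $\zeta(T) = i\zeta(T')$.

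The main obstacle is the combinatorial bookkeeping: one must verify that the bijections between indexing data for $T$ and $T'$, the shift of $W_\sigma$ modulo $4$, and the swap of coefficients in the expansion of the braid all interact coherently. The isotopy invariance of the HOMFLY--PT polynomial supplies the ``main-term'' identity for free; what remains is to confirm that the recursive structure of (\ref{zetadef1})--(\ref{zetadef2}) is respected by the local Fourier rotation at $p$ so that the induction closes cleanly.
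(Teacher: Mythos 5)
Your opening observations are on target: the oriented links $T_{\sigma,\gamma}$ and $T'_{\sigma,\gamma}$ coincide, and the winding count $W_\sigma$ shifts by exactly $1 \pmod 4$. But the induction you build around these is both unnecessary and, as described, incorrect. The point you are missing is that the lower-order terms $T_{\sigma,\gamma}(j)$ and $T'_{\sigma,\gamma}(j)$ for $j \geq 2$ are not merely related by a Fourier rotation requiring the inductive hypothesis --- they are \emph{literally the same diagrams}. Once an orientation $\sigma$ and a crossing resolution $\gamma$ are fixed, the oriented link $T_{\sigma,\gamma}$ is identical to $T'_{\sigma,\gamma}$ (the $\$$-label plays no role in forming the oriented link), and consequently the entire decomposition $T_{\sigma,\gamma} = \sum_j T_{\sigma,\gamma}(j)$ coincides term-by-term with $T'_{\sigma,\gamma} = \sum_j T'_{\sigma,\gamma}(j)$. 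The \emph{only} place the $\$$-label enters the recursion (\ref{zetadef1}) is through the exponent $W_\sigma$, which records the number of Fourier clicks needed to carry $T_{\sigma,\gamma}(1)/\mathcal{D}^n$ back to $T$. Subtracting the two instances of (\ref{zetadef1}) for $T$ and $T'$ therefore kills the lower-order sums \emph{identically}, with no induction and no appeal to a lower-order hypothesis; one is left immediately with $i^{W_\sigma}\zeta_{\sigma,\gamma}(T) = i^{W_\sigma+1}\zeta_{\sigma,\gamma}(T')$, i.e.\ $\zeta_{\sigma,\gamma}(T) = i\zeta_{\sigma,\gamma}(T')$, and averaging over $\sigma,\gamma$ closes the proof.

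Your proposed treatment of the remainder is the specific step that would fail. You assert that ``the coefficients of $\gra{21}$ and $\gra{22}$ in the expansion of $\gra{b+}$ or $\gra{b-}$ are permuted exactly in sync with the permutation of the two Temperley-Lieb diagrams,'' and then invoke the inductive hypothesis term by term. But nothing is permuted: the braid chosen at the distinguished crossing is determined by $\sigma$ and $\gamma$, both of which are the same for $T$ and $T'$; its expansion into $\gra{21},\gra{22},\gra{23}$ is identical in both cases. Moreover, if one did try to apply the inductive hypothesis ``$\zeta(\text{diagram with }\mathcal{F}(R)\text{ at }p) = -i\,\zeta(\text{diagram with }R\text{ at }p)$'' to the lower-order terms, one would find that the remainder does \emph{not} vanish (it picks up a factor of $(1 + i)$ or similar), which contradicts what is needed. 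The argument is only rescued by the observation that the lower-order terms are equal, rendering the whole inductive framework superfluous for this particular lemma. (The induction on the label count is genuinely needed for the subsequent Lemmas handling $R_1$, $R_2$, $R_3$, but not here.)
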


\begin{proof}
For an annular tangle $\Psi \in Ann_2^0(n)$, take $T^0=\Psi(R)$, $T^1=\Psi(\mathcal{F}(R))$.
Then the choices of orientations and braids of $T^0$ coincide with those of $T^1$, i.e., $\pm(T^0)=\pm(T^1)$.
For any $\sigma\in\pm(T^0)$, and $\gamma\in\pm(\sigma)$, by Equation (\ref{zetadef1}), we have
$$\text{HOMFLY}_{q,r}(T^m_{\sigma,\gamma})= \mathcal{D}^{n+1}i^{W^m_\sigma}\zeta_{\sigma,\gamma}(T^m)+\sum_{j=2}^{3^n}\zeta(T^m_{\sigma,\gamma}(j)),$$
for some elements $T^m_{\sigma,\gamma}(j)$ with at most $n-1$ labels $R$, $2\leq j\leq 3^n$, $m=0,1$.
Note that
$$T^0_{\sigma,\gamma}=T^1_{\sigma,\gamma},\quad  T^0_{\sigma,\gamma}(j)=T^1_{\sigma,\gamma}(j), ~\forall~ 2\leq j\leq 3^n, \quad W^0_\sigma+1=W^1_\sigma,$$
so
$$\zeta_{\sigma,\gamma}(T^0)=i\zeta_{\sigma,\gamma}(T^1).$$
By Equation (\ref{zetadef2}), we have
$$\zeta(T^0)=i\zeta(T^1), ~i.e., ~\zeta(\Psi(R-i\mathcal{F}(R)))=0.$$
So $R-i\mathcal{F}(R)\in\text{Ker}(\zeta)$.
\end{proof}

\begin{lemma}\label{''to'''}
The element $R_1$ is in $\text{Ker}(\zeta)$.
Therefore $\zeta$ passes to the quotient $\mathscr{C}_{\bullet}'''$.
\end{lemma}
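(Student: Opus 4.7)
The plan is to follow the blueprint of Lemma \ref{'to''}, adapted for the Reidemeister I framing relation. By Lemma \ref{Move I}, $\mathcal{D} R_1 = \gra{homfly11} - r\,\gra{b1id}$ in $\mathscr{C}_{\bullet}''$, so it suffices to show that for every annular tangle $\Psi \in Ann_1^0(n)$,
\[
\zeta(\Psi(\gra{homfly11})) = r\,\zeta(\Psi(\gra{b1id})),
\]
i.e.\ the partition function $\zeta$ respects Reidemeister I with the same framing factor $r$ as the underlying HOMFLY-PT polynomial on which it is built.

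First I would reinterpret $\Psi(R_1)$ as a closed diagram with $n+1$ copies of $R$, obtained from the closed diagram $\Psi(\gra{b1id})$ by inserting a single curl labeled by an extra $R$ on the through-strand. Applying the defining recursion \eqref{zetadef1}, \eqref{zetadef2}, any orientation $\sigma$ on $\Psi(\gra{b1id})$ lifts canonically to an orientation $\tilde\sigma$ on $\Psi(R_1)$, and any braid assignment $\gamma$ extends by a single $\pm$-choice for the extra $R$. For each such lifted pair, the resolved oriented link differs from the corresponding resolution of $\Psi(\gra{b1id})$ by exactly one oriented curl.

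The key step is then to apply Reidemeister I for HOMFLY-PT: this extra oriented curl multiplies $\mathrm{HOMFLY}_{q,r}$ by $r^{\pm 1}$, with sign governed by the local orientation of the curl. Substituting into \eqref{zetadef1}, the leading $j=1$ term on the $\Psi(R_1)$ side carries one extra factor of $\mathcal{D}$ (from the $\gra{23}$-coefficient in the definition of $\gra{b\pm}$) and a shifted winding number $W_{\tilde\sigma}$; these combine with $r^{\pm 1}$ through an elementary local identity of the form $\mathcal{D}\,i^{\Delta W}\,r^{\pm 1} = r$, which follows from the explicit formulas $\mathcal{D} = \delta/\sqrt{1+\delta^2}$ and $r = (\delta i + 1)/\sqrt{1+\delta^2}$. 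All remaining resolution terms on both sides produce closed diagrams with strictly fewer $R$-labels, and they match up (after invoking the already-established Lemma \ref{'to''} to absorb any Fourier-transform ambiguity), so induction on $n$ closes the argument.

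The main obstacle will be the careful bookkeeping described above: one must verify that after averaging over the two orientations of the added curl via \eqref{zetadef2} and the two braid resolutions $\gra{b\pm}$, the four contributions combine to give exactly $r$, independently of the ambient orientations and crossings in the rest of the diagram. Once this local compatibility is confirmed, the term-by-term comparison with $\Psi(\gra{b1id})$ proceeds along exactly the same lines as the proof of Lemma \ref{'to''}, yielding $R_1 \in \mathrm{Ker}(\zeta)$ and allowing $\zeta$ to descend to the quotient $\mathscr{C}_{\bullet}''/\{R_1\} = \mathscr{C}_{\bullet}'''$.
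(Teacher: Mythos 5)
Your overall plan—induction on the number of $R$-labels in $\Psi$, mirroring Lemma~\ref{'to''}, resolving via \eqref{zetadef1} and using HOMFLY-PT Reidemeister~I—is the same strategy the paper uses. However, your central algebraic claim is false: there is no local identity of the form $\mathcal{D}\,i^{\Delta W}\,r^{\pm 1}=r$. Taking $\Delta W=0$ would force $\mathcal{D}=1$, i.e.\ $\delta/\sqrt{1+\delta^2}=1$, which never holds; for other values of $\Delta W$ one similarly gets contradictions, since $\mathcal{D}$ and $r$ are functionally independent in $\delta$. The actual mechanism is the \emph{linear} relation from Lemma~\ref{Move I},
\[
\gra{homfly11}-r\,\gra{b1id}=\mathcal{D}\,R_1
\]
(and its three orientation variants), which when plugged into \eqref{zetadef1} makes the two sides of Reidemeister~I match \emph{after subtracting the $R_1$-term}; there is no miraculous scalar collapse.

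The proof also needs two ingredients you gloss over. First, in the inductive step one does not just ``match up'' the lower-order $j\geq 2$ terms—one applies the induction hypothesis $\zeta(\Psi^n_{\sigma,\gamma}(j)(R_1))=0$ to each of them separately, which only works because those annular tangles have strictly fewer than $n$ labels. Second, even after that, one is left with a residual term of the form $\zeta_{\sigma,\gamma}(T)+r\bigl(\zeta(\Psi^n(\gra{1id}))-\zeta_{\bar\sigma,\bar\gamma}(\Psi^n(\gra{1id}))\bigr)=0$; the conclusion $\zeta(T)=0$ does \emph{not} follow pointwise in $(\sigma,\gamma)$, but only after averaging over all $\sigma,\gamma$ via \eqref{zetadef2}, using that $\gamma\mapsto\bar\gamma$ is a double cover onto $\pm(\bar\sigma)$. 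This averaging step is essential and is absent from your argument. As written the proposal has the right scaffolding but would not compile into a correct proof without replacing the false scalar identity by the Reidemeister Move~I relation above and supplying the averaging argument.
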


\begin{proof}
Let us prove that $R_1\in \text{Ker}(\zeta)$ by an inductive argument.

For an annular tangle $\Psi^0 \in Ann_1^0(0)$, take $T^0=\Psi^0(\gra{r1})$.
For any $\sigma\in\pm(T^0)$ and $\gamma\in\pm(\sigma)$, if $\gra{r1}$ is replaced by $\gra{homfly11}$ in $T^0_{\sigma,\gamma}$,
then by Equation (\ref{zetadef1}) and the Reidemester Move I
\begin{equation}\label{Move I 1}
\gra{homfly11}-r\gra{b1id}=\displaystyle \mathcal{D}R_1
\end{equation}
in Lemma \ref{Move I}, we have
$$\text{HOMFLY}_{q,r}(\Psi^0(\gra{homfly11}))= \mathcal{D}\zeta_{\sigma,\gamma}(T^0)+\zeta(\Psi^0(r\gra{b1id})).$$
Note that
$$\text{HOMFLY}_{q,r}(\Psi^0(\gra{homfly11}))=\text{HOMFLY}_{q,r}(\Psi^0(r\gra{b1id}))=\zeta(\Psi^0(r\gra{b1id})).$$
so $\zeta_{\sigma,\gamma}(T^0)=0$.
If $\gra{r1}$ is replaced by $\gra{homfly12}$, $\gra{homfly13}$ or $\gra{homfly12}$, then we still have $\zeta_{\sigma,\gamma}(T)=0$ by applying the corresponding Reidemester Move I in Lemma \ref{Move I} to a similar argument.
Therefore $\zeta(T^0)=0$, i.e., $\zeta(\Psi^0(R_1))=0$ by Equation \ref{zetadef2}.

Suppose
$$\zeta(\Psi^{k}(R_1))=0, ~\forall~ \Psi^{k}\in Ann_1^0(k), ~k<n,$$
for some $n>0$.
For an annular tangle $\Psi^n \in Ann_1^0(n)$, take $T=\Psi^n(\gra{r1})$.
For any $\sigma\in\pm(T)$ and $\gamma\in\pm(\sigma)$, let us define the annular tangle $\Psi^n_{\sigma,\gamma}$ to be the restriction of $T_{\sigma,\gamma}$ on $\Psi^n$.
Replacing the braids of $\Psi^n_{\sigma,\gamma}$ by Equations (\ref{braid+}), (\ref{braid-}),
we have a decomposition of $\Psi^n_{\sigma,\gamma}$ as
$$\Psi^n_{\sigma,\gamma}=\sum_{j=1}^{3^n}\Psi^n_{\sigma,\gamma}(j),$$
such that each $\Psi^n_{\sigma,\gamma}(j)$, $2\leq j\leq 3^n$ , is a scalar multiple of an annular tangle with at most $n-1$ labels $R$, and $\Psi^n_{\sigma,\gamma}(1)$ is $\displaystyle \mathcal{D}^{n}$ times an annular tangle with $n$ labels $R$.

If $\gra{r1}$ is replaced by $\gra{homfly11}$ in $T_{\sigma,\gamma}$,
then by Equation (\ref{zetadef1}) and the Reidemester Move I (\ref{Move I 1}), we have
\begin{equation}\label{equ2}
\text{HOMFLY}_{q,r}(\Psi^n_{\sigma,\gamma}(\gra{homfly11}))= \mathcal{D}^n i^{W_\sigma}\left( \mathcal{D}\zeta_{\sigma,\gamma}(T)+\zeta(\Psi^n(r\gra{b1id}))\right)+\sum_{j=2}^{3^n}\zeta(\Psi^n_{\sigma,\gamma}(j)(\gra{homfly11})).
\end{equation}
On the other hand
\begin{equation}\label{equ3}
\text{HOMFLY}_{q,r}(\Psi^n_{\bar{\sigma},\bar{\gamma}}(\gra{b1id}))= \mathcal{D}^n i^{W_\sigma}(\zeta_{\bar{\sigma},\bar{\gamma}}(\Psi^n(\gra{b1id})))+\sum_{j=2}^{3^n}\zeta(\Psi^n_{\bar{\sigma},\bar{\gamma}}(j)(\gra{b1id})),
\end{equation}
where $\bar{\sigma},\bar{\gamma}$ are the choices of orientations and braids of $\Psi^n(\gra{1id})$ corresponding to $\sigma,\gamma$ for $T$.

By induction and the Reidemester Move I (\ref{Move I 1}), we have
$$\zeta(\Psi^n_{\sigma,\gamma}(j)(\gra{homfly11}))-r\zeta(\Psi^n_{\sigma,\gamma}(j)(\gra{b1id}))= \mathcal{D}\Psi^n_{\sigma,\gamma}(j)(R_1)=0$$
for $2\leq j\leq 3^n$. Moreover,
$$\text{HOMFLY}_{q,r}(\Psi^n_{\sigma,\gamma}(\gra{homfly11}))=\text{HOMFLY}_{q,r}(\Psi^n_{\sigma,\gamma}(\gra{b1id})).$$
So Equation (\ref{equ2})-r(\ref{equ3}) implies
\begin{equation}\label{equ11}
\zeta_{\sigma,\gamma}(T)+r\left(\zeta(\Psi^n(\gra{1id}))-\zeta_{\bar{\sigma},\bar{\gamma}}(\Psi^n(\gra{1id}))\right)=0.
\end{equation}

If $\gra{r1}$ is replaced by $\gra{homfly12}$, $\gra{homfly13}$ or $\gra{homfly12}$,
then we still have Equation (\ref{equ11}) by applying the corresponding Reidemester Move I in Lemma \ref{Move I} to a similar argument.

Note that $\sigma\rightarrow\bar{\sigma}$ is a bijection from $\pm(\Psi^n(\gra{r1}))$ to $\pm(\Psi^n(\gra{1id}))$,
and $\gamma\rightarrow\bar{\gamma}$ is a double cover from $\pm(\sigma)$ to $\pm(\bar{\sigma})$.
Summing over all $\sigma,\gamma$ for Equation (\ref{equ11}), we have
$\zeta(T)=0$, i.e., $\zeta(\Psi^n(R_1))=0$ by Equation (\ref{zetadef2}).

By induction, we have $\zeta(\Psi(R_1))=0$, for any annular tangle $\Psi$. So $R_1\in \text{Ker}(\zeta)$ and $\zeta$ passes to the quotient $\mathscr{C}_{\bullet}'''$.
\end{proof}

\begin{lemma}\label{'''to''''}
The element $R_2$ is in $\text{Ker}(\zeta)$.
Therefore $\zeta$ passes to the quotient $\mathscr{C}_{\bullet}''''$.
\end{lemma}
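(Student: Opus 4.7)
The plan is to mimic Lemma \ref{''to'''} closely, with the Reidemeister II identity of Lemma \ref{Move II} playing the role of Reidemeister I. I will show $\zeta(\Psi^n(R_2)) = 0$ for every $\Psi^n \in Ann_2^0(n)$ by induction on $n$.

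For the base case $n = 0$, fix $\Psi^0 \in Ann_2^0(0)$ and put $T = \Psi^0(\gra{r21})$. In $\mathscr{C}_\bullet'''$, the $R$-uncappability from Lemma \ref{''to'''} kills $\gra{22}\cdot\gra{23}$ and $\gra{23}\cdot\gra{22}$, so the braid expansion of $\gra{homfly21}$ collapses to
\[
\gra{homfly21} \;=\; \tfrac{1}{1+\delta^2}\,\gra{21} + \tfrac{\delta}{1+\delta^2}\,\gra{22} + \mathcal{D}^2\,\gra{r21}.
\]
For each $\sigma \in \pm(T)$ and each $\gamma \in \pm(\sigma)$ producing one of the eight oriented Reidemeister II configurations of Lemma \ref{Move II} and its remark, Equation (\ref{zetadef1}) combined with HOMFLY's own Reidemeister II invariance yields $i^{W_\sigma}\zeta_{\sigma,\gamma}(T) = \zeta(\Psi^0(\gra{21})) - \tfrac{1}{\delta}\zeta(\Psi^0(\gra{22}))$. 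Averaging over $\sigma,\gamma$ via Equation (\ref{zetadef2})---with the bijective correspondence of orientations absorbing the Fourier-twist factors through $R = i\mathcal{F}(R)$ in $\mathscr{C}_\bullet''$ as in Lemma \ref{''to'''}---produces $\zeta(T) = \zeta(\Psi^0(\gra{21})) - \tfrac{1}{\delta}\zeta(\Psi^0(\gra{22}))$, equivalently $\zeta(\Psi^0(R_2)) = 0$.

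For the inductive step, assume $\zeta(\Psi^k(R_2)) = 0$ for all $k < n$ and fix $\Psi^n \in Ann_2^0(n)$ with $T = \Psi^n(\gra{r21})$. For each $(\sigma,\gamma)$, let $\Psi^n_{\sigma,\gamma}$ denote the restriction of the orientation/sign data to $\Psi^n$ and expand its $n$ internal braids via Equations (\ref{braid+}), (\ref{braid-}). Writing Equation (\ref{zetadef1}) once with $\gra{homfly21}$ in place of $\gra{r21}$ and once with $\gra{homfly22}$ yields identities precisely analogous to Equations (\ref{equ2}) and (\ref{equ3}) of Lemma \ref{''to'''}; the induction hypothesis annihilates the $\zeta(\Psi^n_{\sigma,\gamma}(j)(R_2))$ terms for $j \geq 2$ (these are smaller annular tangles), while HOMFLY's Reidemeister II equates the two HOMFLY values on the left. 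Subtracting, summing over $\sigma$ and $\gamma$ by the same bijective correspondence, and applying Equation (\ref{zetadef2}) closes the induction.

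The main technical work is the configuration bookkeeping: Reidemeister II comes in eight oriented/layered flavors (Lemma \ref{Move II} together with the 2-click rotation of its remark), and each $(\sigma,\gamma)$ must be matched to the appropriate flavor so the cancellation against HOMFLY's invariance is exact. For the $(\sigma,\gamma)$'s where the two braids carry the same sign---which by themselves do not form a RII configuration---one first converts between same-sign and opposite-sign layerings using the Hecke relation of Lemma \ref{Move I}, absorbing the extra $(q-q^{-1})\gra{b2id}$ contribution into the lower-order terms already controlled by the induction hypothesis. The Fourier-twist factor $i^{W_\sigma}$ is governed by $R = i\mathcal{F}(R)$ and creates no new obstruction beyond what was resolved in Lemma \ref{'to''}.
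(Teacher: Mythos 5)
Your proposal follows the paper's argument closely: the same two-step induction on the number of labels in $\Psi^n$, the same use of Equation~(\ref{zetadef1}) twice (once for the Reidemeister II tangle and once for the reduced tangle), the same cancellation against the HOMFLY Reidemeister II invariance, and the same averaging via Equation~(\ref{zetadef2}) using the $4$-fold cover $\gamma\mapsto\bar\gamma$. The explicit expansion of $\gra{homfly21}$ in your base case, with $1-\mathcal{D}^2=\frac{1}{1+\delta^2}$, is the same identity the paper establishes in Lemma~\ref{Move II}.

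The one place where you diverge is in flagging the same-sign configurations $(\gra{b+},\gra{b+})$ and $(\gra{b-},\gra{b-})$, which are not Reidemeister~II reducible, and which the paper's ``other 7 possibilities'' do not literally cover. You are right that they must be accounted for before the sum over all $\gamma\in\pm(\sigma)$ can be invoked via Equation~(\ref{zetadef2}). However, the mechanism you describe---``absorbing the extra $(q-q^{-1})\gra{b2id}$ contribution into the lower-order terms already controlled by the induction hypothesis''---is not quite the right bookkeeping: the induction hypothesis of this lemma (that $\zeta(\Psi^k(R_2))=0$ for $k<n$) is not what makes the extra term go away. Rather, what one really needs is that $\zeta_{\sigma,\gamma}(T)$, as defined by Equation~(\ref{zetadef1}), is independent of $\gamma$. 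This follows by writing Equation~(\ref{zetadef1}) for $\gamma_1$ and $\gamma_2$ differing at a single crossing, noting that the $R$-coefficient of $\gra{b+}$ and $\gra{b-}$ are equal while the $\gra{b1id}$-coefficients differ by exactly $q-q^{-1}$, and observing that the HOMFLY skein relation produces the identical $(q-q^{-1})$-weighted term on the other side, so the two contributions cancel outright. Once $\gamma$-independence of $\zeta_{\sigma,\gamma}(T)$ is noted, establishing Equation~(\ref{equ7}) for one Reidemeister~II choice of $\gamma$ per $\sigma$ (as the paper does) suffices, and the sum over all $\gamma$ collapses correctly. So your instinct that something must be said about the same-sign case is sound and slightly sharper than the paper's exposition, but the justification should be phrased as $\gamma$-independence via the skein relation rather than via the $R_2$-induction hypothesis.
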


\begin{proof}
The proof is a similar inductive argument as in the proof of Lemma \ref{''to'''}.

For an annular tangle $\Psi^0 \in Ann_2^0(0)$, take $T^0=\Psi^0(\gra{r21})$.
For any $\sigma\in\pm(T^0)$ and $\gamma\in\pm(\sigma)$, if $\gra{r21}$ is replaced by $\gra{homfly21}$ in $T^0_{\sigma,\gamma}$,
then by Equation (\ref{zetadef1}) and the Reidemester Move II
\begin{equation}\label{Move II 1}
\gra{homfly21}-\gra{homfly22}=\displaystyle \mathcal{D}^2R_2
\end{equation}
in Lemma \ref{Move II}, we have
$$\text{HOMFLY}_{q,r}(\Psi^0(\gra{homfly21}))= \mathcal{D}^2 (\zeta_{\sigma,\gamma}(T^0)+\zeta(\Psi^0(R_2-\gra{r21}))+\zeta(\Psi^0(\gra{homfly22})).$$
Note that
$$\text{HOMFLY}_{q,r}(\Psi^0(\gra{homfly21}))=\text{HOMFLY}_{q,r}(\Psi^0(\gra{homfly22}))=\zeta(\Psi^0(\gra{homfly22})),$$
so
\begin{equation}\label{equ4}
\zeta_{\sigma,\gamma}(T^0)+\zeta(\Psi^0(R_2-\gra{r21}))=0.
\end{equation}

If $\gra{r21}$ is replaced by the other 7 possibilities, then we still have $\zeta(\Psi^0(R_2))=0$ by applying the corresponding Reidemester Move II in Lemma \ref{Move II} to a similar argument.

Summing over all $\sigma,\gamma$, we have $\zeta(\Psi^0(R_2))=0$.

Suppose
$$\zeta(\Psi^{k}(R_2))=0, ~\forall~ \Psi^{k}\in Ann_2^0(k), ~k<n,$$
for some $n>0$.
For an annular tangle $\Psi^n \in Ann_2^0(n)$, take $T=\Psi^n(\gra{r21})$.
For any $\sigma\in\pm(T)$ and $\gamma\in\pm(\sigma)$, let
$$\Psi^n_{\sigma,\gamma}=\sum_{j=1}^{3^n}\Psi^n_{\sigma,\gamma}(j),$$
be the same decomposition as the one in the proof of Lemma \ref{''to'''}.

If $\gra{r21}$ is replaced by $\gra{homfly21}$ in $T_{\sigma,\gamma}$,
then by Equation (\ref{zetadef1}), we have
\begin{equation}\label{equ5}
\text{HOMFLY}_{q,r}(\Psi^n_{\sigma,\gamma}(\gra{homfly21}))= \mathcal{D}^n i^{W_\sigma}\left( \mathcal{D}^2\zeta_{\sigma,\gamma}(T)+\zeta(\Psi^n(
\gra{homfly21}-\mathcal{D}^2\gra{r21}))\right)+\sum_{j=2}^{3^n}\zeta(\Psi^n_{\sigma,\gamma}(j)(\gra{homfly21})).
\end{equation}
On the other hand
\begin{equation}\label{equ6}
\text{HOMFLY}_{q,r}(\Psi^n_{\bar{\sigma},\bar{\gamma}}(\gra{homfly22}))= \mathcal{D}^n i^{W_\sigma}(\zeta_{\bar{\sigma},\bar{\gamma}}(\Psi^n(\gra{homfly22})))+\sum_{j=2}^{3^n}\Psi^n_{\bar{\sigma},\bar{\gamma}}(j)(\gra{homfly22}),
\end{equation}
where $\bar{\sigma},\bar{\gamma}$ are the choices of orientations and braids of $\Psi^n(\gra{homfly22})$ corresponding to $\sigma,\gamma$ for $T$.
By induction and the Reidemester Move II (\ref{Move II 1}), we have
$$\Psi^n_{\sigma,\gamma}(j)(\gra{homfly21})-\Psi^n_{\sigma,\gamma}(j)(\gra{homfly22})= \mathcal{D}^2\Psi^n_{\sigma,\gamma}(j)(R_2)=0$$
for $2\leq j\leq 3^n$. Moreover,
$$\text{HOMFLY}_{q,r}(\Psi^n_{\sigma,\gamma}(\gra{homfly21}))=\text{HOMFLY}_{q,r}(\Psi^n_{\sigma,\gamma}(\gra{homfly22})).$$
So Equation (\ref{equ5})-(\ref{equ6}) implies
\begin{equation}
\mathcal{D}^2\zeta_{\sigma,\gamma}(T)+\zeta(\Psi^n(\gra{homfly21}-\mathcal{D}^2\gra{r21}))-\zeta_{\bar{\sigma},\bar{\gamma}}(\Psi^n(\gra{homfly22}))=0.
\end{equation}
By the Reidemester Move II (\ref{Move II 1}), we have
\begin{equation}\label{equ7}
\mathcal{D}^2\left(\zeta_{\sigma,\gamma}(T)-\zeta(\Psi^n(\gra{r21}))\right)+\left(\zeta(\Psi^n(\gra{21}))-\zeta_{\bar{\sigma},\bar{\gamma}}(\Psi^n(\gra{21}))\right)+\mathcal{D}^2\zeta(\Psi^n(R_2))=0.
\end{equation}

If $\gra{r21}$ is replaced by the other 7 possibilities, then we still have Equation (\ref{equ7}) by applying the corresponding Reidemester Move II in Lemma \ref{Move II} to a similar argument.

Note that $\sigma\rightarrow\bar{\sigma}$ is a bijection from $\pm(\Psi^n(\gra{r21}))$ to $\pm(\Psi^n(\gra{21}))$,
and $\gamma\rightarrow\bar{\gamma}$ is a 4-fold cover from $\pm(\sigma)$ to $\pm(\bar{\sigma})$.
Recall that $T=\Psi^n(\gra{r21})$.
Summing over all $\sigma,\gamma$ for Equation (\ref{equ7}), we have
$\zeta(\Psi^n(R_2))=0$ by Equation (\ref{zetadef2}).

By induction, we have $\zeta(\Psi(R_2))=0$, for any annular tangle $\Psi$. So $R_2\in \text{Ker}(\zeta)$ and $\zeta$ passes to the quotient $\mathscr{C}_{\bullet}''''$.
\end{proof}

\begin{lemma}\label{''''to}
The element $R_3$ is in $\text{Ker}(\zeta)$.
Therefore $\zeta$ passes to the quotient $\mathscr{C}_{\bullet}$.
\end{lemma}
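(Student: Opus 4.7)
The plan is to mirror the inductive arguments of Lemmas \ref{''to'''} and \ref{'''to''''}, now using the Reidemeister Moves III of Lemma \ref{Move III} in place of Moves I and II. The induction is on $k$, the number of labels $R$ in the ambient annular tangle $\Psi^k \in Ann_3^0(k)$.

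For the base case $k = 0$, fix $\Psi^0 \in Ann_3^0(0)$ and consider simultaneously $T = \Psi^0(\gra{15})$ and $\tilde T = \Psi^0(\gra{16})$, each containing three labels $R$. For any orientation $\sigma \in \pm(T)$ and braid choice $\gamma \in \pm(\sigma)$, the oriented link $T_{\sigma,\gamma}$ differs from its companion $\tilde T_{\bar\sigma,\bar\gamma}$ (with $\bar\sigma, \bar\gamma$ induced in the natural way from the equivalence between $\gra{15}$ and $\gra{16}$ under braid replacement) by a single Reidemeister Move III, in one of the 48 signed and oriented variants established in Lemma \ref{Move III}. Hence the two HOMFLY-PT values agree. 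Applying the decomposition formula (\ref{zetadef1}) to each side, matching the $\mathcal{D}^3 i^{W_\sigma}$ leading terms against $\zeta_{\sigma,\gamma}(T)$ and $\zeta_{\bar\sigma,\bar\gamma}(\tilde T)$ respectively, and organizing the remaining lower-order contributions via the identity $\gra{homfly31} - \gra{homfly32} = \mathcal{D}^3 i^3 R_3$, yields an equation of the form
\[
\mathcal{D}^3\bigl(\zeta_{\sigma,\gamma}(T) - \zeta_{\bar\sigma,\bar\gamma}(\tilde T)\bigr) + \mathcal{D}^3 i^3\, \zeta(\Psi^0(R_3)) + (\text{lower-order terms}) = 0.
\]
Summing over $(\sigma, \gamma)$ via (\ref{zetadef2}) and invoking the relation $\zeta(\Psi^0(\gra{16})) = -i\,\zeta(\Psi^0(\gra{15}))$ implied by Lemma \ref{'to''} (that is, by $R - i\mathcal{F}(R) \in \text{Ker}(\zeta)$ applied to the outer strand of the 3-braid), the first difference averages to zero; the remaining lower-order terms are $\zeta$-values of diagrams with at most two labels $R$ and cancel by direct inspection, yielding $\zeta(\Psi^0(R_3)) = 0$.

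The inductive step runs the same argument with an additional $\mathcal{D}^n i^{W_\sigma}$ factor in each leading term, coming from the $n$ extra labels $R$ in $\Psi^n$. The lower-order summands that arise when braids in $\Psi^n$ are expanded via (\ref{braid+}), (\ref{braid-}) have strictly fewer than $n$ labels $R$ in the annular portion, so their contribution to $\zeta(\Psi^n(R_3))$ vanishes by the induction hypothesis. The bijection $\sigma \mapsto \bar\sigma$ and the covering $\gamma \mapsto \bar\gamma$ on the choice sets work exactly as in the proof of Lemma \ref{'''to''''}.

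I expect the main technical obstacle to be the combinatorial book-keeping: one must verify that the 48 variants of Move III from Lemma \ref{Move III} exactly cover the pairings $(\sigma,\gamma) \leftrightarrow (\bar\sigma,\bar\gamma)$ that arise in the averaging, so that every summand in (\ref{zetadef2}) for $T$ finds a matching summand for $\tilde T$. Lemma \ref{Move III} was established precisely to guarantee this coverage, and together with the relation $\mathcal{F}(R_3) = -R_3$ in $\mathscr{C}_{\bullet}''''$ it supplies all the rotational and orientational symmetries needed for the averaged sum to collapse to $\zeta(\Psi^n(R_3)) = 0$, completing the induction and showing that $\zeta$ descends to $\mathscr{C}_{\bullet}$.
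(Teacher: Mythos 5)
Your overall architecture is right (mirror Lemmas \ref{''to'''} and \ref{'''to''''}, replacing Moves I and II with the 48 variants of Move III from Lemma \ref{Move III}, and induct on the number of labels in the ambient annular tangle), but there is a genuine gap in the mechanism you use to kill the summed leading difference. You claim that $\zeta(\Psi^0(\gra{16}))=-i\,\zeta(\Psi^0(\gra{15}))$ ``implied by Lemma \ref{'to''}.'' This is not available: the two all-crossings $3$-boxes $\gra{15}$ and $\gra{16}$ are not related by applying $\mathcal{F}(R)=-iR$ to any single crossing, nor by rotating the whole $3$-box --- they differ by a planar rearrangement of the three crossings, which is precisely the content of $R_3$ and hence of the statement you are trying to prove. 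Using it here is circular. Even granting it, the scalar $-i$ would not force $\sum_{\sigma,\gamma}\bigl(\zeta_{\sigma,\gamma}(T)-\zeta_{\bar\sigma,\bar\gamma}(\tilde T)\bigr)$ to vanish, since that sum equals a nonzero multiple of $\zeta(\Psi^0(\gra{15}))-\zeta(\Psi^0(\gra{16}))$.

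The correct cancellation is the same as in the two preceding lemmas and needs no global relation between $\zeta(\Psi^0(\gra{15}))$ and $\zeta(\Psi^0(\gra{16}))$. After subtracting the two HOMFLY-PT evaluations and inserting $\gra{homfly31}-\gra{homfly32}=\mathcal{D}^3 i^3 R_3$, one does not get your form; one gets (dividing by $\mathcal{D}^3$ and with the leading coefficient $i^3$ on the $T$-side coming from $W_\sigma$ vs.\ $W_{\bar\sigma}$)
\begin{equation*}
i^3\bigl(\zeta_{\sigma,\gamma}(T^0)-\zeta(\Psi^0(\gra{15}))\bigr)-\bigl(\zeta_{\bar\sigma,\bar\gamma}(S^0)-\zeta(\Psi^0(\gra{16}))\bigr)+i^3\zeta(\Psi^0(R_3))=0,
\end{equation*}
in which the ``lower-order terms'' have already been folded into the two constants $\zeta(\Psi^0(\gra{15}))$ and $\zeta(\Psi^0(\gra{16}))$. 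Summing over $\sigma,\gamma$ and using the bijections $\sigma\mapsto\bar\sigma$, $\gamma\mapsto\bar\gamma$, the first parenthesis averages to $\zeta(T^0)-\zeta(\Psi^0(\gra{15}))=0$ because $T^0=\Psi^0(\gra{15})$ by definition; the second vanishes identically for the same reason with $S^0=\Psi^0(\gra{16})$. No auxiliary relation is needed, and the extra $i^3$ factor you dropped is harmless precisely because each parenthesis vanishes on its own. Reorganizing your base case and inductive step around this point repairs the argument; otherwise the rest of your outline (the bijection and covering structure, the inductive use of fewer $R$-labels in the $j\ge 2$ terms) matches the paper's proof.
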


\begin{proof}
The proof is a similar inductive argument as in the proof of Lemma \ref{''to'''}, \ref{'''to''''}.

For an annular tangle $\Psi^0 \in Ann_3^0(0)$, take $T^0=\Psi^0(\gra{15})$.
For any $\sigma\in\pm(T^0)$ and $\gamma\in\pm(\sigma)$,
if $\gra{15}$ is replaced by $\gra{homfly31}$ in $T^0_{\sigma,\gamma}$,
then by Equation (\ref{zetadef1}), we have
$$\text{HOMFLY}_{q,r}(\Psi^0(\gra{homfly31}))= \mathcal{D}^3 i^3\zeta_{\sigma,\gamma}(T^0)+\zeta(\Psi^0(\gra{homfly31}-\mathcal{D}^3i^3\gra{15})).$$

On the other hand, take $S^0=\Psi^0(\gra{16})$ and $\bar{\sigma}\in \pm(S^0),\bar{\gamma}\in \pm(\bar{\sigma})$ such that $S_{\bar{\sigma},\bar{\gamma}}$ is isotopic to $T_{\sigma,\gamma}$ by a Reidemester move III.
Then
by Equation (\ref{zetadef1}), we have
$$\text{HOMFLY}_{q,r}(\Psi^0(\gra{homfly32}))= \mathcal{D}^3 \zeta_{\bar{\sigma},\bar{\gamma}}(S^0)+\zeta(\Psi^0(\gra{homfly32}-\mathcal{D}^3\gra{16})).$$

Note that $\text{HOMFLY}_{q,r}(\Psi^0(\gra{homfly31}))=\text{HOMFLY}_{q,r}(\Psi^0(\gra{homfly32}))$. By the Reidemester Move III
\begin{equation}\label{Move III 1}
\gra{homfly31}-\gra{homfly32}=\displaystyle \mathcal{D}^3i^3R_3
\end{equation}
in Lemma \ref{Move III}, we have
\begin{equation}\label{equ31}
i^3\left(\zeta_{\sigma,\gamma}(T^0)-\zeta(\Psi^0(\gra{15}))\right)-\left(\zeta_{\bar{\sigma},\bar{\gamma}}(S^0)-\zeta(\Psi^0(\gra{16}))\right)+i^3\zeta(\Psi^0(R_3))=0.
\end{equation}

If $\gra{15}$ is replaced other 47 possibilities, then we still have Equation (\ref{equ31}) by applying the corresponding Reidemester Move III in Lemma \ref{Move III} to a similar argument.

Note that $\sigma\rightarrow\bar{\sigma}$ is a bijection from $\pm(T^0)$ to $\pm(S^0)$,
and $\gamma\rightarrow\bar{\gamma}$ is a bijection from $\pm(\sigma)$ to $\pm(\bar{\sigma})$.
Summing over all $\sigma,\gamma$, we have
\begin{equation*}
i^3\left(\zeta(T^0)-\zeta(\Psi^0(\gra{15}))\right)-\left(\zeta(S^0)-\zeta(\Psi^0(\gra{16}))\right)+i^3\zeta(\Psi^0(R_3))=0.
\end{equation*}
Recall that $T^0=\Psi^0(\gra{15})$, $S^0=\Psi^0(\gra{16})$,
so $\zeta(\Psi^0(R_3))=0$.

Suppose
$$\zeta(\Psi^{k}(R_3))=0, ~\forall~ \Psi^{k}\in Ann_3^0(k), ~k<n,$$
for some $n>0$.
For an annular tangle $\Psi^n \in Ann_3^0(0)$, take $T=\Psi^n(\gra{15})$.
For any $\sigma\in\pm(T)$ and $\gamma\in\pm(\sigma)$, let
$$\Psi^n_{\sigma,\gamma}=\sum_{j=1}^{3^n}\Psi^n_{\sigma,\gamma}(j),$$
be the same decomposition as the one in the proof of Lemma \ref{''to'''}.

If $\gra{15}$ is replaced by $\gra{homfly31}$ in $T_{\sigma,\gamma}$,
then by Equation (\ref{zetadef1}), we have
\begin{align*}
&\text{HOMFLY}_{q,r}(\Psi^n_{\sigma,\gamma}(\gra{homfly31}))\\
&=\mathcal{D}^n i^{W_\sigma}\left( \mathcal{D}^3i^3\zeta_{\sigma,\gamma}(T)+\zeta(\Psi^n(
\gra{homfly31}-\mathcal{D}^3i^3\gra{15}))\right)+\sum_{j=2}^{3^n}\zeta(\Psi^n_{\sigma,\gamma}(j)(\gra{homfly31})).
\numberthis \label{equ8}
\end{align*}

On the other hand, take $S=\Psi^n(\gra{16})$, we have
\begin{align*}
&\text{HOMFLY}_{q,r}(\Psi^n_{\bar{\sigma},\bar{\gamma}}(\gra{homfly32}))\\
&= \mathcal{D}^n i^{W_\sigma}\left( \mathcal{D}^3\zeta_{\bar{\sigma},\bar{\gamma}}(S)+\zeta(\Psi^n(
\gra{homfly32}-\mathcal{D}^3\gra{16}))\right)+\sum_{j=2}^{3^n}\zeta(\Psi^n_{\bar{\sigma},\bar{\gamma}}(j)(\gra{homfly32})).  \numberthis \label{equ9}
\end{align*}
where $\bar{\sigma},\bar{\gamma}$ are the corresponding choices of orientations and braids of $\Psi^n(\gra{16})$, such that $\Psi^n_{\sigma,\gamma}=\Psi^n_{\bar{\sigma},\bar{\gamma}}$.

By induction and the Reidemester Move III (\ref{Move III 1}), we have
$$\zeta(\Psi^n_{\sigma,\gamma}(j)(\gra{homfly31}))-\zeta(\Psi^n_{\bar{\sigma},\bar{\gamma}}(j)(\gra{homfly32}))= \mathcal{D}^3i^3\zeta(\Psi^n_{\sigma,\gamma}(j)(R_3))=0,$$
for $2\leq j\leq 3^n$. Moreover,
$$\text{HOMFLY}_{q,r}(\Psi^n_{\sigma,\gamma}(\gra{homfly31}))=\text{HOMFLY}_{q,r}(\Psi^n_{\sigma,\gamma}(\gra{homfly32})).$$
Applying the Reidemester Move III (\ref{Move III 1}) to Equation (\ref{equ8})-(\ref{equ9}), we have
\begin{equation}\label{equ32}
i^3\left(\zeta_{\sigma,\gamma}(T)-\zeta(\Psi^n(\gra{15}))\right)-\left(\zeta_{\bar{\sigma},\bar{\gamma}}(S)-\zeta(\Psi^n(\gra{16}))\right)+i^3\zeta(\Psi^n(R_3))=0.
\end{equation}

If $\gra{15}$ is replaced by the other 47 possibilities, then we still have Equation (\ref{equ32}) by applying the corresponding Reidemester Move III in Lemma \ref{Move III} to a similar argument.

Note that $\sigma\rightarrow\bar{\sigma}$ is a bijection from $\pm(T^0)$ to $\pm(S^0)$,
and $\gamma\rightarrow\bar{\gamma}$ is a bijection from $\pm(\sigma)$ to $\pm(\bar{\sigma})$.
Summing over all $\sigma,\gamma$, we have
\begin{equation*}
i^3\left(\zeta(T)-\zeta(\Psi^n(\gra{15}))\right)-\left(\zeta(S)-\zeta(\Psi^n(\gra{16}))\right)+i^3\zeta(\Psi^n(R_3))=0.
\end{equation*}
Recall that $T=\Psi^n(\gra{15})$, $S=\Psi^n(\gra{16})$,
so $\zeta(\Psi^0(R_3))=0$.

By induction, we have $\zeta(\Psi(R_1))=0$, for any annular tangle $\Psi$. So $R_2\in \text{Ker}(\zeta)$ and $\zeta$ passes to the quotient $\mathscr{C}_{\bullet}$.
\end{proof}

\begin{theorem}[Consistency]\label{consistency}
The general planar algebra $\mathscr{C}_{\bullet}$ is a planar algebra over $\mathbb{C}$ for any $\delta\in\mathbb{R}$.
\end{theorem}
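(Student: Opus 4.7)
The plan is to assemble the chain of lemmas \ref{'to''} through \ref{''''to} and invoke the general principle from Section \ref{subsection skein theory} that a well-defined partition function on the universal planar algebra whose kernel contains all the defining relations produces a consistent planar algebra as the quotient. So the work has in fact been done; this theorem is essentially the final assembly step.

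First, I would verify that the partition function $\zeta$ is unambiguously defined on the universal planar algebra $\mathscr{C}_{\bullet}'$ by the inductive formulas \eqref{zetadef1} and \eqref{zetadef2}. The base case $n=0$ is the closed Temperley-Lieb evaluation with circle value $\delta$. The inductive step averages an expression built from the HOMFLY-PT polynomial over the $2^k\cdot 2^n$ choices of orientations $\sigma$ and braid resolutions $\gamma$; the recursion terminates since each residual diagram $T_{\sigma,\gamma}(j)$ for $2\leq j\leq 3^n$ has strictly fewer $R$-labels than $T$. Extending by linearity gives $\zeta$ on all of $\mathscr{C}_{\bullet}'$.

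Second, I would line up the descent lemmas in sequence. The multiplicativity lemma shows $\gra{r0}-\delta\in\operatorname{Ker}(\zeta)$, so $\zeta$ defines a partition function on $\mathscr{C}_{\bullet}'$ with circle parameter $\delta$. Lemma \ref{'to''} then gives $R-i\mathcal{F}(R)\in\operatorname{Ker}(\zeta)$, so $\zeta$ descends to $\mathscr{C}_{\bullet}''$. Lemmas \ref{''to'''}, \ref{'''to''''}, and \ref{''''to} give, respectively, $R_1,R_2,R_3\in\operatorname{Ker}(\zeta)$, so $\zeta$ descends through $\mathscr{C}_{\bullet}'''$ and $\mathscr{C}_{\bullet}''''$ to $\mathscr{C}_{\bullet}$. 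Consequently every relation imposed in Definition \ref{Def:Centralizer algebra} is annihilated by $\zeta$. By the standard framework, the kernel of a well-defined partition function is a planar ideal, so the quotient inherits a planar algebra structure; the multiplicativity of $\zeta$ on disjoint unions gives unitality, and $\zeta(\emptyset)=1$ gives non-degeneracy. Hence $\mathscr{C}_{\bullet}$ is a planar algebra over $\mathbb{C}$ for every $\delta\in\mathbb{R}$.

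The main subtlety — already absorbed into Lemmas \ref{'to''}--\ref{''''to} — is the mismatch between the \emph{unoriented} generator $R$ of $\mathscr{C}_{\bullet}$ and the \emph{oriented} braids required to invoke the HOMFLY-PT polynomial. The averaging over choices $(\sigma,\gamma)$ in the definition of $\zeta$ is exactly what reconciles this mismatch. At the level of oriented diagrams, the Reidemeister I, II, III identities of Lemmas \ref{Move I}, \ref{Move II}, \ref{Move III} translate, after averaging, into the defining relations $R_1$, $R_2$, $R_3$ of $\mathscr{C}_{\bullet}$, and each descent proof proceeds by induction on the number of $R$-labels in a closed diagram, using the previous descent step to discard the residual lower-order terms. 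The bulk of the technical difficulty lives in that orientation-averaging machinery rather than in the present assembly.
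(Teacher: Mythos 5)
Your proposal is correct and takes essentially the same route as the paper: the paper's proof is a three-sentence assembly citing Theorem \ref{evaluable} for evaluability and Lemma \ref{''''to} to conclude that $\zeta$ descends to $\mathscr{C}_{\bullet}$, so that every evaluation of a closed diagram equals $\zeta(T)$. You make the intermediate descent chain and the orientation-averaging rationale more explicit, but the logical content is identical.
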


\begin{proof}
The general planar algebra $\mathscr{C}_{\bullet}$ is evaluable by Theorem \ref{evaluable}.
By Lemma \ref{''''to}, the partition function $\zeta$ passes to the quotient $\mathscr{C}_{\bullet}$. So any evaluation of a closed diagram $T$ is $\zeta(T)$.
\end{proof}

Recall that $\displaystyle  q=\frac{i+\delta }{\sqrt{1+\delta^2}}$, so
$\displaystyle \delta=\frac{i(q+q^{-1})}{q-q^{-1}}$, $r=iq^{-1}$.
Therefore the Yang-Baxter relation (\ref{YBrelation}) for $\mathscr{C}_{\bullet}$ is also a relation over the field $\mathbb{C}(q)$.

\begin{corollary}
The general planar algebra $\mathscr{C}_{\bullet}=\mathscr{C}(q)_{\bullet}$ is a planar algebra over $\mathbb{C}(q)$.
\end{corollary}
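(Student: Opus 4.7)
The plan is to deduce this corollary from Theorem~\ref{consistency} by a specialization/density argument. Under the substitution $\delta = i(q+q^{-1})/(q-q^{-1})$ and $r = iq^{-1}$, all of the coefficients appearing in the defining relations of Definition~\ref{Def:Centralizer algebra}—namely $\mathcal{F}(R)=-iR$, uncappability, $R^2 = \mathrm{id} - e$, and the Yang-Baxter relation \eqref{YBrelation}—lie in $\mathbb{C}(q)$. Hence the universal planar algebra generated by $R$ modulo these relations makes sense verbatim over $\mathbb{C}(q)$, and the evaluation algorithm of Theorem~\ref{evaluable} runs formally over $\mathbb{C}(q)$ to reduce every closed labeled diagram to an element of $\mathbb{C}(q)$.

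The content of the corollary is that this evaluation is well defined, i.e., that the ideal generated by the defining relations is contained in the kernel of any evaluation sequence. Equivalently, for any closed labeled diagram $T$, any two complexity-reducing evaluations produce the same element of $\mathbb{C}(q)$. First I would observe that, by tracking the reduction process in Theorem~\ref{evaluable}, each such evaluation is a rational function in $q$ with denominator a product of finitely many factors drawn from $q - q^{-1}$ and $q + q^{-1}$ (the denominators appearing in the Yang-Baxter relation when rewritten over $\mathbb{C}(q)$). Consequently, the difference of any two evaluations of $T$ is a single rational function $f_T(q) \in \mathbb{C}(q)$ with controlled denominator.

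Next I would invoke Theorem~\ref{consistency}: for every $\delta \in \mathbb{R}$, the corresponding specialization $q_\delta = (i+\delta)/\sqrt{1+\delta^2}$ lies on the unit circle, and at $q = q_\delta$ the two evaluations both collapse to the single scalar $\zeta(T) \in \mathbb{C}$ of that theorem. Thus $f_T(q_\delta) = 0$ for every real $\delta$. Because $\{q_\delta : \delta \in \mathbb{R}\}$ is an uncountable subset of $\mathbb{C}$, and a nonzero rational function has only finitely many zeros, we conclude $f_T \equiv 0$ in $\mathbb{C}(q)$. Thus the evaluation is consistent over $\mathbb{C}(q)$, making $\mathscr{C}(q)_\bullet$ a planar algebra over $\mathbb{C}(q)$.

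The main obstacle, such as it is, is bookkeeping: one must check that the formal evaluation really does produce a rational function of $q$ (as opposed to an expression that is only defined pointwise for real $\delta$) so that the density argument applies. This is not deep—it suffices to observe that the reductions used in Theorem~\ref{evaluable}, i.e., the type~I, II, and Yang-Baxter moves, all have coefficients rewritten from $\delta$ to $q$ via a fixed rational substitution—but it is the point one must be careful about. The alternative route, rerunning the inductive construction of $\zeta$ via the HOMFLY-PT polynomial with the understanding that $\mathrm{HOMFLY}_{q,r}$ is a Laurent polynomial in $q,r$ (and $r = iq^{-1}$), gives the rationality of $\zeta(T)$ in $q$ directly and reduces the corollary to a one-line specialization argument.
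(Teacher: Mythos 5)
Your argument is the same as the paper's: observe that any two evaluations over $\mathbb{C}(q)$ are rational functions in $q$, note that by Theorem~\ref{consistency} they agree at the infinite family of specializations $q=\frac{i+\delta}{\sqrt{1+\delta^2}}$ for $\delta\in\mathbb{R}$, and conclude by the fact that a nonzero rational function has finitely many zeros. Your extra bookkeeping about denominators and the alternative route through the HOMFLY-PT construction of $\zeta$ are reasonable elaborations but do not change the structure of the argument.
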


\begin{proof}
Over the field $\mathbb{C}(q)$, any two evaluations of a closed diagram in $\mathscr{C}_{\bullet}$ are two rational functions over $q$. Moreover, the two rational functions have the same value for $q=\frac{i+\delta }{\sqrt{1+\delta^2}}$, $\delta\in\mathbb{R}$ by Theorem \ref{consistency}, so they are the same.
Therefore the Yang-Baxter relation is consistent over $\mathbb{C}(q)$.
\end{proof}

\section{Representations}\label{subsection matrix units}
We have constructed the planar algebra $\mathscr{C}_{\bullet}$ over the field $\mathbb{C}(q)$.
The next step is to find out all values of $q$, such that the planar algebra $\mathscr{C}_{\bullet}$ has a positive partition function with respect to an involution $*$. Then (the quotient of) $\mathscr{C}_{\bullet}$ is a subfactor planar algebra over the field $\mathbb{C}$. It is easy to figure out the unique possible involution $*$ on $\mathscr{C}_{\bullet}$. It seems impossible to show that the partition function is positive directly.
We prove the positivity by three steps:

(1) We construct the matrix units of $\mathscr{C}_{\bullet}$ over $\mathbb{C}(q)$ in this Section. Its minimal idempotents are indexed by Young diagrams.

(2) We compute the trace formula for the minimal idempotents of $\mathscr{C}_{\bullet}$ over $\mathbb{C}(q)$ in Section \ref{subsection trace formula}. For a minimal idempotent labeled by a Young diagram $\lambda$, its trace $<\lambda>$ is given in Theorem \ref{main theorem trace formula}.

Technically, the construction of the matrix units relies on the trace formula. On the other hand, the computation of the trace formula relies on the construction of the matrix units. The order of constructing matrix units and computing the trace formula is special and very delicate.

(3) The positivity of partition function can only be obtained at $\displaystyle q=e^{\frac{i\pi}{2N+2}}$, for $N\in \mathbb{N}^+$.
When $\displaystyle q=e^{\frac{i\pi}{2N+2}}$, $\mathscr{C}_{\bullet}$ is not semisimple over $\mathbb{C}$. (In this paper, it is semisimple means that it is a direct sum of full matrix algebras over a field $k$.)
We give a method to determine the semisimple quotient without knowing the presumed semi-simple quotients. We construct the semi-simple quotient $\mathscr{C}^{N}_{\bullet}$ by showing that certain matrix units are still well-defined while passing from the field $\mathbb{C}(q)$ to $\mathbb{C}$. Then we prove that $\mathscr{C}_{\bullet}$ has a positive partition function and the semi-simple quotient $\mathscr{C}^{N}_{\bullet}$ is a subfactor planar algebra.

First let us prove that $\mathscr{C}_{\bullet}$ is semisimple over $\mathbb{C}(q)$ and construct its matrix units. Consequently, we obtain all irreducible representations of $\mathscr{C}_{m}$. They are indexed by Young diagrams with $m'$ cells, $m'\leq m$ with the same parity.

Recall that the braid $\gra{b+}$ satisfies the Hecke relation, so $\mathscr{C}_{\bullet}$ has a subalgebra $H_{\bullet}$, the Hecke algebra of type $A$ with parameters $q$, $r$ subject to $r=iq^{-1}$.
Moreover $\mathscr{C}_n/\mathscr{I}_n\cong H_n$, where $\mathscr{I}_n$ is the two sided ideal of $\mathscr{C}_n$ generated by the Jones projection, namely the basic construction ideal.
The Bratteli diagram of $H_{\bullet}$ is Young's Lattice, denoted by $YL$, so the principal graph of (a proper quotient of) $\mathscr{C}_{\bullet}$ is a subgraph of Young's Lattice.
To construct the matrix units of $\mathscr{C}_{\bullet}$, we need to decompose minimal idempotents of $\mathscr{C}_n$ in $\mathscr{C}_{n+1}$. This decomposition can be derived from Wenzl's formula for the basic construction for $\mathscr{C}_{n-1}\subset\mathscr{C}_n$ and the branching formula for $H_{\bullet}$. The basic construction and Wenzl's formula will work, if $\mathscr{C}_n$ is semisimple and the trace $tr_n$ is non-degenerate. To ensure the two conditions, let us take the ground field to be $\mathbb{C}(q)$ first.
We are going to prove that $\mathscr{C}_{\bullet}$ over the field $\mathbb{C}(q)$ is isomorphic to the string algebra of the Young's Lattice starting from the empty Young diagram.

\begin{definition}
The string algebra $YL_{\bullet}$ of $YL$ over the field $\mathbb{C}(q)$ is an inclusion of semisimple algebras $YL_{n}$, $n=0,1,\cdots$.
Moreover, the basis of $YL_{n}$ consists of all length $2n$ loops of $YL$ starting from $\emptyset$. The multiplication of $YL_{n}$ is a linear extension of the multiplication of length $2n$ loops.
The inclusion $\iota: YL_{n}\to YL_{n+1}$ is a linear extension of
\begin{align*}
\iota(t\tau^{-1})=\sum_{s(e)=v} tee^{-1}\tau^{-1},
\end{align*}
where $t$ and $\tau$ are length $n$ paths from $\emptyset$ to some vertex $v$, and $s(E)$ is the source vertex of the edge $E$.
\end{definition}

\begin{definition}
For $n\geq1$, the vertices of $YL$, whose distance to $\emptyset$ is at most $n-1$, and the edges between these vertices form a subgraph of $YL$, denoted by $YL^{n-1}$.
Let $IYL_n$ to be the subspace of $YL_n$ whose basis consisting of all length $2n$ loops of $YL^{n-1}$ starting from $\emptyset$.
Let $HYL_n$ to be the subspace of $YL_n$ whose basis consisting of all length $2n$ loops passing a vertex in $YL\setminus YL^{n-1}$ starting from $\emptyset$.
\end{definition}

\begin{lemma}\label{y=y+h}
The subspace $IYL_n$ is a two sided ideal of $YL_n$,
$YL_n= IYL_n \oplus HYL_n$,
and
$HYL_n\simeq H_n$ as an algebra, for $n\geq1$.
\end{lemma}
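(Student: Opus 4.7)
The plan is to characterize the basis vectors of $IYL_n$ and $HYL_n$ by the size of the common endpoint of the defining paths, and then read off the three claims from this description combined with the string-algebra multiplication rule $(t\tau^{-1})(s\sigma^{-1})=\delta_{\tau,s}\,t\sigma^{-1}$. Every basis loop of $YL_n$ factors as $t\tau^{-1}$ where $t,\tau$ are length-$n$ paths from $\emptyset$ ending at a common Young diagram $\lambda$, with $|\lambda|\equiv n\pmod 2$ and $|\lambda|\leq n$. I will first verify that $t\tau^{-1}\in IYL_n$ iff $|\lambda|<n$ and $t\tau^{-1}\in HYL_n$ iff $|\lambda|=n$. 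If $|\lambda|=d$, the path uses $(n+d)/2$ up-steps, so its largest intermediate distance is at most $(n+d)/2$; for $d<n$ (equivalently $d\leq n-2$ by parity) this bound is $\leq n-1$, so both $t$ and $\tau$ stay in $YL^{n-1}$. Conversely, visiting a vertex at distance $n$ within a length-$n$ path from $\emptyset$ requires $n$ consecutive up-steps, forcing the endpoint to have $n$ cells. These two conditions partition the basis, giving the vector-space direct sum $YL_n=IYL_n\oplus HYL_n$.

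The ideal property is then immediate from the multiplication rule: if $t\tau^{-1}\in IYL_n$ and $s\sigma^{-1}\in YL_n$, a nonzero product $t\sigma^{-1}$ forces $s=\tau$ and hence $\sigma$ to end at the same vertex as $\tau$, which has fewer than $n$ cells, so $t\sigma^{-1}\in IYL_n$. The symmetric argument handles left multiplication. The same analysis shows $IYL_n\cdot HYL_n=HYL_n\cdot IYL_n=0$, so $HYL_n$ is closed under multiplication and $YL_n=IYL_n\oplus HYL_n$ is in fact an algebra direct sum.

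For the isomorphism $HYL_n\simeq H_n$: the basis of $HYL_n$ consists of loops $t\tau^{-1}$ indexed by pairs of standard tableaux on Young diagrams of size $n$, and the multiplication is the standard matrix-unit rule, yielding $HYL_n\cong\bigoplus_{|\lambda|=n}M_{f^\lambda}(\mathbb{C}(q))$, where $f^\lambda$ is the number of standard tableaux of shape $\lambda$. On the other hand, the construction recalled in Section \ref{hecke} from \cite{Yok97} produces over $\mathbb{C}(q,r)$ a matrix-unit basis $\{P_t^+P_\tau^-\}$ of $H_n$ indexed by the same pairs and satisfying the same multiplication rule. Specializing $r=iq^{-1}$, the map $P_t^+P_\tau^-\mapsto t\tau^{-1}$ will be the desired algebra isomorphism $H_n\to HYL_n$.

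The one technical point to verify is that the matrix-unit construction of \cite{Yok97}, originally performed over $\mathbb{C}(q,r)$, survives the substitution $r=iq^{-1}$; this reduces to checking that the normalization coefficients $m_\lambda$ and $m_{[\mu|\lambda|\mu]}$ remain non-zero as elements of $\mathbb{C}(q)$ after this substitution. That is the only nontrivial obstacle, and it follows from the explicit product formulas for these coefficients recorded in \cite{Yok97}; the rest of the argument is a purely combinatorial unpacking of the definitions of $IYL_n$ and $HYL_n$.
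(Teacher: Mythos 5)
Your proof is correct, and since the paper offers only ``They follow from the definitions,'' your write-up is essentially the intended (if unstated) argument. The combinatorial characterization is the heart of it and you have it right: a length-$n$ path from $\emptyset$ to $\lambda$ with $|\lambda|=d$ has $(n+d)/2$ up-steps, so its maximal excursion is $\leq (n+d)/2 \leq n-1$ whenever $d\leq n-2$; conversely reaching distance $n$ in $n$ steps forces all steps upward, so $|\lambda|=n$. Hence $IYL_n$ and $HYL_n$ are exactly the spans of loops whose apex has $<n$ and $=n$ cells respectively, and the ideal/direct-sum claims drop out of the rule $(t\tau^{-1})(s\sigma^{-1})=\delta_{\tau,s}\,t\sigma^{-1}$, which can only connect loops with the same apex.

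One stylistic simplification worth noting for the final step: the abstract algebra $H_n$ is defined purely by the braid and Hecke relations, which involve $q$ but not $r$; the parameter $r$ enters only through the Markov trace and the filtered inclusion $H_n\hookrightarrow H_{n+1}$, not the multiplication in a fixed level $n$. So the isomorphism $H_n\cong\bigoplus_{|\lambda|=n}M_{f^\lambda}(\mathbb{C}(q))$ is the classical semisimplicity of the type-$A$ Hecke algebra at generic $q$ (Hoefsmit, Dipper--James, Wenzl) and requires no specialization argument at all. Your route through Yokota's matrix units and the nonvanishing of $m_\lambda$, $m_{[\mu|\lambda|\mu]}$ at $r=iq^{-1}$ also works -- and in fact the $m_\lambda$ come purely from symmetrizers/antisymmetrizers built from Equations~(\ref{fl}), (\ref{gl}) and so involve only $q$ -- but invoking the classical fact sidesteps the check entirely.
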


\begin{proof}
They follow from the definitions.
\end{proof}

\begin{notation}
The elements $x\otimes 1$, $x\otimes \cap$, $x\otimes \cup$, are adding a string, a cap $\cap$, a cup $\cup$ to the right of $x$ respectively.
\end{notation}

\begin{theorem}[matrix units]\label{P=YL}
Over the field $\mathbb{C}(q)$, $\mathscr{C}_{\bullet}\cong YL_{\bullet}$ as a filtered algebra.
\end{theorem}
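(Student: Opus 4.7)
The plan is to prove Theorem~\ref{P=YL} by induction on $n$, intertwined with the trace formula of Theorem~\ref{main theorem trace formula}. The base cases $n = 0, 1$ are immediate: $\mathscr{C}_0 \cong \mathbb{C}(q) \cong YL_0$ and $\mathscr{C}_1$ is one-dimensional, matching $YL_1$. For the induction step I would assume $\mathscr{C}_n \cong YL_n$ semisimply with an explicit set of matrix units $\{P^+_t P^-_\tau\}$ indexed by pairs of length-$n$ paths in $YL$ ending at the same Young diagram, and also assume the trace formula holds for every $\lambda$ with $|\lambda| \le n$. The goal is then to extend to a full orthogonal system for $\mathscr{C}_{n+1}$ indexed by pairs of length-$(n+1)$ paths, compatible with the filtered inclusion.

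Following the splitting $YL_{n+1} = IYL_{n+1} \oplus HYL_{n+1}$ from Lemma~\ref{y=y+h}, I would build two families of matrix units. The Hecke-type units, corresponding to paths ending at a Young diagram $\lambda$ with $|\lambda| = n+1$, are the elements $P^+_t P^-_\tau$ recalled in Section~\ref{hecke}, constructed via Young symmetrizers and antisymmetrizers inside the subalgebra $H_{n+1} \subset \mathscr{C}_{n+1}$ generated by the braids of Equations~(\ref{braid+})--(\ref{braid-}); over $\mathbb{C}(q)$ all normalizing constants $m_\lambda$ and $m_{[\mu|\lambda|\mu]}$ are non-zero rational functions, so these lift from $H_{n+1}$ into $\mathscr{C}_{n+1}$ without issue. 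The basic-construction-type units, corresponding to paths returning to a vertex $\mu$ with $|\mu| \le n-1$, are built from the inductive matrix units of $\mathscr{C}_{n-1}$ together with the Jones projection $e = \tfrac{1}{\delta}\gra{22}$ via Wenzl's formula for the inclusion tower $\mathscr{C}_{n-1} \subset \mathscr{C}_n \subset \mathscr{C}_{n+1}$; the normalizations involve ratios of traces $\langle\lambda\rangle / \langle\mu\rangle$ supplied by the trace formula at levels $\le n$.

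The induction is closed by checking three things. First, these units multiply correctly: for the Hecke type this is the multiplication rule in $H_{n+1}$ recalled in Section~\ref{hecke}, for the ideal type it follows from the basic-construction relations satisfied by $e$, and the cross terms vanish because the Young symmetrizers and antisymmetrizers entering $y_\lambda$ are killed by composition with a cap. Second, they sum to $1 \in \mathscr{C}_{n+1}$: inserting a through string to the right of each $y_\mu$ with $|\mu| = n$ produces, via the branching formula~(\ref{branching formula}) together with the basic-construction decomposition $y_\mu \otimes 1 = (y_\mu \otimes 1)(1 \otimes e) + (y_\mu \otimes 1)(1 - 1 \otimes e)$, exactly the two expected families of sub-idempotents. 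Third, the new Hecke-type units are non-zero in $\mathscr{C}_{n+1}$ (not merely in the quotient $H_{n+1}$), which follows from $\mathrm{tr}(y_\lambda) = \langle\lambda\rangle \neq 0$ over $\mathbb{C}(q)$ by the trace formula. Compatibility with the filtration is automatic from the construction, since adding a through string is exactly the string-algebra inclusion $YL_n \hookrightarrow YL_{n+1}$ under the identification.

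The main obstacle is the circularity flagged by the author: the ideal-type units at level $n+1$ require trace values $\langle\lambda\rangle$ with $|\lambda| \le n$, while proving the trace formula for $|\lambda| = n+1$ will in turn require the matrix-unit construction at level $n+1$. I would resolve this by carefully sequencing a single induction: at each new level, first build the Hecke-type units (which use only Hecke-algebra data, not traces); then evaluate $\mathrm{tr}(y_\lambda)$ for $|\lambda| = n+1$ via the Markov property and the branching formula, using only the trace formula at lower levels; only then feed those values into the basic-construction normalizations needed for the ideal-type units at level $n+2$. Establishing the precise recursion for $\mathrm{tr}(y_\lambda)$ in terms of $\mathrm{tr}(y_{\mu})$ with $\mu < \lambda$, and matching it to the hook-length product, is the delicate combinatorial heart of the argument, carried out in Section~\ref{subsection trace formula}.
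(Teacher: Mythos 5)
Your overall plan matches the paper's: inductive construction of matrix units in two families (Hecke-type for monotone paths reaching a Young diagram with $n{+}1$ cells, basic-construction-type for loops that return to smaller diagrams), interleaved with the trace formula to supply normalizations and to certify non-vanishing. But there is a real gap in your orthogonality argument. You claim the Hecke matrix units $P^+_t P^-_\tau$ lift from $H_{n+1}$ into $\mathscr{C}_{n+1}$ ``without issue,'' and that cross terms vanish ``because the Young symmetrizers and antisymmetrizers entering $y_\lambda$ are killed by composition with a cap.'' This is false: the Young idempotent $y_\lambda\in H_{n+1}$, viewed inside $\mathscr{C}_{n+1}$, is in general \emph{not} annihilated by caps, hence not orthogonal to the basic-construction ideal $\mathscr{I}_{n+1}$, hence not a minimal idempotent of $\mathscr{C}_{n+1}$. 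Already at level $2$, the right cap of $y_{[2]}=f^{(2)}$ evaluates to $2i/(q^2-q^{-2})\neq 0$ over $\mathbb{C}(q)$. What the paper does instead is introduce the central projection $s_{n+1}$, the complement of the support of $\mathscr{I}_{n+1}$ (which exists because $\mathscr{I}_{n+1}$ is semisimple with non-degenerate trace, being the basic construction of $\mathscr{C}_{n-1}\subset\mathscr{C}_n$), establish $s_{n+1}\mathscr{C}_{n+1}=s_{n+1}H_{n+1}$ via Proposition~\ref{algebragenerated}, and define the top-level minimal idempotent as $\tilde{y}_\lambda:=s_{n+1}y_\lambda$, not $y_\lambda$. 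Once you cut by $s_{n+1}$, orthogonality of the two families is automatic from the central decomposition $\mathscr{C}_{n+1}=\mathscr{I}_{n+1}\oplus s_{n+1}\mathscr{C}_{n+1}$; what remains is to check $\tilde{y}_\lambda\neq 0$, and this is exactly where $\langle\lambda\rangle=\mathrm{tr}(\tilde{y}_\lambda)\neq 0$ enters — that part of your sequencing is right.

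Relatedly, the decomposition $y_\mu\otimes 1 = (y_\mu\otimes 1)(1\otimes e) + (y_\mu\otimes 1)(1-1\otimes e)$ you propose for the sum-to-identity check is not the right one: $1\otimes e$ does not commute with $y_\mu\otimes 1$, so these are not a pair of orthogonal idempotents, and the coarse split by a single Jones projection does not match the path decomposition of $YL_{n+1}$. What is needed is Wenzl's formula for $\tilde{y}_\mu\otimes 1$ (with tildes), which expresses it as a sum over $\lambda<\mu$ of idempotents built from $(\rho_{\mu>\lambda}\otimes 1)(\tilde{y}_\lambda\otimes\cap)(\tilde{y}_\lambda\otimes\cup)(\rho_{\lambda<\mu}\otimes 1)$ landing in $\mathscr{I}_{n+1}$, plus a sum over $\lambda>\mu$ of idempotents $(\tilde{y}_\mu\otimes 1)\rho_{\mu<\lambda}\tilde{y}_\lambda\rho_{\lambda>\mu}(\tilde{y}_\mu\otimes 1)$ in $s_{n+1}\mathscr{C}_{n+1}$. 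Deriving this from the basic-construction isomorphism together with the Hecke branching formula~(\ref{branching formula}) is the non-trivial content of the filtered-compatibility step (Property~(5) in the paper's inductive scheme); without it, the identification of the $\mathscr{C}_n\hookrightarrow\mathscr{C}_{n+1}$ inclusion with $\iota:YL_n\hookrightarrow YL_{n+1}$ is not established.
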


(A trace of a semisimple algebra is non-degenerate if and only if the trace of any minimal idempotent is non-zero.)

\begin{proof}
Note that $TL_0$ and $\mathscr{C}_0$ are isomorphic to the ground field $\mathbb{C}(q)$.
We set up $\omega_0: YL_{0}\to \mathscr{C}_{0}$ to be the isomorphism.
Moreover, the minimal idempotent $\emptyset$ is $1$.

We are going to prove the following properties of $\mathscr{C}_m$ inductively for $m\geq1$.

\begin{itemize}
\item[(1)] $\mathscr{C}_m$ is a finite dimensional semisimple algebra and its trace is non-degenerate.

Then the two sided ideal $\mathscr{I}_m$ is a finite dimensional semisimple algebra, so it has a unique maximal idempotent, called the support of $\mathscr{I}_m$.
Moreover, its support is central in $\mathscr{C}_m$.
Let $s_m$ be the complement of the support of $\mathscr{I}_m$.

\item[(2)] $\mathscr{C}_m=\mathscr{I}_m\oplus s_m\mathscr{C}_m$, for a central idempotent $s_m\in\mathscr{C}_m$ orthogonal to $\mathscr{I}_m$ with respect $tr_m$.

Note that $\mathscr{C}_{m}$ has a subalgebra $H_{m}$ generated by the braid $\gra{b+}$.
Moreover, $s_m$ is central and $s_me_i=0$, for any $1\leq i\leq m-1$, so $s_m\mathscr{C}_m=s_mH_m$ by Proposition \ref{algebragenerated}.
For each equivalence class of minimal idempotents of $H_m$ corresponding to the Young diagram $\lambda$, $|\lambda|=m$, we have a minimal idempotent $y_\lambda$ in $H_m$ (see Section \ref{hecke} for the construction of $y_\lambda$).
Thus $s_my_\lambda$ is either a minimal idempotent of $s_mH_m$ or zero.
\item[(3)] For any $|\lambda|=m$, $\tilde{y}_{\lambda}=s_my_\lambda$ is a minimal idempotent in $\mathscr{C}_m$ with a non-zero trace $<\lambda>$.

For a length $m$ path $t$ in $YL$ from $\emptyset$ to $\lambda$,
take $t'$ to be the first length $(m-1)$ sub path of $t$ from $\emptyset$ to $\mu$.
Let us define $\tilde{P}^{\pm}_t$ by induction as follows,
\begin{align*}
P^{\pm}_\emptyset&=\emptyset &\\
\tilde{P}^+_t&=(\tilde{P}^+_{t'}\otimes 1)\rho_{\mu < \lambda} \tilde{y}_{\lambda}, &\text{ when } \mu<\lambda\\
\tilde{P}^+_t&=\frac{<\lambda>}{<\mu>}(\tilde{P}^+_{t'}\otimes 1)(\rho_{\mu>\lambda}\otimes 1) (\tilde{y}_{\lambda}\otimes \cap), &\text{ when } \mu>\lambda\\
\tilde{P}^-_t&=(\tilde{P}^-_{t'}\otimes \cup)(\rho_{\mu < \lambda}\otimes 1) (\tilde{y}_{\lambda}\otimes 1), &\text{ when } \mu<\lambda\\
\tilde{P}^-_t&=\tilde{P}^+_{t'}\rho_{\mu>\lambda} (\tilde{y}_{\lambda}\otimes 1), &\text{ when } \mu>\lambda
\end{align*}
(see Section \ref{hecke} for the construction of $\rho_{\mu < \lambda}$)
\item[(4)] The map $\omega_m: YL_m \to \mathscr{C}_m$ as a linear extension of
\begin{align*}
\omega_m(t\tau^{-1})=\tilde{P}^+_t \tilde{P}^{-}_\tau
\end{align*}
is an algebraic isomorphism.
\item[(5)] $\omega_m(\iota(x))=\omega_{m-1}(x)\otimes 1$, $\forall$ $x\in TL_{m-1}$.
\end{itemize}

When $m=1$, it is easy to check Properties (1)-(5).
Suppose Property (1)-(5) hold for $m=1,2,\cdots, n$, $n\geq 1$, let us prove them for $m=n+1$.

By Property (4),(5), we have an isomorphism $\omega_{n}: YL_{n} \to \mathscr{C}_{n}$, such that $\omega_{n}(\iota(x))=\omega_{n-1}(x)\otimes 1$, for any $x\in YL_{n-1}$.
So $\mathscr{C}_{n-1}\subset \mathscr{C}_{n} \cong YL_{n-1}\subset YL_{n}$ is an inclusion of finite dimensional semisimple algebras.

By Property (1), $\mathscr{C}_{n-1}\subset \mathscr{C}_{n}$ is an inclusion of finite dimensional semisimple algebras with a non-degenerate trace.
We have the basic construction $\mathscr{C}_{n-1}\subset \mathscr{C}_{n}\subset \mathscr{I}_{n+1}$ \cite{GHJ}. Then $\mathscr{I}_{n+1}$ is a finite dimensional semisimple algebra.
Moreover, the trace of $\mathscr{I}_{n+1}$ is determined by the trace of $\mathscr{C}_{n-1}$. Since the trace of any minimal idempotent in $\mathscr{C}_{n-1}$ is non-zero by Property (1), the trace of any minimal idempotent in $\mathscr{I}_{n+1}$ is also non-zero. So the trace of $\mathscr{I}_{n+1}$ is non-degenerate.

Let us define $s_{n+1}$ to be the complement of the support of $\mathscr{I}_{n+1}$, then $\mathscr{C}_{n+1}=\mathscr{I}_{n+1}\oplus s_{n+1}\mathscr{C}_{n+1}$. Property (2) holds for $m=n+1$.

Moreover, we have $s_{n+1}\mathscr{C}_{n+1}=s_{n+1}H_{n+1}$.
For any $|\lambda|=n+1$, the minimal idempotent $\tilde{y}_{\lambda}=s_{n+1}y_\lambda$ in $\mathscr{C}_{n+1}$ has a non-zero trace $<\lambda>$ by Theorem \ref{trace formula}.
(The computation of $<\lambda>$ in Theorem \ref{trace formula} only requires the matrix units of $\mathscr{C}_k$, $k\leq n+1$, which have been constructed by induction.)
Property (3) holds for $m=n+1$.

Furthermore, $s_{n+1}H_{n+1}\cong H_{n+1}$ as a finite dimensional semisimple algebra. Therefore $\mathscr{C}_{n+1}$ is a finite dimensional semisimple algebra. Property (1) holds for $m=n+1$.

By Properties (4) and (5), $\mathscr{C}_{n-1}\subset \mathscr{C}_{n} \cong YL_{n-1}\subset YL_{n}$ is an inclusion of finite dimensional semisimple algebras.
By the basic construction, we can define an isomorphism $\omega_m: IYL_{n+1}\to \mathscr{I}_{n+1}$ with Property (4).
Note that $HYL_{n+1}\cong H_{n+1} \cong s_nH_n=s_n\mathscr{C}_n$, $YL_n=HYL_{n+1}\oplus HYL_{n+1}$ and $\mathscr{C}_n=\mathscr{I}_n\oplus s_n\mathscr{P_n}$, so we can extend $\omega_m$ as an isomorphism $\omega_m: YL_{n+1}\to \mathscr{C}_n$ with Property (4).

Property (5) for $m=n+1$ follows from Wenzl's formula (see Appendix \ref{Appendix:Wenzl's formula} for a proof):
\begin{align}
\tilde{y}_\mu \otimes 1 &=\sum_{\lambda<\mu} \frac{<\lambda>}{<\mu>} (\tilde{y}_\mu\otimes 1)(\rho_{\mu>\lambda} \otimes 1) (\tilde{y}_{\lambda}\otimes \cap) (\tilde{y}_{\lambda}\otimes \cup) (\rho_{\lambda<\mu} \otimes 1) (\tilde{y}_\mu \otimes 1) & \nonumber\\
                        &+\sum_{\lambda>\mu} (\tilde{y}_\mu\otimes 1) \rho_{\mu<\lambda} \tilde{y}_{\lambda} \rho_{\lambda>\mu} (\tilde{y}_\mu\otimes 1),
                        &\forall |\mu|\leq n-1. \label{wenzlformula}
\end{align}

Therefore Properties (1)-(5) hold for all $m$ by induction, and $\mathscr{C}_{\bullet}\cong YL_{\bullet}$ as a filtered algebra
\end{proof}

\begin{corollary}
The dimension of $\mathscr{C}_m$ is given by
$$\dim(\mathscr{C}_n)=(2m-1)!!$$
\end{corollary}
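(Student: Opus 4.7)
By Theorem~\ref{P=YL}, $\mathscr{C}_m\cong YL_m$ as $\mathbb{C}(q)$-vector spaces, so the task reduces to counting the basis of $YL_m$, which by definition consists of closed walks of length $2m$ in Young's lattice based at $\emptyset$.  Cutting such a walk at its midpoint writes it uniquely as a concatenation $t\tau^{-1}$ of two length-$m$ walks from $\emptyset$ to a common Young diagram, hence
\begin{equation*}
\dim \mathscr{C}_m \;=\; \sum_{\lambda}\bigl(f_\lambda^{(m)}\bigr)^2,
\end{equation*}
where $f_\lambda^{(m)}$ is the number of length-$m$ walks in $YL$ from $\emptyset$ to $\lambda$ (using the add/remove-a-cell edges of Notation~\ref{Notation:Young diagrams}).

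The plan is to identify the right-hand side with a vacuum expectation for a Bose oscillator, via Stanley's differential-poset framework.  On the $\mathbb{C}$-span of all Young diagrams define the up and down operators
\begin{equation*}
U\lambda \;=\; \sum_{\mu>\lambda}\mu, \qquad D\lambda \;=\; \sum_{\nu<\lambda}\nu.
\end{equation*}
Then $\dim\mathscr{C}_m = \langle\emptyset,\,(U+D)^{2m}\,\emptyset\rangle$.  A direct case analysis---using that any Young diagram has exactly one more addable cell than removable cell, while all off-diagonal ``add-at-$a$/remove-at-$b$'' contributions cancel pairwise between $DU$ and $UD$---gives the canonical commutation relation
\begin{equation*}
[D,U]\;=\;I.
\end{equation*}

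Combined with $D\emptyset=0$, this makes $(U,D)$ act on the cyclic vector $\emptyset$ exactly like creation/annihilation operators on the Bose vacuum.  Wick's theorem then yields
\begin{equation*}
\langle\emptyset,\,(U+D)^{2m}\,\emptyset\rangle \;=\; (2m-1)!!,
\end{equation*}
since after normal-ordering only those expansion terms survive in which every $D$ is contracted with a $U$ strictly to its left (each contraction contributing $+1$), and such contractions are in bijection with perfect matchings of $\{1,\ldots,2m\}$.

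The only nontrivial step is the commutator identity $[D,U]=I$, the classical statement that Young's lattice is a $1$-differential poset; once that is in hand, the Wick reduction is routine.  An equivalent route, avoiding Wick's theorem entirely, is the direct inductive recursion $\dim\mathscr{C}_{m+1}=(2m+1)\dim\mathscr{C}_m$: in the basic-construction decomposition $\mathscr{C}_{m+1}=\mathscr{I}_{m+1}\oplus s_{m+1}\mathscr{C}_{m+1}$ the first summand has dimension $\sum_{\lambda}f_\lambda^{(m)}f_\lambda^{(m-1)}\cdot(\text{addable cells of }\lambda)$ and the second has dimension $\sum_{\lambda}f_\lambda^{(m)}\cdot(\text{length-}1\text{ extensions of }\lambda)$, and the branching identities assembled via Wenzl's formula~(\ref{wenzlformula}) repackage these as $(2m+1)$ times the previous dimension.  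Either route gives $\dim\mathscr{C}_m=(2m-1)!!$.
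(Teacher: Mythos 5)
Your proof is correct, but it takes a genuinely different route from the paper. The paper disposes of this in one line: having shown $\mathscr{C}_\bullet \cong YL_\bullet$ as a filtered algebra (Theorem~\ref{P=YL}), it observes that $YL_\bullet$ is the same string algebra underlying the generic Brauer algebra, and imports the well-known dimension count $(2m-1)!!$ from there. You instead prove the loop count in Young's lattice from scratch: you reduce $\dim\mathscr{C}_m$ to $\langle\emptyset,(U+D)^{2m}\emptyset\rangle$, invoke the $1$-differential-poset identity $[D,U]=I$ (which is indeed the classical fact that every Young diagram has exactly one more addable corner than removable corner, with the off-diagonal terms in $DU-UD$ cancelling), and then run a Wick/normal-ordering argument to land on the perfect-matching count $(2m-1)!!$. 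Both are valid. What the paper's route buys is brevity and a conceptual anchor to a known algebra; what yours buys is self-containedness and a transparent reason \emph{why} the number is a double factorial, without appealing to any prior knowledge of Brauer algebras. One small caution: your sketched ``alternative route'' via the basic-construction dimension count and Wenzl's formula is really the same differential-poset recursion $\alpha(m+1)=(2m+1)\alpha(m)$ in disguise, and the claim that the two summands of $\mathscr{C}_{m+1}=\mathscr{I}_{m+1}\oplus s_{m+1}\mathscr{C}_{m+1}$ assemble to $(2m+1)\dim\mathscr{C}_m$ would need the $[D,U]=I$ identity spelled out anyway; so it is not truly an independent argument, just a repackaging of the first.
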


\begin{proof}
Note that $\mathscr{C}_{\bullet}\cong YL_{\bullet}$ is isomorphic to the Brauer algebra as a filtered algebra. Therefore,
$\dim(\mathscr{C}_m)=(2m-1)!!.$
\end{proof}
\section{The trace formula}\label{subsection trace formula}
Recall that the minimal idempotents of $\mathscr{C}_{\bullet}=\mathscr{C}(q)_{\bullet}$ are labeled by Young diagrams. The trace of a minimal idempotent is also called the quantum dimension of the corresponding representation.
In this section, we compute the trace formula for $\mathscr{C}_{\bullet}$ and prove Theorem \ref{main theorem trace formula}.

The $q$-Murphy operator is usually constructed by a braid and used to compute the trace formula for centralizer algebras.
For the BMW planar algebra, this was done by Beliakova and Blanchet in \cite{BelBla} which was inspired by the work of Nazarov in \cite{Naz96}.

In $\mathscr{C}_{\bullet}$, there is no braid. Instead, there is a half-braiding given by the solution of the Yang-Baxter equation in Lemma \ref{solution of YBE -1}. We construct a $q$-Murphy operator for $\mathscr{C}_{\bullet}$ by the half-braiding.

When $\delta\in \mathbb{R}$, we introduced the notations $\mathcal{D}=\displaystyle \frac{\delta}{\sqrt{1+\delta^2}}$, $\displaystyle r=\frac{\delta i+1}{\sqrt{1+\delta^2}}$, $\displaystyle q=\frac{i+\delta }{\sqrt{1+\delta^2}}$, and $|r|=|q|=1$.
Over the field $\mathbb{C}(q)$, let us define $r=iq^{-1}$, $\displaystyle \delta=\frac{i(q+q^{-1})}{q-q^{-1}},$ and $\displaystyle D=\frac{q+q^{-1}}{2}$.

\begin{notation}
Let us define
$$\alpha=\gra{b1}=\frac{q-q^{-1}}{2}\gra{21}+\frac{q-q^{-1}}{2i}\gra{22}+ \mathcal{D}\gra{23};$$
$$\beta=\gra{b2}=\frac{q-q^{-1}}{2}\gra{21}-\frac{q-q^{-1}}{2i}\gra{22}+ \mathcal{D}\gra{23}.$$
Then their inverses are given by
$$\alpha^{-1}=\gra{b1-}=-\frac{q-q^{-1}}{2}\gra{21}+\frac{q-q^{-1}}{2i}\gra{22}+ \mathcal{D}\gra{23};$$
$$\beta^{-1}=\gra{b2-}=-\frac{q-q^{-1}}{2}\gra{21}-\frac{q-q^{-1}}{2i}\gra{22}+ \mathcal{D}\gra{23}.$$
\end{notation}

Actually $\gra{b1}=\gra{b+}$. The orientation of $\gra{b+}$ was useful in the proof of the consistency, but it would be confusing in the rest computations. We change the notation to $\gra{b1}$.

\begin{proposition}\label{braidfourier}
In $\mathscr{C}_{\bullet}$, we have
$$\gra{b1}=i\gra{b1up}=-\gra{b2-right}=-i\gra{b2-down}.$$
Equivalently, $$\gra{b2}=i\gra{b1-up}=-\gra{b1-right}=-i\gra{b2down}.$$
\end{proposition}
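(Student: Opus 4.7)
The plan is to reduce the proposition to a short linear algebra computation in the three-dimensional 2-box space. By construction, $\alpha = \gra{b1} = a\gra{21} + b\gra{22} + c\gra{23}$ and $\beta^{-1} = \gra{b2-} = -a\gra{21} - b\gra{22} + c\gra{23}$, where $a = (q-q^{-1})/2$, $b = (q-q^{-1})/(2i) = -ia$, and $c = \mathcal{D}$. The four diagrams in the displayed chain of equalities are the same underlying crossing redrawn with the $\$$-sign cyclically rotated; hence each is obtained by applying some power of the Fourier transform $\mathcal{F}$ (the $1$-click rotation) to $\alpha$ or to $\beta^{-1}$.

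The only nontrivial input is the defining relation $\mathcal{F}(R) = -iR$ of $\mathscr{C}_\bullet$ (Definition \ref{Def:Centralizer algebra}), which says $\mathcal{F}(\gra{23}) = -i\gra{23}$. Combined with the standard Temperley--Lieb behavior $\mathcal{F}(\gra{21}) = \gra{22}$ and $\mathcal{F}(\gra{22}) = \gra{21}$, a one-line calculation gives
\[
\mathcal{F}(\alpha) \;=\; b\gra{21} + a\gra{22} - ic\gra{23} \;=\; -i\beta,
\]
using $b = -ia$ to regroup the coefficients. Iterating $\mathcal{F}$ once more and carrying out the analogous computation for $\beta^{-1}$ yields
\[
\mathcal{F}^2(\alpha) \;=\; -\beta^{-1}, \qquad \mathcal{F}^2(\beta^{-1}) \;=\; -\alpha, \qquad \mathcal{F}^3(\beta) \;=\; i\alpha.
\]
The identity $\mathcal{F}^2(\beta^{-1}) = -\alpha$ is exactly $\gra{b1} = -\gra{b2-right}$. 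The remaining equalities $\gra{b1} = i\gra{b1up}$ and $\gra{b1} = -i\gra{b2-down}$ follow the same way, once one matches $\gra{b1up}$ and $\gra{b2-down}$ with the appropriate further rotations of $\alpha$ and $\beta^{-1}$ and compares the phase factors extracted from $\mathcal{F}(R) = -iR$. The second, ``equivalent'' chain of equalities starting from $\gra{b2}$ then follows either by applying $\mathcal{F}$ once more to the first chain, or by repeating the calculation verbatim with the roles of $\alpha$ and $\beta$ interchanged.

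No conceptual difficulty arises: the entire proof is just linearity plus the single Fourier eigenvalue relation for $R$. The only point requiring care is the purely diagrammatic bookkeeping of identifying each of the labels ``up'', ``right'', ``down'' with a definite power of $\mathcal{F}$ under the paper's drawing conventions; once this correspondence is fixed, the proposition is immediate from the two boxed computations above.
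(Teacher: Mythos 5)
Your proposal is correct and takes essentially the same approach as the paper, whose own proof is the one-liner ``They follow from the definitions and the fact that $\mathcal{F}(R)=-iR$.'' You merely write out the linear-algebra details ($\mathcal{F}(\alpha)=-i\beta$, $\mathcal{F}^2(\alpha)=-\beta^{-1}$, etc.) that the paper leaves implicit.
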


\begin{proof}
They follow from the definitions and the fact that $\mathcal{F}(R)=-iR$.
\end{proof}

\begin{proposition}[half-braidings\footnote{The operator $\gra{b1}$ gives a half-braiding while considering the unshaded planar algebra $\mathscr{C}_{\bullet}$ as a $\mathbb{N}\cup\{0\}$ graded semisimple tensor category.}]\label{flat}
For any element $a\in \mathscr{C}_{\bullet}$, we have
$$\graa{flat11}=\graa{flat12}; \quad \graa{flat21}=\graa{flat22}.$$
\end{proposition}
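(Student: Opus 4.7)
The plan is to reduce the flatness identities to a verification on 2-box generators. Both identities are linear in $a$, and by Proposition \ref{algebragenerated} the $m$-box space $\mathscr{C}_m$ is generated as an algebra by shifts of 2-boxes (the 1-box space is one dimensional because $R$ is uncappable). Flatness of $\alpha$ past a product $ab$ follows by sliding $\alpha$ first past $a$ and then past $b$, and is trivially preserved under adding through strings on either side. Hence it suffices to verify each identity when $a$ is a 2-box generator.

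Since
\[
\alpha = \tfrac{q-q^{-1}}{2}\gra{21} + \tfrac{q-q^{-1}}{2i}\gra{22} + \mathcal{D}\, R,
\]
with $\mathcal{D}=\delta/\sqrt{1+\delta^2}\neq 0$, the element $R$ lies in the span of $\{\gra{21}, \gra{22}, \alpha\}$, and so this triple gives an alternative basis of the 2-box space. It is therefore enough to check flatness for $a\in\{\gra{21}, \gra{22}, \alpha\}$. For $a=\gra{21}$ the identity is tautological. For $a=\alpha$ it is precisely the Reidemeister III (Yang-Baxter) relation for $\alpha$ established in Lemma \ref{Move III}. For $a=\gra{22}$, which is a composition of a cup with a cap, flatness reduces to sliding $\alpha$ past a single cup and past a single cap; this is controlled by the Fourier-rotation identities of Proposition \ref{braidfourier} together with the Reidemeister II relations of Lemma \ref{Move II}.

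The second flatness identity $\graa{flat21}=\graa{flat22}$ follows from the first by one application of Proposition \ref{braidfourier}, which interchanges $\alpha$ with its rotated and inverse variants at the cost of fixed scalars $\pm i$. The main bookkeeping obstacle will be tracking these $\pm i$ factors and the shaded-orientation cases uniformly across all subcases; once the reduction to the three generators is carried out cleanly, however, every subcase collapses to a relation already proved in Section \ref{subsection consistency}, so no genuinely new identity must be established.
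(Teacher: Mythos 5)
Your proposal matches the paper's own proof in essence: both reduce to the 2-box generators via Proposition~\ref{algebragenerated}, verify the identity for $a=\gra{22}$ via Proposition~\ref{braidfourier} and for the braid $a=\alpha$ via the Yang--Baxter relation (the paper cites Lemma~\ref{solution of YBE -1}, you cite the equivalent Lemma~\ref{Move III}), and then extend by multiplicativity. The only cosmetic difference is that you derive the second identity from the first by a single application of Proposition~\ref{braidfourier}, whereas the paper says ``in a similar way,'' but this is the same calculation.
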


\begin{proof}
By Proposition \ref{braidfourier}, we have
$$\gra{flath1}=i\gra{flath2}=i\gra{flath3};$$
$$\gra{flath4}=i\gra{flath5}=i\gra{flath6}.$$
So the equation $\graa{flat11}=\graa{flat12}$ holds for $a=\gra{22}$.
By Lemma \ref{solution of YBE -1}, it also holds for $a=\gra{b1}$.
So it holds for any element $a$ by Proposition \ref{algebragenerated}.

We can prove the equation $\graa{flat21}=\graa{flat22}$ in a similar way.
\end{proof}

\begin{notation}
Let $\alpha_{n},\beta_{n},h_{n}$ be the diagrams by adding $n-1$ through strings to the left of $\gra{b1},\gra{b2}, \gra{22}$ respectively.
\end{notation}

Recall that $H_{\bullet}$ is the Hecke algebra generated by $\gra{b1}$.
The $n$-box, $n\geq1$, $$\varepsilon_{n}=\grc{varepsilon}$$ is the $q$-Murphy operator of $H_{\bullet}$.

For $|\mu|=n$, $\lambda>\mu$,
$\rho_{\lambda>\mu}$ is an intertwiner from $\lambda$ to $\mu \otimes 1$, and $y_\mu$, $y_\lambda$ are the minimal idempotents corresponding to $\mu$ and $\lambda$ respectively. (See Section \ref{hecke} for the construction.)
So $\rho_{\lambda>\mu} = y_\lambda\rho_{\lambda>\mu} (y_\mu\otimes 1)$.
Then $\rho_{\lambda>\mu}\varepsilon_{n+1}=y_\lambda\rho_{\lambda>\mu} (y_\mu\otimes 1) \varepsilon_{n+1}=y_\lambda\rho_{\lambda>\mu} \varepsilon_{n+1}(y_\mu\otimes 1)$. It is also an intertwiner from $\lambda$ to $\mu \otimes 1$.
The intertwiner space in the Hecke algebra $H_{\bullet}$ is one dimensional, so $y_\lambda\varepsilon_{n+1}$ is a multiple of $y_\lambda$. The coefficient was known as follows.
\begin{proposition}[\cite{Bla00}, Prop. 1.11]\label{taub}
For $|\mu|=n$, $n\geq0$, $\lambda>\mu$,
$$\rho_{\lambda>\mu}\varepsilon_{n+1}=b_{\lambda-\mu}\rho_{\lambda>\mu},$$
where $b_{\lambda-\mu}=q^{2\text{cn}(\lambda-\mu)}$, and $\text{cn}(\lambda-\mu)=j-i$ is the content of the cell $\lambda-\mu$, which is in the $i$-$th$ row and $j$-$th$ column of $\lambda$.
\end{proposition}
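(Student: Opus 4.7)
Since the statement lives entirely inside the Hecke algebra $H_\bullet$ (the Jones projection and the extra relations of $\mathscr{C}_\bullet$ play no role), my plan is to prove it by induction on $n = |\mu|$ using only braid skein. The base case $n=0$ is immediate: $\mu = \emptyset$, $\rho_{\lambda>\mu}$ is the empty morphism, the sole cell of $\lambda$ has content $j - i = 0$, and the one-strand wrapping $\varepsilon_1$ reduces to the identity under the normalisation of the picture $\grc{varepsilon}$.

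For the inductive step, the main tool is the Jucys--Murphy-style skein identity
\begin{equation*}
\varepsilon_{n+1} \;=\; \alpha_n\,\varepsilon_n\,\alpha_n,
\end{equation*}
obtained by isotoping the outermost wrapping strand across the $n$-th strand using Reidemeister II and III (available in $H_\bullet$ through $\alpha$). I would then factor the path $\mu \to \lambda$ through a predecessor $\nu < \mu$ and invoke the recursive formulas for $\rho_{\mu<\lambda}$ from Section \ref{hecke}. The half-braiding of $\alpha$ (Proposition \ref{flat}) lets one slide the inner $\alpha_n$ past the $\rho$'s built from lower idempotents, so that $\rho_{\lambda>\mu}\varepsilon_{n+1}$ can be rewritten in terms of $\rho_{\mu>\nu}\varepsilon_n$, to which the induction hypothesis applies and contributes a factor $q^{2\mathrm{cn}(\mu - \nu)}$.

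What remains is a two-cell eigenvalue computation: the two outer $\alpha_n$'s, sandwiched between the projections onto $y_\lambda$ and $y_\mu \otimes 1$, act on the relevant matrix unit by a scalar. Using the Hecke relation $\alpha_n - \alpha_n^{-1} = (q - q^{-1})\cdot 1$ together with the explicit Young-orthogonal description of $\alpha_n$ on the four-dimensional block spanned by the loops $\lambda \to \mu \to \lambda$ and $\lambda \to \mu \to \nu \to \mu \to \lambda$, a short direct calculation shows this scalar equals $q^{2(\mathrm{cn}(\lambda - \mu) - \mathrm{cn}(\mu - \nu))}$, producing the required eigenvalue $q^{2\mathrm{cn}(\lambda - \mu)}$.

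The main obstacle is precisely this last diagrammatic-combinatorial step: one needs to handle all configurations of the two added cells (same row, same column, or disjoint rows and columns), and to verify that the resulting eigenvalue does not depend on the choice of intermediate $\nu$. This independence is ultimately forced by the fact that $y_\lambda$ is a central minimal idempotent inside its simple block of $H_{n+1}$ over $\mathbb{C}(q)$, so any one consistent calculation suffices; nonetheless, carrying out the skein manipulation so that the $q$-powers bookkeep exactly the content shift is the technical heart of the proof.
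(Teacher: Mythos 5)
The paper gives no proof of this proposition; it is cited from Blanchet \cite{Bla00}, and the skein-theoretic argument there, as is standard in the HOMFLY literature, runs through the central full twist $\Delta_{n+1}^2=\varepsilon_1\cdots\varepsilon_{n+1}$: its eigenvalue on $y_\lambda$ is the framing/curl scalar, and the ratio of the twist scalars for $\lambda$ and for $\mu$ isolates $\varepsilon_{n+1}$ and yields $q^{2\text{cn}(\lambda-\mu)}$ at once. Your proposal --- induction via $\varepsilon_{n+1}=\alpha_n\varepsilon_n\alpha_n$ together with the seminormal form of $\alpha_n$ --- is the standard \emph{algebraic} treatment of Jucys--Murphy eigenvalues for Hecke algebras; it is a genuinely different route and it can be made to work, but two points in your sketch are wrong as stated.

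First, ``the half-braiding of $\alpha$ (Proposition~\ref{flat}) lets one slide the inner $\alpha_n$ past the $\rho$'s built from lower idempotents'' fails: $\alpha_n$ crosses strands $n$ and $n+1$, while $\rho_{\mu>\nu}\in H_n$ occupies strands $1,\dots,n$; they share strand $n$ and do not commute, and no half-braiding or naturality lets one slide past. The fact you actually need is that the \emph{whole} wrapping element $\varepsilon_{n+1}$ commutes with all of $H_n$ (Jucys--Murphy commutativity, which does follow from R2/R3 applied to the full loop, not to a single crossing); combined with $\dim\hom(y_\lambda,y_\mu\otimes 1)=1$ this gives $\rho_{\lambda>\mu}\varepsilon_{n+1}=c\,\rho_{\lambda>\mu}$ for a scalar $c$, with no sliding of $\alpha_n$ involved. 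Second, identifying $c$ as the $(\mu,\mu)$-entry of $\alpha_n\varepsilon_n\alpha_n$ on the two-path block $\{\nu<\mu<\lambda,\ \nu<\mu'<\lambda\}$ is not a ``short direct calculation'': you need the explicit seminormal diagonal entry $a$ of $\alpha_n$, which is not supplied by the paper and has to be recovered from the normalizations $m_{[\mu|\lambda|\mu]}$ of \cite{Yok97}, and then you must check that $a^2 e_\mu+(ad'+1)e_{\mu'}=e_{\mu'}$, where $a,d'$ are the diagonal entries and $e_\mu,e_{\mu'}$ the inductive $\varepsilon_n$-eigenvalues, using $a+d'=q-q^{-1}$ and $ad'-bc=-1$. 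The degenerate one-dimensional cases (same row, $\alpha_n\mapsto q$; same column, $\alpha_n\mapsto -q^{-1}$) check immediately, but in the generic case this computation \emph{is} the proposition, not a footnote.
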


\begin{definition}
Let us define the $q$-Murphy operator $\tau_{n}$, $n\geq1$, for $\mathscr{C}_{\bullet}$ to be the $n$-box
$$\tau_{n}=\grc{tau}.$$
\end{definition}
One can rewrite the $q$-Murphy operator $\tau_{n}$ in terms of one half-braiding $\gra{b1}$ by Proposition \ref{braidfourier}.

Similar to $\varepsilon_{n}$, the $q$-Murphy operator $\tau_{n}$ acts diagonally on partial matrix units of $\mathscr{C}_{\bullet}$ as follows. (See Theorem \ref{P=YL} for the construction of matrix units.)

\begin{proposition}\label{taubd}
For $|\mu|=n$, $n\geq0$, we have
\begin{align}
\tilde{y}_{\lambda} \rho_{\lambda>\mu} (\tilde{y}_\mu\otimes 1)\tau_{n+1}&=b_{\lambda-\mu}\tilde{y}_{\lambda} \rho_{\lambda>\mu} (\tilde{y}_\mu\otimes 1), &&\text{ for } \lambda>\mu;
\label{eqb}\\
(\tilde{y}_{\lambda}\otimes \cup) (\rho_{\lambda<\mu} \otimes 1) (\tilde{y}_\mu \otimes 1)\tau_{n+1}&=-b_{\mu-\lambda}(\tilde{y}_{\lambda}\otimes \cup) (\rho_{\lambda<\mu} \otimes 1) (\tilde{y}_\mu \otimes 1), &&\text{ for } \lambda<\mu.
\label{eqd}\end{align}
\end{proposition}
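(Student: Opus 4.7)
The plan is to proceed in three stages: first establish a centrality property for $\tau_{n+1}$ using flatness, then handle Equation \ref{eqb} via reduction to the Hecke algebra, and finally handle Equation \ref{eqd} via a diagrammatic sign-tracking argument through the cup-cap structure. First I would show that $\tau_{n+1}$ commutes with every element of $\mathscr{C}_n \otimes 1 \subset \mathscr{C}_{n+1}$. This is immediate from the flatness of the half-braiding (Proposition \ref{flat}): each half-braiding in the diagram defining $\tau_{n+1}$ can be slid freely past any element $a \otimes 1$ that touches only the first $n$ strands. Combined with Wenzl's formula (Equation \ref{wenzlformula}), which orthogonally decomposes $\tilde{y}_\mu \otimes 1$ into summands indexed by $\lambda > \mu$ and $\lambda < \mu$, this shows that $\tau_{n+1}$ preserves each summand and acts as a scalar on the corresponding partial matrix unit. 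Both (\ref{eqb}) and (\ref{eqd}) thereby reduce to the computation of that scalar.

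For Equation \ref{eqb} ($\lambda > \mu$), I would reduce to Proposition \ref{taub}. Since $\tilde{y}_\lambda = s_{n+1}\, y_\lambda$ lies in $s_{n+1} H_{n+1}$ and $s_{n+1}$ is complementary to the basic-construction ideal $\mathscr{I}_{n+1}$, it suffices to verify that $\tau_{n+1} \equiv \varepsilon_{n+1} \pmod{\mathscr{I}_{n+1}}$ after multiplication by $\tilde{y}_\lambda$. This is a diagrammatic fact: expanding each half-braiding $\alpha$ in $\tau_{n+1}$ via its defining decomposition together with Lemma \ref{Move I}, the pure-braid component reconstructs $\varepsilon_{n+1}$ exactly, while every remaining term either already contains a Jones projection or, after being closed into the $q$-Murphy loop, becomes cappable and so is absorbed into $\mathscr{I}_{n+1}$. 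Proposition \ref{taub} then produces the eigenvalue $b_{\lambda-\mu}$.

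For Equation \ref{eqd} ($\lambda < \mu$), the partial matrix unit contains the cup forcing the $(n+1)$-th strand of the $q$-Murphy operator to turn back on itself. Using flatness a second time, I would slide the arm of $\tau_{n+1}$ through this cup; each rotation of a half-braiding contributes a factor of $\pm i$ by Proposition \ref{braidfourier}, while the rotated $q$-Murphy loop transforms into a Hecke-type $q$-Murphy loop acting on $\tilde{y}_\lambda$ at the lower level. Applying Proposition \ref{taub} to the cell $\mu - \lambda$ (the cell added to $\lambda$ to produce $\mu$) contributes the eigenvalue $b_{\mu-\lambda}$, and the accumulated rotation factors combine to give the overall sign $-1$.

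The main obstacle will be the precise bookkeeping of signs and coefficients in the $\lambda < \mu$ case: matching the global product of the $\pm i$ factors produced by rotating individual half-braidings to the single $-1$ prescribed by the formula, and simultaneously recognizing the rotated closed loop as the Hecke $q$-Murphy operator acting on $\tilde{y}_\lambda$. This is precisely where the unshaded planar algebra relation $\mathcal{F}(R) = -iR$ is used crucially, producing the sign that distinguishes \eqref{eqd} from \eqref{eqb}.
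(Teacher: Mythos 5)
Your proposal follows the same strategy as the paper: for \eqref{eqb} you reduce to the Hecke result (Proposition \ref{taub}) by observing that $\tau_{n+1}$ and $\varepsilon_{n+1}$ agree modulo the basic construction ideal --- which the paper packages as the identity $\tau_{n+1}s_{n+1}=\varepsilon_{n+1}s_{n+1}$, a consequence of $s_2\alpha = s_2\beta$ --- and for \eqref{eqd} you use one-dimensionality of the intertwiner space (here via flatness plus Wenzl's formula) to know the action is by a scalar, then compute that scalar by sliding the Murphy loop through the cup, harvesting rotation signs from $\mathcal{F}(R) = -iR$. The paper carries out this last computation as a trace calculation (isotopy, sphericality, Proposition \ref{flat}, Proposition \ref{braidfourier}, then \eqref{eqb} at level $n$), but the diagrammatic content matches your plan.
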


\begin{proof}
See Appendix \ref{Appendix:taubd}.
\end{proof}

Let $\Phi_{n+1}: \mathscr{C}_{n+1}\rightarrow \mathscr{C}_{n}$ be the trace preserving conditional expectation, i.e. adding a cap on the right of an $(n+1)$-box.
Then $\Phi_{n+1}(\tau_{n+1}^i)=Z_{n+1}^{(i)}$ defines a central element $Z_{n+1}^{(i)}$ in $\mathscr{C}_{n}$.
We consider the formal power series in $u^{-1}$,
$$Z_{n+1}(u)=\sum_{i\geq0}Z_{n+1}^{(i)}u^{-i}.$$
Then
\begin{align}
Z_{n+1}(u)=\Phi_{n+1}(\frac{u}{u-\tau_{n+1}}).\label{nonphi}
\end{align}

By Theorem \ref{P=YL}, each simple components of $s_n\mathscr{C}_{n}$ is indexed by a Young diagram $\mu$, $|\mu|=n$.
Moreover, $\tilde{y}_{\mu}$ is a minimal idempotent in this component.

\begin{notation}
The trace of the minimal idempotent $tr_n(\tilde{y}_{\mu})$ is denoted by $<\mu>$, i.e. the quantum dimension of the irreducible representation indexed by $\mu$.
\end{notation}

Since $Z_{n+1}^{(i)}$ is central in $\mathscr{C}_{n}$, it is a scalar multiplication on the simple component of $\mathscr{C}_{n}$. Let us define $Z(\mu,u)$ to be the formal power series in $u^{-1}$ by
$$Z_{n+1}(u)\tilde{y}_{\mu}=Z(\mu,u)\tilde{y}_{\mu}.$$

The relation between $Z_{n+1}$ and the trace formula is given by
\begin{lemma}\label{traceZ}
For $|\mu|=n$, $n\geq0$, $\lambda>\mu$,
$$\frac{<\lambda>}{<\mu>}=\text{res}_{u=b_{\lambda-\mu}}\frac{Z(\mu,u)}{u},$$
\end{lemma}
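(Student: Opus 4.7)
The plan is to compute $Z(\mu,u)$ in closed form as a sum indexed by Young diagrams adjacent to $\mu$, by combining Wenzl's formula (\ref{wenzlformula}) with the one-sided eigenvalue identities of Proposition~\ref{taubd}; the residue formula will then be read off directly. First, I will trade the residue statement for a trace computation: since $Z_{n+1}^{(i)}$ is central in $\mathscr{C}_n$ and $\tilde y_\mu$ is a minimal idempotent, one has $Z_{n+1}^{(i)}\tilde y_\mu = Z^{(i)}(\mu)\tilde y_\mu$ with $Z(\mu,u)=\sum_{i\ge 0}Z^{(i)}(\mu)u^{-i}$, and applying $tr_n$ to both sides while using that $\Phi_{n+1}$ is trace preserving and $\mathscr{C}_n$-bimodular reduces matters to evaluating
$$Z^{(i)}(\mu)<\mu> \;=\; tr_{n+1}\bigl(\tau_{n+1}^{\,i}\,(\tilde y_\mu\otimes 1)\bigr).$$

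Next, I will invoke Wenzl's formula (\ref{wenzlformula}) to decompose $\tilde y_\mu\otimes 1 = \sum_{\lambda>\mu}P_\lambda + \sum_{\lambda<\mu}E_\lambda$ into mutually orthogonal idempotents, where $P_\lambda := (\tilde y_\mu\otimes 1)\rho_{\mu<\lambda}\tilde y_\lambda\rho_{\lambda>\mu}(\tilde y_\mu\otimes 1)$ is a minimal idempotent in the $\lambda$-block of $\mathscr{C}_{n+1}$ for $\lambda>\mu$, and $E_\lambda$ is the corresponding basic-construction piece (scaled by $<\lambda>/<\mu>$) for $\lambda<\mu$. A short check using $\rho_{\lambda>\mu}\rho_{\mu<\lambda}=\tilde y_\lambda$ and the cyclicity of the trace gives $tr_{n+1}(P_\lambda)=tr_{n+1}(E_\lambda)=<\lambda>$. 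Proposition~\ref{taubd} supplies the right-multiplication eigenvalue relations
$$P_\lambda\,\tau_{n+1}=b_{\lambda-\mu}\,P_\lambda,\qquad E_\lambda\,\tau_{n+1}=-b_{\mu-\lambda}\,E_\lambda,$$
which iterate to $P_\lambda\tau_{n+1}^{\,i}=b_{\lambda-\mu}^{\,i}P_\lambda$ and $E_\lambda\tau_{n+1}^{\,i}=(-b_{\mu-\lambda})^{\,i}E_\lambda$.

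Cycling the trace and using that each $P_\lambda,E_\lambda$ sits under $\tilde y_\mu\otimes 1$ will then yield
$$tr_{n+1}\bigl(\tau_{n+1}^{\,i}(\tilde y_\mu\otimes 1)\bigr)=\sum_{\lambda>\mu}b_{\lambda-\mu}^{\,i}<\lambda>+\sum_{\lambda<\mu}(-b_{\mu-\lambda})^{\,i}<\lambda>,$$
and summing the resulting geometric series in $u^{-1}$ will give
$$Z(\mu,u)=\sum_{\lambda>\mu}\frac{<\lambda>}{<\mu>}\cdot\frac{u}{u-b_{\lambda-\mu}}+\sum_{\lambda<\mu}\frac{<\lambda>}{<\mu>}\cdot\frac{u}{u+b_{\mu-\lambda}}.$$
Dividing by $u$ and extracting the residue at $u=b_{\lambda-\mu}$ picks up exactly the single simple pole with residue $<\lambda>/<\mu>$, which is the claim (the poles are distinct over $\mathbb{C}(q)$ since the $b_{\lambda-\mu}=q^{2\operatorname{cn}(\lambda-\mu)}$ are distinct rational functions of $q$).

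The main delicate point will be that $\tau_{n+1}$ need not commute with $\tilde y_\mu\otimes 1$, so a full two-sided spectral decomposition of $\tau_{n+1}$ on the subspace $(\tilde y_\mu\otimes 1)\mathscr{C}_{n+1}(\tilde y_\mu\otimes 1)$ is not immediately available; only the one-sided (right) eigenvalue identities of Proposition~\ref{taubd} are known for the Wenzl components. Pushing the argument through the trace is what bypasses this obstacle: the cyclicity of $tr_{n+1}$ together with the fact that each $P_\lambda,E_\lambda$ lies under $\tilde y_\mu\otimes 1$ makes the one-sided information enough to evaluate the required trace exactly.
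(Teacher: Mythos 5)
Your proposal is correct and takes essentially the same route as the paper's Appendix proof: Wenzl's formula (\ref{wenzlformula}) decomposes $\tilde y_\mu\otimes 1$ into the idempotents $P_\lambda$, $E_\lambda$, Proposition~\ref{taubd} supplies the $\pm b$ eigenvalues, and summing the geometric series produces the closed form $Z(\mu,u)$ with distinct simple poles at $\pm b_{\lambda-\mu}$, from which the residue is read off. One factual correction to your closing remark: $\tau_{n+1}$ does commute with $\tilde y_\mu\otimes 1$, by Proposition~\ref{flat} (indeed this commutation is used explicitly in the proof of Proposition~\ref{taubd}), so the paper legitimately passes to the two-sided relation $\tau_{n+1}p_\lambda=b\,p_\lambda$ under $\Phi_{n+1}$, and the obstacle you describe does not actually arise; your detour via trace cyclicity is valid but unnecessary.
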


\begin{proof}
See Appendix \ref{Appendix:traceZ}.
\end{proof}

Let us compute $Z_{n}$ recursively.

\begin{lemma}\label{ZZ}
For $n\geq1$,
$$Z_{n+1}-\frac{\delta}{2}=(Z_{n}-\frac{\delta}{2})\frac{(u-\tau_{n})^2(u+q^{-2}\tau_{n})(u+q^2\tau_{n})}{(u+\tau_{n})^2(u-q^{-2}\tau_{n})(u-q^2\tau_{n})}.$$
\end{lemma}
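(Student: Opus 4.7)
The plan is to derive the recursion by a resolvent manipulation applied to the definition $Z_{n+1}(u) = \Phi_{n+1}(u(u-\tau_{n+1})^{-1})$. First, I would establish the operator identity $\tau_{n+1} = \beta_n \tau_n \alpha_n$, which follows by isotopy and the half-braiding property of Proposition \ref{braidfourier}, and then split $\alpha_n = \beta_n^{-1} + (q-q^{-1})(1-ih_n)$ using the Hecke-type relation $\beta_n^{-1} - \alpha_n = -(q-q^{-1})(1-ih_n)$ that comes directly from the explicit expansions of $\alpha_n$ and $\beta_n$. This writes $\tau_{n+1}$ as the conjugate $\beta_n \tau_n \beta_n^{-1}$ plus a correction supported on the Jones projection $h_n$. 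A direct computation using $Rh_n = h_n R = 0$ and $h_n^2 = \delta h_n$ gives the scalar actions $\alpha_n h_n = q h_n$ and $\beta_n h_n = -q^{-1} h_n$, so $\alpha_n, \beta_n$ act on the range of $h_n$ by the Hecke eigenvalues $\{q, -q^{-1}\}$.

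Next, I would feed this decomposition into the resolvent identity $W^{-1} = Y^{-1} + Y^{-1}(Y-W)W^{-1}$ with $W = u - \tau_{n+1}$ and $Y = u - \beta_n \tau_n \beta_n^{-1}$. Since the perturbation $Y-W$ is proportional to $\beta_n \tau_n (1-ih_n)$, iterating the resolvent produces only powers of $h_n$, which collapse via $h_n^2 = \delta h_n$. Using the flatness of the half-braiding (Proposition \ref{flat}) one obtains $h_n \tau_{n+1} = -h_n \tau_n$ and $\tau_{n+1} h_n = -\tau_n h_n$, hence $h_n(u-\tau_{n+1})^{-1} = h_n(u+\tau_n)^{-1}$ and the mirrored right-hand identity. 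These let the expansion close in one step, yielding an operator identity of the shape
\begin{equation*}
(u-\tau_{n+1})^{-1} = \beta_n (u-\tau_n)^{-1} \beta_n^{-1}\, A(u,\tau_n) + h_n\, B(u,\tau_n),
\end{equation*}
where $A, B$ are explicit rational functions of $u$ with coefficients in $\mathbb{C}(q)[\tau_n]$ whose denominators divide $(u+\tau_n)(u-q^{-2}\tau_n)(u-q^2\tau_n)$. Applying $\Phi_{n+1}$, the conjugate term contributes $Z_n(u)/u$ (since sliding $\beta_n$ around the right cap via Proposition \ref{braidfourier} shows $\Phi_{n+1}(\beta_n f(\tau_n)\beta_n^{-1}) = f(\tau_n)$), and the projection term contributes $B(u,\tau_n)$ (since $\Phi_{n+1}(h_n) = 1$). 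This produces a closed functional equation relating $Z_{n+1}(u)$ to $Z_n(u)$ and to rational functions of $\tau_n$.

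The final step is to reorganise this equation into the multiplicative form stated in the lemma. The shift by $\delta/2$ absorbs the constant-in-$u$ remainder: both $Z_n(u)$ and $Z_{n+1}(u)$ tend to $\delta$ as $u \to \infty$, so $(Z_k - \delta/2)$ has leading behaviour $\delta/2$, and the constant $\delta/2$ is exactly what is needed to cancel the non-multiplicative terms. The denominator $(u+\tau_n)^2(u-q^{-2}\tau_n)(u-q^2\tau_n)$ reflects the poles of $B(u,\tau_n)$: the double factor $(u+\tau_n)^2$ from two substitutions $\tau_{n+1} \mapsto -\tau_n$ on the range of $h_n$, and the simple factors $(u \mp q^{\pm 2}\tau_n)$ from the braid eigenvalues $\{q, -q^{-1}\}$ acting through $\alpha_n, \beta_n$; the numerator is the complementary polynomial dictated by the identity. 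The main obstacle will be the algebraic bookkeeping in the middle step: verifying that the rational functions $A, B$ produced by the resolvent expansion really combine to give the precise ratio claimed, a simplification that hinges on the constraint $\delta = i(q+q^{-1})/(q-q^{-1})$ linking the trace parameter $\delta$ to the braid parameter $q$, together with the Hecke-type relation $\alpha_n - \beta_n^{-1} = (q-q^{-1})(1-ih_n)$.
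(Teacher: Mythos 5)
Your plan correctly identifies most of the right ingredients: the factorization $\tau_{n+1}=\beta_n\tau_n\alpha_n$ coming from the half-braiding, the Hecke split $\alpha_n=\beta_n^{-1}+(q-q^{-1})(1-ih_n)$, the eigenvalue relations $\alpha_n h_n=qh_n$ and $\beta_n h_n=-q^{-1}h_n$, the flatness-derived commutations $h_n\tau_{n+1}=-h_n\tau_n$ and $h_n(u-\tau_{n+1})^{-1}=h_n(u+\tau_n)^{-1}$, the identity $\Phi_{n+1}(\beta_n(u-\tau_n)^{-1}\beta_n^{-1})=Z_n/u$ (your parenthetical misstates it as $=f(\tau_n)$, but your conclusion is right), $\Phi_{n+1}(h_n)=1$, and the $\delta/2$-shift as the mechanism absorbing the inhomogeneous part. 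These are exactly the tools the paper uses.

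However, the central mechanism you propose — a resolvent expansion of $(u-\tau_{n+1})^{-1}$ around $(u-\beta_n\tau_n\beta_n^{-1})^{-1}$ that ``closes in one step'' because the perturbation is supported on $h_n$ — does not work. The perturbation is $\tau_{n+1}-\beta_n\tau_n\beta_n^{-1}=(q-q^{-1})\beta_n\tau_n(1-ih_n)$, which has a summand $(q-q^{-1})\beta_n\tau_n$ with no $h_n$ at all. Iterating the resolvent produces arbitrarily long words $(u-T)^{-1}\beta_n\tau_n(u-T)^{-1}\beta_n\tau_n\cdots$ (with $T=\beta_n\tau_n\beta_n^{-1}$) which do not collapse to powers of $h_n$ and do not terminate, so there is no closed operator identity of the clean shape $(u-\tau_{n+1})^{-1}=\beta_n(u-\tau_n)^{-1}\beta_n^{-1}A(u,\tau_n)+h_nB(u,\tau_n)$. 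Moreover, when the $h_n$-part of the perturbation does arise, one immediately meets sandwiches $h_n(u-\tau_n)^{-1}h_n=\frac{Z_n(u)}{u}h_n$ (the Jones-projection conditional expectation), which inject $Z_n$ itself into the operator identity; in the paper's Equation (\ref{eq3}) this is visible as the term $i(q-q^{-1})\frac{u\tau_n}{u+\tau_n}\frac{Z_n}{u}h_n$. Your claimed $B(u,\tau_n)$ has coefficients only in $\mathbb{C}(q)[\tau_n]$, with no $Z_n$-dependence, so it cannot account for this.

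The paper's route avoids these two problems by not trying to solve for $(u-\tau_{n+1})^{-1}$ at all. Instead it starts from the variant identity $\frac{1}{u-\tau_n}\beta_n^{-1}=\beta_n^{-1}\frac{1}{u-\tau_{n+1}}+\frac{\tau_n}{u-\tau_n}(\beta_n^{-1}-\alpha_n)\frac{1}{u-\tau_{n+1}}$ (Equation (\ref{non11})), derives an expression for $\beta_n(u-\tau_n)^{-1}\beta_n^{-1}$ that still contains $(u-\tau_{n+1})^{-1}$ together with $h_n$-terms and a $Z_n$-factor, applies $\Phi_{n+1}$ to both sides to turn the surviving $(u-\tau_{n+1})^{-1}$ pieces into $Z_{n+1}/u$, and only then obtains a functional equation containing $Z_n$ and $Z_{n+1}$ on both sides, which it rearranges using $\delta=i(q+q^{-1})/(q-q^{-1})$ into the multiplicative form. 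Your proposal would need to be rebuilt around this ``derive a two-sided functional equation and solve it'' strategy rather than a one-sided resolvent expansion.
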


\begin{proof}
See Appendix \ref{Appendix:ZZ}.
\end{proof}

\begin{notation}
For a Young diagram $\mu$, let us define
\begin{align*}
\mu_+&=\{\lambda-\mu~|~\lambda>\mu\};\\
\mu_-&=\{\mu-\lambda~|~\lambda<\mu\}.
\end{align*}
\end{notation}

\begin{lemma}\label{Z=}
For a Young diagram $\mu$, $|\mu|=n$, $n\geq0$,
$$Z(\mu,u)-\frac{\delta}{2}=\frac{\delta}{2}\prod_{c\in\mu_+}\frac{u+b_c}{u-b_c}\prod_{c\in\mu_-}\frac{u-b_c}{u+b_c}.$$
\end{lemma}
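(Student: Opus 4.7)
\emph{The plan.} I would prove the formula by induction on $n=|\mu|$, combining the operator recursion of Lemma~\ref{ZZ} with Wenzl's formula~(\ref{wenzlformula}) to extract a scalar recursion for $Z(\,\cdot\,,u)$, and then verify that the product on the right satisfies the same recursion.

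\emph{Base case.} For $n=0$, $\mu=\emptyset$, $\tilde{y}_\emptyset=1$, and $\tau_1=1$, so
$$Z_1(u)=\Phi_1\bigl(u/(u-1)\bigr)=\frac{\delta u}{u-1}=\frac{\delta}{2}+\frac{\delta}{2}\cdot\frac{u+1}{u-1},$$
which agrees with the claim, since $\emptyset_+=\{(1,1)\}$ has content $0$ (so $b_{(1,1)}=1$) and $\emptyset_-=\emptyset$.

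\emph{Inductive step.} Assume the formula at $|\mu|=n$ and let $\lambda=\mu+c_0$ for an addable cell $c_0=(i_0,j_0)$. Applying Wenzl's formula to $\tilde{y}_\mu\otimes 1\in\mathscr{C}_{n+1}$, I isolate the $\lambda$-component summand
$$e=(\tilde{y}_\mu\otimes 1)\rho_{\mu<\lambda}\tilde{y}_\lambda\rho_{\lambda>\mu}(\tilde{y}_\mu\otimes 1),$$
a nonzero idempotent. Three facts then follow from general principles: $e\cdot Z_{n+1}(u)=Z(\mu,u)\,e$, because $Z_{n+1}(u)\in\mathscr{C}_n$ is central with eigenvalue $Z(\mu,u)$ on $\tilde{y}_\mu$ and $e\le\tilde{y}_\mu\otimes 1$; $e\cdot Z_{n+2}(u)=Z(\lambda,u)\,e$, because $Z_{n+2}(u)\in\mathscr{C}_{n+1}$ is central with eigenvalue $Z(\lambda,u)$ on the $\lambda$-component; and $e\cdot\tau_{n+1}=b_{c_0}\,e$ by Proposition~\ref{taubd}. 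Multiplying the identity of Lemma~\ref{ZZ} at index $n+1$ on the left by $e$ then yields
$$Z(\lambda,u)-\tfrac{\delta}{2}=\bigl(Z(\mu,u)-\tfrac{\delta}{2}\bigr)\,\Psi(b_{c_0}),\qquad \Psi(x):=\frac{(u-x)^2(u+q^{-2}x)(u+q^2x)}{(u+x)^2(u-q^{-2}x)(u-q^2x)}.$$

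\emph{Combinatorial identity.} Writing $R(\mu)$ for the claimed product formula, it remains to verify $R(\lambda)/R(\mu)=\Psi(b_{c_0})$. The cell $c_0$ leaves $\mu_+$ and enters $\lambda_-$, contributing $\bigl((u-b_{c_0})/(u+b_{c_0})\bigr)^2$. The remaining changes come in compensating pairs: depending on whether $\mu_{i_0-1}=j_0$ or $\mu_{i_0-1}\ge j_0+1$ (with $\mu_0:=\infty$), either the removable cell $(i_0-1,j_0)\in\mu_-$ disappears from $\lambda_-$, or the new addable cell $(i_0,j_0+1)$ enters $\lambda_+\setminus\mu_+$; both cells have $b$-value $q^2 b_{c_0}$, so either way the contribution is $(u+q^2 b_{c_0})/(u-q^2 b_{c_0})$. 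A symmetric case split on $\mu_{i_0+1}$ (namely $\mu_{i_0+1}=j_0-1$ versus $\mu_{i_0+1}\le j_0-2$) produces $(u+q^{-2}b_{c_0})/(u-q^{-2}b_{c_0})$. The product of these three factors is precisely $\Psi(b_{c_0})$, closing the induction.

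\emph{Main obstacle.} The combinatorial identity $R(\lambda)/R(\mu)=\Psi(b_{c_0})$ is where the genuine work lies: one must track carefully how $\mu_\pm$ and $\lambda_\pm$ differ and verify that the cancellation between addable and removable contributions always lines up correctly across the boundary cases ($i_0=1$, $j_0=1$, or $\mu$ having fewer than $i_0+1$ rows).
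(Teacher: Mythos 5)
Your proof is correct and takes essentially the same route as the paper: both arguments scalarize Lemma~\ref{ZZ} by hitting it with an element carrying the eigenvalue data of Proposition~\ref{taubd} and then close the induction. The only cosmetic differences are that the paper scalarizes with the one-step intertwiner $W=\tilde{y}_{\mu}\rho_{\mu>\nu}(\tilde{y}_{\nu}\otimes 1)$ going down a level rather than your idempotent $e$ going up, and that the paper leaves the final combinatorial verification (that the claimed product satisfies the recursion) implicit with a bare ``Therefore,'' which you rightly flag as the genuine content and sketch carefully.
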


\begin{proof}
Note that
$$\displaystyle Z(\emptyset,u)=\sum_{i\geq0} \delta u^{-i}=\frac{\delta u}{u-1},$$
so
$$\displaystyle Z(\emptyset,u)-\frac{\delta}{2}=\frac{\delta}{2}\frac{u+1}{u-1}.$$
The statement is true for $n=0$.

For $|\mu|=n$, $n\geq1$ and $\nu<\mu$, take
$W=\tilde{y}_{\mu}\rho_{\mu>\nu} (\tilde{y}_{\nu} \otimes 1)$.
Then by the definitions of $Z_n$ and $Z_n(\cdot,u)$ and Proposition \ref{taubd}, we have
$$WZ_n=Z_n(\nu,u)W, \quad WZ_{n+1}=Z(\mu,u)W, \quad W\tau_n=b_{\mu-\nu}W.$$
By Lemma \ref{ZZ}, we obtain the recursive formula
\begin{align}
Z_{\mu,u}-\frac{\delta}{2}=(Z_{\nu,u}-\frac{\delta}{2})\frac{(u-b_{\mu-\nu})^2(u+q^{-2}b_{\mu-\nu})(u+q^2b_{\mu-\nu})}{(u+b_{\mu-\nu})^2(u-q^{-2}b_{\mu-\nu})(u-q^2b_{\mu-\nu})}.
\end{align}
Therefore
$$Z(\mu,u)-\frac{\delta}{2}=\frac{\delta}{2}\prod_{c\in\mu_+}\frac{u+b_c}{u-b_c}\prod_{c\in\mu_-}\frac{u-b_c}{u+b_c}.$$
\end{proof}

\begin{theorem}[Trace formula. This is Theorem \ref{main theorem trace formula}]\label{trace formula}
The quantum dimension of an irreducible representation indexed by a Young diagram $\lambda$ is given by
$$<\lambda>=\prod_{c\in\lambda} \frac{i(q^{h(c)}+q^{-h(c)})}{q^{h(c)}-q^{-h(c)}},$$
where $h(c)$ is the hook length of a cell $c$ in $\lambda$.
\end{theorem}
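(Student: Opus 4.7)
The plan is to induct on $|\lambda|$, using the two preceding Lemmas (\ref{traceZ} and \ref{Z=}) to reduce the statement to a purely combinatorial identity about contents of addable/removable cells and hook lengths in Young diagrams. The base case is $\lambda = \emptyset$, where both sides equal $1$ (an empty product).

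For the inductive step, fix $\mu$ with $|\mu| = n$ and $\lambda > \mu$, and write $a = b_{\lambda - \mu} = q^{2\operatorname{cn}(\lambda-\mu)}$. By Lemma~\ref{Z=}, the function $Z(\mu,u)$ has a simple pole at $u=a$ with residue obtained by pulling out the factor $(u+a)/(u-a)$ and evaluating the rest at $u=a$. Combining this with Lemma~\ref{traceZ} gives the explicit recursion
\begin{equation*}
\frac{\langle\lambda\rangle}{\langle\mu\rangle} \;=\; \delta \prod_{c \in \mu_+ \setminus \{\lambda-\mu\}} \frac{a + b_c}{a - b_c}\;\prod_{c \in \mu_-} \frac{a - b_c}{a + b_c}.
\end{equation*}
Substituting $\delta = i(q+q^{-1})/(q-q^{-1})$ and $b_c = q^{2\operatorname{cn}(c)}$, each factor of the right-hand side becomes a ratio of the form $i(q^{k}+q^{-k})/(q^{k}-q^{-k})$, where $k$ is the difference (or sum) of contents of two cells. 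So the inductive step reduces to the following combinatorial identity: the above product over addable and removable cells of $\mu$ equals the ratio of the claimed hook-length product for $\lambda$ over that for $\mu$.

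To verify this identity, I would use the classical bijection between addable cells of a Young diagram and ``outer corners'' indexed by runs of constant content along the boundary, and the analogous description of removable cells. Adding the cell $c_0 = \lambda - \mu$ at row $i_0$ and column $j_0$ changes hook lengths only in row $i_0$ and column $j_0$: for cells strictly west or north of $c_0$ in these two arms, the hook length increases by $1$; for cells in $\mu$ not in these arms, the hook length is unchanged; and $c_0$ itself contributes a new factor with $h(c_0) = 1$. The arm and leg lengths of $c_0$ match precisely with the positions of the other addable and removable cells of $\mu$ along the boundary, and under this correspondence the telescoping of consecutive hook lengths in the modified row and column matches the ratios $(a \pm b_c)/(a \mp b_c)$ appearing in the recursion.

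The main obstacle is carrying out this last combinatorial telescoping cleanly — unwinding which addable/removable cells of $\mu$ correspond to which hook-length changes in $\lambda$, and checking signs and the overall factor $\delta$. I would handle it by organizing the boundary of $\mu$ into its sequence of addable and removable cells read from NE to SW, noting that along the arm and leg of $c_0$ in $\lambda$ the contents of addable/removable cells of $\mu$ are exactly $\operatorname{cn}(c_0) \pm h_\lambda(c')$ for $c'$ in the row/column of $c_0$, so that each factor in the residue product pairs up with exactly one hook-length factor in the ratio $\langle\lambda\rangle/\langle\mu\rangle$ predicted by the theorem. A short verification that the leftover factor from pairing is precisely $\delta \cdot i(q+q^{-1})/(q-q^{-1}) = -(q+q^{-1})^2/(q-q^{-1})^2$ (accounting for $h(c_0)=1$ and the ``missing'' pairings at the two ends of the boundary) completes the induction.
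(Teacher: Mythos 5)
Your overall strategy is the same as the paper's: induct on $|\lambda|$, use Lemmas \ref{traceZ} and \ref{Z=} together with Proposition \ref{taubd} to extract the recursion
$$\frac{\langle\lambda\rangle}{\langle\mu\rangle} = \delta \prod_{c\in\mu_+,\, c\neq\lambda-\mu}\frac{a+b_c}{a-b_c}\prod_{c\in\mu_-}\frac{a-b_c}{a+b_c}, \qquad a=b_{\lambda-\mu},$$
and verify it combinatorially by tracking how hook lengths change when the cell $c_0=\lambda-\mu$ is adjoined. Your observation that $\operatorname{cn}(c_0)-\operatorname{cn}(c)$ for an addable or removable cell $c$ of $\mu$ matches, up to sign, a hook length (or a hook length minus one) along the row and column of $c_0$ is exactly the mechanism the paper exploits.

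That said, the final bookkeeping has two problems. First, the prefactor $\delta$ in the recursion should be cancelled, not combined, with the hook-length contribution of the new cell $c_0$: since $h_\lambda(c_0)=1$, that contribution is $\frac{i(q+q^{-1})}{q-q^{-1}}=\delta$, appearing on the opposite side of the identity. After cancellation the residue product should equal $\prod_{c\in C} A_{h_\lambda(c)}/A_{h_\lambda(c)-1}$, where $A_k = i(q^k+q^{-k})/(q^k-q^{-k})$ and $C$ is the set of cells of $\mu$ in the row or column of $c_0$; there is no leftover $\delta^2 = -(q+q^{-1})^2/(q-q^{-1})^2$. Second, the number of residue factors is $|\mu_+|-1+|\mu_-| = 2|\mu_-|$, which in general is not $|C|$ (e.g.\ $\mu=[3,1]$, $\lambda=[4,1]$ gives $2|\mu_-|=4$ but $|C|=3$), so a strict one-to-one pairing of residue factors with hook-length factors cannot exist. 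What actually happens, and what the paper checks, is that each adjacent addable/removable pair on the boundary of $\mu$ corresponds to a \emph{telescoping} product $\prod_k A_k/A_{k-1}$ over a contiguous range of hook lengths in $C$, and these ranges tile $C$. Your ``missing pairings at the two ends'' remark suggests you sense this, but the argument as written does not carry it through correctly.
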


\begin{remark}
When $q=e^{i\theta}$, we have $\delta=\cot(\theta)$ and
$$<\lambda>=\prod_{c\in\lambda} \cot(h(c)\theta).$$
\end{remark}

\begin{proof}
For $|\mu|=n$, $n\geq0$, $\lambda>\mu$,
by Lemmas \ref{traceZ}, \ref{Z=} and Proposition $\ref{taubd}$, we have
\begin{align}
\frac{<\lambda>}{<\mu>}=\delta \prod_{c\in\mu_+,c\neq \lambda-\mu} \frac{b_{\lambda-\mu}+b_c}{b_{\lambda-\mu}-b_c} \prod_{c\in\mu_-} \frac{b_{\lambda-\mu}-b_c}{b_{\lambda-\mu}+b_c}. \label{trace1}
\end{align}

$$\grf{youngdiagram}$$

Without loss of generality, let $\lambda$ be the above Young diagram. The cell $\lambda-\mu$ is marked in the diagram.
Let $C$ be the set of cells in $\mu$ located in the same row or column as $\lambda-\mu$.
The cells in $\mu_+$ except $\lambda-\mu$ are marked by dotted boxes outside $\mu$, and $s$ is the leftmost one. The cells in $\mu_-$ are marked by dotted boxes inside $\mu$, and $t$ is the left most one.
The cells in $C$ located in the same column as $s$ and $t$ are denoted by $s'$ and $t'$ respectively. Then
\begin{align*}
\frac{b_{\lambda-\mu}+b_s}{b_{\lambda-\mu}-b_s}&=\frac{q^{h(s')}+q^{-h(s')}}{q^{h(s')}-q^{-h(s')}};\\
\frac{b_{\lambda-\mu}-b_t}{b_{\lambda-\mu}+b_t}&=\frac{q^{h(t')-1}-q^{-(h(t')-1)}}{q^{h(t')-1}+q^{-(h(t')-1)}}.
\end{align*}
So
\begin{align*}
\frac{b_{\lambda-\mu}+b_s}{b_{\lambda-\mu}-b_s}\frac{b_{\lambda-\mu}-b_t}{b_{\lambda-\mu}+b_t}=\prod_{k=h(s')}^{h(t')} \frac{i(q^{k}+q^{-k})}{q^{k}-q^{-k}} \times \left(\frac{i(q^{k-1}+q^{-(k-1)})}{q^{k-1}-q^{-(k-1)}}\right)^{-1}
\end{align*}
Therefore the recursive formula (\ref{trace1}) can be written as
\begin{align*}
\frac{<\lambda>}{<\mu>}=\delta \prod_{c\in C} \frac{i(q^{h(c)}+q^{-h(c)})}{q^{h(c)}-q^{-h(c)}} \times \left(\frac{i(q^{h(c)-1}+q^{-(h(c)-1)})}{q^{h(c)-1}-q^{-(h(c)-1)}}\right)^{-1}.
\end{align*}
Note that $<\emptyset>=1$, $\displaystyle \delta=\frac{i(q+q^{-1})}{q-q^{-1}}$ and $h(\lambda-\mu)=1$, so
\begin{align*}
<\lambda>=\prod_{c\in\lambda} \frac{i(q^{h(c)}+q^{-h(c)})}{q^{h(c)}-q^{-h(c)}}.
\end{align*}
\end{proof}

\section{Positivity}\label{subsection positivity}
We have constructed the matrix units and computed the trace formula of $\mathscr{C}_{\bullet}$ over the field $\mathbb{C}(q)$. In this section, we consider $q$ as a scalar and $\mathscr{C}_{\bullet}$ as a planar algebra over $\mathbb{C}$. We are going to find out all values of $q$, such that (a proper quotient of) $\mathscr{C}_{\bullet}$ is a subfactor planar algebra. We need to be careful while using Wenzl's formula (\ref{wenzlformula}) over the field $\mathbb{C}$, as the formula is only defined for an idempotent with a non-zero trace. When $q$ is not a root of unity, by Theorem \ref{trace formula}, $<\lambda>$ is non-zero for any Young diagram $\lambda$. Therefore we have the following:
\begin{proposition}
When $q$ is not a root of unity, we have $\mathscr{C}_{\bullet}\cong YL_{\bullet}$ as a filtered algebra over the field $\mathbb{C}$.
\end{proposition}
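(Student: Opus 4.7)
The plan is to re-run the inductive construction of matrix units from the proof of Theorem \ref{P=YL} verbatim, but with the ground field taken to be $\mathbb{C}$ rather than $\mathbb{C}(q)$, and to verify that none of the scalars by which we divide vanish when $q$ is not a root of unity. The output will be the family of isomorphisms $\omega_m\colon YL_m\to \mathscr{C}_m$ compatible with inclusions, just as in the generic case, which is precisely what a filtered algebra isomorphism requires.

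There are three places in the original argument where we need non-zero denominators: (i) the construction of the Hecke algebra matrix units $y_\lambda$, $\rho_{\mu<\lambda}$, $\rho_{\lambda>\mu}$ recalled in Section \ref{hecke}, which requires the quantum integers $[l]$ and the normalization constants $m_\lambda$ and $m_{[\mu|\lambda|\mu]}$ to be non-zero; (ii) Wenzl's formula (\ref{wenzlformula}), which requires each trace $<\lambda>$ to be non-zero; and (iii) the basic construction $\mathscr{C}_{n-1}\subset\mathscr{C}_n\subset\mathscr{I}_{n+1}$ used at the inductive step, which requires the trace on $\mathscr{C}_{n-1}$ to be non-degenerate. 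By the trace formula (Theorem \ref{trace formula}),
$$<\lambda>=\prod_{c\in\lambda}\frac{i(q^{h(c)}+q^{-h(c)})}{q^{h(c)}-q^{-h(c)}},$$
and since $q$ is not a root of unity, neither $q^{k}+q^{-k}$ nor $q^{k}-q^{-k}$ vanishes for any positive integer $k$; hence $<\lambda>\ne 0$ for every Young diagram. The quantum integers $[l]=(q^l-q^{-l})/(q-q^{-1})$ are likewise non-zero, and the explicit expressions of \cite{Yok97} for $m_\lambda$ and $m_{[\mu|\lambda|\mu]}$ are finite products of such factors, so they too are non-zero. In particular the normalized minimal idempotents and intertwiners of $H_n$ exist as elements of $H_n$ over $\mathbb{C}$.

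With these non-vanishing facts in hand, the inductive Properties (1)--(5) established in the proof of Theorem \ref{P=YL} go through without change: at each step $\mathscr{C}_n$ is finite-dimensional semisimple with non-degenerate trace, the central idempotent $s_n$ and the minimal idempotents $\tilde{y}_\lambda=s_ny_\lambda$ are well-defined, and the partial matrix units $\tilde{P}^{\pm}_t$ exist in $\mathscr{C}_n$; Wenzl's formula then produces $\omega_{n+1}$ from $\omega_n$ as a filtered algebra isomorphism $YL_{n+1}\to\mathscr{C}_{n+1}$, completing the induction. The main (and essentially only) obstacle is thus the verification that the denominators do not vanish, which follows immediately from Theorem \ref{trace formula} together with the standard Hecke algebra formulas recalled in Section \ref{hecke}; no new analytical or combinatorial input is required, and the proposition amounts to the observation that specialization of the isomorphism $\mathscr{C}(q)_\bullet\cong YL_\bullet$ from $\mathbb{C}(q)$ to $\mathbb{C}$ is valid at any value of $q$ that is not a root of unity.
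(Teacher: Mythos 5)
Your proposal is correct and matches the paper's own (very terse) proof, which simply cites Theorem \ref{P=YL} and Theorem \ref{trace formula}; you are just spelling out why those two results suffice, namely that the inductive construction of matrix units only requires the non-vanishing of the quantum integers, of the normalization constants $m_\lambda$, $m_{[\mu|\lambda|\mu]}$, and of the traces $<\lambda>$, all of which hold when $q$ is not a root of unity. No new idea is needed beyond what the paper intends the reader to supply.
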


\begin{proof}
Follows from Theorem \ref{P=YL}, \ref{trace formula}.
\end{proof}

When $q$ is a root of unity, $\mathscr{C}_{\bullet}$ is no longer semisimple. We need to consider $(\mathscr{C}/\text{Ker})_{\bullet}$, where $\text{Ker}$ is the kernel of the partition function of $\mathscr{C}_{\bullet}$.
To show $(\mathscr{C}/\text{Ker})_{\bullet}$ is a subfactor planar algebra, we need an involution $*$ which reflects planar tangles vertically and a positive definite Markov trace with respect to the involution. In this case, each $(\mathscr{C}/\text{Ker})_m$ is a $C^*$-algebra.

Recall that $\mathscr{C}_{\bullet}$ is generated by $R$. (See definition \ref{Def:Centralizer algebra}.)
\begin{lemma}
If $(\mathscr{C}/\text{Ker})_{\bullet}$ is a subfactor planar algebra, then $\displaystyle q=e^{\frac{i\pi}{2N+2}}$, for some $N\in \mathbb{N}^+$; and $R=R^*$ for the generator $R$.
\end{lemma}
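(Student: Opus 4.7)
The plan is to extract both conclusions from positivity of the Markov trace, first on the 2-box space (which forces $R^* = R$) and then via the trace formula of Theorem \ref{main theorem trace formula} applied to one-row Young diagrams (which pins down $q$).

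For $R^* = R$, the Markov trace is positive definite on $(\mathscr{C}/\mathrm{Ker})_2$, so $\langle x,y\rangle := tr_2(y^*x)$ is an inner product on the 2-box space. The Temperley-Lieb elements $\mathrm{id}$ and $e$ are self-adjoint, and uncappability places $R$ in their orthogonal complement; since $*$ preserves this complement (which is at most one-dimensional once $R$ survives in the quotient), we have $R^* = \alpha R$ for some scalar $\alpha$. Applying $*$ to the self-adjoint relation $R^2 = \mathrm{id} - e$ gives $\alpha^2 = 1$, hence $\alpha = \pm 1$. Then $tr_2(R^*R) = \alpha(\delta^2 - 1) \geq 0$: the case $\delta = 1$ forces $R \in \mathrm{Ker}$ (so $R = R^* = 0$ in the quotient, trivially), while $\delta > 1$ directly yields $\alpha = +1$. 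The subfactor planar algebra axioms rule out $\delta < 1$.

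For $q = e^{i\pi/(2N+2)}$, the circle parameter $\delta = tr_1(\mathrm{id})$ is a positive real number. Writing $q = re^{i\theta}$ in the identity $\delta = i(q+q^{-1})/(q-q^{-1})$, a direct calculation shows that $\delta \in \mathbb{R}$ forces $r^4 = 1$, i.e., $|q| = 1$. Setting $q = e^{i\theta}$ gives $\delta = \cot\theta$ with $\theta \in (0, \pi/2)$. Now apply Theorem \ref{main theorem trace formula} to the one-row Young diagram $[k]$, whose hook lengths are $k, k-1, \dots, 1$, to obtain
\[
\langle [k]\rangle \;=\; \prod_{j=1}^{k}\cot(j\theta).
\]
By Theorem \ref{P=YL}, $\tilde y_{[k]}$ is a minimal idempotent with trace $\langle [k]\rangle$, so $\tilde y_{[k]} \neq 0$ in the semisimple quotient exactly when $\langle[k]\rangle \neq 0$. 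Positivity of the Markov trace forces $\langle[k]\rangle \geq 0$ for all $k$. If $\cot(j\theta)$ were nonzero for every $j$, the smallest $k_0$ with $k_0\theta > \pi/2$ would produce a nonzero minimal idempotent $\tilde y_{[k_0]}$ of strictly negative trace, contradicting positivity. Hence some $j$ satisfies $\cot(j\theta) = 0$; the least such yields $j\theta = \pi/2$, and writing $N = j - 1$ (and excluding $N = 0$ since that would force $\delta = 0$) produces $q = e^{i\pi/(2N+2)}$ for some $N \in \mathbb{N}^+$.

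The main obstacle is justifying the equivalence $\tilde y_{[k]} \neq 0 \Leftrightarrow \langle[k]\rangle \neq 0$ in the quotient that is used in the contradiction step. This follows from the inductive matrix-unit construction of Theorem \ref{P=YL}, where the decomposition $\mathscr{C}_n = \mathscr{I}_n \oplus s_n H_n$ lets us realize $\tilde y_{[k]} = s_k \, y_{[k]}$ as the image of the Hecke-algebra symmetrizer $y_{[k]} \in H_k$ under the projection onto the complement of the basic-construction ideal; since symmetrizers are never absorbed into a basic-construction ideal, non-degeneracy of the trace on the semi-simple quotient makes vanishing of the trace itself the only possible mechanism of annihilation.
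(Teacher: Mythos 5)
Your proposal is correct and takes essentially the same route as the paper: positivity of the Markov trace together with the one-row trace formula $\langle[k]\rangle=\prod_{j=1}^{k}\cot(j\theta)$ forces some $\cot(j\theta)$ to vanish, pinning $q$ to $e^{i\pi/(2N+2)}$. Your treatment of $R^*=R$ (ruling out $R^*=-R$ via $tr_2(R^*R)=\alpha(\delta^2-1)\geq 0$) is more explicit than the paper's one-line appeal to $R^2=\mathrm{id}-e$, but it is the same positivity argument spelled out.
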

\begin{proof}
Recall that $R^2=id-e$, so $R^*=R$.

To obtain a subfactor planar algebra, $\delta$ has to be a positive number. Recall that $\displaystyle q=\frac{i+\delta }{\sqrt{1+\delta^2}}$. So $q=e^{i\theta}$, for some $0<\theta<\frac{\pi}{2}$.
When $\displaystyle \frac{\pi}{2N+2}<\theta<\frac{\pi}{2N}$, $N\geq1$, the minimal idempotents $\tilde{y}_{[i]}$, $1\leq i \leq N$, can be constructed inductively as in Theorem \ref{P=YL}, where $[i]$ is the Young diagram with 1 row and $N$ columns. However, by Theorem \ref{trace formula}, $<[N]>=\cot (N\theta)<0$. So the trace is not positive semi-definite, and we will not obtain a subfactor planar algebra. Therefore  $\displaystyle q=e^{\frac{i\pi}{2N+2}}$, for some $N\in \mathbb{N}^+$.
\end{proof}

When $\displaystyle q=e^{\frac{i\pi}{2N+2}}$, $N\in \mathbb{N}^+$, let us define an involution $*$ as an anti-linear map on the universal planar algebra $\mathscr{C}_{\bullet}'$ generated by $R$, in which $*$ fixes $R$ and reflects planar tangles vertically. Note that the action of $*$ on the relations of $R$ is stable, so $*$ is well-defined on $\mathscr{C}_{\bullet}$ and $(\mathscr{C}/\text{Ker})_{\bullet}$. Therefore $(\mathscr{C}/\text{Ker})_{\bullet}$ becomes a planar *-algebra.

Let us determine the semisimple quotient $(\mathscr{C}/\text{Ker})_{\bullet}$. Since we do not have a presumed candidate for the semisimple quotient, we need to construct the matrix units of $(\mathscr{C}/\text{Ker})_{\bullet}$ in $\mathscr{C}_{\bullet}$. Since $\mathscr{C}_{\bullet}$ is no longer semisimple, we need to show that the required matrix units are still well-defined while passing from the field $\mathbb{C}(q)$ to $\mathbb{C}$.
Unlike the semisimple case, the basic construction and Wenzl's formula do not always work and the complement of the support of the basic construction ideal $s_m$ is not defined.
We give an alternating Wenzl's formula for $(\mathscr{C}/\text{Ker})_{\bullet}$ and an alternating definition of $s_m$ in $\mathscr{C}_{\bullet}$ to construct the matrix units for $(\mathscr{C}/\text{Ker})_{\bullet}$.
We also construct $\text{Ker}$. Then we can determine the semisimple quotient $(\mathscr{C}/\text{Ker})_{\bullet}$.

Recall that $\tilde{y}_\lambda$ is defined as $s_{|\lambda|}y_\lambda$ over $\mathbb{C}(q)$. If $\tilde{y}_\lambda$ is well-defined over $\mathbb{C}$, then we have the trace formula \ref{trace formula},
$$tr(y_\lambda)=\prod_{c\in\lambda} \cot(h(c)\theta).$$

The $(1,1)$ cell of a Young diagram $\lambda$, denoted by $c_\lambda$, achieves the maximal hook length $h(c_{\lambda})$. Thus
$$\left\{\begin{aligned}
tr(y_\lambda)&>0, \text{~when~} h(c_\lambda)\leq N;\\
tr(y_\lambda)&=0, \text{~when~} h(c_\lambda)= N+1.
\end{aligned}
\right.$$

\begin{notation}[Truncated Weyl Chambers]
Take
\begin{align*}
Y(N)&=\{\lambda~|~h(c_\lambda)\leq N\}; \\
B(N)&=\{\kappa ~|~ \kappa>\lambda, ~\lambda\in Y(N), ~\kappa \notin Y(N) \}.
\end{align*}
Let us define $YL(N)$ to be the sub lattice of Young's lattice $YL$ consisting of $Y(N)$, and $YL(N)_{\bullet}$ to be the string algebra of $YL(N)$ starting from $\emptyset$.
\end{notation}
(Notation \ref{Notation:Young diagrams} for $\kappa>\lambda$: The Young diagram $\kappa$ is obtained by adding one cell to $\lambda$.)

For example, $YL(4)$ is given in Figure \ref{Figure:Pinciplegraph2}.
\begin{figure}[h]
  $$\grc{principalgraph4young}.$$
  \caption{The sub lattice $YL(4)$ of Young's lattice.}\label{Figure:Pinciplegraph2}
\end{figure}

Let $H_{\bullet}$ be the Hecke algebra generated by $\gra{b1}$ over $\mathbb{C}$. By the construction in Section \ref{hecke}, for any $\mu,\lambda\in Y(N)\cup B(N)$, such that $\mu<\lambda$, we have the well-defined idempotents $y_\mu$, $y_\lambda$, and morphisms $\rho_{\mu<\lambda}$ from $y_\mu\otimes 1$ to $y_\lambda$; $\rho_{\lambda>\mu}$ from $y_{\lambda}$ to $y_\mu\otimes 1$. Moreover $y_\mu^*=y_\mu$, $y_\lambda^*=y_\lambda$ and $\rho_{\mu<\lambda}^*=\rho_{\lambda>\mu}$. Then we have the branching formula for $y_{\mu}$, $\mu\in Y(N)$,
$$y_{\mu}\otimes 1=\sum_{\lambda>\mu} \rho_{\mu<\lambda}\rho_{\lambda>\mu}.$$

Now let us construct $\tilde{y}_\lambda$ (over $\mathbb{C}$) inductively, for $\lambda\in Y(N)\cup B(N)$.
Here we cannot use the former construction of $\tilde{y}_\lambda$, which was done over $\mathbb{C}(q)$, since $s_m$ is not defined over $\mathbb{C}$ yet.

Set up $\tilde{y}_\emptyset=\emptyset$.
Suppose $\mu\in Y(N)$ and $\tilde{y}_{\lambda}$ is constructed. For $\kappa\in Y(N)\cup B(N)$, $\kappa>\mu$, let us define $\tilde{y}_\kappa$ as
\begin{align*}
\tilde{y}_\kappa&=\rho_{\kappa>\mu} \left(
\tilde{y}_\mu \otimes 1 -\sum_{\lambda<\mu} \frac{<\lambda>}{<\mu>} (\tilde{y}_\mu\otimes 1)(\rho'_{\mu>\lambda} \otimes 1) (\tilde{y}_{\lambda}\otimes \cap) (\tilde{y}_{\lambda}\otimes \cup) (\rho'_{\lambda<\mu} \otimes 1) (\tilde{y}_\mu \otimes 1) \right) \rho_{\mu<\kappa}.
\end{align*}
Recall that $\rho$ and $\rho'$ are normalization of $\dot{\rho}$ over $\mathbb{C}(q)$ and $\mathbb{C}$ respectively. So
\begin{align*}
\tilde{y}_\kappa&=\rho_{\kappa>\mu} \left(
\tilde{y}_\mu \otimes 1 -\sum_{\lambda<\mu} \frac{<\lambda>}{<\mu>} (\tilde{y}_\mu\otimes 1)(\rho_{\mu>\lambda} \otimes 1) (\tilde{y}_{\lambda}\otimes \cap) (\tilde{y}_{\lambda}\otimes \cup) (\rho_{\lambda<\mu} \otimes 1) (\tilde{y}_\mu \otimes 1) \right) \rho_{\mu<\kappa},
\end{align*}
which is also defined over $\mathbb{C}(q)$. By Wenzl's formula \ref{wenzlformula}, we have $\tilde{y}_\kappa=s_my_\kappa$ over $\mathbb{C}(q)$. Therefore the definition of $\tilde{y}_{\kappa}$ over $\mathbb{C}$ is independent of the choice of $\mu$.

We have constructed $\tilde{y}_\lambda$, for $\lambda\in Y(N)\cup B(N)$. Thus Wenzl's formula \ref{wenzlformula} holds for $\tilde{y}_{\mu}$, $\mu\in Y(N)$, over $\mathbb{C}$ as follows
\begin{align*}
\tilde{y}_\mu \otimes 1 &=\sum_{\lambda<\mu} \frac{<\lambda>}{<\mu>} (\tilde{y}_\mu\otimes 1)(\rho'_{\mu>\lambda} \otimes 1) (\tilde{y}_{\lambda}\otimes \cap) (\tilde{y}_{\lambda}\otimes \cup) (\rho'_{\lambda<\mu} \otimes 1) (\tilde{y}_\mu \otimes 1) & \nonumber\\
                        &+\sum_{\lambda>\mu} (\tilde{y}_\mu\otimes 1) \rho'_{\mu<\lambda} \tilde{y}_{\lambda} \rho'_{\lambda>\mu} (\tilde{y}_\mu\otimes 1).
\end{align*}

\begin{lemma}\label{trace0}
For a spherical planar algebra $\mathscr{C}_{\bullet}$, if $p$ is a trace zero minimal idempotent in $\mathscr{C}_m$, then $p$ is in the kernel of the partition function of $\mathscr{C}_{\bullet}$.
\end{lemma}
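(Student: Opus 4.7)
The plan is short: unpack the definition of the kernel and reduce to a single trace computation. Recall from Section \ref{subsection skein theory} that the kernel of the partition function in $\mathscr{C}_m$ is the set $\{x\in\mathscr{C}_m \ | \ Z(tr_m(xy))=0 \text{ for all } y\in\mathscr{C}_m\}$. Hence the task reduces to showing that $tr_m(py)=0$ for every $y\in\mathscr{C}_m$.

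For this, I would first invoke the cyclic property of the Markov trace, which holds because $\mathscr{C}_{\bullet}$ is assumed spherical, together with the idempotency $p^2=p$, to rewrite
$$tr_m(py)=tr_m(p\cdot py)=tr_m(py\cdot p)=tr_m(pyp).$$
Then I would use minimality of $p$: the corner algebra $p\mathscr{C}_m p$ is spanned by $p$, so $pyp=\lambda(y)\, p$ for some scalar $\lambda(y)\in\mathbb{C}$. Combining these two observations,
$$tr_m(py)=\lambda(y)\,tr_m(p)=0$$
by the trace-zero hypothesis on $p$, completing the argument.

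The only substantive step---really the only place that could reasonably be called an obstacle---is the appeal to minimality to conclude $p\mathscr{C}_m p=\mathbb{C}p$. For a minimal idempotent in a finite-dimensional algebra over $\mathbb{C}$ the corner is a priori only a local algebra, so this identification requires that $p$ be minimal in the stronger Wedderburn sense. In the applications of this lemma intended in the paper (namely the boundary idempotents $\tilde{y}_\kappa$, $\kappa\in B(N)$, at $q=e^{i\pi/(2N+2)}$), these idempotents arise as specializations of central minimal idempotents in simple components of $\mathscr{C}_m$ over $\mathbb{C}(q)$, so this stronger minimality is automatic and the proof goes through verbatim.
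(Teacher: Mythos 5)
Your proof is correct and takes essentially the same route as the paper, which invokes sphericality to reduce to the single closed diagram $tr_m(px)$ and then concludes from minimality and trace-zero. You have usefully expanded the step the paper leaves implicit: that $tr_m(py)=tr_m(pyp)$ by cyclicity of the Markov trace and idempotency, and that minimality in the sense $p\mathscr{C}_m p=\mathbb{C}p$ is what makes $tr_m(pyp)=\lambda(y)\,tr_m(p)$ legitimate. Your caveat about the stronger (Wedderburn) minimality is also well placed; the paper's one-line appeal to minimality silently assumes exactly this, and you correctly observe that the idempotents $\tilde{y}_\kappa$ to which the lemma is applied do satisfy it because they specialize from matrix units constructed over $\mathbb{C}(q)$.
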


\begin{proof}
By spherical isotopy, any closed diagram containing $p$ is of the form $tr(px)$ for some $x$ in $\mathscr{C}_m$.
By assumption $p$ is a trace zero minimal idempotent, so $tr(px)=0$. Therefore $p$ is in the kernel of the partition function of $\mathscr{C}_{\bullet}$.
\end{proof}

Note that $h(c_\kappa)= N+1$, for any $\kappa\in B(N)$. So $tr(y_{\kappa})=0$.
By Lemma \ref{trace0}, we have $y_{\kappa}\in\text{Ker}$.
Therefore in $(\mathscr{C}/\text{Ker})_\bullet$, Wenzl's formula for $\tilde{y}_{\mu}$, $\mu\in Y(N)$,  is given by
\begin{align}
\tilde{y}_\mu \otimes 1 &=\sum_{\lambda<\mu} \frac{<\lambda>}{<\mu>} (\tilde{y}_\mu\otimes 1)(\rho'_{\mu>\lambda} \otimes 1) (\tilde{y}_{\lambda}\otimes \cap) (\tilde{y}_{\lambda}\otimes \cup) (\rho'_{\lambda<\mu} \otimes 1) (\tilde{y}_\mu \otimes 1) & \nonumber\\
                        &+\sum_{\lambda>\mu,\lambda\in Y(N)} (\tilde{y}_\mu\otimes 1) \rho'_{\mu<\lambda} \tilde{y}_{\lambda} \rho'_{\lambda>\mu} (\tilde{y}_\mu\otimes 1). \label{positive wenzl formula}
\end{align}

Now let us construct the matrix units of $(\mathscr{C}/\text{Ker})_{\bullet}$ and show that it is a subfactor planar algebra.

\begin{theorem}[Positivity]\label{Positivity}
When $\displaystyle q=e^{\frac{i\pi}{2N+2}}$, $N\geq1$, $(\mathscr{C}/\text{Ker})_{\bullet}$ is a subfactor planar algebra, denoted by $\mathscr{C}^{N}_{\bullet}$.
Its principal graph is $YL(N)$. See Figure \ref{Figure:principal graphs}.
\end{theorem}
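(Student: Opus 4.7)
The plan is to construct explicit matrix units for $(\mathscr{C}/\text{Ker})_m$ indexed by pairs of length-$m$ paths in $YL(N)$ from $\emptyset$, mirroring the semisimple construction in Theorem \ref{P=YL} but using the $C^*$-normalizations $\rho'_{\mu<\lambda}$ throughout and restricting to Young diagrams in $Y(N)$. First I would establish strict positivity of the quantum dimensions: for $\lambda \in Y(N)$, every hook length $h(c)$ satisfies $0 < h(c)\theta \leq N\pi/(2N+2) < \pi/2$, so Theorem \ref{trace formula} gives $\langle\lambda\rangle = \prod_{c\in\lambda}\cot(h(c)\theta) > 0$; for $\kappa \in B(N)$, there is a cell with $h(c) = N+1$, hence $\cot((N+1)\theta) = 0$ and $\langle\kappa\rangle = 0$. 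By Lemma \ref{trace0}, each such $\tilde{y}_\kappa$ lies in $\text{Ker}$, so it vanishes in the quotient.

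Next I would inductively verify equation \eqref{positive wenzl formula} in $(\mathscr{C}/\text{Ker})_\bullet$: for $\mu \in Y(N)$, the identity $\tilde{y}_\mu \otimes 1$ decomposes as a sum of basic-construction terms indexed by $\lambda < \mu$ plus new minimal idempotents $\tilde{y}_\lambda$ indexed by $\lambda > \mu$ with $\lambda \in Y(N)$ (the $\lambda \in B(N)$ summands drop out). The denominators $\langle\mu\rangle, \langle\lambda\rangle$ and the coefficients $m_\lambda, m_{[\mu|\lambda|\mu]}$ that enter the definitions of $\tilde{y}_\lambda$, $\rho'_{\mu<\lambda}$, $\rho'_{\lambda>\mu}$ are all strictly positive on $Y(N)\cup B(N)$ by the hook-length analysis of Section \ref{hecke}, so these partial isometries are well-defined over $\mathbb{C}$ and satisfy $(\rho'_{\mu<\lambda})^* = \rho'_{\lambda>\mu}$. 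Defining $P^+_t P^-_\tau$ by the inductive formulas of Theorem \ref{P=YL} (with $\rho$ replaced by $\rho'$) and invoking the $*$-compatibility together with the standard string-algebra multiplication rule $P^+_t P^-_\tau \cdot P^+_s P^-_\sigma = \delta_{\tau s} P^+_t P^-_\sigma$ yields an injective $*$-homomorphism $YL(N)_m \hookrightarrow (\mathscr{C}/\text{Ker})_m$. Positivity of the Markov trace on the image is immediate, since it is a positive combination of matrix traces weighted by $\langle\lambda\rangle > 0$.

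The main obstacle — and the step I expect to be the hardest — is showing that this inclusion is surjective onto $(\mathscr{C}/\text{Ker})_m$, i.e.\ that the ideal $\text{Ker}$ is exactly generated (inside each $\mathscr{C}_m$) by the trace-zero idempotents $\tilde{y}_\kappa$ for $\kappa \in B(N)$ together with their shifts. My plan here is to carry the inductive step of Theorem \ref{P=YL} over to the non-semisimple setting, using the $\mathbb{C}(q)$-isomorphism $\mathscr{C}_\bullet \cong YL_\bullet$ to show that every element of $\mathscr{C}_m$ has a decomposition into matrix units of $YL_m$, then argue that each matrix unit labelled by a path passing through a vertex outside $Y(N)$ factors through some $\tilde{y}_\kappa$ with $\kappa \in B(N)$ and hence lies in $\text{Ker}$ by Lemma \ref{trace0} and spherical isotopy. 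Once surjectivity is established, $(\mathscr{C}/\text{Ker})_\bullet \cong YL(N)_\bullet$ as filtered $*$-algebras with a positive Markov trace; sphericality and evaluability are inherited from $\mathscr{C}_\bullet$, so the quotient is a subfactor planar algebra, and its Bratteli diagram — hence its principal graph by Definition \ref{def principal graph} — is $YL(N)$.
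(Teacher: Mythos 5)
Your treatment of positivity of dimensions, well-definedness of $\tilde{y}_\lambda$, $\rho'_{\mu<\lambda}$, $\rho'_{\lambda>\mu}$ for $\lambda,\mu\in Y(N)\cup B(N)$, injectivity of $\omega_m: YL(N)_m\hookrightarrow(\mathscr{C}/\text{Ker})_m$, and positivity of the Markov trace all match the paper. You also correctly identify surjectivity as the hard step. But your proposed route to surjectivity has a genuine gap.

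You propose to ``use the $\mathbb{C}(q)$-isomorphism $\mathscr{C}_\bullet\cong YL_\bullet$ to show that every element of $\mathscr{C}_m$ has a decomposition into matrix units of $YL_m$,'' and then argue that the terms through vertices outside $Y(N)$ land in $\text{Ker}$. The problem is that the matrix units $P^+_tP^-_\tau$ of $YL_m$ are only defined over $\mathbb{C}(q)$; for paths $t$ passing through Young diagrams $\lambda$ whose $(1,1)$ cell has hook length $>N+1$, the normalizing factors $\frac{1}{m_\lambda}$, $\frac{1}{m_{[\mu|\lambda|\mu]}}$, $\frac{1}{[l]}$ in the construction develop poles at $q=e^{i\pi/(2N+2)}$, so these matrix units do not specialize to elements of $\mathscr{C}_m$ over $\mathbb{C}$. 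One cannot simply transport the $\mathbb{C}(q)$ decomposition of a given $x$ to $\mathbb{C}$: the coefficients $c_{t,\tau}(q)$ may vanish as the corresponding matrix unit blows up, and sorting out which limits exist and what they equal is a nontrivial analytic problem that your sketch does not address. The claim that problematic matrix units ``factor through'' a $\tilde{y}_\kappa$, $\kappa\in B(N)$, presupposes the path segments beyond $\kappa$ make sense over $\mathbb{C}$, which is exactly what fails.

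The paper sidesteps this entirely. Because $\tilde{y}_\kappa\in\text{Ker}$ for $\kappa\in B(N)$ (Lemma \ref{trace0}), the modified Wenzl formula \eqref{positive wenzl formula} holds for $\tilde{y}_\mu\otimes 1$, $\mu\in Y(N)$, in the quotient $(\mathscr{C}/\text{Ker})_\bullet$ with the outgoing sum restricted to $\lambda>\mu$, $\lambda\in Y(N)$. Iterating this from $\tilde{y}_\emptyset=1_0$ gives the decomposition $1_m=\sum_{t\in\text{Path}(m)}\tilde{P}^+_t\tilde{P}^-_t$ over paths in $YL(N)$, working entirely inside the truncated region $Y(N)\cup B(N)$ where everything is well-defined over $\mathbb{C}$. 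Once $1_m$ is so decomposed, every $x\in(\mathscr{C}/\text{Ker})_m$ sandwiches as $x=\sum_{(t,\tau)}x_{t,\tau}\tilde{P}^+_t\tilde{P}^-_\tau$ (using non-vanishing of the trace $\langle\mu\rangle$ of any $\mu\in Y(N)$ to extract the scalar $x_{t,\tau}$), giving surjectivity directly. This ``decompose $1_m$ via truncated Wenzl'' step is the missing key lemma in your proposal; it is what lets one avoid ever looking at Young diagrams past $B(N)$.
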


\begin{figure}[h]
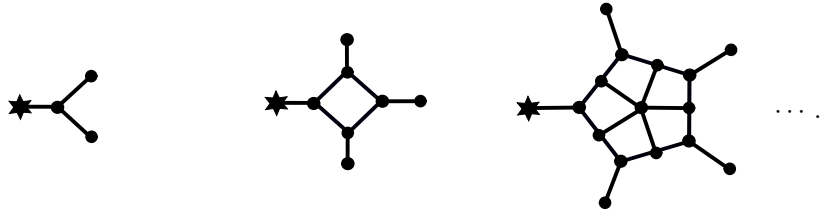

$$\grc{principalgraph3}\quad \grc{principalgraph4}\quad \grc{principalgraph5} \cdots.$$
\caption{Principal graphs $YL(N)$, for $N=2,3,4,\cdots$}\label{Figure:principal graphs}
\end{figure}

\begin{remark}
When $N=1$, the planar algebra has index 1.
When $N=2$, the planar algebra is the group subfactor planar algebra $\mathbb{Z}_3$.
It is exactly the one in the classification result in \cite{BJL}, which is not in the two families, Bisch-Jones planar algebras and BMW planar algebras. When $N=3$, the subfactor planar algebra was constructed in \cite{LMP13}, i.e., the example (4) in Section \ref{subsection example}.
\end{remark}

\begin{remark}
For each $q=e^{\frac{i\pi}{2N+2}}$, we obtained a pair of subfactor planar algebras, due to the complex conjugate choice for the generator and relations in Definition \ref{Def:Centralizer algebra}.
\end{remark}

\begin{remark}
There are two different ways to identify the group subfactor planar algebra $\mathbb{Z}_3$ as an unshaded planar algebra. The two unshaded ones are complex conjugates of each other.
\end{remark}

\begin{proof}
Let $\text{Path}(m)$ be the set of all length $m$ paths $t$ in $YL(N)$ starting from $\emptyset$.
For $t\in \text{Path}(m)$ from $\emptyset$ to $\lambda$,
take $t'$ to be the first length $(m-1)$ sub path of $t$ from $\emptyset$ to $\mu$.
Let us define $\tilde{P}^{\pm}_t$ inductively as follows,
\begin{align*}
P^{\pm}_\emptyset&=\emptyset; &\\
\tilde{P}^+_t&=(\tilde{P}^+_{t'}\otimes 1)\rho'_{\mu < \lambda} \tilde{y}_{\lambda}, &\text{ when } \mu<\lambda;\\
\tilde{P}^+_t&=\sqrt{\frac{<\lambda>}{<\mu>}}(\tilde{P}^+_{t'}\otimes 1)(\rho'_{\mu>\lambda}\otimes 1) (\tilde{y}_{\lambda}\otimes \cap), &\text{ when } \mu>\lambda;\\
\tilde{P}^-_t&=\sqrt{\frac{<\lambda>}{<\mu>}}(\tilde{P}^-_{t'}\otimes \cup)(\rho'_{\mu < \lambda}\otimes 1) (\tilde{y}_{\lambda}\otimes 1), &\text{ when } \mu<\lambda;\\
\tilde{P}^-_t&=\tilde{P}^+_{t'}\rho'_{\mu>\lambda} (\tilde{y}_{\lambda}\otimes 1), &\text{ when } \mu>\lambda.
\end{align*}

By definitions, we have $y_{\lambda}^*=y_{\lambda}$ and $(\tilde{P}^+_t)^*=\tilde{P}^-_t$. By Theorem \ref{P=YL}, the map $\omega_m: YL(N)_m \to \mathscr{C}_m$ as a linear extension of
\begin{align*}
\omega_m(t\tau^{-1})=\tilde{P}^+_t \tilde{P}^{-}_\tau,
\end{align*}
is an injective *-homomorphism.
Recall that $tr(y_{\lambda})>0$, for any $\lambda\in Y(N)$, so $\omega_m$ is still injective passing to quotient $(\mathscr{C}/\text{Ker})_m$.

Applying Wenzl's formula (\ref{positive wenzl formula}) to the identity $1_m$ of $(\mathscr{C}/\text{Ker})_m$, we have
$$1_m=\sum_{t\in \text{Path}(m)}\tilde{P}^+_t\tilde{P}^-_t.$$
For an $m$-box $x$, if $t,\tau \in \text{Path}(m)$ are paths from $\emptyset$ to different vertices, then $\tilde{P}^-_t x \tilde{P}^+_\tau=0$ by Theorem \ref{P=YL}.
If $t,\tau \in \text{Path}(m)$ are paths from $\emptyset$ to $\mu$, then $tr(\tilde{P}^+_t\tilde{P}^-_\tau \tilde{P}^+_\tau \tilde{P}^-_t)=<\mu>\neq 0$. Take
$$x_{t,\tau}=\frac{tr(\tilde{P}^+_t\tilde{P}^-_t x \tilde{P}^+_\tau\tilde{P}^-_\tau \tilde{P}^+_\tau \tilde{P}^-_t )}{tr(\tilde{P}^+_t\tilde{P}^-_\tau \tilde{P}^+_\tau \tilde{P}^-_t)}.$$
By Theorem \ref{P=YL}, we have
$$\tilde{P}^+_t\tilde{P}^-_t x \tilde{P}^+_\tau\tilde{P}^-_\tau=x_{t,\tau}\tilde{P}^+_t\tilde{P}^-_\tau,$$
Let $\text{Pair}(m)$ be the set of all pairs of paths $(t,\tau)$ in $\text{Path}(m)$ from $\emptyset$ to the same vertex. Then
$$x=\sum_{(t,\tau) \in \text{Pair}(m)}x_{t,\tau} \tilde{P}^+_t\tilde{P}^-_\tau.$$
Therefore $\omega_m$ is onto $(\mathscr{C}/\text{Ker})_m$.

Since $\omega_m: YL(N)_m \to (\mathscr{C}/\text{Ker})_m$ is a *-isomorphism and the trace is positive definite, we have that $(\mathscr{C}/\text{Ker})_{\bullet}$ is a subfactor planar algebra. Moreover, its principal graph is $YL(N)$.
\end{proof}

\begin{corollary}
For each $m$, we have $\mathscr{C}_m=YL(N)_m\oplus \text{Ker}_m$, where $\text{Ker}_m$ is the two sided ideal of the algebra $\mathscr{C}_m$ generated by the trace zero minimal idempotents $\{\tilde{y}_{\lambda}\}_{\lambda\in B(N), |\lambda|\leq m}$.
\end{corollary}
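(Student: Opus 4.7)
The plan is to establish the decomposition by two complementary moves: exhibiting a spanning decomposition via a ``full'' version of Wenzl's formula that keeps track of the $B(N)$-contributions, and then using the Positivity theorem to rule out nontrivial overlap. Write $\mathcal{J}_m$ for the two-sided ideal generated by $\{\tilde y_\kappa\}_{\kappa\in B(N),|\kappa|\le m}$ (what the statement calls $\mathrm{Ker}_m$).

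First, I would prove that $\mathscr{C}_m=\omega_m(YL(N)_m)+\mathcal{J}_m$. Over $\mathbb{C}$, the minimal idempotents $\tilde y_\kappa$ are well-defined for every $\kappa\in Y(N)\cup B(N)$, so the full Wenzl formula for $\mu\in Y(N)$ reads
\begin{align*}
\tilde y_\mu\otimes 1
&=\sum_{\lambda<\mu}\frac{<\lambda>}{<\mu>}(\tilde y_\mu\otimes 1)(\rho'_{\mu>\lambda}\otimes 1)(\tilde y_\lambda\otimes\cap)(\tilde y_\lambda\otimes\cup)(\rho'_{\lambda<\mu}\otimes 1)(\tilde y_\mu\otimes 1)\\
&\quad +\sum_{\substack{\lambda>\mu\\ \lambda\in Y(N)}}(\tilde y_\mu\otimes 1)\rho'_{\mu<\lambda}\tilde y_\lambda\rho'_{\lambda>\mu}(\tilde y_\mu\otimes 1)
+\sum_{\substack{\kappa>\mu\\ \kappa\in B(N)}}(\tilde y_\mu\otimes 1)\rho'_{\mu<\kappa}\tilde y_\kappa\rho'_{\kappa>\mu}(\tilde y_\mu\otimes 1).
\end{align*}
The last sum lies in $\mathcal{J}_m$ by construction. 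Iterating this identity starting from $1=\tilde y_\emptyset$, exactly as in the proof of the Positivity theorem, yields $1_m=\sum_{t\in\mathrm{Path}(m)}\tilde P^+_t\tilde P^-_t+J_m$ for some $J_m\in\mathcal{J}_m$. For any $x\in\mathscr{C}_m$, substituting $x=1_m\cdot x\cdot 1_m$ and expanding, each cross-term $\tilde P^+_t\tilde P^-_t\, x\, \tilde P^+_\tau\tilde P^-_\tau$ is (by the argument used in the Positivity proof, via Theorem~\ref{P=YL}) a scalar multiple of $\tilde P^+_t\tilde P^-_\tau\in\omega_m(YL(N)_m)$, while all remaining terms involve $J_m$ on the left or right and therefore lie in $\mathcal{J}_m$.

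Second, I would verify $\omega_m(YL(N)_m)\cap\mathcal{J}_m=0$. By Lemma~\ref{trace0} each generator $\tilde y_\kappa$, $\kappa\in B(N)$, lies in the kernel of the partition function, and since that kernel is a two-sided ideal, $\mathcal{J}_m\subseteq\mathrm{Ker}$. The Positivity theorem shows $\omega_m:YL(N)_m\to(\mathscr{C}/\mathrm{Ker})_m$ is an isomorphism, in particular injective. Hence $\omega_m(y)\in\mathcal{J}_m$ forces $\omega_m(y)\equiv 0\pmod{\mathrm{Ker}}$, so $y=0$. Combining with the previous paragraph gives $\mathscr{C}_m=\omega_m(YL(N)_m)\oplus\mathcal{J}_m$, which is the claimed decomposition (and incidentally shows $\mathcal{J}_m=\mathrm{Ker}_m$, i.e., the ideal generated by the trace-zero minimal idempotents exhausts the entire partition-function kernel).

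The main obstacle is verifying that the iterative application of the full Wenzl formula in Step~1 is clean: one must check that once a branch enters a vertex of $B(N)$, every further factor that appears can be absorbed into $\mathcal{J}_m$ (because $\tilde y_\kappa$ with $\kappa\in B(N)$ is a minimal idempotent used as a two-sided ``sandwich'' in the Wenzl decomposition). This is bookkeeping rather than a deep step, but it is the one place where the cutoff $Y(N)\cup B(N)$ must be used carefully — deeper Young diagrams would have vanishing normalization constants and cannot be handled, but the iteration never reaches them because every $\kappa\in B(N)$ terminates its branch inside $\mathcal{J}_m$.
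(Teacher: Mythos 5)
Your proof is correct and takes the same approach as the paper's, which compresses it to a single remark: $\text{Ker}_m\subseteq\text{Ker}$, and the Wenzl decomposition $1_m=\sum_{t\in\text{Path}(m)}\tilde P^+_t\tilde P^-_t$ already holds in $\mathscr{C}_m/\text{Ker}_m$ (because the $B(N)$-summands in the full Wenzl formula lie in $\text{Ker}_m$), whence the surjectivity argument from Theorem~\ref{Positivity} together with injectivity of $\omega_m$ modulo $\text{Ker}\supseteq\text{Ker}_m$ gives the direct sum. Your unpacking of the iterated Wenzl formula and the bookkeeping that branches into $B(N)$ stay inside $\mathcal{J}_m$ is exactly the content the paper leaves implicit.
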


\begin{proof}
Note that $\text{Ker}_m \subset \text{Ker}$ and the decomposition $$1_m=\sum_{t\in \text{Path}(m)}\tilde{P}^+_t\tilde{P}^-_t$$ also holds in $\mathscr{C}_m/\text{Ker}_m$, so $\mathscr{C}_m=YL(N)_m\oplus \text{Ker}_m$.
\end{proof}

\begin{remark}
Our strategy of decomposing the non-semisimple algebra $\mathscr{C}_m$ into a direct sum of a semisimple algebra $(\mathscr{C}/\text{Ker})_m$ and an ideal $\text{Ker}_m$ also works for other cases, such as Temperley-Lieb-Jones planar algebras, BMW planar algebras, Bisch-Jones planar algebras etc.

Usually the (planar) algebra $\mathscr{C}_{\bullet}$ given by generators and relations is semisimple over the field of rational functions in some parameters. However, it may not be semisimple over $\mathbb{C}$ when the parameters are scalars, in particular roots of unity.
First we construct the matrix units for the algebra over rational functions and identify them as loops of a (directed) graph $\Gamma$ starting from a distinguished vertex $\emptyset$.
Then we find out the subgraph $Y$ such that the statistical dimensions of vertices in $Y$ are non-zero and the statistical dimensions of vertices in the boundary $B$ of $Y$ are zero.
We need to check that \underline{the matrix units for the string algebra and the trace zero idempotents are well-defined} over the field $\mathbb{C}$.
Then we have the decomposition of $\mathscr{C}_{\bullet}$ over the field $\mathbb{C}$ as a direct sum of the string algebra of $Y$ and an ideal generated by trace zero idempotents corresponding to vertices in $B$.
\end{remark}

\begin{proposition}
When $\displaystyle q=e^{\frac{ik\pi}{2N+2}}$, $(k,2N+2)=1$, the quotient $(\mathscr{C}/\text{Ker})_{\bullet}$ is a spherical semisimple planar algebra, but not a subfactor planar algebra. The fusion category associated with the planar algebra is spherical, but not unitary. Moreover, the simple objects are given by $Y(N)$.
\end{proposition}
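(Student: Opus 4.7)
The approach is to transcribe the argument for Theorem \ref{Positivity}, replacing every appeal to positivity by an appeal to mere non-vanishing, and then to verify separately that the surviving Markov trace fails to be positive definite. The key inputs—namely the trace formula of Theorem \ref{main theorem trace formula} and the Hecke-algebra ingredients of Section \ref{hecke}—both survive the specialisation $q=e^{ik\pi/(2N+2)}$ under the hypothesis $\gcd(k,2N+2)=1$, and this is what drives the whole plan.

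First I would analyse the zero locus of $\langle\lambda\rangle$. With $\theta=k\pi/(2N+2)$, the trace formula gives $\langle\lambda\rangle=\prod_{c\in\lambda}\cot(h(c)\theta)$, and $\cot(h(c)\theta)=0$ iff $2kh(c)\equiv N+1\pmod{2N+2}$. Since $\gcd(k,2N+2)=1$ forces $k$ odd and $\gcd(k,N+1)=1$, this reduces to $(N+1)\mid h(c)$. I would conclude that $\langle\lambda\rangle\neq 0$ for every $\lambda\in Y(N)$, while $\langle\kappa\rangle=0$ for every $\kappa\in B(N)$. The same congruence shows $[l]\neq 0$ for $1\le l\le N+1$, which by the discussion at the end of Section \ref{hecke} ensures that all idempotents $y_\lambda$ and intertwiners $\rho_{\mu<\lambda},\rho_{\lambda>\mu}$ required for $\lambda,\mu\in Y(N)\cup B(N)$ remain well-defined at this specialisation.

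Next I would run the inductive construction of $\tilde y_\lambda$ from Section \ref{subsection positivity} verbatim over $\mathbb{C}$. The only place positivity entered there was to normalise matrix units symmetrically via $\sqrt{\langle\lambda\rangle/\langle\mu\rangle}$ so that $(\tilde P^+_t)^\ast=\tilde P^-_t$; since I am not claiming unitarity, I would drop that normalisation in favour of the asymmetric $\rho$-based matrix units used in Theorem \ref{P=YL}, which only require the non-vanishing of $\langle\mu\rangle$. Lemma \ref{trace0} still forces $\tilde y_\kappa\in\text{Ker}$ for every $\kappa\in B(N)$, so the truncated Wenzl formula (\ref{positive wenzl formula}) persists in the quotient, and a Wenzl-type decomposition of the identity $1_m$ over length-$m$ paths in $YL(N)$ yields a filtered algebra isomorphism $(\mathscr{C}/\text{Ker})_\bullet\cong YL(N)_\bullet$. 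This establishes that the quotient is semisimple with simple objects indexed by $Y(N)$; sphericity is inherited from $\mathscr{C}_\bullet$, and the reconstruction of Section 2.3 then furnishes a spherical fusion category.

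Finally, for non-unitarity I would exhibit some $\lambda\in Y(N)$ with $\langle\lambda\rangle<0$ whenever $k>1$: if $k>N+1$ a single cell works because $\cot(k\theta)<0$; otherwise $j:=\lceil(N+1)/k\rceil$ satisfies $N+1<jk<2N+2$ strictly (using $\gcd(k,N+1)=1$ to exclude equality), hence $\cot(j\theta)<0$, and one takes $\lambda$ to be a single row of length $j$, which lies in $Y(N)$ for $k\ge 3$. This shows the Markov trace is indefinite, so $(\mathscr{C}/\text{Ker})_\bullet$ is not a subfactor planar algebra, and the associated spherical fusion category has a simple object of negative quantum dimension and therefore cannot be unitary. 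The main obstacle I anticipate is the denominator bookkeeping for the Hecke ingredients—confirming that every symmetrizer, idempotent, and intertwiner used in the matrix-unit construction survives the specialisation—but this is already reduced to the single congruence $(N+1)\mid h(c)$ exploited in the first step.
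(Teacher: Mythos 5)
Your proposal carries out exactly what the paper's terse ``The argument is similar to the case for $q=e^{i\pi/(2N+2)}$'' intends: rerun the matrix-unit construction of Theorem~\ref{Positivity} with ``non-vanishing'' in place of ``positive,'' then exhibit a negative quantum dimension to rule out unitarity. Two small bookkeeping items should be tightened. First, the vanishing condition $\cot(h\theta)=0$ with $\theta=k\pi/(2N+2)$ is equivalent to $kh(c)\equiv N+1 \pmod{2N+2}$, not $2kh(c)\equiv N+1\pmod{2N+2}$ as written (the latter has no solution when $N+1$ is odd); your downstream conclusion that, for $h(c)\leq N+1$, vanishing occurs exactly at $h(c)=N+1$ is nevertheless correct, since $(k,2N+2)=1$ implies $(N+1)\mid h(c)$ and $h(c)/(N+1)$ odd, which in that range forces $h(c)=N+1$. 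Second, $\cot(j\theta)<0$ alone does not give $\langle[j]\rangle<0$, since $\langle[j]\rangle=\prod_{l=1}^{j}\cot(l\theta)$; you should add the observation that for $l<j$ one has $lk\leq(j-1)k<N+1$, so $0<l\theta<\pi/2$ and each earlier factor $\cot(l\theta)$ is strictly positive, whence the product is negative. With these repairs the argument is complete and matches the paper's approach.
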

\begin{proof}
The argument is similar to the case for $q=e^{\frac{i\pi}{2N+2}}$.
\end{proof}

\section{Dihedral group symmetries}\label{section dihedral}
For $N\in\mathbb{N}^+$, $\theta=\frac{\pi}{2N+2}$, $q=e^{i\theta}$,
we have constructed the unshaded subfactor planar algebra $\mathscr{C}^{N}_{\bullet}=(\mathscr{C}/\text{Ker})_{\bullet}$. Its principal graph is $YL(N)$.
We are going to prove that the automorphism group of the graph $YL(N)$ is the dihedral group $D_{2(N+1)}$. From the $\mathbb{Z}_2$ symmetry, we construct another sequence of subfactor planar algebras. From the $\mathbb{Z}_{N+1}$ symmetry, we obtain at least one more subfactor for each odd ordered subgroup of $\mathbb{Z}_{N+1}$.

While considering $\mathscr{C}^{N}_{\bullet}$ as a unitary fusion category, its simple objects are given by $Y(N)$. The dimension of the object $\lambda\in Y(N)$ is given in Lemma \ref{trace formula}.
Let $G$ be the set of invertible objects, i.e. $G=\{\lambda\in Y(N) ~|~ <\lambda>=1\}$. Then $G$ forms a group under $\otimes$. Moreover, $G$ is a subgroup of the automorphism group $\text{Aut}(YL(N))$ of the graph $YL(N)$.

\begin{proposition}
Let $r_0=\emptyset$ and $r_k$, $1\leq k\leq N$, be the Young diagram with $k$ rows and each row has $N+1-k$ cells.  Then $G=\{r_k~|~0\leq k\leq N\}$.
\end{proposition}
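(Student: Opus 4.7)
The plan is to prove the two inclusions $\{r_k\}_{k=0}^N \subseteq G$ and $G \subseteq \{r_k\}_{k=0}^N$ separately.

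\textbf{Forward inclusion.} I would directly evaluate $<r_k>$ using the trace formula of Theorem~\ref{main theorem trace formula}. In the $k \times (N{+}1{-}k)$ rectangle $r_k$, the hook length at cell $(i,j)$ equals $h(i,j) = N+2-i-j$, so these hook lengths range over $\{1,2,\ldots,N\}$. Define the involution $\sigma(i,j) = (k+1-i,\, N+2-k-j)$ on cells of $r_k$; a direct check gives $h(i,j) + h(\sigma(i,j)) = N+1$. Using the elementary identity
\[
\cot(h\theta)\,\cot\bigl((N+1-h)\theta\bigr) \;=\; \cot(h\theta)\,\tan(h\theta) \;=\; 1 \qquad \text{at } \theta=\tfrac{\pi}{2N+2},
\]
each $2$-cycle of $\sigma$ contributes $1$ to $\prod_c \cot(h(c)\theta)$. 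A $\sigma$-fixed cell exists only when both $k$ and $N$ are odd, in which case its hook length is $(N+1)/2$ and contributes $\cot(\pi/4)=1$. Hence $<r_k>=1$, so each $r_k$ lies in $G$.

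\textbf{Reverse inclusion.} In a unitary fusion category a simple object $\lambda$ satisfies $<\lambda>\ge 1$ with equality if and only if $\lambda$ is invertible: $<\lambda>^2 = \dim(\lambda\otimes\bar\lambda) \ge \dim\hom(\mathbf{1},\lambda\otimes\bar\lambda)=1$, with equality forcing $\lambda\otimes\bar\lambda\cong \mathbf{1}$. So $G=\{\lambda\in Y(N): <\lambda>=1\}$, and the task reduces to showing no $\lambda\in Y(N)\setminus\{r_k\}$ has $<\lambda>=1$. The group $G$ acts freely on $Y(N)$ by tensoring (if $g\otimes\lambda\cong\lambda$ for invertible $g$, Frobenius reciprocity and the multiplicity-one occurrence of $\mathbf{1}$ in $\lambda\otimes\bar\lambda$ force $g=\mathbf{1}$), and this action is by graph automorphisms of the principal graph $YL(N)$. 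I would then identify the orbit of $\emptyset$ under $G$ with exactly $\{r_0,r_1,\ldots,r_N\}$ by establishing the fusion rule $r_1\otimes r_k = r_{k+1\bmod(N+1)}$ via the matrix units of Section~\ref{subsection matrix units}: this shows $\langle r_1\rangle$ already has order $N+1$ and that this cyclic subgroup coincides with $G$-orbit of $\emptyset$, which equals $G$.

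The main obstacle is ruling out extra invertible objects, equivalently verifying $r_1\otimes r_k = r_{k+1\bmod(N+1)}$. The cleanest route is to show that tensoring with $r_1$ induces a rotation automorphism of $YL(N)$ of order $N+1$, cyclically permuting the ``corner'' vertices $\{r_k\}$; the quantum-dimension identity $<r_1\otimes\lambda>=<\lambda>$ combined with the Bratteli structure of $\mathscr{C}^N_\bullet$ constrains the automorphism on these boundary vertices, and simplicity plus $\dim(r_1\otimes r_k)=1$ pins the fusion rule down. An alternative, purely combinatorial route is to show directly that $\prod_c\cot(h(c)\theta)>1$ for any non-rectangular $\lambda\in Y(N)$, by analyzing how the hook-length multiset must pair under $h\leftrightarrow N+1-h$: the pairing exactly matches only when $\lambda$ is a rectangle with $\lambda_1+\lambda_1'=N+1$, while any mismatch produces an unbalanced factor strictly greater than $1$.
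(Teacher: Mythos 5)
Your forward inclusion is correct and matches the paper's: evaluate $\prod_{c\in r_k}\cot(h(c)\theta)$ at $\theta = \pi/(2N+2)$ using the $180^\circ$ rotation symmetry of the $k\times(N+1-k)$ rectangle, pairing hooks $h$ with $N+1-h$ and applying $\cot(h\theta)\cot((N+1-h)\theta)=1$.

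The reverse inclusion is where you have a genuine gap. You correctly observe that $G$ acts on $Y(N)$ by graph automorphisms of $YL(N)$ and that the orbit of $\emptyset$ is $G$ itself, but then your argument only tries to produce elements of $G$ (via the fusion rule $r_1\otimes r_k=r_{k+1}$), which establishes $\{r_k\}\subseteq G$ — not the inclusion you need. Showing that $\langle r_1\rangle$ has order $N+1$ and lies in $G$ does not preclude additional invertibles, so the claim ``this cyclic subgroup coincides with $G$'' is asserted, not proved. The paper instead uses a very short rigidity argument you are one step away from: since each $g\otimes(\cdot)$ is a graph automorphism and $g = g\otimes\emptyset$ is the image of $\emptyset$, every $g\in G$ is a \emph{univalent} vertex of $YL(N)$, because $\emptyset$ is. One then checks that the univalent vertices of $YL(N)$ other than $\emptyset$ are exactly the rectangles $r_k$: a nonempty $\lambda$ is univalent iff every $\kappa>\lambda$ lies in $B(N)$, which forces $\lambda$ to be a rectangle with $\lambda_1 + \lambda_1' = N+1$ (a non-rectangular $\lambda$ has an inner corner whose addition leaves the $(1,1)$ hook length unchanged and $\leq N$). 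Your alternative combinatorial route — arguing directly that the hook-length multiset of any non-rectangular $\lambda\in Y(N)$ gives $\prod\cot(h(c)\theta)>1$ — is plausible in spirit but would be substantially harder to make rigorous than the univalency argument, and you do not execute it; as written it is a claim, not a proof.
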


\begin{proof}
Note that $\emptyset$ is in $G$ and it is a univalent vertex in $YL(N)$. So each vertex in $G$ is univalent in $YL(N)$. Then for any vertex $\lambda$ in $G$, $\lambda\neq \emptyset$, and any $\kappa>\lambda$, we have $\kappa\in B(N)$.
Thus the Young diagram $\lambda$ is a square with $k$ rows and $N+1-k$ columns, for some $1\leq k\leq N$, denoted by $r_k$.
Conversely applying the trace formula in Lemma \ref{trace formula}, we have that $<r_k>=1$ by the central symmetry of the Young diagram $r_k$ and the fact $\cot(n\theta)\cot((N+1-k)\theta)=1$.
\end{proof}

Since $\mathscr{C}^{N}_{\bullet}$ is a quotient of $\mathscr{C}_{\bullet}$, we keep the notations $\alpha=\gra{b+}$, $\alpha_i$, $H_{\bullet}$, $y_{\lambda}$ and $\tilde{y}_{\lambda}$ for $\mathscr{C}^{N}_{\bullet}$.
Let $s_m$ be the complement of the support of the basic construction ideal of $\mathscr{E}_m$, $m\geq0$. Then $\overline{s_m}=s_m$ and $s_{|\lambda|}y_{\lambda}=\tilde{y}_{\lambda}$, for any $\lambda\in Y(N)$.

\begin{proposition}\label{Prop:rr}
In $(\mathscr{C}^N)_l$, we have $\overline{\tilde{y}_{[l]}}=\tilde{y}_{[1^l]}$, for $0\leq l\leq N$.
Thus $r_N\otimes r_1=r_0$ in $G$.
\end{proposition}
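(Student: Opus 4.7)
The plan is to reduce to showing $\mathcal{F}^2(f^{(l)}) \equiv g^{(l)}$ modulo the basic construction ideal $\mathscr{I}_l$, where $f^{(l)}$ and $g^{(l)}$ are the Hecke algebra symmetrizer and antisymmetrizer of \S\ref{hecke}. By Theorem \ref{P=YL} applied in the semisimple quotient $\mathscr{C}^N_l$, we have $\tilde y_{[l]} = s_l f^{(l)}$ and $\tilde y_{[1^l]} = s_l g^{(l)}$, where $s_l$ is the maximal central idempotent orthogonal to $\mathscr{I}_l$. Since $\mathcal{F}^2$ is an anti-algebra automorphism preserving both $\mathscr{I}_l$ and the trace, it fixes this uniquely characterised $s_l$, so the reduction is valid.

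First I would compute the action of $\mathcal{F}^2$ on the braid. Writing $\alpha = i\sin\theta\,\gra{21} + \sin\theta\,\gra{22} + \cos\theta\,\gra{23}$ with $q = e^{i\theta}$, and using $\mathcal{F}^2(\gra{23}) = -\gra{23}$ while $\gra{21}$ and $\gra{22}$ are rotation-fixed, one obtains $\mathcal{F}^2(\alpha) + \alpha^{-1} = 2\sin\theta\,\gra{22} \in \mathscr{I}_2$, so $\mathcal{F}^2(\alpha) \equiv -\alpha^{-1} \pmod{\mathscr{I}_2}$. Transported to $\mathscr{C}^N_l$, and using that the $180^\circ$ rotation reverses strand positions, this gives $\mathcal{F}^2(\sigma_i) \equiv -\sigma_{l-i}^{-1}$ modulo $\mathscr{I}_l$.

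I would then proceed by induction on $l$, the cases $l\leq 1$ being trivial. Applying the anti-multiplicative $\mathcal{F}^2$ to the defining formula \ref{fl}, and using that rotation swaps ``through string on the right'' with ``through string on the left'' (so that $\mathcal{F}^2(f^{(l-1)}\otimes 1) = 1\otimes \mathcal{F}^2_{l-1}(f^{(l-1)}) \equiv 1\otimes g^{(l-1)}$ by the inductive hypothesis) together with $\mathcal{F}^2(\sigma_{l-1}) \equiv -\sigma_1^{-1}$, I obtain modulo $\mathscr{I}_l$:
$$\mathcal{F}^2(f^{(l)}) = (1\otimes g^{(l-1)}) - \tfrac{[l-1]}{[l]}(1\otimes g^{(l-1)})(q + \sigma_1^{-1})(1\otimes g^{(l-1)}).$$
Applying the Hecke relation $\sigma_1^{-1} = \sigma_1 - (q - q^{-1})$ rewrites $q + \sigma_1^{-1} = q^{-1} + \sigma_1$, and by Proposition \ref{gll} the right hand side is exactly $g^{(l)}$. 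This closes the induction; multiplying through by $s_l$ gives $\overline{\tilde y_{[l]}} = \tilde y_{[1^l]}$.

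The second assertion $r_N \otimes r_1 = r_0$ follows immediately from the case $l = N$: it identifies the invertible simple objects $r_1 = [N]$ and $r_N = [1^N]$ as mutual duals in the fusion category $\mathscr{C}^N_\bullet$, so $r_1 \otimes r_N$ is an invertible simple object containing the unit $r_0$ and hence equals $r_0$. The main technical point to watch will be the careful bookkeeping of the anti-multiplicativity of $\mathcal{F}^2$ (so that a sandwich $ABA$ rotates to $\mathcal{F}^2(A)\mathcal{F}^2(B)\mathcal{F}^2(A)$ with the symmetric outer factors, which makes the induction clean) together with the position-shift $\sigma_i \mapsto \sigma_{l-i}$ forced by the rotation; once these are in place, the induction step is a one-line manipulation of the Hecke relation.
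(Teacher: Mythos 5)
Your proof is correct and follows essentially the same route as the paper's: both proofs induct on $l$ by applying the contragredient to the recursive formula for the symmetrizer, using the Hecke relation (in the form $q+\sigma^{-1}=q^{-1}+\sigma$) together with the fact that the rotation kills the $\gra{22}$-term modulo the basic construction ideal to land on the recursive formula for the antisymmetrizer. The only cosmetic difference is that you use the pair of recursions (\ref{fl}), (\ref{gll}) and work modulo $\mathscr{I}_l$, while the paper uses the mirror pair (\ref{fl1}), (\ref{gl}) and multiplies by $s_l$ up front; since $\mathscr{C}_l=\mathscr{I}_l\oplus s_l\mathscr{C}_l$ these are the same computation.
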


\begin{proof}
Recall that $\alpha=\gra{b+}$ is the generator of the Hecke algebra. We construct the symmetrizers and anti-symmetrizers $f_l$ and $g_l$ as in Equations (\ref{fl}), (\ref{gl}).
Since $\tilde{y}_{[l]}=s_mf_l$ and $\tilde{y}_{[1^l]}=s_mg_l$.
multiplying $s_l$ to Equations \ref{fl1}, $\ref{gl}$, we have that
\begin{align}
\tilde{y}_{[l]}&=1\otimes \tilde{y}_{[l-1]}-\frac{[l-1]}{[l]}(1\otimes \tilde{y}_{[l-1]})s_2(q-\alpha_1)(1\otimes \tilde{y}_{[l-1]}); \label{equbraidf3}\\
\tilde{y}_{[1^{l}]}&=\tilde{y}_{[1^{l-1}]}-\frac{[l-1]}{[l]}(\tilde{y}_{[1^{l-1}]})s_2(q^{-1}+\alpha_{l-1})(\tilde{y}_{[1^{l-1}]}). \label{equbraidf4}
\end{align}

Note that $\overline{s_2(q-\alpha)}=s_2(q+\alpha)$ in $(\mathscr{C}^N)_2$.
Take the contragredient of Equation \ref{equbraidf3} in $(\mathscr{C}^N)_l$, we have
\begin{align}
\overline{\tilde{y}_{[l]}}&=\overline{\tilde{y}_{[l-1]}}-\frac{[l-1]}{[l]}(\overline{\tilde{y}_{[l-1]}})s_2(q^{-1}+\alpha_{l-1})(\overline{\tilde{y}_{[l-1]}})
\label{equbraidf5}.
\end{align}
Since $\overline{\tilde{y}_{[1]}}=1=\tilde{y}_{[1^{l}]}$, by recursive formulas \ref{equbraidf4}, \ref{equbraidf5}, we have
$$\overline{\tilde{y}_{[l]}}=\tilde{y}_{[1^l]}.$$
In particular, $\overline{\tilde{y}_{[N]}}=\tilde{y}_{[1^N]}$.
Thus $r_N\otimes r_1=r_0$ in $G$.
\end{proof}

\begin{proposition}\label{group fusion}
For $N\geq 1$, we have $G=\mathbb{Z}_{N+1}$ and $r_k \otimes r_1=r_{k+1}$, for $0\leq k\leq N$, where $r_{N+1}=r_{0}$.
\end{proposition}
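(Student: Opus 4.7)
My plan is to prove $r_1^{\otimes k} = r_k$ by induction on $k$ for $0 \leq k \leq N$; combined with Proposition \ref{Prop:rr} (which gives $r_1^{\otimes (N+1)} = r_N \otimes r_1 = r_0$), this shows that $r_1$ has order exactly $N+1$ in $G$, so $G = \langle r_1 \rangle \cong \mathbb{Z}_{N+1}$ and the fusion rule $r_k \otimes r_1 = r_{k+1 \bmod (N+1)}$ follows at once. The case $N = 1$ is immediate, so I will focus on $N \geq 2$.

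The essential ingredient will be a graph-distance computation in the principal graph $YL(N)$ between the invertible-object leaves. Writing $r_a = ((N+1-a)^a)$ and $r_b = ((N+1-b)^b)$ with $a \leq b$, the intersection has $a(N+1-b)$ cells, giving the Young's-lattice distance $|r_a| + |r_b| - 2|r_a \cap r_b| = (b-a)(N+1-(b-a))$. I would then verify that the natural shrink-then-grow geodesic, which first removes the cells of $r_b \setminus (r_a \cap r_b)$ row by row and then appends the cells of $r_a \setminus (r_a \cap r_b)$ row by row, passes only through Young diagrams whose hook length at the $(1,1)$ cell is at most $N$; hence the geodesic lies inside $YL(N)$ and realises the actual graph distance. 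Solving $(b-a)(N+1-(b-a)) = N$ within $\{0,\dots,N\}$ yields $|b-a| \in \{1,N\}$; thus the leaves at graph-distance exactly $N$ from $r_a$, for $1 \leq a \leq N-1$, are precisely $r_{a-1}$ and $r_{a+1}$.

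For the inductive step, assume $r_1^{\otimes(k-1)} = r_{k-1}$ for some $2 \leq k \leq N$. Since $r_1$ is invertible, the functor $T_1 := (-)\otimes r_1$ is a monoidal auto-equivalence of $\mathscr{C}^N$ sending simples bijectively to simples and intertwining with $(-)\otimes [1]$, hence it induces a distance-preserving graph automorphism of $YL(N)$ with $T_1(\emptyset) = r_1$. Consequently $r_1^{\otimes k} = T_1(r_{k-1})$ sits at graph distance $N$ from $r_{k-1}$; being an invertible object it is a leaf, so by the previous paragraph it is either $r_{k-2}$ or $r_k$. The option $r_1^{\otimes k} = r_{k-2} = r_1^{\otimes (k-2)}$ would force $r_1^{\otimes 2} = r_0$, and therefore $r_1 = r_1^{-1} = r_N$, contradicting $[N] \neq [1^N]$ for $N \geq 2$. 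Therefore $r_1^{\otimes k} = r_k$, closing the induction. The main obstacle is carefully verifying that the shrink-then-grow geodesic remains in $YL(N)$; this reduces to a direct calculation showing that every intermediate partial rectangle has hook length at $(1,1)$ bounded by $N$, which can be checked uniformly across both phases of the path.
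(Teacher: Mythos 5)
Your proof is correct and follows essentially the same strategy as the paper: compute graph distances between the invertible leaves of $YL(N)$, exploit the distance-preserving graph automorphism given by tensoring with an invertible object, and combine this with $r_N \otimes r_1 = r_0$ from Proposition \ref{Prop:rr} to close the induction. One small remark: the ``main obstacle'' you flag, verifying that the shrink-then-grow geodesic stays in $YL(N)$, requires no hook-length calculation at all, since each intermediate diagram is a subdiagram of the rectangle $r_a$ or $r_b$ and $Y(N)$ is downward closed (removing a cell cannot increase the hook length at the $(1,1)$ cell); this is also the reason the paper can get away with computing only $d(r_0,r_l)=l(N+1-l)$ together with the explicit length-$N$ path from $r_k$ to $r_{k+1}$, rather than $d(r_a,r_b)$ for every pair as you do.
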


\begin{proof}
Let $d(v,w)$ be the distance of vertices $v$ and $w$ in the graph $YL(N)$.
Then $r_k\otimes (\cdot)$ as an automorphism of $YL(N)$ preserves $d$, for $0\leq k\leq N$.

Recall that $r_0=\emptyset$, so $d(r_0,r_l)=|r_l|=(N+1-l)l$.
Then
\begin{align*}
d(r_0,r_l)
\left\{
\begin{aligned}
&=N &&\text{~for~} l=1, N;\\
&>N &&\text{~for~} 1< l< N.
\end{aligned}
\right.
\end{align*}
Therefore
\begin{align*}
d(r_k, r_k\otimes r_l)
\left\{
\begin{aligned}
&=N &&\text{~for~} l=1, N;\\
&>N &&\text{~for~} 1< l< N.
\end{aligned}
\right.
\end{align*}
There is a length $N$ path from $r_k$ to $r_{k+1}$ by removing the last column then adding one row.
So
$$d(r_k,r_{k+1})=N.$$
Therefore the automorphism $\cdot\otimes r_1$ maps a distance-$N$ pair $(r_k,r_{k+1})$ to another distance-$N$ pair $(r_{k'},r_{k'\pm1})$.
Note that $r_0\otimes r_1=r_1$ and $r_N\otimes r_1=r_0$ by Proposition \ref{Prop:rr}, so $r_k \otimes r_1=r_{k+1}$, for $0\leq k\leq N$.
\end{proof}

\begin{notation}
We define the $\mathbb{Z}_2$ automorphism $\Omega$ on the universal planar algebra $\mathscr{C}_{\bullet}'$ generated by $R$ by mapping the generator $R$ to $-R$. The action of $\Omega$ on the relations of $R$ is stable, so $\Omega$ is well-defined on the quotient $\mathscr{C}_{\bullet}$ and $\mathscr{C}^{N}$. Therefore $\Omega$ induces an $\mathbb{Z}_2$ automorphism on the principal graph $YL(N)$, still denoted by $\Omega$.
\end{notation}

\begin{proposition}
The induced $\mathbb{Z}_2$ automorphism $\Omega$ on Young diagrams is the reflection of Young diagrams by the diagonal.
\end{proposition}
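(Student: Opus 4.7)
The plan is to show that the induced permutation $\sigma$ on equivalence classes of minimal idempotents sends each $\lambda\in Y(N)$ to its transpose $\lambda^T$, which is exactly reflection of Young diagrams by the diagonal. The key observation is that on the semisimple quotient $s_m\mathscr{C}_m^N = s_m H_m$, the automorphism $\Omega$ should implement the classical Hecke involution $\epsilon\colon\alpha_i\mapsto -\alpha_i^{-1}$, which is known to exchange symmetrizers and antisymmetrizers and hence transpose Young idempotents.

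Concretely, I would first compute the action of $\Omega$ on the braid $\alpha = \gra{b1}$. Since $\Omega$ fixes every Temperley--Lieb diagram and sends $R\mapsto -R$, the explicit expansion of $\alpha$ yields $\Omega(\alpha) = \alpha - 2\mathcal{D}\,R$. Multiplying by $s_2$, using the vanishing $s_2\gra{22}=0$ and $s_2 R = R$, and comparing with the explicit formula for $\alpha^{-1}$, one obtains the key identity $s_2\Omega(\alpha) = -s_2\alpha^{-1}$. The same calculation applied locally at each strand gives $s_m\Omega(\alpha_i) = -s_m\alpha_i^{-1}$ for all $1\le i\le m-1$; and $\Omega(s_m)=s_m$ because the basic construction ideal $\mathscr{I}_m$ is generated by Temperley--Lieb data on which $\Omega$ acts trivially. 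Combining these, and using the identification $s_m\mathscr{C}_m^N = s_m H_m$ coming from Proposition \ref{algebragenerated}, the restriction of $\Omega$ to $s_m H_m$ coincides with $\epsilon$.

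It remains to recall that $\epsilon$ transposes Young idempotents. From the inductive recursions (\ref{fl}) and (\ref{gl}) and the Hecke identity $\epsilon(q-\alpha_{l-1}) = q^{-1}+\alpha_{l-1}$ (a direct consequence of $\alpha_{l-1}-\alpha_{l-1}^{-1}=q-q^{-1}$), one checks $\epsilon(f^{(l)}) = g^{(l)}$. The idempotent $y_\lambda$ of Equation (\ref{equyoungidempotent}) is built by placing $f^{(l)}$'s along the rows of $\lambda$ and $g^{(l)}$'s along its columns; applying $\epsilon$ interchanges these two layers and, after the obvious reorganization corresponding to transposing $\lambda$, produces a minimal idempotent in the equivalence class of $y_{\lambda^T}$. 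Thus $\Omega(\tilde{y}_\lambda) = s_{|\lambda|}\,\epsilon(y_\lambda)$ is equivalent to $\tilde{y}_{\lambda^T}$, giving $\sigma(\lambda)=\lambda^T$. The main obstacle is the bookkeeping for the identity $s_2\Omega(\alpha) = -s_2\alpha^{-1}$; this is the mechanism by which the naively mysterious symmetry $R\mapsto -R$ produces the Hecke-algebra involution on the semisimple part, and once it is established the rest reduces to the standard behavior of the Hecke involution on Young idempotents.
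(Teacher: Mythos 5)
Your proof is correct and follows essentially the same approach as the paper's: both use $s_m e_i = 0$ to reduce $\Omega(\alpha_i) = \alpha_i - 2\mathcal{D}R_i$ to $-s_m\alpha_i^{-1}$ on the semisimple part, then invoke the recursive formulas for $f^{(l)}$ and $g^{(l)}$ to conclude that this involution swaps symmetrizers and antisymmetrizers and hence transposes Young idempotents. Your explicit identification of the map as the classical Hecke involution $\epsilon\colon\alpha_i\mapsto-\alpha_i^{-1}$ is just a cleaner packaging of what the paper writes as $\Omega(s_2(q-\sigma))=s_2(q^{-1}+\sigma)$.
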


\begin{proof}
If we switch the symmetrizers and antisymmetrizers in the construction of $y_{\lambda}$ in Formula \ref{equyoungidempotent}, then we obtain a minimal projection, which is equivalent to $y_{\Omega(\lambda)}$, where $\Omega(\lambda)$ is the reflection of the Young diagram $\lambda$ by the diagonal.

Note that $\Omega(s_m)=s_m$ and $\Omega(s_2(q-\sigma))=s_2(q^{-1}+\sigma)$.
By the recursive formulas (\ref{fl}) and (\ref{gl}), we have $\Omega(s_lf^{(l)})=\Omega(s_lg^{(l)})$.
Therefore $\Omega(\tilde{y}_{\lambda})$ is equivalent to $\tilde{y}_{\Omega(\lambda)}$.
\end{proof}

In particular, $\Omega(r_k)=r_{N+1-k}$.
Then $\Omega (r_k \otimes \Omega(\lambda))=r_{N+1-k} \otimes \lambda$.
So $G$ and $\{\Omega\}$ generates the Dihedral group $\text{D}_{2(N+1)}$ in $\text{Aut}(YL(N))$. The Dihedral Symmetries of $YL(N)$ was discovered by Suter in \cite{Sut02}. In our case, it is realized as the invertible objects and automorphisms of $\mathscr{C}^{N}$.

Furthermore, we have the following
\begin{proposition}\label{Prop:symmetry}
Suppose $\Gamma$ is a sub lattice of the Young lattice $TL$, such that for any $\lambda\in\Gamma$ and $\mu<\lambda$, we have $\mu\in\Gamma$. Then any automorphism of the graph $\Gamma$ fixing $\emptyset$ is either the identity or the reflection by the diagonal. Consequently
$$\text{Aut}(YL(N))=\text{D}_{2(N+1)}.$$
\end{proposition}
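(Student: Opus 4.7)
The plan is to prove the main claim by induction on Young diagram size, using as a key lemma the fact that a Young diagram is uniquely determined by its set of down-neighbors in Young's lattice. The consequence $\mathrm{Aut}(YL(N)) = D_{2(N+1)}$ then follows by reducing an arbitrary automorphism to one fixing $\emptyset$ via tensoring with an invertible object $r_k \in G$.

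First I would establish the key lemma: if $\lambda$ and $\lambda'$ are Young diagrams of the same size with the same set of down-neighbors in $YL$, then $\lambda = \lambda'$. If $\lambda$ has at least two removable cells, then every cell of $\lambda$ lies in at least one of its down-neighbors (since only one cell is deleted in each), so $\lambda$ equals the entrywise union $\bigcup_{\mu < \lambda} \mu$ of its down-neighbors, and the same holds for $\lambda'$, forcing $\lambda = \lambda'$. If instead $\lambda$ has exactly one removable cell, then $\lambda$ is a rectangle $k \times l$, and the hypothesis forces $\lambda'$ to also have a unique down-neighbor, hence also be a rectangle; a direct comparison of the single down-neighbor $(l^{k-1}, l-1)$ of each then forces $(k, l)$ to agree.

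Next I would run the induction. Since $\phi$ is a graph automorphism fixing $\emptyset$, it preserves the distance to $\emptyset$, hence preserves Young diagram size. The sizes $0$ and $1$ are trivial. At size $2$, $\phi$ permutes $\{(2), (1,1)\} \cap \Gamma$, so either fixes both (when present) or swaps them, splitting the argument into an \emph{identity case} and a \emph{reflection case}. For the inductive step, suppose $\phi$ acts as either the identity or the diagonal reflection $\lambda \mapsto \lambda^t$ on all diagrams of $\Gamma$ of size $\leq n$. Given $\lambda \in \Gamma$ of size $n+1$, downward-closedness of $\Gamma$ places all down-neighbors of $\lambda$ in $\Gamma$, and, as a graph automorphism preserving size, $\phi$ sends these to the down-neighbors of $\phi(\lambda)$. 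Thus $\phi(\lambda)$ shares its down-neighbor set with either $\lambda$ or $\lambda^t$, and the key lemma concludes $\phi(\lambda) \in \{\lambda, \lambda^t\}$ accordingly.

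For the consequence, I would first verify that the univalent vertices of $YL(N)$ are exactly the invertible objects $G = \{r_k\}_{k=0}^{N}$. A direct hook-length check shows each $r_k$ has no addable cell keeping the $(1,1)$-hook at most $N$, and a unique removable cell whose removal yields a diagram still in $YL(N)$, so $r_k$ is univalent. Conversely, any $\lambda \in YL(N)$ not of shape $k \times (N{+}1{-}k)$ is either non-rectangular (hence has $\geq 2$ removable cells, giving $\geq 2$ down-neighbors in $YL(N)$) or is a smaller rectangle (for which one checks an addable cell keeps the $(1,1)$-hook at most $N$, producing an up-neighbor). Given any $\phi \in \mathrm{Aut}(YL(N))$, then $\phi(\emptyset)$ is univalent, hence equals some $r_k$. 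Since $r_k$ is invertible, tensoring with $r_{N+1-k}$ is itself a graph automorphism of $YL(N)$ (fusion preserves the principal graph edges), and the composition $(r_{N+1-k}\otimes{\cdot})\circ \phi$ fixes $\emptyset$. By the first part this composition is $\mathrm{id}$ or $\Omega$, so $\phi \in \langle G, \Omega\rangle = D_{2(N+1)}$; the reverse inclusion is clear. The main obstacle is the key lemma: while not deep, it requires a clean dichotomy between the rectangular case (one down-neighbor) and the generic case (recovery by union), and getting the rectangle comparison right is the delicate point of the proof.
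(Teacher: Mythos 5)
Your approach matches the paper's: the key combinatorial fact that the set of down-neighbors determines a Young diagram of size at least $3$, an induction split into identity/reflection cases at size $2$, and transitivity of $G=\mathbb{Z}_{N+1}$ on the univalent vertices of $YL(N)$ to reduce a general automorphism to one fixing $\emptyset$.

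One correction is needed. Your key lemma is stated without a size restriction and is then false: $(2)$ and $(1,1)$ have the same size and the identical down-neighbor set $\{(1)\}$, and the ``direct comparison of the single down-neighbor $(l^{k-1},l-1)$'' does not force $(k,l)$ to agree here, since both the $1\times 2$ and $2\times 1$ rectangles yield $(1)$. Your induction is saved because you dispose of size $\leq 2$ in the base case and only invoke the lemma at size $n+1\geq 3$, but you should state the lemma with that restriction (or with the exceptional pair $\{(2),(1,1)\}$ noted explicitly) and check the rectangle comparison only under it. For comparison, the paper's proof of this fact is a single observation rather than a two-case split: if $\lambda\neq\kappa$ have the same size and same down-neighbor set, then every common down-neighbor lies in $\lambda\cap\kappa$, which has size $|\lambda|-1$; hence the down-neighbor set is exactly the singleton $\{\lambda\cap\kappa\}$, both diagrams are rectangles with that unique down-neighbor, and the only such distinct pair is $\{(2),(1,1)\}$. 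Your union observation (that $\lambda$ is recovered as the union of its down-neighbors whenever it has two or more removable cells) is a pleasant alternative route to the non-rectangular case, but is not needed.
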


\begin{proof}
For a vertex $\lambda\in \Gamma$, we define the set $\lambda_<:=\{\mu ~|~ \mu<\lambda\}$.
Suppose $\lambda\neq \kappa$ and $\lambda_<=\kappa_<$, then $|\lambda|=|\kappa|$. Take $c_1$ in $\lambda\setminus\kappa$ and $c_2$ in $\kappa\setminus \lambda$. Then the only element in $\lambda_<=\kappa_<$ is $\lambda\setminus{\{c_1\}}$, and $\lambda\setminus{\{c_1\}}=\kappa\setminus{\{c_2\}}$. So $\{\lambda,\kappa\}=\{[2],[1^2]\}$.
Therefore,
\begin{align}
|\lambda|\geq3 \quad \& \quad \lambda_<=\kappa_< &\Longrightarrow \lambda=\kappa. \label{equd1}
\end{align}

Note that the distance from $\emptyset$ to $\lambda$ is $|\lambda|$.
If an automorphism $\Delta$ of the graph $\Gamma$ fixes $\emptyset$, then $|\Delta(\lambda)|=|\lambda|$.
Thus $\Delta([1])=[1]$, and
\begin{align}
\Delta(\lambda)_<&=\Delta(\lambda_<). \label{equd2}
\end{align}

If $\Delta([2])=[2]$ and $\Delta([1^2])=[1^2]$, then $\Delta$ is the identity map on Young diagrams with at most two cells. By \ref{equd1} and \ref{equd2}, $\Delta$ is the identity map on $\Gamma$.

If $\Delta([2])=[1^2]$ and $\Delta([1^2])=[2]$, then $\Delta=\Omega$ on Young diagrams with at most two cells. By \ref{equd1} and \ref{equd2}, $\Delta=\Omega$ on $\Gamma$.

When $\Gamma=YL(N)$, the automorphism $\Delta$ fixes the set of univalent vertices $Y(N)$. Note that $G$ acts transitively on $Y(N)$, so $\text{Aut}(YL(N))=\text{D}_{2(N+1)}$.

\end{proof}

\begin{corollary}\label{fusion rule for 1^n}
In particular, we have the fusion rule for $\mu\otimes [1^N]$ due to the $\text{D}_{2(N+1)}$ automorphism of $YL(N)$ constructed in \cite{Sut02}.
More precisely, the Young diagram $\mu\otimes [1^N]$ is obtained from $\mu$ by removing the first row of $\mu$ and adding one column with $N-k$ cells on the left, where $k$ is the number of cells in the first row of $\mu$.
\end{corollary}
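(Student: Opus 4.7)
The plan is to realize the combinatorial operation $\Phi$ described in the statement as a specific element of $\mathrm{Aut}(YL(N)) = D_{2(N+1)}$ (Proposition \ref{Prop:symmetry}) and then identify it with $\cdot \otimes [1^N]$. First I would verify that $\Phi : Y(N) \to Y(N)$ is a well-defined graph automorphism. Writing $\mu \in Y(N)$ with first row of length $k$ and first column of length $l$, the hypothesis $h(c_\mu) \le N$ says $k + l \le N+1$, which is exactly the condition under which the new column of length $N-k$ fits above the remaining $l-1$ cells of $\mu$'s first column; a routine hook-length check then gives $\Phi(\mu) \in Y(N)$, and the analogous ``remove leftmost column, prepend top row'' move inverts $\Phi$. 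For adjacency, given an edge $\mu < \kappa$ with added cell $c$, case analysis on whether $c$ lies in the first row or in a lower row shows $\Phi(\kappa)$ and $\Phi(\mu)$ differ by a single cell in each case.

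Next I would locate $\Phi$ inside $D_{2(N+1)}$. By Proposition \ref{group fusion} the invertibles form $G = \mathbb{Z}_{N+1}$, and $g \mapsto (\cdot \otimes g)$ is an injection of $G$ into $\mathrm{Aut}(YL(N))$ (faithful by evaluating at $\emptyset$), embedding $G$ as a cyclic subgroup of order $N+1$. Since $D_{2(N+1)}$ has a unique cyclic subgroup of order $N+1$---namely its rotation subgroup---the rotations are exactly $\{\cdot \otimes r_k\}_{k=0}^N$. Combined with the fact that the stabilizer of $\emptyset$ has order $2$ (Proposition \ref{Prop:symmetry}), there are precisely two automorphisms sending $\emptyset$ to $[1^N]$: the rotation $\cdot \otimes r_N = \cdot \otimes [1^N]$ and the reflection $(\cdot \otimes r_N) \circ \Omega$. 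Both agree with $\Phi$ at $\emptyset$, so a second test is needed.

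To finish, I would compute $\Phi([N]) = \emptyset$: the first row of $[N]$ has all $N$ cells, so deleting it yields the empty diagram and the prepended column of length $0$ is empty. This matches $[N] \otimes [1^N] = r_1 \otimes r_N = r_0 = \emptyset$ by Proposition \ref{group fusion}, whereas the reflection $(\cdot \otimes r_N) \circ \Omega$ sends $[N] = r_1$ to $\Omega(r_1) \otimes r_N = r_N \otimes r_N = r_{N-1}$, which is nonempty for $N \ge 2$; the case $N = 1$ is immediate since $YL(1)$ has only two vertices. Hence $\Phi = \cdot \otimes [1^N]$. The main obstacle is the adjacency verification for $\Phi$, a bounded case analysis; everything else is a uniqueness argument inside $D_{2(N+1)}$.
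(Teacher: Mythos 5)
Your proposal is correct, and it is a genuinely more self-contained argument than what the paper does. The paper's own ``proof'' is essentially a citation: Suter~\cite{Sut02} established that the combinatorial operation (strip the first row, prepend a column of length $N-k$) generates the $D_{2(N+1)}$-symmetry of the truncated Young lattice, and the paper then appeals to Proposition~\ref{Prop:symmetry}, which identifies $\mathrm{Aut}(YL(N))$ with the group generated by $G=\{\cdot\otimes r_k\}$ and $\Omega$. You instead re-derive Suter's combinatorics from scratch (showing the operation is a well-defined bijection on $Y(N)$ via the hook-length bound $k+l\le N+1$ and that it sends covers to covers) and then pin it down inside $D_{2(N+1)}$ by an elimination argument: the stabilizer of $\emptyset$ has order $2$, hence there are exactly two automorphisms mapping $\emptyset\mapsto[1^N]$, namely $\cdot\otimes r_N$ and $(\cdot\otimes r_N)\circ\Omega$, and these are distinguished by their value at $[N]=r_1$ since $r_1\otimes r_N=\emptyset$ while $\Omega(r_1)\otimes r_N=r_N\otimes r_N=r_{N-1}\ne\emptyset$ for $N\ge2$. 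The two-test-point argument buys you independence from Suter's explicit identification of which formula is a rotation; the cost is the (routine but nonempty) adjacency case analysis, which the paper never writes out. One small redundancy: the observation that $D_{2(N+1)}$ has a unique cyclic subgroup of order $N+1$ is not actually needed once you invoke the stabilizer-count and evaluate at the second vertex, so it could be dropped. Everything else checks out, including the edge cases $\mu=\emptyset$, $\mu=[N]$, and $N=1$.
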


From the $Z_2$ automorphism $\Omega$ of $\mathscr{C}^{N}_{\bullet}$, we obtain another subfactor planar algebra $(\mathscr{C}^{N}_{\bullet})^{\Omega}$ as the fixed point algebra. This process is also known as an orbifold construction or equivariantization. The fusion rules of equivariantizations of fusion categories are given in \cite{BurNat13}.
Thus we can derive the principal graph $YL(N)^{\Omega}$ of $(\mathscr{C}^{N}_{\bullet})^{\Omega}$ from the principal graph $YL(N)$ of $\mathscr{C}^{N}_{\bullet}$ as follows.

For a vertex $\lambda\in YL(N)$,
\begin{itemize}
\item[(1)] if $\Omega(\lambda)=\lambda$, then it splits into two vertices $\lambda_0$ and $\lambda_1$ in $YL(N)^{\Omega}$.
\item[(2)] If $\Omega(\lambda)\neq\lambda$, then $\lambda$ and $\Omega(\lambda)$ combine as one vertex $(\lambda,\Omega(\lambda))$ in $YL(N)^{\Omega}$.
\end{itemize}

For an edge between $\mu$ and $\lambda$ in $YL(N)$,
\begin{itemize}
\item[(3)]  if $\Omega(\mu)=\mu$ and $\Omega(\lambda)=\lambda$, then there is an edge between $\mu_k$ and $\lambda_k$, for $k=0,1$.
\item[(4)] If $\Omega(\mu)\neq\mu$ and $\Omega(\lambda)=\lambda$, then there is an edge between $(\mu,\Omega(\mu))$ and $\lambda_k$, for $k=0,1$.
\item[(5)]  If $\Omega(\mu)\neq\mu$ and $\Omega(\lambda)\neq\lambda$, then there is an edge between $(\mu,\Omega(\mu))$ and $(\lambda,\Omega(\lambda))$.
\end{itemize}

The Young diagrams invariant under $\Omega$ are the ones in the middle of the graph $YL(N)$. So $TL(N)^\Omega$ is the bottom half of $YL(N)$ with one more copy of the vertices in the middle and adjacent edges. Therefore we have the following:

\begin{theorem}\label{Thm:subfactors1}
We obtain a sequence of subfactor planar algebras $(\mathscr{C}^{N}_{\bullet})^{\Omega}$ from the $\mathbb{Z}_2$ action.
The principal graphs $YL(N)^{\Omega}$, for $N=2,3,4, \cdots$, are given by
$$\grc{principalgraph3d}\quad \grc{principalgraph4d}\quad \grc{principalgraph5d} \cdots$$
\end{theorem}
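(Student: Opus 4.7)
The plan is to realize $(\mathscr{C}^N_\bullet)^\Omega$ as the $\mathbb{Z}_2$-equivariantization of the planar $*$-algebra $\mathscr{C}^N_\bullet$ under the order-two automorphism $\Omega$, and then to read off its principal graph $YL(N)^\Omega$ by mechanically applying the splitting/combining rules (1)--(5) listed just before the theorem.

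First I would verify that $\Omega$ descends to a planar-algebra automorphism of $\mathscr{C}^N_\bullet$ and commutes with $*$: the defining relations in Definition \ref{Def:Centralizer algebra} are either even in $R$ or change sign on both sides under $R\mapsto -R$ (in particular the Yang--Baxter relation \eqref{YBrelation} is odd on each side), and the kernel ideal $\mathrm{Ker}$ is $\Omega$-invariant because the Markov trace is. Consequently the fixed-point algebra $(\mathscr{C}^N_\bullet)^\Omega$ is a spherical planar $*$-subalgebra of $\mathscr{C}^N_\bullet$ and inherits a positive definite Markov trace, hence is a subfactor planar algebra in the sense of Jones.

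Second, to compute the principal graph I would decompose each level $(\mathscr{C}^N)_m$ under the $\mathbb{Z}_2$-action. By Proposition \ref{Prop:symmetry} and the discussion preceding it, $\Omega$ permutes the central minimal idempotents $\{\tilde{y}_\lambda : |\lambda|\leq m,\ \lambda\in Y(N)\}$ by Young-diagram transposition. For a non-self-conjugate $\lambda$, $\Omega$ exchanges the summands indexed by $\lambda$ and $\Omega(\lambda)$, so on the fixed algebra they collapse to a single matrix block indexed by the orbit $\{\lambda,\Omega(\lambda)\}$; this is rule (2). For a self-conjugate $\lambda$, $\Omega$ restricts to an order-two automorphism of the simple summand $\tilde{y}_\lambda \mathscr{C}^N_m \tilde{y}_\lambda$, which must be inner, and both $\pm 1$ isotypic components are nontrivial, so the fixed subalgebra decomposes into two blocks $\lambda_0,\lambda_1$; this is rule (1). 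The Bratteli edges then follow from Frobenius reciprocity applied to the inclusion $(\mathscr{C}^N_m)^\Omega \hookrightarrow \mathscr{C}^N_m$, and translate directly into rules (3)--(5); equivalently, these are special cases of the fusion rules for $\mathbb{Z}_2$-equivariantization derived in \cite{BurNat13}. The final visual description of $YL(N)^\Omega$ follows from the observation that a Young diagram in $Y(N)$ is fixed by transposition iff it lies on the diagonal axis of $YL(N)$ (the ``middle row'' in the displayed figures), while all other vertices come in $\Omega$-pairs; hence $YL(N)^\Omega$ is the bottom half of $YL(N)$ with the middle row duplicated and the incident edges drawn twice, matching the three pictures in the statement.

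The step I expect to be the main obstacle is showing, in the self-conjugate case $\lambda = \Omega(\lambda)$, that the involution on the simple summand is inner with both eigenspaces nonzero, so that the block genuinely splits into two rather than persisting as a single twisted block. I would handle this by a global dimension count: compute $\dim (\mathscr{C}^N_m)^\Omega = \mathrm{tr}\bigl(\tfrac{1+\Omega}{2}\bigr)$ as an endomorphism of $\mathscr{C}^N_m$ using the matrix units from Theorem \ref{P=YL}, and verify it matches the dimension produced by rules (1)--(2). Alternatively, one can construct an explicit $\Omega$-antiinvariant element in $\tilde{y}_\lambda \mathscr{C}^N_m \tilde{y}_\lambda$ (for instance a suitable signed combination of $\tilde{P}^+_t \tilde{P}^-_{\Omega(t)}$ running over standard tableaux $t$ of $\lambda$) to certify that the $-1$ eigenspace is nonzero.
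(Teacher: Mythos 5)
Your proposal follows the paper's route almost exactly: the paper defines $\Omega$, notes that the construction is an orbifold/equivariantization, cites \cite{BurNat13} for the combinatorial branching rules (1)--(5), and then simply reads off $YL(N)^{\Omega}$ as the bottom half of $YL(N)$ with the diagonal (self-conjugate) vertices doubled. You do the same, with the only real addition being that you propose to verify rules (1)--(5) directly at the algebra level rather than taking them wholesale from \cite{BurNat13}.

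Two small cautions about the extra verification you propose. First, the object you should be analyzing is the orbifold/equivariantized planar algebra, which is subtly more than the naive tower of fixed subalgebras $(\mathscr{C}^{N}_{m})^{\Omega}$: for instance, the trivial object $\emptyset$ acquires a second copy (the sign representation of $\mathbb{Z}_2$) in the equivariantization even though $(\mathscr{C}^{N}_{0})^{\Omega}=\mathbb{C}$ shows no split at level zero, so a plain Bratteli-diagram argument on the fixed tower will not literally reproduce rule (1) at depth $0$. The categorical picture from \cite{BurNat13} resolves this cleanly, and the paper is implicitly leaning on it. Second, the issue you flag as the ``main obstacle'' --- that for self-conjugate $\lambda$ the restricted involution is inner with both $\pm1$ eigenspaces nonzero, so the block genuinely splits --- is exactly the statement about the obstruction/cocycle in the equivariantization fusion rules; your proposed resolutions (trace of $\tfrac{1+\Omega}{2}$, or an explicit $\Omega$-antiinvariant matrix unit built from a pair of standard tableaux $(t,\Omega(t))$ with $t\neq\Omega(t)$, which exists as soon as $|\lambda|\geq 2$) are reasonable and would work once reformulated in the equivariantization language, but the paper simply outsources this to the cited reference, which already covers the $\mathbb{Z}_2$ case in full generality.
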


When $N=3$, it is a near-group subfactor planar algebra. (Its even part is a near-group fusion category.) It is proved in \cite{LMP13} that its invertible objects forms the group $\mathbb{Z}_4$. This near-group subfactor planar algebra was first constructed by Izumi in \cite{Izu93}.
Therefore we obtain a sequence of (complex conjugate pairs of) subfactor planar algebras which is an extension of the near-group subfactor planar algebra for $\mathbb{Z}_4$.

We also obtain some subfactors from the $\mathbb{Z}_{N+1}$ symmetry. Take the stabilizer group of $\lambda$, $G_{\lambda}=\{g\in \mathbb{Z}_{N+1} ~|~ g\otimes \lambda=\lambda\}$. Then the irreducible summands of $\lambda\otimes \overline{\lambda}$ has exactly one $g$, for $g\in G_{\lambda}$. Let $\mathcal{N}\subset\mathcal{M}$ be the reduced subfactor of $\lambda$. Then it has an intermediate subfactor $\mathcal{P}$ and $\mathcal{N}\subset\mathcal{P}$ is the group subfactor $G_{\lambda}$. Therefore we obtain a subfactor $\mathcal{P}\subset\mathcal{M}$ with index $\displaystyle \frac{<\lambda>^2}{|G_{\lambda}|}.$

Let $\lambda_{N,m}$ be the following Young diagram,
$$\grd{blockedtriangle},$$
where $(2m-1)k=N+1$.
This triangle has $\frac{m(m+1)}{2}$ blocks and each block is a square with $k\times k$ cells.
Note that $G_{\lambda}=\mathbb{Z}_{2m-1}$. Therefore
\begin{theorem}\label{Thm:subfactors2}
For each $N$ and each odd ordered subgroup $\mathbb{Z}_{2m-1}$ of $\mathbb{Z}_{N+1}$, we obtain a subfactor with index $\displaystyle \frac{<\lambda_{N,m}>^2}{2m-1}.$
\end{theorem}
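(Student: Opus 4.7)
The plan is to reduce the statement to the combinatorial identity $G_{\lambda_{N,m}}=\mathbb{Z}_{2m-1}$. Once this is in hand, the general intermediate subfactor construction recalled in the paragraph immediately preceding the theorem applies verbatim: the reduced subfactor $\mathcal{N}\subset\mathcal{M}$ at $\lambda_{N,m}$ has index $<\lambda_{N,m}>^2$, the intermediate inclusion $\mathcal{N}\subset\mathcal{P}$ is the group subfactor of index $|G_{\lambda_{N,m}}|=2m-1$, and multiplicativity of the index gives $[\mathcal{M}:\mathcal{P}]=<\lambda_{N,m}>^2/(2m-1)$ as claimed. As a preliminary I would verify that $\lambda_{N,m}\in Y(N)$ by a direct hook-length computation on the triangular diagram (the relation $(2m-1)k=N+1$ is arranged so that the $(1,1)$-hook is at most $N$), and read $<\lambda_{N,m}>$ off from the trace formula in Theorem \ref{main theorem trace formula}.

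The core task is therefore to compute $G_{\lambda_{N,m}}$. Since $G=\mathbb{Z}_{N+1}$ is cyclic, its unique subgroup of order $2m-1$ is $\langle r_k\rangle$. For the inclusion $\mathbb{Z}_{2m-1}\subseteq G_{\lambda_{N,m}}$, I would iterate the explicit fusion rule for $\mu\otimes[1^N]=\mu\otimes r_N$ given in Corollary \ref{fusion rule for 1^n} a total of $k$ times (using $r_k=r_N^{-k}$). A single iteration removes the current top row of $\mu$ and installs a new leftmost column of complementary length; applied to the triangular arrangement of $k\times k$ blocks, $k$ successive iterations peel off the topmost block-row of $\lambda_{N,m}$ and reattach it as a block-column on the left, so that the resulting diagram coincides with $\lambda_{N,m}$ after relabelling the blocks by a cyclic rotation. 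The cleanest packaging is via Suter's lattice-path model \cite{Sut02} that underlies the dihedral symmetry of Proposition \ref{Prop:symmetry}: the $r_1$-action is a one-step cyclic shift of the length-$(N+1)$ border path of $\lambda$, and the border path of $\lambda_{N,m}$ is constructed to have exact period $k$, hence is fixed by the shift of size $k$.

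For the reverse inclusion $G_{\lambda_{N,m}}\subseteq \mathbb{Z}_{2m-1}$, I would argue from exact (rather than merely divisor) periodicity that the orbit of $\lambda_{N,m}$ under $G$ has cardinality exactly $k$; then orbit--stabilizer yields $|G_{\lambda_{N,m}}|=(N+1)/k=2m-1$. The main obstacle I anticipate is the bookkeeping in iterating Corollary \ref{fusion rule for 1^n} on the triangle, which is straightforward but tedious; passing to the border-path picture avoids it entirely by converting both containments into a single statement about the exact period of a cyclic word of length $N+1$, after which everything else is a direct appeal to the construction already described before the theorem.
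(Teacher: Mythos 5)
Your overall structure matches the paper exactly: the theorem is the instantiation of the intermediate-subfactor construction in the paragraph preceding it, the paper simply asserts $G_{\lambda_{N,m}}=\mathbb{Z}_{2m-1}$ with ``Note that'', and you correctly identify that this combinatorial identity is the only thing that needs to be established. Your reduction via orbit–stabilizer (exact orbit size $k$ forces $|G_{\lambda_{N,m}}|=(N+1)/k=2m-1$ and cyclicity pins down the subgroup) is sound, and iterating Corollary \ref{fusion rule for 1^n} $k$ times is the right mechanism. So you are not on a different route; you are filling in a detail the paper omits.

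One caution about your ``cleanest packaging.'' It is not literally true that the $r_1$-action is a one-step cyclic shift of the naive length-$(N+1)$ border word obtained by padding the $W/S$ boundary of $\lambda$; if you test this on the principal orbit $\emptyset,[1^N],[2^{N-1}],\ldots,[N]$ you will see that the padded boundary words do not evolve by shift. Suter's bijection does realize $\rho$ as a shift, but only after a nontrivial recentering of the window, and you would need to import and verify that convention before you can read off ``exact period $k$.'' There is a much shorter and entirely self-contained argument that sidesteps this: applying the fusion rule once replaces a $\mu$ with $\mu_1$ cells in its first row and $r$ parts by a partition with $N-\mu_1$ parts, so for $\lambda_{N,m}$ (whose first $k$ parts all equal $(m-1)k$) one gets by a one-line induction that $\rho^d(\lambda_{N,m})$ has $mk-d$ parts for $1\le d\le k$. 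Since $\lambda_{N,m}$ itself has $(m-1)k$ parts, $\rho^d(\lambda_{N,m})$ can coincide with $\lambda_{N,m}$ only when $mk-d=(m-1)k$, i.e.\ $d=k$; and the explicit computation of $\rho^k(\lambda_{N,m})$ does return $\lambda_{N,m}$. This simultaneously gives $\rho^k(\lambda_{N,m})=\lambda_{N,m}$ and rules out any smaller period, so the stabilizer is exactly $\mathbb{Z}_{2m-1}$ without any appeal to lattice paths.
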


\section{The centralizer algebra of Quantum subgroups}\label{section quantum subgroups}
Etingof, Nikshych, and Ostrik introduced fusion categories in \cite{ENO}.
In this section, we construct three-parameter families of unitary fusion categories $\mathscr{C}^{N,k,l}$. In particular, $\mathscr{C}^{N,1,0}$ is the bimodule category associated with the unshaded subfactor planar algebra $\mathscr{C}^{N}_{\bullet}$; $\mathscr{C}^{N,0,1}$ and $\mathscr{C}^{N,1,1}$ are representation categories \cite{Ost03} of (exceptional) subgroups of quantum $SU(N)_{N+2}$ and $SU(N+2)_{N}$ respectively. These representation categories are called modules or module categories of quantum subgroups by Onceanu \cite{Ocn00} and Ostrik \cite{Ost03}. From this construction, we see that $\mathscr{C}_{\bullet}$ is the centralizer algebra of each family of quantum subgroups. With a flavor reminiscent of Schur-Weyl duality, we study these module categories by the representations of $\mathscr{C}_{\bullet}$ in Sections \ref{subsection matrix units}, \ref{subsection positivity}. We also obtain a closed-form of the quantum dimensions of these representations.

When $\displaystyle q=e^{\frac{i\pi}{2N+2}}$, we have constructed the subfactor planar algebra $\mathscr{C}^{N}_{\bullet}=\mathscr{C}_{\bullet}/\text{Ker}$.
The subalgebra $H^N_{\bullet}$ of $\mathscr{C}^{N}_{\bullet}$ generated by shifts of $\gra{b1}$ is a Hecke algebra which is the centralizer algebra for quantum $SU(N)_{N+2}$.
Recall that the representation category of quantum $SU(N)_{N+2}$ is obtained from the Hecke algebra \emph{modulo} a $\mathbb{Z}_{N}$ periodicity given by the $N$th antisymmetrizer $g^{(N)}$.
We are going to show that the $\mathbb{Z}_{N}$ periodicity extends to $\mathscr{C}^{N}_{\bullet}$. Modulo the $\mathbb{Z}_{N}$ periodicity, we obtain the unitary fusion category $\mathscr{C}^{N,0,1}$  and it contains the representation category of quantum $SU(N)_{N+2}$ as a subcategory. Therefore it is the module category of an subgroup of quantum $SU(N)_{N+2}$.

The subalgebra of $\mathscr{C}^{N}_{\bullet}$ generated by shifts of $\gra{b2}$ is also a Hecke algebra, which is the centralizer algebra for quantum $SU(N+2)_{N}$. We are going to show that the corresponding $\mathbb{Z}_{N+2}$ periodicity also extends to $\mathscr{C}^{N}_{\bullet}$. Modulo the $\mathbb{Z}_{N+2}$ periodicity, we obtain $\mathscr{C}^{N,1,1}$ as the module category of an subgroup of quantum $SU(N+2)_{N}$.

We conjecture that the two families of unitary fusion categories $\mathscr{C}^{N,0,1}$ and $\mathscr{C}^{N,1,1}$ are isomorphic to the bimodule categories defined by Xu in \cite{Xu98} for conformal inclusions $\displaystyle SU(N)_{N+2}\subset SU(\frac{N(N+1)}{2})_{1}$ and $\displaystyle SU(N+2)_{N}\subset SU(\frac{(N+2)(N+1)}{2})_{1}$ respectively.

\begin{remark}
While checking Ocneanu's list in \cite{Ocn00} with Noah Snyder, we realized that that the zero-graded part of the subgroup $E_9$ of $SU(3)$ is a near-group category with simple objects $1, g, g^2, X$, such that
$X\otimes X=\oplus_{k=0}^2 g^k \oplus 6 X$. This example is particularly interesting, because 6 is a non-trivial multiple of the order of the group $\mathbb{Z}_3$.
\end{remark}

\begin{definition}
For an unshaded subfactor planar algebra $\mathscr{S}_{\bullet}$, we call a trace one projection $g$ in $\mathscr{S}_m$ a $\mathbb{Z}_m$ grading operator, if there is a partial isometry $u$ from $g\otimes 1$ onto $1\otimes g$, such that for any $x\in \mathscr{S}_k$
\begin{align}
\grc{grade1}&=\grc{grade2}.  \label{braiding}
\end{align}
\end{definition}
The Jones projection $\displaystyle e=\frac{1}{\delta}\gra{22}$ is a $\mathbb{Z}_2$ grading operator.

\begin{proposition}\label{Prop:tensorgrading}
The tensor product of grading operators is a grading operator.
\end{proposition}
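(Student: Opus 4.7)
The plan is to exhibit explicitly the partial isometry $u$ associated to $g_1\otimes g_2$ in terms of $u_1,u_2$ and check the three conditions: projection, unit trace, and the braiding relation (\ref{braiding}).

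First I would verify the easy pieces. Since tensor product in the planar algebra is horizontal juxtaposition and multiplication is vertical stacking, the interchange law gives $(g_1\otimes g_2)^2=g_1^2\otimes g_2^2=g_1\otimes g_2$, so $g_1\otimes g_2$ is a projection in $\mathscr{S}_{m_1+m_2}$. Closing up $g_1\otimes g_2$ with $m_1+m_2$ right caps first closes $g_2$ on the right and then closes $g_1$, so by sphericality $tr(g_1\otimes g_2)=tr(g_1)tr(g_2)=1$.

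Next I would define the candidate partial isometry
\[
u:=(u_1\otimes 1_{m_2})\,(1_{m_1}\otimes u_2)\in\mathscr{S}_{m_1+m_2+1},
\]
i.e.\ first use $u_2$ to pass the extra strand through $g_2$, then use $u_1$ to pass it through $g_1$. Using $u_iu_i^*=1\otimes g_i$ and $u_i^*u_i=g_i\otimes 1$ together with the interchange law, a short computation gives
\[
uu^*=(u_1\otimes 1_{m_2})(1_{m_1}\otimes u_2u_2^*)(u_1^*\otimes 1_{m_2})=(u_1u_1^*)\otimes g_2=1\otimes g_1\otimes g_2,
\]
and symmetrically $u^*u=g_1\otimes g_2\otimes 1$. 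Thus $u$ is a partial isometry from $(g_1\otimes g_2)\otimes 1$ onto $1\otimes(g_1\otimes g_2)$.

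Finally, for the braiding identity (\ref{braiding}) applied to $(g_1\otimes g_2,u,x)$, the plan is a two-step rewrite: starting from the left-hand picture, apply the braiding relation for $(g_2,u_2)$ to pass the label $x$ across the $g_2$-block via $u_2$, then apply the braiding relation for $(g_1,u_1)$ to pass $x$ across the $g_1$-block via $u_1$. Since $u$ was defined precisely as the composite of these two moves, the result is the right-hand picture of (\ref{braiding}) for $g_1\otimes g_2$. The only real content of the proof is this two-step diagrammatic composition; the main (mild) obstacle is keeping the strand bookkeeping straight so that the partial isometries $u_i$ and the identity strands occupy the correct ranges, and so that the application of each individual braiding relation is on a sub-tangle of the right shape. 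No new analytic input is needed.
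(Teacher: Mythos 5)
Your proof is correct and takes essentially the same approach as the paper: the paper simply states that the partial isometry for $g_1\otimes g_2$ is $(g_1\otimes u_2)(u_1\otimes g_2)$, i.e.\ the composite that passes the extra strand through the $g_2$-block and then through the $g_1$-block; this is the same element as your $(u_1\otimes 1_{m_2})(1_{m_1}\otimes u_2)$ up to absorbing the $g_i$'s into the $u_j$'s (since $u_i^*u_i=g_i\otimes 1$, replacing $1_{m_i}$ by $g_i$ on the complementary tensor factor is harmless) and up to the ordering convention for the multiplication tangle. Your write-up actually carries out the partial-isometry computation and the two-step rewrite for the braiding identity (\ref{braiding}), which the paper leaves implicit.
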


\begin{proof}
Suppose $g_i$ is a $\mathbb{Z}_{m_i}$ grading operator, and $u_i$ is the partial isometry from $g_i\otimes 1$ onto $1\otimes g_i$, for $i=1,2$, so that Equation \ref{braiding} holds.
Then $g_1\otimes g_2$ is a $\mathbb{Z}_{m_1+m_2}$ grading operator, and $(g_1 \otimes u_2)(u_1\otimes g_2)$ is the partial isometry from $g_1\otimes g_2\otimes 1$ onto $1\otimes g_1\otimes g_2$, so that Equation \ref{braiding} holds.
\end{proof}

\begin{definition}
For a grading operator $g$, we call the pair $(g,u)$ a commutative grading, if the following equation holds.
\begin{align}
\grc{grade3}&=\grc{grade4}.  \label{commuting}
\end{align}
\end{definition}

If $h$ is a minimal projection equivalent to $g$ in $\mathscr{S}_{m}$, then $h$ is also a grading operator.
The above definitions only depends on the equivalence class of $g$.

\begin{proposition}
For any $\mathbb{Z}_m$ grading operator $g$ in $\mathscr{S}_{\bullet}$, there are $m$ commutative gradings.
\end{proposition}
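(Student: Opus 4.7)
The plan is to exploit the rigidity of half-braidings for a simple object. Since $g$ has trace one in the subfactor planar algebra $\mathscr{S}_{\bullet}$, the projection $g$ is minimal, so the corner algebra $(1\otimes g)\mathscr{S}_{m+1}(1\otimes g)$ is one-dimensional. Consequently, if $u$ and $u'$ are two partial isometries from $g\otimes 1$ onto $1\otimes g$ both satisfying the half-braiding relation (\ref{braiding}), then $u'u^{*}$ is an endomorphism of the simple projection $1\otimes g$, hence a scalar multiple of $1\otimes g$; the partial isometry condition then forces this scalar to lie in $U(1)$. So, fixing any reference $u_0$ realizing (\ref{braiding}), every valid $u$ has the form $\lambda u_0$ with $|\lambda|=1$, and the set of candidate gradings compatible with a fixed $g$ is naturally a $U(1)$-torsor.

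I would then substitute $u = \lambda u_0$ into the commutativity equation (\ref{commuting}) and track how many instances of $u$ (versus $u^{*}$) appear on each side. Because $g$ lies in $\mathscr{S}_m$, transporting $g$ around a second copy of itself via the half-braiding requires precisely $m$ elementary applications of $u$ (one for each of the $m$ strings emanating from $g$), so a diagrammatic bookkeeping should show that (\ref{commuting}) reduces to an equation of the form $\lambda^{m} = c$ for a unit scalar $c \in U(1)$ depending only on $u_0$. Existence of at least one solution is then automatic: under rescaling $u_0 \mapsto \mu u_0$ the scalar $c$ transforms as $c \mapsto \mu^{-m} c$, and since $U(1)$ is $m$-divisible one can pick $\mu$ to be an $m$-th root of $c$ to reduce the equation to $\lambda^{m}=1$. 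The $m$ roots of unity then give $m$ distinct commutative gradings.

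The main obstacle is the diagrammatic count that pins the exponent down to exactly $m$ (as opposed to $2m$, $-m$, or $0$); this requires unpacking the tangles underlying (\ref{commuting}), tracking each occurrence of $u$, and using Proposition \ref{Prop:tensorgrading} together with the $\mathbb{Z}_m$ structure of $g$ propagating through (\ref{braiding}). Once this identification is in hand, the existence step uses only divisibility of $U(1)$, and the count $|\mu_m|=m$ concludes the proof.
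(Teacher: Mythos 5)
Your proposal is correct and takes essentially the same approach as the paper. The paper likewise observes that the left side of the commutativity equation (\ref{commuting}) is a partial isometry from the minimal projection $g\otimes g$ onto itself (hence a phase) that scales by $\lambda^{m}$ when $u\mapsto\lambda u$, yielding exactly $m$ admissible phases; your torsor observation is implicit in the paper's ``modify $u$ by a phase'' step, and the diagrammatic exponent count you flag as the main obstacle is asserted without further elaboration in the paper as well.
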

\begin{proof}
The left side of Equation \ref{commuting} is a partial isometry from $g\otimes g$ onto $g\otimes g$.
Since $g\otimes g$ is a minimal projection in $\mathscr{S}_{2m}$, we can modify the isometry $u$ by a phase, such that Equation \ref{commuting} holds. There are $m$ choices of the phase corresponding to the $m$th roots of unity.
\end{proof}

\begin{definition}
A $\mathbb{Z}_m$ grading operator $g$ is said to have a periodicity $k$, if $k$ is the smallest positive integer, such that $g^{\otimes k} $ is equivalent to $ e^{\otimes \frac{mk}{2}}$ in $\mathscr{S}_{km}$.
\end{definition}

The Jones projection $e$ is a $\mathbb{Z}_2$ grading operator with periodicity 1.

A unshaded subfactor planar algebra $\mathscr{S}_{\bullet}$ is a $\mathbb{N}\cup\{0\}$ graded monoidal category with the usual tensor functor and the usual multiplication in Notation \ref{Notation:diagrams}.
If $g$ is a $\mathbb{Z}_m$ grading operator and $(g,u)$ is a commutative grading, then one can consider $A=\oplus_{k=0}^\infty g^{\otimes k}$ as a commutative algebra with a half-braiding defined in Equation \ref{braiding} by $(g,u)$. We obtain a $\mathbb{Z}_m$ graded monoidal category denoted by $\mathscr{S}/(g,u)$ which is the module category over $A$. If $(g,u')$ is another commutative grading, then $\mathscr{S}/(g,u')$ can be derived from $\mathscr{S}/(g,u)$ using the gauge transformation by the character of $\mathbb{Z}_m$. Therefore, we only need to consider one $(g,u)$, and simply denote $\mathscr{S}/(g,u)$ by $\mathscr{S}/g$.

From the pivotal, spherical and positive properties of a planar algebra, one can show that $\mathscr{S}/g$ is pivotal and spherical. Its simple objects have positive quantum dimensions. Furthermore, if $\mathscr{S}_{\bullet}$ has finite depth and $g$ has a finite periodicity, then $\mathscr{S}/g$ is a unitary fusion category. See Appendix \ref{Appendix:modulo grading operators} for the above construction of the unitary fusion category $\mathscr{S}/g$.

\begin{definition}
For a finite depth unshaded subfactor planar algebra $\mathscr{S}_{\bullet}$ and a $\mathbb{Z}_m$ grading operator $g$ with a finite periodicity, we construct a $\mathbb{Z}_m$ graded unitary fusion category as $\mathscr{S}_{\bullet}$ modulo a commutative grading $g$, denoted by $\mathscr{S}/g$.
\end{definition}

Let the $Ver$ be the set of vertices of the principal graph of the unshaded subfactor planar algebra $\mathscr{S}_{\bullet}$, then the (equivalent classes of) simple objects of $\mathscr{S}_{\bullet}$ as a $\mathbb{N}\cup\{0\}$ graded monoidal category are indexed by $\lambda\otimes e^{\otimes k}$ for $\lambda\in Ver$, $k\geq0$.
The simple objects of $\mathscr{S}/g$ are indexed by of $\lambda\otimes e^{\otimes k}$ modulo $g$. If $X$ is the object in $\mathscr{S}/g$ corresponding to the 1-box identity in $\mathscr{S}_{\bullet}$, then one can derive the branching formula of $\mathscr{S}/g$ with respect to $X$ from the principal graph of $\mathscr{S}_{\bullet}$.

Now let us construct grading operators in $\mathscr{C}^{N}_{\bullet}$ and related unitary fusion categories.
Recall that $g^{(N)}$ is the $N$th antisymmetrizer of the Hecke algebra $H^N_{\bullet}$.
The trace of $g^{(N)}=y_{[1^N]}$ is one. It has a trace one subprojection $\tilde{y}_{[1^N]}$. Thus $\tilde{y}_{[1^N]}=g^{(N)}$.

\begin{proposition}
The tensor product $(g^{(N)})^{\otimes k}\otimes e^{\otimes l}$ is a grading operator in $\mathscr{C}^{N}_{Nk+2l}$ with periodicity $\frac{N+1}{(N+1,k)}$, for $N\geq1,$ and $k,l\geq0$.
\end{proposition}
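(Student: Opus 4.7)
The plan is to reduce the statement to two independent facts: first, that $g^{(N)}$ is itself a $\mathbb{Z}_N$-grading operator on $\mathscr{C}^N_\bullet$, and second, that its fusion power $(g^{(N)})^{\otimes k}$ represents the invertible object $r_{-k \bmod (N+1)}$ of the group $G=\mathbb{Z}_{N+1}$ identified in Proposition~\ref{group fusion}.

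First I would verify that $g^{(N)}$ is a $\mathbb{Z}_N$-grading operator. Since $g^{(N)}=\tilde y_{r_N}$ is a minimal projection with $tr(g^{(N)})=1$, the subobjects $g^{(N)}\otimes 1$ and $1\otimes g^{(N)}$ of $\mathscr{C}^N_{N+1}$ are both minimal projections supported on the simple object $r_N$, so there is a partial isometry $u\colon g^{(N)}\otimes 1\to 1\otimes g^{(N)}$ that is unique up to a unit scalar. To promote $u$ to a grading we must check the braiding identity (\ref{braiding}) for every $x\in\mathscr{C}^N_k$. By Proposition~\ref{algebragenerated}, $\mathscr{C}^N_k$ is generated as an algebra by shifts of the 2-box $\alpha=\gra{b+}$ and the Jones projection $e$, so it suffices to verify (\ref{braiding}) for $x=\alpha$ and $x=e$. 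For $x=\alpha$ the identity follows from a standard Hecke-algebra computation: wrapping the braid $\alpha$ around the full antisymmetrizer $g^{(N)}$ is a scalar times the identity, and the normalization of $u$ can be chosen so that this scalar is absorbed. For $x=e$ the identity is immediate once $u$ is understood to implement the canonical isomorphism of the invertible object $r_N$ with any other simple object, because capping and cupping factor through the identity component of $r_N\otimes e\otimes r_N^{-1}$. Rescaling $u$ by a suitable phase produces a commutative grading.

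Next, $e$ is a $\mathbb{Z}_2$-grading operator (the Jones projection, with $u$ being the Temperley--Lieb cap-cup composite), so by Proposition~\ref{Prop:tensorgrading} the tensor product $(g^{(N)})^{\otimes k}\otimes e^{\otimes l}$ is a $\mathbb{Z}_{Nk+2l}$-grading operator in $\mathscr{C}^N_{Nk+2l}$, completing the first half.

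For the periodicity, I would compute the equivalence class of $G_{k,l}^{\otimes j}:=((g^{(N)})^{\otimes k}\otimes e^{\otimes l})^{\otimes j}=(g^{(N)})^{\otimes kj}\otimes e^{\otimes lj}$ as a minimal projection in $\mathscr{C}^N_{(Nk+2l)j}$. By Proposition~\ref{group fusion} the simple object represented by $(g^{(N)})^{\otimes m}$ is $r_N^{\otimes m}=r_1^{\otimes Nm}=r_{-m\bmod(N+1)}$, and the factor $e^{\otimes lj}$ is equivalent to the empty diagram $\emptyset=r_0$. Hence $G_{k,l}^{\otimes j}$ is equivalent to the minimal projection corresponding to $r_{(-kj)\bmod(N+1)}$, which equals $r_0$, i.e.\ is equivalent to $e^{\otimes(Nk+2l)j/2}$, if and only if $(N+1)\mid kj$. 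The least such positive $j$ is $\frac{N+1}{(N+1,k)}$, giving the claimed periodicity.

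The main obstacle is verifying the braiding identity (\ref{braiding}) directly, rather than invoking the fusion-categorical argument. A clean route is to define $u$ by the explicit Hecke-algebra formula
\[
u\;=\;c\,(g^{(N)}\otimes 1)\,\alpha_N\alpha_{N-1}\cdots\alpha_1
\]
with $c$ a normalization scalar, and then to use Proposition~\ref{flat} (the half-braiding property of $\alpha$) together with the absorption identity $g^{(N)}\alpha_j g^{(N)}=-q^{-1}g^{(N)}$ to push an arbitrary generator through $g^{(N)}$; the identity for $x=e$ then follows from the Fourier covariance $\mathcal{F}(R)=-iR$ and the $*$-structure. All of this works inside the Hecke subalgebra and extends to $\mathscr{C}^N_\bullet$ because the required identities only invoke relations already present in $\mathscr{C}^N_\bullet$.
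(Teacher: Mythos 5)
Your proposal is correct and ultimately takes the paper's own route: the explicit Hecke-algebra formula $u=(g^{(N)}\otimes 1)\alpha_N\cdots\alpha_1$ combined with Proposition~\ref{flat} to obtain the half-braiding, Proposition~\ref{Prop:tensorgrading} to tensor with the Jones projection, and Proposition~\ref{group fusion} to compute the periodicity via the $\mathbb{Z}_{N+1}$ fusion of $r_N=r_1^{-1}$. The preliminary paragraph, which first produces an abstract partial isometry and then tries to verify (\ref{braiding}) case by case (with a claim about ``rescaling $u$ to absorb a scalar'' that does not withstand scrutiny, since $u$ appears homogeneously on both sides of (\ref{braiding})), is a detour the paper avoids: it simply writes $U$ down and observes that Proposition~\ref{flat} immediately gives (\ref{braiding}) for arbitrary $x$, with no need to reduce to generators.
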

\begin{proof}
Take $U=(g^{(N)}\otimes1) \alpha_N\alpha_{N-1}\cdots\alpha_1$. Then $U$ is a partial isometry from $g^{(N)}\otimes1$ onto $1\otimes g^{(N)}$ by type III Reidemester moves for $\alpha$.
By Proposition \ref{flat}, Equation \ref{braiding} holds for any $x$.
Thus $g^{(N)}$ is a $\mathbb{Z}_N$ grading operator.
By Proposition \ref{group fusion}, the periodicity of $g^{(N)}$ is $N+1$.
Recall that $e$ is a grading operator. By Proposition \ref{Prop:tensorgrading}, the tensor product  $(g^{(N)})^{\otimes k}\otimes e^{\otimes l}$ is a grading operator.
By Proposition \ref{group fusion}, the periodicity of the grading operator $(g^{(N)})^{\otimes k}\otimes e^{\otimes l}$ is $\frac{N+1}{(N+1,k)}$.
\end{proof}

\begin{theorem}\label{Thm:UFC}
The unshaded finite depth subfactor planar algebra $\mathscr{C}^{N}_{\bullet}$ modulo the grading operator $(g^{(N)})^{\otimes k}\otimes e^{\otimes l}$ is a $\mathbb{Z}_{kN+2l}$ graded unitary fusion category, denoted by $\mathscr{C}^{N,k,l}$.
\end{theorem}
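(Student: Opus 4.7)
The plan is to exhibit this theorem as a direct application of the general construction $\mathscr{S}/g$ described in the paragraph preceding the definition, once we verify its two hypotheses for our specific setup.

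First, I would check that $\mathscr{C}^{N}_{\bullet}$ is finite depth. This is immediate from Theorem \ref{Positivity}: the principal graph is $YL(N)$, and by definition $Y(N)$ consists of Young diagrams whose $(1,1)$-cell has hook length at most $N$. Since any such Young diagram fits inside an $N\times N$ box (the hook length of $(1,1)$ being the number of rows plus the number of columns in the first row minus one), the set $Y(N)$ is finite, hence $\mathscr{C}^{N}_{\bullet}$ has finite depth. Second, the grading operator $g := (g^{(N)})^{\otimes k}\otimes e^{\otimes l}$ sits in $\mathscr{C}^{N}_{Nk+2l}$, and by the proposition just proved it is a $\mathbb{Z}_{Nk+2l}$-grading operator with finite periodicity $\frac{N+1}{(N+1,k)}$.

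Next I would fix a commutative grading structure $(g,u)$: the paragraph after the definition of commutative grading shows that for any grading operator there exist $m$ choices of phase making $u$ satisfy the commutativity relation \eqref{commuting}, so at least one such $(g,u)$ exists. Different choices are related by the gauge transformation through characters of $\mathbb{Z}_{Nk+2l}$, and the resulting category $\mathscr{S}/(g,u)$ is independent of this choice up to equivalence, so we can simply write $\mathscr{C}^{N}/g =: \mathscr{C}^{N,k,l}$.

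With the hypotheses in place, I would invoke the construction summarized in the paragraph before the definition of $\mathscr{S}/g$: the algebra object $A=\bigoplus_{j\geq 0} g^{\otimes j}$, equipped with the half-braiding determined by $u$ via equation \eqref{braiding}, is a commutative algebra in the $\mathbb{N}\cup\{0\}$-graded monoidal category $\mathscr{C}^{N}_{\bullet}$; the category of $A$-modules is pivotal and spherical inheriting these structures from the planar algebra, it has positive quantum dimensions by the positivity of the Markov trace, and finite depth of $\mathscr{C}^{N}_{\bullet}$ together with finite periodicity of $g$ ensure that there are only finitely many isomorphism classes of simple $A$-modules and that each has integer multiplicity in $A$-module tensor products. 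This produces the unitary fusion category $\mathscr{C}^{N,k,l}$, and the $\mathbb{Z}_{Nk+2l}$-grading is induced by the $\mathbb{Z}_{Nk+2l}$-grading of $g$ on the number of strands modulo $Nk+2l$.

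The main obstacle is really already discharged by the previous proposition (the half-braiding comes from the flatness property in Proposition \ref{flat}, which was the crucial ingredient) and by the technical verification, deferred to Appendix \ref{Appendix:modulo grading operators}, that the module category over a commutative algebra with finite periodicity in a finite depth unshaded subfactor planar algebra really is a unitary fusion category. So the proof itself reduces to assembling these pieces and pointing to the appendix for the general categorical construction.
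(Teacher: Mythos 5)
Your proposal is correct and takes essentially the same route as the paper: the paper's proof of Theorem \ref{Thm:UFC} is a one-line reference to the general construction in Appendix \ref{Appendix:modulo grading operators}, and you have simply spelled out the hypothesis-checking (finite depth of $\mathscr{C}^{N}_{\bullet}$ via finiteness of $YL(N)$, finite periodicity of the grading operator from the preceding proposition, existence of a commutative grading) that the paper leaves implicit. The unpacking is accurate and matches the paper's logic.
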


\begin{proof}
Follows from the construction in Appendix \ref{Appendix:modulo grading operators}.
\end{proof}

One obtains 3D TQFT from each unitary fusion category $\mathscr{C}^{N,k,l}$ by Turaev-Viro construction \cite{TurVir92}.
\begin{question}
Whether these 3D TQFT can be parameterized by three parameters?
\end{question}

\begin{theorem}\label{Cor:quantumsubgroup1}
The unitary fusion category $\mathscr{C}^{N,0,1}$ is the module category of a subgroups of quantum $SU(N)_{N+2}$.
Therefore $\mathscr{C}_{\bullet}$ is the centralizer algebra for these quantum subgroups.
\end{theorem}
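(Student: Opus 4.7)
The plan is to exploit the Hecke subalgebra $H^N_\bullet \subset \mathscr{C}^N_\bullet$ generated by the half-braiding $\alpha = \gra{b1}$. It is classical that the Hecke algebra at $q = e^{i\pi/(2N+2)}$, together with its $\mathbb{Z}_N$ grading by the $N$-th antisymmetrizer $g^{(N)}$, produces the representation category of quantum $SU(N)_{N+2}$ after taking the appropriate unitary quotient (trace-zero idempotents go to zero, and $g^{(N)}$ becomes identified with the unit). The key observation is that the inclusion of planar algebras $H^N_\bullet \hookrightarrow \mathscr{C}^N_\bullet$ survives the modding construction of Theorem \ref{Thm:UFC} applied with $(k,l) = (0,1)$, so that $\mathscr{C}^{N,0,1}$ naturally carries an action of $\mathrm{Rep}(SU(N)_{N+2})$; one then invokes Ostrik's classification \cite{Ost03} to translate such module structures into etale algebras, which is the categorical incarnation of a quantum subgroup.

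First I would verify that the tensor subcategory of $\mathscr{C}^{N,0,1}$ generated by the image of $H^N_\bullet$ is precisely $\mathrm{Rep}(SU(N)_{N+2})$. This uses the matrix units of Section \ref{subsection matrix units}, the fact that $g^{(N)} = \tilde{y}_{[1^N]}$ has quantum dimension $1$ by Theorem \ref{trace formula} (since $r_N \in G = \mathbb{Z}_{N+1}$ by Propositions \ref{Prop:rr} and \ref{group fusion}), and the matching of the fusion/truncation rules at level $N+2$. Second, the planar-algebraic inclusion $H^N_\bullet \subset \mathscr{C}^N_\bullet$ passes to the quotient by $e$, producing a tensor functor $\mathrm{Rep}(SU(N)_{N+2}) \to \mathscr{C}^{N,0,1}$ and hence the desired module-category structure. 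Third, by Ostrik's theorem the indecomposable semisimple module categories over $\mathrm{Rep}(SU(N)_{N+2})$ correspond bijectively to indecomposable etale (connected commutative Frobenius) algebras $A$ inside $\mathrm{Rep}(SU(N)_{N+2})$; I would identify $A$ by reading off how the monoidal unit of $\mathscr{C}^{N,0,1}$ decomposes under the restriction functor, using the branching formula (Figure \ref{Figure:Branchingformula}) and the principal graph $YL(N)$ as bookkeeping. The algebra $A$ so obtained defines the quantum subgroup in the sense of Ocneanu--Ostrik, and its module category is by construction $\mathscr{C}^{N,0,1}$.

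The main obstacle will be step three: identifying the etale algebra $A$ concretely and ruling out that the module category is obtained by some non-subgroup construction (for example, a twist of a module category by a nontrivial $2$-cocycle). The $D_{2(N+1)}$-symmetry established in Proposition \ref{Prop:symmetry}, the explicit structure of the invertible group $G = \mathbb{Z}_{N+1}$, and the fact that $H^N_\bullet$ sits inside $\mathscr{C}^N_\bullet$ as a subalgebra (not merely as a subcategory twisted by some bimodule) should jointly pin $A$ down and exhibit it as a genuine subgroup-type algebra, matching the one produced by the conformal inclusion $SU(N)_{N+2} \subset SU(N(N+1)/2)_1$. Finally, the concluding sentence of the theorem is essentially a corollary: over $\mathbb{C}(q)$ the Hecke subalgebra $H_\bullet \subset \mathscr{C}_\bullet$ is the Schur--Weyl centralizer for quantum $SU(N)$, and the module-category identification above shows that at $q = e^{i\pi/(2N+2)}$ the whole parameterized algebra $\mathscr{C}_\bullet$ plays the role of the centralizer for the quantum subgroup whose module category is $\mathscr{C}^{N,0,1}$.
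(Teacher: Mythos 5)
Your steps (1) and (2) match the paper's short proof, which simply observes that $\mathscr{C}^{N,0,1}$ (constructed in the proof as $\mathscr{C}^{N}/g^{(N)}$) contains $\mathrm{Rep}(SU(N)_{N+2})$ as a tensor subcategory coming from the image of the Hecke subalgebra $H^N_\bullet$, and then cites Ocneanu and Ostrik. Your careful spelling out of why the image is exactly $\mathrm{Rep}(SU(N)_{N+2})$ --- using the matrix units of Section \ref{subsection matrix units}, the quantum-dimension-one property of $g^{(N)}=\tilde{y}_{[1^N]}$ from Theorem \ref{trace formula}, and the level-$(N+2)$ truncation --- is a useful amplification of what the paper leaves implicit.

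Your step (3), however, is a detour built on a misattribution. Theorem 1 of \cite{Ost03} does not classify module categories by \'etale (connected commutative Frobenius) algebras; it gives a correspondence between indecomposable semisimple module categories over $\mathcal{C}$ and Morita classes of \emph{general} algebra objects in $\mathcal{C}$. The \'etale/commutative classification you have in mind is the separate Kirillov--Ostrik modular-invariant story, which is not what the paper invokes. In the framework of \cite{Ocn00} and of Theorem 1 of \cite{Ost03}, a ``quantum subgroup'' is \emph{defined} by a module category, equivalently by an algebra object, so there is no dichotomy between ``subgroup'' and ``non-subgroup'' constructions to rule out, and the worry about $2$-cocycle twists is a phantom. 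Once the module-category structure over $\mathrm{Rep}(SU(N)_{N+2})$ is established by your steps (1)--(2), the theorem follows immediately. The concrete identification of the algebra object and the matching with Xu's bimodule category for the conformal inclusion $SU(N)_{N+2}\subset SU(N(N+1)/2)_1$ is exactly the content of the Conjecture the paper deliberately leaves open at the end of Section \ref{section quantum subgroups}; proving that would go well beyond the theorem under review and should not be folded into its proof.
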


\begin{proof}
Note that the unitary fusion category $\mathscr{C}^{N,0,1}$ is $\mathscr{C}^{N}/g^{(N)}$ which contains the representation category of quantum $SU(N)_{N+2}$ as a subcategory. Therefore it is the module category of a subgroups of quantum $SU(N)_{N+2}$ in the sense of Ocneanu \cite{Ocn00}, or in the sense of Ostrik by Theorem 1 in \cite{Ost03}.
\end{proof}

Recall that the subalgebra of $\mathscr{C}^{N}_{\bullet}$ generated by shifts of $\gra{b2}$ is also a Hecke algebra, which is the centralizer algebra for quantum $SU(N+2)_{N}$.
Let us construct the antisymmetrizers $h^{(l)}$, $1\leq l\leq N+2$ from $\beta_i$ as follows,
$$h^{(l)}=h^{(l-1)}-\frac{[l-1]}{[l]}h^{(l-1)}(q^{-1}+\beta_i)h^{(l-1)},$$
where $h^{(1)}=1$. In particular, $h^{(N+2)}$ is a trace one projection. By Proposition \ref{flat}, we have the following:

\begin{proposition}
The operator $h^{(N+2)}$ is $\mathbb{Z}_{N+2}$ grading operator for $\mathscr{C}^{N}_{\bullet}$.
\end{proposition}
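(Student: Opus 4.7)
The verification of the three requirements in the definition of a grading operator proceeds in parallel with the proof that $g^{(N)}$ is a $\mathbb{Z}_N$ grading operator, with $\alpha$ replaced by $\beta$ and the first half-braiding of Proposition \ref{flat} replaced by the second. That $h^{(N+2)}$ is a trace-one projection in $\mathscr{C}^{N}_{N+2}$ has just been asserted, so the first requirement is met.

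For the partial isometry condition, the plan is to take
$$V=(h^{(N+2)}\otimes 1)\,\beta_{N+2}\beta_{N+1}\cdots\beta_1 \in \mathscr{C}^{N}_{N+3}$$
and show $V^*V = h^{(N+2)}\otimes 1$ and $VV^* = 1\otimes h^{(N+2)}$. This relies on two facts. First, $\beta$ obeys the analogues of the type II and type III Reidemeister moves: the inverse $\beta^{-1}$ is given explicitly in the preceding notation, and a Yang--Baxter identity for $\beta$ can be read off from Lemma \ref{solution of YBE -1} together with the Fourier identifications of Proposition \ref{braidfourier}, which relate $\beta$ to rotations of $\alpha$. Second, $h^{(N+2)}$ is constructed by the antisymmetrizer recursion built from $\beta_i$, so it absorbs each $\beta_i$ by a scalar in the same manner that the top antisymmetrizer of a Hecke algebra absorbs its generators; iterating this absorption while sliding $h^{(N+2)}$ across the chain of $\beta_i$'s via the $\beta$-Yang--Baxter relation turns $h^{(N+2)}\otimes 1$ into $1\otimes h^{(N+2)}$, up to a unitary factor that cancels in $V^*V$ and $VV^*$.

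For the final condition, Equation \ref{braiding} with $g=h^{(N+2)}$, $u=V$, and arbitrary $x\in\mathscr{C}^N_k$ would be obtained by applying the second half-braiding identity of Proposition \ref{flat}, namely $\graa{flat21}=\graa{flat22}$, successively $N+2$ times so as to pass $x$ across the chain of $\beta_i$ strands that comprise $V$. The main obstacle will be bookkeeping rather than ideas: one must verify that the iterated $\beta$-slides produce exactly the oriented strand configuration demanded by Equation \ref{braiding}, and that the scalar appearing in the absorption $\beta_i h^{(N+2)} = (\text{scalar})\,h^{(N+2)}$ (which is a $\beta$-antisymmetrizer analogue of the identity $\alpha_i g^{(N)} = -q^{-1} g^{(N)}$) combines correctly over the product so that $V$ is genuinely a partial isometry rather than merely a partial isometry up to phase. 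Once these signs and phases are pinned down, the argument is mechanical and strictly parallel to that for $g^{(N)}$.
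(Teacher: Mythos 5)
Your argument matches the paper's intended approach: the paper supplies no proof beyond the phrase ``By Proposition \ref{flat}, we have the following,'' and your sketch correctly reconstructs the implied parallel with the $g^{(N)}$ case, replacing $\alpha$ by $\beta$, the first half-braiding of Proposition \ref{flat} by the second, and taking $V=(h^{(N+2)}\otimes 1)\beta_{N+2}\beta_{N+1}\cdots\beta_1$ as the partial isometry. One small over-caution: your concern that $V$ might be ``merely a partial isometry up to phase'' is moot, since a unimodular scalar multiple of a partial isometry is still a partial isometry and the definition of a grading operator asks for nothing more.
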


\begin{proposition}
The minimal projection $g^{(N)}\otimes e$ is equivalent to $h^{(N+2)}$ in $\mathscr{C}^{N}_{N+2}$.
\end{proposition}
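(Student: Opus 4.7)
The plan is to identify $g^{(N)}\otimes e$ and $h^{(N+2)}$ as trace-one minimal projections of $\mathscr{C}^N_{N+2}$ sitting in the same simple summand; any two trace-one minimal projections of a semisimple finite-dimensional $C^*$-algebra that lie in the same matrix-algebra summand are automatically Murray--von Neumann equivalent.

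First, by Proposition \ref{Prop:rr} we have $g^{(N)}=\tilde{y}_{[1^N]}$, so Theorem \ref{trace formula} gives $\operatorname{tr}(g^{(N)})=\prod_{k=1}^{N}\cot(k\pi/(2N+2))=1$ (the product telescopes via $\cot(k\theta)\cot((N+1-k)\theta)=1$). The matrix-unit construction in the proof of Theorem \ref{Positivity} then identifies $g^{(N)}\otimes e$ as a minimal projection in the simple summand labelled by the Young diagram $[1^N]=r_N$, corresponding to the path $\emptyset\to[1]\to[1^2]\to\cdots\to[1^N]\to[1^{N-1}]\to[1^N]$. The Proposition directly preceding the one under consideration asserts that $h^{(N+2)}$ is a $\mathbb{Z}_{N+2}$ grading operator, hence by definition a trace-one projection; minimality of $h^{(N+2)}$ in $\mathscr{C}^N_{N+2}$ I would establish by the $\beta$-analog of the inductive argument of Theorem \ref{P=YL}, exploiting Proposition \ref{braidfourier}, which expresses $\beta$ as a Fourier rotation of $\alpha^{-1}$, together with the fact that $\beta$ satisfies the Hecke relation with the same parameter $q$.

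To localize $h^{(N+2)}$, I would use that grading operators correspond to invertible objects of $\mathscr{C}^N_{\bullet}$; by Propositions \ref{group fusion} and \ref{Prop:rr} these form $\mathbb{Z}_{N+1}$ generated by $r_1=[N]$, with $r_k$ sitting at level $k(N+1-k)$. The invertible summands of $\mathscr{C}^N_{N+2}$ are therefore of the form $r_k\otimes e^{\otimes j}$ with $k(N+1-k)+2j=N+2$; the solutions are $r_1\otimes e=[N]\otimes e$, $r_N\otimes e=[1^N]\otimes e$, and, when $N$ is even, $r_0\otimes e^{\otimes(N+2)/2}$. To select the correct one, I would compute the pairing $\operatorname{tr}\bigl(h^{(N+2)}(g^{(N)}\otimes e)\bigr)$: since minimal projections in distinct simple summands have product zero, non-vanishing of this trace forces $h^{(N+2)}$ into the $[1^N]$-summand, giving the equivalence.

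The main obstacle is this final trace computation. A natural route is to combine the half-braiding identities of Proposition \ref{flat} (which allow one to commute the $\beta$-antisymmetrizer past $g^{(N)}\otimes 1$) with the Wenzl-type recursion $h^{(l)}=h^{(l-1)}-\tfrac{[l-1]}{[l]}h^{(l-1)}(q^{-1}+\beta_{l-1})h^{(l-1)}$; because $g^{(N)}\otimes 1$ annihilates any cup-cap among its first $N$ strands, the recursion should collapse step by step until only the $e$-factor on the last two strands survives, leaving a nonzero scalar multiple of $g^{(N)}\otimes e$ and thereby confirming that $h^{(N+2)}$ and $g^{(N)}\otimes e$ share the simple summand indexed by $[1^N]$.
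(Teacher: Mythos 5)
Your high-level plan---show that $g^{(N)}\otimes e$ and $h^{(N+2)}$ are trace-one minimal projections sitting in the same simple summand of $\mathscr{C}^N_{N+2}$, by computing a nonzero pairing---is essentially the route the paper takes, and the preliminary observations (trace-oneness of $g^{(N)}$ via the telescoping product, $h^{(N+2)}$ is a projection because it is a grading operator, identifying $g^{(N)}\otimes e$ with the summand $[1^N]$) are all correct.

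However, the final step---the pairing computation---is a genuine gap. Your heuristic that the Wenzl recursion for $h^{(N+2)}$ ``should collapse step by step'' against $g^{(N)}\otimes 1$ does not go through as stated: $h^{(N+2)}$ is built from the $\beta$-crossings while $g^{(N)}$ is built from the $\alpha$-crossings, and there is no simple eigenvalue relation between $g^{(N)}$ and $\beta_i$ (only between $g^{(N)}$ and $\alpha_i$, or between $h^{(N+2)}$ and $\beta_i$). Concretely, unwinding $h^{(N+2)}=h^{(N+1)}-\frac{[N+1]}{[N+2]}h^{(N+1)}(q^{-1}+\beta_{N+1})h^{(N+1)}$ against $g^{(N)}\otimes e$ produces terms like $h^{(N+1)}(q^{-1}+\beta_{N+1})h^{(N+1)}e_{N+1}$ in which $h^{(N+1)}$ and $e_{N+1}$ share the strand $N+1$ and do not commute, so nothing cancels automatically. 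The missing insight---which is the crux of the paper's proof---is that $\alpha-\beta$ is a scalar multiple of the Jones projection $\gra{22}$, hence $s_m\alpha_i=s_m\beta_i$ and therefore $s_m g^{(l)}=s_m h^{(l)}$; since $g^{(N)}=s_N g^{(N)}=\tilde y_{[1^N]}$ this gives the crucial identity $g^{(N)}h^{(N)}=g^{(N)}$. With that identity in hand, the pairing becomes immediate: the conditional expectation $\Phi$ onto $\mathscr{C}^N_N$ sends $h^{(N+2)}$ to a positive multiple of $h^{(N)}$ (Markov property of the $\beta$-Hecke algebra), and then $\Phi(h^{(N+2)}g^{(N)})=\frac{\operatorname{tr}(h^{(N+2)})}{\operatorname{tr}(h^{(N)})}\,h^{(N)}g^{(N)}=\frac{\operatorname{tr}(h^{(N+2)})}{\operatorname{tr}(h^{(N)})}\,g^{(N)}\neq0$. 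Two smaller issues: you defer the minimality of $h^{(N+2)}$ to a sketch (acceptable, though note it must be justified before subequivalence follows from a nonzero pairing), and you do not address the degenerate cases $N=1,2$, where $1\otimes1=e+\tilde y_{[2]}+\tilde y_{[1^2]}$ fails and the paper handles them separately.
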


\begin{proof}

Let $\Phi$ be the trace preserving condition expectation from $\mathscr{C}^{N}_{N+2}$ to $\mathscr{C}^{N}_{N}$, i.e. adding two caps on the right of a $N+2$ box.
Then it is also a trace preserving condition expectation on the Hecke algebra and $\Phi(h^{(N+2)})=\frac{tr(h^{(N+2)})}{tr(h^{(N)})}h^{(N)}$.

Recall that $s_m$ is the complement of the support of the basic construction ideal of $\mathscr{C}^{N}_m$, so $s_m\alpha_i=s_m\beta_i$.
By the inductive construction of the antisymmetrizer, we have $s_mg^{(l)}=s_mh^{(l)},$ for $1\leq l\leq N$. Recall that $s_mg^{(N)}=g^{(N)}$, so
\begin{align*}
\Phi(h^{(N+2)}(g^{(N)}\otimes 1\otimes 1) )
=&\Phi(h^{(N+2)}g^{(N)})\\
=&\frac{tr(h^{(N+2)})}{tr(h^{(N)})}h^{(N)}g^{(N)}\\
=&\frac{tr(h^{(N+2)})}{tr(h^{(N)})}h^{(N)}s_mg^{(N)}\\
=&\frac{tr(h^{(N+2)})}{tr(h^{(N)})}g^{(N)}\\
\neq&0.
\end{align*}
Therefore the trace one projection $h^{(N+2)}$ is subequivalent to $g^{(N)}\otimes 1\otimes 1$.
Note that
$$1\otimes 1=e+\tilde{y}_{[11]}+\tilde{y}_{[1^2]}.$$
When $N\geq3$, $g^{(N)}\otimes 1\otimes 1$ has only one trace one subprojection $g^{(N)}\otimes e$. Thus
the minimal projection $g^{(N)}\otimes e$ is equivalent to $h^{(N+2)}$ in $\mathscr{C}^{N}_{N+2}$.

When $N=1$, $\mathscr{C}^1$ has index one. The statement is true.

When $N=2$, $\mathscr{C}^1$ is the group subfactor planar algebra $\mathbb{Z}_3$. The 2-box minimal projections are given by $e$, $g^{(2)}$, $h^{(2)}$. Thus $g^{(2)}\otimes e$ is equivalent to $h^{(2)}\otimes h^{(2)}$.
Applying two left caps or two right caps to $h^{(4)}$, we obtain a scalar multiple of $h^{(2)}$. Thus $h^{(4)}$ is a sub projection of $h^{(2)}\otimes h^{(2)}$. Both of them have trace one, so $h^{(4)}=h^{(2)}\otimes h^{(2)}$, which is equivalent to $g^{(2)}\otimes e$.
\end{proof}

\begin{theorem}\label{Cor:quantumsubgroup2}
The unitary fusion category $\mathscr{C}^{N,1,1}$ is the module category of a subgroups of quantum $SU(N+2)_{N}$.
Therefore $\mathscr{C}_{\bullet}$ is the centralizer algebra for these quantum subgroups.
\end{theorem}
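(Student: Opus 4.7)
The plan is to mirror exactly the argument used for Theorem \ref{Cor:quantumsubgroup1}, with the roles of $\alpha$ and $\beta$ interchanged and with the grading operator $g^{(N)}$ replaced by $h^{(N+2)}$. The first step is to identify $\mathscr{C}^{N,1,1}$ with a quotient in which the Hecke subalgebra generated by the $\beta_i$'s plays the distinguished role. By Theorem \ref{Thm:UFC}, $\mathscr{C}^{N,1,1}$ is obtained from $\mathscr{C}^{N}_{\bullet}$ by modding out by a commutative grading built on the projection $g^{(N)}\otimes e$. By the preceding proposition, $g^{(N)}\otimes e$ is equivalent to $h^{(N+2)}$, so $\mathscr{C}^{N,1,1}\simeq \mathscr{C}^{N}_{\bullet}/h^{(N+2)}$ as unitary fusion categories (the choice of equivalent representative for the grading operator does not change the quotient category, only the specific isomorphism class of partial isometry $u$ realizing the half-braiding).

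Next I would identify the relevant Hecke subalgebra. Let $H'_{\bullet}\subset \mathscr{C}^{N}_{\bullet}$ be the unital subalgebra generated by the shifts of $\beta=\gra{b2}$. By the Hecke relation for $\beta$ (which follows from Lemma \ref{Move I} or, equivalently, by applying the Fourier-type identity $\gra{b2}=-\gra{b1-right}$ of Proposition \ref{braidfourier} to the Hecke relation for $\alpha$) and the corresponding Reidemeister moves, $H'_{\bullet}$ is the type $A$ Hecke algebra with parameters $q,r'=-iq^{-1}$ (or more precisely, the Hecke algebra associated to quantum $SU(N+2)_N$ under the standard normalization). Its antisymmetrizers are precisely the $h^{(l)}$ constructed above, and $h^{(N+2)}$ is exactly the image of the determinant of the fundamental representation of $SU(N+2)_N$; modding it out cuts down the Hecke category to the representation category $\mathrm{Rep}(SU(N+2)_N)$.

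The third step is to compare categories. Since the quotient functor $\mathscr{C}^{N}_{\bullet}\to \mathscr{C}^{N,1,1}$ restricts to a quotient of $H'_{\bullet}$ by $h^{(N+2)}$, the subcategory of $\mathscr{C}^{N,1,1}$ generated (as a fusion subcategory) by the object $X$ corresponding to the 1-box identity under $\beta$-braidings is precisely $\mathrm{Rep}(SU(N+2)_N)$. Thus $\mathscr{C}^{N,1,1}$ is a unitary fusion category containing $\mathrm{Rep}(SU(N+2)_N)$ as a full fusion subcategory (indeed, the commutative algebra $\bigoplus_{k\geq 0}h^{(N+2)\otimes k}$ is a connected étale algebra in the modular category $\mathrm{Rep}(SU(N+2)_N)$, and $\mathscr{C}^{N,1,1}$ is the category of its local modules, at least up to the grading). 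By Ocneanu's definition \cite{Ocn00} or Ostrik's Theorem 1 in \cite{Ost03}, this means $\mathscr{C}^{N,1,1}$ is the module category of an (exceptional) quantum subgroup of $SU(N+2)_N$. Consequently $\mathscr{C}_{\bullet}$ serves as the centralizer algebra for this family of quantum subgroups, in the sense of the Schur--Weyl philosophy outlined in the introduction.

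The main obstacle will be verifying that the commutative grading data $(g^{(N)}\otimes e, u)$ used to define $\mathscr{C}^{N,1,1}$ matches (up to gauge) the natural étale algebra structure on $\bigoplus_k h^{(N+2)\otimes k}$ inside $\mathrm{Rep}(SU(N+2)_N)$; equivalently, one must check that the half-braiding produced by the partial isometry $u$ coming from $\alpha$-crossings coincides, after the equivalence $g^{(N)}\otimes e\simeq h^{(N+2)}$, with the one produced by $\beta$-crossings. This is where the Fourier identity $\beta=-\mathcal{F}(\alpha)$ of Proposition \ref{braidfourier} together with the flatness property of Proposition \ref{flat} is essential: it guarantees that the two half-braidings differ only by a character of $\mathbb{Z}_{N+2}$, and hence define isomorphic quotient fusion categories as discussed after the definition of $\mathscr{S}/g$.
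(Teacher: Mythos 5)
Your proposal follows the same route as the paper's proof: identify $\mathscr{C}^{N,1,1}$ with $\mathscr{C}^{N}/h^{(N+2)}$ via the equivalence $g^{(N)}\otimes e\sim h^{(N+2)}$, observe that this quotient contains the representation category of quantum $SU(N+2)_N$ (coming from the Hecke subalgebra generated by $\beta$) as a subcategory, and then invoke Theorem~1 of \cite{Ost03}. The paper states this in two sentences without elaborating the half-braiding compatibility you flag in your last paragraph; the paper handles that concern implicitly through the earlier remark that different commutative gradings $(g,u)$ and $(g,u')$ give quotient categories related by a gauge transformation by a character of $\mathbb{Z}_m$, so your more explicit discussion is a reasonable expansion of, not a departure from, the argument.
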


\begin{proof}
Note that the unitary fusion category $\mathscr{C}^{N,1,1}$ is isomorphic to $\mathscr{C}^{N}/h^{N+2}$ which contains the representation category of quantum $SU(N)_{N+2}$ as a subcategory. Therefore it is the module category of a subgroups of quantum $SU(N)_{N+2}$ in the sense of Ocneanu \cite{Ocn00}, or in the sense of Ostrik by Theorem 1 in \cite{Ost03}.
\end{proof}

Let $g$ be the simple object in $\mathscr{C}^{N,k,l}$ corresponding to the minimal projection $g^{N}$. We have known the principal graph $YL(N)$ of $\mathscr{C}^{N}_{\bullet}$ and the action of $g$ by tensor product on the principal graph (Corollary \ref{fusion rule for 1^n}). Therefore we have \underline{the branching formula for all $\mathscr{C}^{N,k,l}$} with respect to the generating simple object, which is indexed by the Young diagram [1] with one cell.

Let us compute the branching formula for the case $\mathscr{C}^{3,1,0}$ as an example.

When $N=3$, the principal graph of $\mathscr{C}^{3}_{\bullet}$ is
$$\grd{principalgraph4young}.$$
(This is the branching formula of $\mathscr{C}^{3,0,1}$.)

The simple objects of $\mathscr{C}^{N}_{\bullet}$ as a $\mathbb{N}\cup\{0\}$ graded monoidal category are indexed by $\lambda\otimes e^{\otimes k}$ for all Young diagrams $\lambda$ and $k\geq0$.

The grading operator $g=g^{(3)}$ is indexed by the Young diagram $[1^3]=\gra{anti3}$. Its action on $Y(3)$ is given in Figure \ref{Figure:action on Y3}.

\begin{figure}[h]
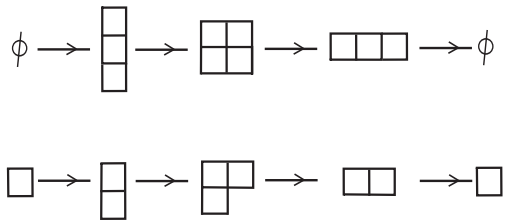

$$\grd{orbit}.$$
\caption{The action of the grading operator $g$ on $Y(3)$}\label{Figure:action on Y3}
\end{figure}

Let us fix representatives $\emptyset$ and $[1]$ for the two orbits. The periodicity of $g$ is 4. So $e^{\otimes 6}$ is equivalent to $g^{\otimes 4}$ in $\mathscr{C}^{3}_{12}$, equivalent to $\emptyset$ in  $\mathscr{C}^{3,1,0}$.
Therefore the simple objects of $\mathscr{C}^{3,k,l}$ are indexed by $ e^{\otimes t}$ and $[1] \otimes e^{\otimes t}$, for $\displaystyle 0\leq t< 6$.

The fusion rule is derived from the principal graph as follows.
\begin{align*}
e^{\otimes 6}& \sim g^{\otimes 4}\sim_{g} \emptyset;\\
[1]\otimes e&\sim e\otimes [1];\\
[1]\otimes [1]&\sim e\oplus [2] \oplus [1^2]\\
&\sim_g e\oplus ([2]\otimes g) \oplus ([1^2] \otimes g^{\otimes 3})\\
&\sim e\oplus ([1]\otimes e^{\otimes 2}) \oplus ([1] \otimes e^{\otimes 5}),
\end{align*}
where $\sim$ means two projections (or objects) are equivalent in $\mathscr{C}^{3}_{\bullet}$, and $\sim_g$ means two projections (or objects) are equivalent in $\mathscr{C}^{3,1,0}$.
Thus the $\mathbb{Z}_3$ graded branching formula of $\mathscr{C}^{3,1,0}$ is given in Figure \ref{Figure:Branchingformula2}. (This is Figure \ref{Figure:Branchingformula}.)
\begin{figure}[h]
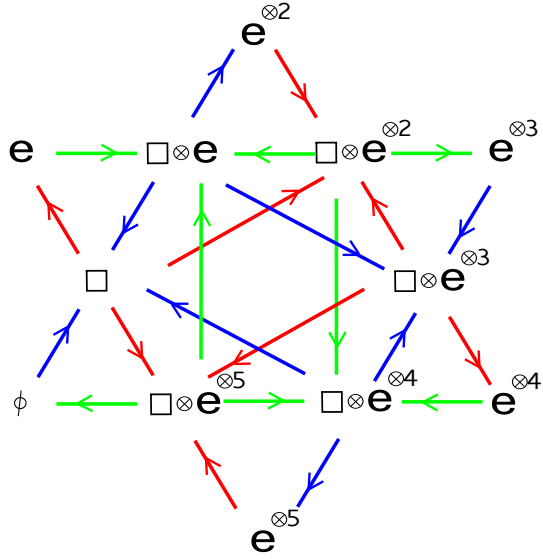

$$\grf{colorgraph3}.$$
  \caption{The branching formula for $\mathscr{C}^{3,1,0}$} \label{Figure:Branchingformula2}
\end{figure}

Ocneanu classified subgroups of quantum $SU(3)$ and $SU(4)$ in \cite{Ocn00}. He also gave their branching formulas.
Our branching formulas for $\mathscr{C}^{N,0,1}$, $N=3,4$, correspond to exceptional subgroups of quantum $SU(3)_5$ and $SU(4)_6$ in Ocneanu's list. Our branching formulas for $\mathscr{C}^{N,0,1}$, $N\geq5$, are new.

Xu constructed bimodule categories (or unitary fusion categories) from conformal inclusions in \cite{Xu98}, namely the $\alpha$-induction.
These bimodule categories contains the representation categories of quantum groups as a subcategory, if the the conformal inclusions are given by corresponding Lie groups. Therefore these bimodule categories are defined as representation categories of quantum subgroups.
In this construction, he could compute the branching formula for subgroups of some small rank quantum groups.
The branching formula for $\mathscr{C}^{3,1,0}$ in Figure \ref{Figure:Branchingformula2} is the same as the one computed by Xu for the conformal inclusion $\displaystyle SU(3)_{5}\subset SU(6)_{1}$.
We conjecture from this observation:
\begin{conjecture}
  The unitary fusion category $\mathscr{C}^{N,1,0}$ is isomorphic to Xu's bimudule category for the conformal inclusion $\displaystyle SU(N)_{N+2}\subset SU(\frac{N(N+1)}{2})_{1}$.
\end{conjecture}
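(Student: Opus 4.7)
The natural approach is to realize both unitary fusion categories as extensions of the common subcategory $\mathcal{C} = \mathrm{Rep}(SU(N)_{N+2})$ and invoke Ostrik's classification \cite{Ost03} of unitary extensions of a modular tensor category by étale commutative algebras. On the Liu side, $\mathcal{C}$ embeds in $\mathscr{C}^{N,1,0}$ via the Hecke subalgebra $H^N_\bullet \subset \mathscr{C}^N_\bullet$ followed by the quotient of Theorem~\ref{Thm:UFC}; on Xu's side, $\mathcal{C}$ is the chiral subcategory of the conformal embedding. The task thus reduces to identifying the associated étale algebra in each case and matching the two algebras in $\mathcal{C}$.

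First, I would compute the Liu algebra $A_{\mathrm{Liu}}$. From the construction of $\mathscr{C}^{N,1,0}$ as a quotient by a grading operator, one has $A_{\mathrm{Liu}} = \bigoplus_{k=0}^{N}(g^{(N)})^{\otimes k}$, viewed as an object of $\mathcal{C}$, with multiplication furnished by the half-braiding of Proposition~\ref{flat} and the commutative grading condition~(\ref{commuting}). Proposition~\ref{group fusion} identifies this as a group-like algebra for $G = \mathbb{Z}_{N+1}$, while Theorem~\ref{main theorem trace formula} supplies the quantum dimensions of the summands in $\mathcal{C}$. Next, I would compute the Xu algebra $A_{\mathrm{Xu}}$, which by definition is the branching of the vacuum module of $SU(n)_1$, $n = N(N+1)/2$, down to $SU(N)_{N+2}$ along the embedding induced by the symmetric-square representation $\mathbb{C}^N \to \mathrm{Sym}^2(\mathbb{C}^N) \cong \mathbb{C}^n$; its simple summands are computed by classical level-rank duality.

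To complete the identification, it suffices to verify $A_{\mathrm{Liu}} \cong A_{\mathrm{Xu}}$ as algebras in $\mathcal{C}$. The strategy splits into two substeps: (i) match the underlying objects, and (ii) match the algebra structures. For (i), both algebras should be supported on the same cyclic group of simple currents of order $N+1$ in $\mathcal{C}$, since the periodicity jump of $g^{(N)}$ from $N$ in the Hecke quotient to $N+1$ in $\mathscr{C}^N_\bullet$ should match the order of the simple current generating the conformal extension. For (ii), one appeals to the vanishing $H^2(\mathbb{Z}_{N+1}, \mathbb{C}^\times) = 0$ to conclude that any indecomposable commutative algebra supported on these summands is unique up to isomorphism. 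For $N = 3$ (and in principle $N = 4$) the explicit branching formula in Figure~\ref{Figure:Branchingformula} directly matches Xu's formula, providing the first non-trivial verification of the conjecture and serving as a sanity check for the general argument.

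The main obstacle is substep (i) in general $N$: rigorously identifying the image of $g^{(N)}$ in $\mathcal{C}$ with the simple current generating the conformal extension. Algebraically, $g^{(N)}$ is the top antisymmetrizer whose behavior is encoded by the trace formula (Theorem~\ref{main theorem trace formula}) and the dihedral symmetry (Proposition~\ref{Prop:symmetry}); on the conformal side, the corresponding simple current arises from the outer automorphism of the affine Dynkin diagram $\widetilde{A}_{N-1}$ lifted through the embedding. Reconciling these two descriptions uniformly in $N$ would require either (a) a classification of indecomposable commutative étale algebras in $\mathrm{Rep}(SU(N)_{N+2})$ of rank $N+1$, reducing the conjecture to a finite combinatorial check, or (b) an explicit braided equivalence constructed at the level of generators and relations. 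Either route amounts to a combinatorial matching of branching coefficients via level-rank duality techniques adapted to the $\mathscr{C}^N_\bullet$ framework, and this matching is precisely where the general-$N$ case ceases to be routine and must be carried out by hand, explaining why the paper verifies the conjecture only for small $N$.
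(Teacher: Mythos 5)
This statement is a \emph{conjecture} in the paper, not a theorem. The paper does not prove it; the only justification given is the one-line remark that the conjecture ``is true for $N=2,3,4$ by Ocneanu's classification result,'' i.e.\ a case-by-case comparison of branching formulas (Figure~\ref{Figure:Branchingformula}) against the finite list in \cite{Ocn00}. There is no general argument in the paper, which is precisely why it is stated as a conjecture. Your proposal should therefore be assessed not against a hidden proof, but against the question of whether it would actually close the open problem; you are candid that it does not, and your honesty about the gap in substep~(i) is the correct assessment.

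Your outline is a sensible way to attack the conjecture, and it agrees with the paper's small-$N$ verification in spirit (both ultimately reduce to matching branching data over $\mathrm{Rep}(SU(N)_{N+2})$). Two caveats are worth flagging. First, substep~(ii) is glossed: uniqueness of a commutative \'etale algebra structure on a fixed group of simple currents is not controlled merely by $H^2(\mathbb{Z}_{N+1},\mathbb{C}^\times)=0$; one must also check that the monodromy charges (the quadratic form induced by the braiding on the simple-current subgroup) permit a commutative algebra structure at all, and that the resulting structure is unique --- this is where the modular data of $SU(N)_{N+2}$ enters and cannot be bypassed by a purely group-cohomological remark. Second, the hard step you isolate in (i) --- identifying $g^{(N)}$ with the simple current generating the conformal extension uniformly in $N$ --- is exactly the content of the conjecture, so the proposal is best read as a clean reduction of the conjecture to a level-rank/branching computation rather than as a proof. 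That reduction is genuinely useful, but it is not what the paper supplies, and it leaves the same gap the paper does.
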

The conjecture is true for $N=2,3,4$ by Ocneanu's classification result.

Recall that $\mathscr{C}^{N,1,1}$ is the unitary fusion category $\mathscr{C}^{N}/h$, where $h\sim g\otimes e$.
When $N=3$, the simple objects of $\mathscr{C}^{3,1,1}$ are given by $e^{\otimes j}$, $[1]\otimes e^{\otimes j}$, for $0\leq j<10$. Moreover, the fusion rule is given by
\begin{align*}
e^{\otimes 10}&\sim \emptyset\\
[1]\otimes e&\sim e\otimes [1]\\
[1]\otimes [1]&\sim e\oplus ([1]\otimes e^{\otimes 3}) \oplus ([1] \otimes e^{\otimes 8})
\end{align*}
This fusion rule is the same as the one computed by Xu for the conformal inclusion $SU(5)_3 \subset SU(10)_1$.
We conjecture from this observation:
\begin{conjecture}
   The unitary fusion category $\mathscr{C}^{N,1,1}$ is isomorphic to Xu's bimodule category for the conformal inclusion $\displaystyle SU(N+2)_{N}\subset SU(\frac{(N+2)(N+1)}{2})_{1}$.
\end{conjecture}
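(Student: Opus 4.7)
The plan is to realize both categories as module categories over $\mathrm{Rep}(SU(N+2)_N)$, identify the corresponding commutative algebra in each case via Ostrik's theorem (Theorem 1 of \cite{Ost03}), and show that these algebras agree. By Theorem \ref{Cor:quantumsubgroup2}, $\mathscr{C}^{N,1,1} = \mathscr{C}^N/h^{(N+2)}$ already comes equipped with an embedding of $\mathrm{Rep}(SU(N+2)_N)$ as the subcategory generated by the braid $\beta = \gra{b2}$, and Xu's bimodule category carries an analogous embedding by construction of $\alpha$-induction. Thus both sides correspond, via Ostrik's dictionary, to commutative étale algebra objects $A, A' \in \mathrm{Rep}(SU(N+2)_N)$, and the goal is to prove $A \cong A'$ as algebras.

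First I would determine $A$ explicitly. The identity object of $\mathscr{C}^{N,1,1}$ decomposes in $\mathrm{Rep}(SU(N+2)_N)$ as a sum of the simple objects $\lambda \in Y(N)$ (viewed via the hook-length parameterization of Section \ref{hecke}) that map to the unit under the quotient by $h^{(N+2)}$. Using the matrix units constructed in Theorem \ref{P=YL} together with the fusion rule $\mu \otimes [1^N] \cong \mathrm{rot}(\mu)$ from Corollary \ref{fusion rule for 1^n}, one can list these summands, compute the total quantum dimension, and compare with the conformal-inclusion branching $SU(N+2)_N \hookrightarrow SU(\binom{N+2}{2})_1$, which is the $\Lambda^2$ embedding and whose vacuum decomposition is classically known. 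For Xu's $A'$ the underlying object is the $\alpha$-induced vacuum, and matching these two sums of simples in $\mathrm{Rep}(SU(N+2)_N)$ is a finite computation that reduces to verifying an identity for the principal graph $YL(N)$ under the action of the $\mathbb{Z}_{N+2}$ grading $h^{(N+2)}$. Next I would show that the algebra structures coincide: both $A$ and $A'$ are commutative, rigid, haploid (simple unit), and étale in the sense of Kirillov--Ostrik, and by the uniqueness portion of the classification of such algebras with a given underlying object in $\mathrm{Rep}(SU(N+2)_N)$ (this is Ocneanu's maximal subfactor principle, or equivalently follows from the fact that haploid commutative étale algebras are rigid up to isomorphism once the support is fixed) one concludes $A \cong A'$.

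An alternative, more hands-on route mirrors the treatment of $\mathscr{C}^{N,1,0}$ used by Ocneanu and Xu for small $N$: verify that the global dimension of $\mathscr{C}^{N,1,1}$ equals $\dim \mathrm{Rep}(SU(\tbinom{N+2}{2})_1)/\dim \mathrm{Rep}(SU(N+2)_N)$, match the branching graph with respect to the generator $[1]$, and invoke the fact that among the finitely many modular invariants for $SU(N+2)_N$ the conformal-inclusion invariant is uniquely characterized by its rank and dimension. The trace formula of Theorem \ref{main theorem trace formula} gives the needed dimensions in closed form, and the principal-graph computation illustrated for $N=3$ in Figure \ref{Figure:Branchingformula2} is the template for the general case.

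The main obstacle I anticipate is the algebra-structure step rather than the object-level matching. Identifying $A$ as an object is combinatorial and reduces to hook-length bookkeeping, but producing an explicit isomorphism of algebra structures requires constructing the multiplication map on $A$ in $\mathscr{C}^{N,1,1}$ and verifying it agrees with the coset vertex-operator multiplication underlying Xu's $\alpha$-induction. The cleanest path is probably to prove uniqueness of commutative haploid étale algebras with a given decomposition in $\mathrm{Rep}(SU(N+2)_N)$ by showing that such an algebra is determined by its inclusion matrix (equivalently, the modular invariant it produces), and to check that this inclusion matrix for $A$ coincides with the conformal-embedding invariant. Without such a uniqueness statement, one would be forced to compute and compare a full set of $F$-symbols, which appears to be substantially harder and would not itself shed light on the conformal-field-theoretic origin of $\mathscr{C}^{N,1,1}$.
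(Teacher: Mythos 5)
This statement is labeled as a \emph{conjecture} in the paper, not a theorem: the author verifies it only for $N=2$, by comparison with Ocneanu's classification of quantum $SU(4)$ subgroups. So there is no proof in the paper against which to compare your argument, and what you have written is a proof strategy rather than a proof. The strategy is the natural one, and you correctly locate the difficulty; but the step you hope to dispatch by citation is in fact the open problem.

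Concretely, realizing both $\mathscr{C}^{N,1,1}=\mathscr{C}^{N}/h^{(N+2)}$ and Xu's $\alpha$-induction category as module categories over $\mathrm{Rep}(SU(N+2)_N)$, passing through Ostrik's correspondence, and comparing the resulting commutative algebra objects $A$ and $A'$ is the right framework, and the object-level matching (via the trace formula, the $\mathbb{Z}_{N+2}$ grading by $h^{(N+2)}$, and the classical branching of the conformal embedding $SU(N+2)_N \subset SU(\tbinom{N+2}{2})_1$) is a tractable, if intricate, computation. The genuine gap is the uniqueness step. The claim that ``haploid commutative \'etale algebras are rigid up to isomorphism once the support is fixed'' is not a theorem, and ``Ocneanu's maximal subfactor principle'' is not a citable statement at this level of generality. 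In a general modular tensor category, a commutative haploid separable Frobenius algebra is \emph{not} determined by its underlying object, nor even by the associated modular invariant matrix: inequivalent algebra structures on the same object occur, and so do distinct algebras producing the same modular invariant (the sufferance/physical realizability problem). Establishing uniqueness for conformal-embedding algebras in $SU(k)$ Verlinde categories would itself be a substantial result, likely requiring either a vertex operator algebra argument together with the $\alpha$-induction machinery, or a completion of Ocneanu's classification of quantum $SU(k)$ subgroups at the relevant levels. Your alternative route --- matching global dimension and the branching graph of $[1]$ and invoking finiteness of modular invariants --- has the same defect: modular invariants for $SU(k)$ at general level are not classified, and the passage from an invariant matrix to a unique module category is precisely what is not automatic. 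This is why the statement is a conjecture in the paper and why the verified cases ($N=2,3,4$ for $\mathscr{C}^{N,1,0}$ and $N=2$ for $\mathscr{C}^{N,1,1}$) are exactly those where Ocneanu's explicit list exists. In short: you have correctly identified where the hard part is, but you have not supplied it, and no off-the-shelf theorem does.
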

The conjecture is ture for $N=2$ by Ocneanu's classification result.

Our branching formula for $\mathscr{C}^{N,1,1}$ is new for $N\geq4$. Our unitary fusion categories $\mathscr{C}^{N,k,l}$ and branching formulas for other parameters $N,k,l$ are new.

\newpage
\appendix

\section{An algebraic presentation}\label{Appendix:algebraic presentation}
\begin{theorem}\label{Thm:algebraic presentation}
The $q$-parameterized Yang-Baxter relation planar algebra $\mathscr{C}_{\bullet}$ constructed in Section \ref{subsection consistency} has the following algebraic presentation. ($\alpha=\gra{b1}$, $h=\gra{22}$)
\begin{align*}
&\alpha_i-\alpha_i^{-1}=(q-q^{-1})\\
&\alpha_i\alpha_j=\alpha_j\alpha_i, ~\forall~ |i-j|\geq2\\
&\alpha_i\alpha_{i+1}\alpha_i=\alpha_{i+1}\alpha_i\alpha_{i+1}\\
&h_i^2=\frac{i(q+q^{-1})}{q-q^{-1}}h_i\\
&h_ih_j=h_jh_i, ~\forall~ |i-j|\geq2\\
&h_ih_{i\pm1}h_i=h_i\\
&\alpha_ih_i=h_i\alpha_i=qh_i\\
&\alpha_ih_j=h_j\alpha_i ~\forall~ |i-j|\geq2\\
&\alpha_i\alpha_{i+1}h_i=h_{i+1}\alpha_i\alpha_{i+1}=ih_{i+1}h_i\\
&h_i\alpha_{i+1}\alpha_i=\alpha_{i+1}\alpha_ih_{i+1}=-ih_ih_{i+1}\\
&\alpha_ih_{i\pm1}\alpha_{i\pm1}^{-1}=\alpha_{i\pm1}^{-1}h_{i}\alpha_{i\pm1}\\
&h_ih_{i\pm1}\alpha_i=h_i\alpha_{i\pm1}^{-1}\\
&\alpha_ih_{i\pm1}h_i=\alpha_{i\pm1}h_i\\
&h_i\alpha_{i\pm1}h_i=iq^{-1}h_i\\
\end{align*}
\end{theorem}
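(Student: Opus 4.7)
My plan is to prove the theorem in two stages: first verify that each listed relation holds in $\mathscr{C}_\bullet$, and then show that the abstract algebra they define has the correct dimension in each filtration degree so that the natural map is an isomorphism.

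Stage 1 (Verification). Using the explicit expansion
$$\alpha = \tfrac{q-q^{-1}}{2}\gra{21} + \tfrac{q-q^{-1}}{2i}\gra{22} + \mathcal{D}\,\gra{23}$$
together with $R^2 = \mathrm{id} - e$, uncappability of $R$, and $\mathcal{F}(R) = -iR$, the Hecke identity $\alpha_i - \alpha_i^{-1} = q - q^{-1}$ follows by direct substitution, as do the local relations $\alpha_i h_i = h_i\alpha_i = qh_i$ (the $R$-terms cancel against the cap on $h$). The Temperley--Lieb--Jones relations for $h_i$ are standard planar identities given the circle parameter $\delta = i(q+q^{-1})/(q-q^{-1})$. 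Locality for $|i-j|\geq 2$ and the braid relation $\alpha_i\alpha_{i+1}\alpha_i = \alpha_{i+1}\alpha_i\alpha_{i+1}$ are precisely the Yang--Baxter equation established in Lemma \ref{solution of YBE -1}. The mixed identities such as $\alpha_i\alpha_{i+1}h_i = ih_{i+1}h_i$ and $h_i\alpha_{i\pm1}h_i = iq^{-1}h_i$ follow from the half-braiding property of Proposition \ref{flat} combined with the Fourier relation of Proposition \ref{braidfourier}; each reduces to a short planar isotopy once $\alpha$ is expanded. The conjugation relation $\alpha_i h_{i\pm1}\alpha_{i\pm1}^{-1} = \alpha_{i\pm1}^{-1}h_i\alpha_{i\pm1}$ is a Reidemeister-III move involving one cap, and $h_ih_{i\pm1}\alpha_i = h_i\alpha_{i\pm1}^{-1}$ is Reidemeister-II applied with one strand capped.

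Stage 2 (Completeness). Let $A_m$ be the abstract algebra presented by the listed generators and relations, and let $\pi : A_\bullet \to \mathscr{C}_\bullet$ be the evident filtered homomorphism. Since $R_1 = \alpha_1 - \tfrac{q-q^{-1}}{2}\mathrm{id} - \tfrac{q-q^{-1}}{2i}h_1$ lies in the image, and since $\mathscr{C}_m$ is generated as an algebra by shifts of its $2$-box space by Proposition \ref{algebragenerated}, the map $\pi$ is surjective in every degree. As $\dim\mathscr{C}_m = (2m-1)!!$, it suffices to prove $\dim A_m \leq (2m-1)!!$. The plan here is to exhibit a spanning set of $A_m$ indexed by Brauer diagrams on $2m$ points. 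Given a word $w$ in the generators, one first uses the braid and locality relations among the $\alpha_i$'s, together with $\alpha_i h_i = q h_i$ and $\alpha_i\alpha_{i+1}h_i = ih_{i+1}h_i$, to collect all $h$'s into a middle block and absorb every $\alpha$ that can touch an $h$. The TLJ relations $h_ih_{i\pm1}h_i = h_i$ then normalize the $h$-block to encode the underlying pair partition. The remaining $\alpha$'s above and below the $h$-block form reduced Hecke-algebra words that permute the through-strands, and these are counted by the symmetric group acting on the endpoints compatibly with the pair partition. Summed over partitions with $k$ pairs and $m-k$ through-strands, this gives exactly $(2m-1)!!$.

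The main obstacle is checking that this normal-form reduction is well-defined, i.e., confluent. The mixed relations couple $\alpha$ and $h$ in several ways, and different reduction orders on a word must be shown to give the same normal form. I would handle this by a diamond-lemma argument: equip words with the lexicographic order induced by (number of $h$'s, then number of $\alpha$'s, then word length), verify that each critical pair of overlapping relations can be closed using the listed identities, and conclude confluence. Once confluence is established, every word reduces to a unique Brauer-diagram normal form, yielding the dimension bound and hence the isomorphism $A_\bullet \cong \mathscr{C}_\bullet$.
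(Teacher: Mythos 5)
Your Stage 1 is essentially the same as the paper's opening move: verify the relations hold, so that $\mathscr{C}_\bullet$ is a quotient of the abstract algebra $A$, and cite Proposition~\ref{algebragenerated} for surjectivity. Your Stage 2, however, takes a genuinely different route. The paper closes the gap in one short step: it observes that the entire construction of the matrix units of $\mathscr{C}_\bullet$ in Theorem~\ref{P=YL} (Wenzl's formula, the idempotents $\tilde{y}_\lambda$, the morphisms $\rho_{\mu<\lambda}$, the decomposition of the identity, and the one-dimensionality of the corners $\tilde P^-_t A_m \tilde P^+_\tau$) uses only the listed algebraic relations, so those matrix units live in $A$ itself and span each $A_m$, forcing $\dim A_m \le (2m-1)!!$ and hence $A_\bullet \cong \mathscr{C}_\bullet$. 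You instead propose a Bergman diamond-lemma / normal-form argument via Brauer diagrams, in the spirit of Morton--Wassermann's treatment of BMW. Both are legitimate: your approach is self-contained and does not presuppose the matrix-unit machinery, but it trades the paper's one observation for a full confluence check across all overlaps of your fourteen relations, which is a substantial combinatorial task and is precisely the hard content of the Brauer/BMW-style argument. Conversely, the paper's route is much shorter but leans on the (stated, not elaborated) claim that the conditional expectation, Wenzl's formula, and the trace computations underlying Theorem~\ref{P=YL} can all be phrased internally to the algebraic presentation.

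Two things to flag before your Stage 2 could be called complete. First, the confluence check is not actually carried out, only promised; given that relations such as $\alpha_i h_{i\pm1}\alpha_{i\pm1}^{-1}=\alpha_{i\pm1}^{-1}h_i\alpha_{i\pm1}$ shift indices and interleave $\alpha$'s and $h$'s, the critical pairs are numerous and not all obviously resolvable by inspection, so this is where the real work lies. Second, your monomial order should be chosen more carefully: prioritizing ``number of $h$'s'' is dangerous because the TLJ relation $h_ih_{i\pm1}h_i=h_i$ and the mixed relations $\alpha_i\alpha_{i+1}h_i=ih_{i+1}h_i$ move the $h$-count in opposite directions, so you need an order under which every listed relation is genuinely a reduction. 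These are fixable issues, and the overall strategy is sound, but as written the dimension bound $\dim A_m\le(2m-1)!!$ is asserted rather than established.
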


\begin{proof}
Let $A$ be the filtered algebra defined by the above presentation.
It is easy to check, these algebraic relations hold for $\mathscr{C}_{\bullet}$. By Proposition \ref{algebragenerated}, $\mathscr{C}_{\bullet}$ is a quotient of $A$ as an algebra.
Note that the construction of matrix units of $\mathscr{C}_{\bullet}$ only use these algebraic relations. Thus the matrix unit of $\mathscr{C}_{\bullet}$ is also a matrix unit of $\mathscr{A}$. Therefore $\mathscr{C}_{\bullet}=A$.
\end{proof}

\begin{remark}
If we consider $\alpha$ and $ih$ as generators, then the centralizer algebra $\mathscr{C}_{\bullet}$ is also defined on the field $\mathbb{Z}(q)$.
\end{remark}

\section{Wenzl's formula}\label{Appendix:Wenzl's formula}

We give a proof of the general Wenzl's formula in Theorem \ref{P=YL}.

\begin{proof}
Take
\begin{align}\label{wenzl1}
x &= \tilde{y}_\mu \otimes 1 -\sum_{\lambda<\mu} \frac{<\lambda>}{<\mu>} (\tilde{y}_\mu\otimes 1)(\rho_{\mu>\lambda} \otimes 1) (\tilde{y}_{\lambda}\otimes \cap) (\tilde{y}_{\lambda}\otimes \cup) (\rho_{\lambda<\mu} \otimes 1) (\tilde{y}_\mu \otimes 1)
\end{align}
and a length $(|\mu|+1)$ path $t$ from $\emptyset$ to $\lambda'$, $|\lambda'|<|\mu|$.

If $\lambda'<\mu$ does not hold, then
\begin{align*}
(\tilde{y}_{\lambda}\otimes \cup) (\rho_{\lambda<\mu} \otimes 1) (\tilde{y}_\mu \otimes 1)\tilde{P}^+(t)&=0,~\forall~\lambda<\mu,
\end{align*}
since it is a morphism from $\tilde{y}_{\lambda}$ to $\tilde{y}_\lambda'$.
By the Frobenius reciprocity, $(\tilde{y}_{\mu}\otimes 1) \tilde{P}^+(t)=0$, since it is a morphism from $\tilde{y}_{\lambda}\otimes 1$ to $\tilde{y}_\lambda'$.
Therefore $x\tilde{P}^+(t)=0$.

If $\lambda'<\mu$, then
\begin{align*}
(\tilde{y}_\mu \otimes 1) \tilde{P}^+(t)=c (\tilde{y}_\mu \otimes 1) (\rho_{\mu\to\lambda'}\otimes 1) (y_{\lambda'}\otimes \cap),
\end{align*}
for some constant $c$, because both sides are morphisms from $\tilde{y}_{\lambda}\otimes 1$ to $\tilde{\lambda'}$.
Thus
\begin{align*}
&(\tilde{y}_{\lambda'}\otimes \cup) (\rho_{\lambda'<\mu} \otimes 1) (\tilde{y}_\mu \otimes 1)\tilde{P}^+(t)\\
=&c(\tilde{y}_{\lambda'}\otimes \cup)(\rho_{\lambda'<\mu} \otimes 1) (\tilde{y}_\mu \otimes 1) (\rho_{\mu\to\lambda'}\otimes 1) (y_{\lambda'}\otimes \cap)\\
=&\frac{c<\mu>}{<\lambda'>}\tilde{y}_{\lambda'}.
\end{align*}
Moreover,
\begin{align*}
 (\tilde{y}_{\lambda}\otimes \cup) (\rho_{\lambda<\mu} \otimes 1) (\tilde{y}_\mu \otimes 1)\tilde{P}^+(t)&=0,~\text{when}~\lambda\neq \lambda',
\end{align*}
since it is a morphism from $\tilde{y}_{\lambda}$ to $\tilde{\lambda'}$.
Therefore
\begin{align*}
x\tilde{P}^+(t)&=c (\tilde{y}_\mu \otimes 1) (\rho_{\mu\to\lambda'}\otimes 1) (y_{\lambda'}\otimes \cap)-c (\tilde{y}_\mu \otimes 1) (\rho_{\mu\to\lambda'}\otimes 1) (y_{\lambda'}\otimes \cap)=0.
\end{align*}

Recall that $IYL_{|\mu|+1}\cong \mathscr{I}_{|\mu|+1}$, so $xz=0$, for any $z\in \mathscr{I}_{|\mu|+1}$. Thus $xs_{|\mu|+1}=x$.
Note that $s_{|\mu|+1}$ is central and $(\tilde{y}_{\lambda}\otimes \cup)s_{|\mu|+1}=0$, by Equation (\ref{wenzl1}), we have
\begin{align}\label{wenzl2}
x=xs_{|\mu|+1}=(\tilde{y}_{\mu}\otimes 1) s_{|\mu|+1}.
\end{align}

On the other hand,
\begin{align*}
&(\tilde{y}_{\mu}\otimes1) s_{|\mu|+1} &\\
=&(y_{\mu}s_{|\mu|}\otimes 1) s_{|\mu|+1} &\\
=&(y_{\mu}\otimes 1) s_{|\mu|+1} &\\
=& \sum_{\lambda>\mu} (y_{\mu}\otimes1)\rho_{\mu\to \lambda}y_{\lambda}\rho_{\lambda \to \mu} (y_{\mu}\otimes1) s_{|\mu|+1} & \text{Branching formula (\ref{branching formula}),}\\
=& \sum_{\lambda>\mu} (\tilde{y}_\mu\otimes 1) \rho_{\mu<\lambda} \tilde{y}_{\lambda} \rho_{\lambda>\mu} (\tilde{y}_\mu\otimes 1). & \numberthis \label{wenzl3}
\end{align*}
By Equations (\ref{wenzl1}), (\ref{wenzl2}), (\ref{wenzl3}), we obtain Wenzl's formula.
\end{proof}

\section{Proofs of Theorems and Lemmas}
\subsection{Proof of Theorem \ref{classification1}:}\label{Appendix:classification1}

\begin{proof}
There are two different ways to evaluate the 3-box $\gra{17}$ as a linear sum over the basis.
Replacing $\gra{15}$ by $\gra{16}$ and lower terms, we have
\begin{align*}
&\gra{17}=\grb{18}\\
                 &=B\gra{7}+C\gra{9}+C\gra{11}\\
                 &+D\gra{14}+D(a'\gra{7}+\gra{2}-\frac{1}{\delta}\gra{4})+D\gra{12}\\
                 &+E(a'\gra{9}+\gra{3}-\frac{1}{\delta}\gra{1})+E(a'\gra{11}+\gra{5}-\frac{1}{\delta}\gra{1})\\
                 &+F(a'\gra{12}+\gra{8}-\frac{1}{\delta}\gra{10})+F\gra{16}+F(a'\gra{14}+\gra{6}-\frac{1}{\delta}\gra{10})\\
                 &+G(a'\gra{16}+\gra{13}-\frac{1}{\delta}(a'\gra{10}+\gra{4}-\frac{1}{\delta}\gra{1})).
\end{align*}
Replacing $\gra{16}$ by $\gra{15}$ and lower terms, we have
\begin{align*}
&-G\gra{17}=-G\grb{19}\\
&=-(a\gra{15}+\gra{13}-\frac{1}{\delta}(a\gra{7}+\gra{4}-\frac{1}{\delta}\gra{2}))\\
&+A\gra{10}+C\gra{6}+C\gra{8}\\
&+D(a\gra{6}+\gra{3}-\frac{1}{\delta}\gra{2})+D(a\gra{8}+\gra{5}-\frac{1}{\delta}\gra{2})\\
&+E\gra{14}+E\gra{12}+E(a\gra{10}+\gra{1}-\frac{1}{\delta}\gra{4})\\
&+F(a\gra{12}+\gra{11}-\frac{1}{\delta}\gra{7})+F\gra{15}+F(a\gra{14}+\gra{9}-\frac{1}{\delta}\gra{7}).\\
\end{align*}
Therefore
\begin{align*}
&(a-F)\gra{15}\\
&=(E-2GE\frac{1}{\delta}+G^2\frac{1}{\delta^2})\gra{1}+(-\frac{1}{\delta^2}-2D\frac{1}{\delta}+GD)\gra{2}\\
&+(D+GE)(\gra{3}+\gra{5})+(\frac{1}{\delta}-E\frac{1}{\delta}-GD\frac{1}{\delta}-G^2\frac{1}{\delta})\gra{4}\\
&+(C+Da+GF)(\gra{6}+\gra{8})+(a\frac{1}{\delta}-2F\frac{1}{\delta}+GB+GDa')\gra{7}\\
&+(F+GC+GEa')(\gra{9}+\gra{11})+(A+Ea-2GF\frac{1}{\delta}-G^2a'\frac{1}{\delta})\gra{10}\\
&+(E+Fa+GD+GFa')(\gra{12}+\gra{14})+(-1+G^2)\gra{13}+(GF+G^2a')\gra{16}.\\
\end{align*}
Comparing the coefficients of the basis,
we have the following equations.
\begin{align}
&\label{g16}
(a-F)G=GF+G^2a' &\gra{16}\\
&\label{g12}
(a-F)F=E+Fa+GD+GFa'=(-1+G^2) &\gra{12},\gra{13},\gra{14}\\
&\label{g9}
(a-F)E=F+GC+GEa'=A+Ea-2GF\frac{1}{\delta}-G^2a'\frac{1}{\delta} &\gra{9},\gra{10},\gra{11}\\
&\label{g6}
(a-F)D=C+Da+GF=a\frac{1}{\delta}-2F\frac{1}{\delta}+GB+GDa' &\gra{6},\gra{7},\gra{8}\\
&\label{g3}
(a-F)C=D+GE=\frac{1}{\delta}-E\frac{1}{\delta}-GD\frac{1}{\delta}-G^2\frac{1}{\delta} & \gra{3},\gra{4},\gra{5}\\
&\label{g2}
(a-F)B=-\frac{1}{\delta^2}-2D\frac{1}{\delta}+GD &\gra{2}\\
&\label{g1}
(a-F)A=(E-2GE\frac{1}{\delta}+G^2\frac{1}{\delta^2}) &\gra{1}
\end{align}

Case 1: If $F=0$, then equation (\ref{g12}) implies
$$G^2=1, E+GD=0.$$
By equation (\ref{g16}), we have
$$a'=Ga.$$
Applying $F=0, a'=Ga$ to the first equality of equation (\ref{g9}), we have
$$C=0.$$
Applying $F=0, a'=Ga, G^2=1$ to the second equality of equation (\ref{g9}), we have
$$A=\frac{Ga}{\delta}.$$
Applying $F=0, a'=Ga, G^2=1$ to the second equality of equation (\ref{g6}), we have
$$B=-\frac{Ga}{\delta}.$$
Applying $B=-\frac{Ga}{\delta}$ to equation (\ref{g2}), we have
$$(G\delta^2-2\delta)D=1-Ga^2\delta.$$
We have solved $A,B,C,D,E,F,G$ in term of $a$ and $\delta$ (and $D$).

Case 2: If $F\neq0 $, then equation (\ref{g12}) implies
$$a=F+\frac{G^2-1}{F}.$$
Note that $G\neq0$, since $\mathscr{C}_{\bullet}$ has a Yang-Baxter.
Substituting $a$ in equation (\ref{g16}), we have
$$ a'=\frac{\frac{G^2-1}{F}-F}{G}.$$
Substituting $a,a'$ in the first equalities of equation (\ref{g12}), (\ref{g9}), (\ref{g6}), we have

$$\left\{
\begin{array}{lllll}
GC&&-FE&=&-F\\
C&+(F+\frac{G^2-1}{F})D&&=&\frac{G^2-1}{F}-FG\\
&GD&+E&=&1-G^2\\
\end{array}
\right.$$
Let us consider $F,G$ as constants and $C,D,E$ as variables, then the determinant of the coefficient matrix on the left side is
$$\begin{vmatrix}
G & 0               &-F \\
1 &F+\frac{G^2-1}{F}& 0 \\
0 &G                & 1 \\
\end{vmatrix}
=\frac{G^2-1}{F}.$$

If $G^2-1\neq 0$, then we have the unique solution
$$\left\{
\begin{array}{lll}
C&=&-F-FG\\
F&=&1\\
E&=&1-G-G^2\\
\end{array}
\right.$$
Plugging the solution into the second equality of equation (\ref{g3}), we have
$$1+(1-G-G^2)G=\frac{1}{\delta}(1-(1-G-G^2)-G-G^2).$$
This implies $(1+G)^2(1-G)=0$. So $G=\pm1$, and $G^2-1=0$, contradicting to the assumption.

If $G^2-1=0$, then $G=\pm1$, $a=F$, $a'=-GF$, and
$$\left\{
\begin{array}{lllll}
GC&&-FE&=&-F\\
C&+FD&&=&-FG\\
&GD&+E&=&0\\
\end{array}
\right.$$
So
$$E=-GD, C=-F(G+D).$$
By equation $\ref{g2}$, we have
$(G\delta^2-2\delta)D=1$. So $G\delta^2-2\delta\neq 0$ and
$$D=\frac{1}{G\delta^2-2\delta}.$$
From the second equality of equation (\ref{g9}), (\ref{g6}), we have
$$A=B=(\frac{1}{\delta}+D)GF.$$
We have solved $A,B,C,D,E,F,G$ in term of $a$ and $\delta$.
\begin{align}
\left\{
  \begin{aligned}
    G&=\pm1\\
    A&=B=Ga(\frac{1}{\delta}+D)\\
    C&=-a(G+D)\\
    D&=-GE=\frac{1}{G\delta^2-2\delta}\\
    F&=a\\
    a'&=-Ga\\
  \end{aligned}
\right. \label{equ solution}
\end{align}
Adding a cap to the right of the following equation
\begin{align*}
\gra{15}&=A\gra{1}+B\gra{2}+C(\gra{3}+\gra{4}+\gra{5})\\
&+D(\gra{6}+\gra{7}+\gra{8})+E(\gra{9}+\gra{10}+\gra{11})\\
&+F(\gra{12}+\gra{13}+\gra{14})+G\gra{16},\\
\end{align*}
we get
\begin{align*}
0&=A \gra{21}+B\delta\gra{22}+C\delta\gra{21}+2C\gra{22}+D\delta\gra{23}+2E\gra{23}+F(a\gra{23}+\gra{21}-\frac{1}{\delta}\gra{22})+\\
&+Ga(a'\gra{23}+\gra{22}-\frac{1}{\delta}\gra{21})-G\frac{1}{\delta}\gra{23}\\
&=(A+C\delta+F-Gaa'\frac{1}{\delta})\gra{21}+(B\delta+2C-F\frac{1}{\delta}+Ga)\gra{22}+\\
&+(D\delta+2E+Fa+Gaa'-G\frac{1}{\delta})\gra{23}.
\end{align*}
Therefore
\begin{align*}
0&=A+C\delta+F-Gaa'\frac{1}{\delta}\\
&=Ga(\frac{1}{\delta}+D)-a(G+D)\delta+a+a^2\frac{1}{\delta}. && \text{by the solution \ref{equ solution}.}
\end{align*}
Recall that $a=F\neq0$, so
\begin{equation*}
a=-G(1+\delta D)+(G+D)\delta^2-1
\end{equation*}
If we replace the generator $R$ by $-R$ and repeat the above arguments, then $a,\delta,A,B,C,D,E,F,G$ are replaced by $-a,\delta,-A,-B,-C,D,E,-F,G$. So we have
\begin{equation*}
-a=-G(1+\delta D)+(G+D)\delta^2-1
\end{equation*}
Thus $a=0$, contradicting to $a=F\neq0$. Therefore there is no solution for Case 2.
\end{proof}

\subsection{Proof of Lemma \ref{solution of YBE 1}}\label{Appendix:solution of YBE 1}
Recall that
$R_U=a_1\gra{21}+a_2\gra{22}+a_3\gra{23}, a_3\neq0,$
we have
\begin{align*}
R_U(1\otimes R_U)R_U&=a_1a_1a_1\gra{3}+a_1a_1a_2\gra{2}+a_1a_1a_3\gra{6}\\
&+a_1a_2a_1\gra{1}+a_1a_2a_2\gra{5}+a_1a_2a_3\gra{11}\\
&+a_1Ga_3a_1\gra{9}+a_1Ga_3a_2\gra{8}+a_1Ga_3a_3\gra{13}\\
&+a_2a_1a_1\gra{2}+a_2a_1a_2\delta\gra{2}+a_2a_1a_3 0\\
&+a_2a_2a_1\gra{4}+a_2a_2a_2\gra{2}+a_2a_2a_3\gra{7}\\
&+a_2Ga_3a_1\gra{7}+a_2Ga_3a_2 0+a_2Ga_3a_3(a\gra{7}+\gra{4}-\frac{1}{\delta}\gra{2})\\
&+a_3a_1a_1\gra{6}+a_3a_1a_2 0+a_3a_1a_3(a\gra{6}+\gra{3}-\frac{1}{\delta}\gra{2})\\
&+a_3a_2a_1\gra{10}+a_3a_2a_2\gra{8}+a_3a_2a_3\gra{12}\\
&+a_3Ga_3a_1\gra{14}+a_3Ga_3a_2(a\gra{8}+\gra{5}-\frac{1}{\delta}\gra{2})+a_3Ga_3a_3\gra{15}
\end{align*}
and
\begin{align*}
(1\otimes R_U)R_U(1\otimes R_U)&=a_1a_1a_1\gra{3}+a_1a_1a_2\gra{1}+a_1a_1Ga_3\gra{9}\\
&+a_1a_2a_1\gra{2}+a_1a_2a_2\gra{4}+a_1a_2Ga_3\gra{7}\\
&+a_1a_3a_1\gra{6}+a_1a_3a_2\gra{10}+a_1a_3Ga_3\gra{14}\\
&+a_2a_1a_1\gra{1}+a_2a_1a_2\delta\gra{1}+a_2a_1Ga_3 0\\
&+a_2a_2a_1\gra{5}+a_2a_2a_2\gra{1}+a_2a_2Ga_3\gra{11}\\
&+a_2a_3a_1\gra{11}+a_2a_3a_2 0+a_2a_3Ga_3(a'\gra{11}+\gra{5}-\frac{1}{\delta}\gra{1})\\
&+Ga_3a_1a_1\gra{9}+Ga_3a_1a_2 0+Ga_3a_1Ga_3(a'\gra{9}+\gra{3}-\frac{1}{\delta}\gra{1})\\
&+Ga_3a_2a_1\gra{8}+Ga_3a_2a_2\gra{10}+Ga_3a_2Ga_3\gra{12}\\
&+Ga_3a_3a_1\gra{13}+Ga_3a_3a_2(a'\gra{10}+\gra{4}-\frac{1}{\delta}\gra{1})+Ga_3a_3Ga_3\gra{16}.
\end{align*}
Replacing $\gra{15}$ by $\gra{16}$ and lower terms, then comparing the coefficients, we have
$$R_U(1\otimes R_U)R_U=(1\otimes R_U)R_U(1\otimes R_U) \iff$$
\begin{align}
&a_3Ga_3a_3G=Ga_3a_3Ga_3 & \gra{15} \label{gg15}\\
&a_3Ga_3a_3F+a_3Ga_3a_1=a_1a_3Ga_3 & \gra{14} \label{gg14}\\
&a_3Ga_3a_3F+a_1Ga_3a_3=Ga_3a_3a_1 & \gra{13} \label{gg13} \\
&a_3Ga_3a_3F+a_3a_2a_3=Ga_3a_2Ga_3 & \gra{12} \label{gg12}\\
&a_3Ga_3a_3E+a_1a_2a_3=a_2a_2Ga_3+a_2a_3a_1+a_2a_3Ga_3a' & \gra{11} \label{gg11}\\
&a_3Ga_3a_3E+a_3a_2a_1=a_1a_3a_2+Ga_3a_2a_2+Ga_3a_3a_2a' & \gra{10} \label{gg10}\\
&a_3Ga_3a_3E+a_1Ga_3a_1=a_1a_1Ga_3+Ga_3a_1a_1+Ga_3a_1Ga_3a' & \gra{9} \label{gg9}\\
&a_3Ga_3a_3D+a_1Ga_3a_2+a_3a_2a_2+a_3Ga_3a_2a=Ga_3a_2a_1 & \gra{8} \label{gg8}\\
&a_3Ga_3a_3D+a_2a_2a_3+a_2Ga_3a_1+a_2Ga_3a_3a=a_1a_2Ga_3 & \gra{7} \label{gg7}\\
&a_3Ga_3a_3D+a_1a_1a_3+a_3a_1a_1+a_3a_1a_3a=a_1a_3a_1 & \gra{6} \label{gg6}\\
&a_3Ga_3a_3C+a_1a_2a_2+a_3Ga_3a_2=a_2a_2a_1+a_2a_3Ga_3 & \gra{5} \label{gg5}\\
&a_3Ga_3a_3C+a_2a_2a_1+a_2Ga_3a_3=a_1a_2a_2+Ga_3a_3a_2 & \gra{4} \label{gg4}\\
&a_3Ga_3a_3C+a_1a_1a_1+a_3a_1a_3=a_1a_1a_1+Ga_3a_1Ga_3 & \gra{3} \label{gg3}
\end{align}
\begin{equation}
a_3Ga_3a_3B+a_1a_1a_2+a_2a_1a_1+a_2a_1a_2\delta+a_2a_2a_2-\frac{1}{\delta}a_2Ga_3a_3-\frac{1}{\delta}a_3a_1a_3-\frac{1}{\delta}a_3Ga_3a_2=a_1a_2a_1  \gra{2} \label{gg2}
\end{equation}
\begin{equation}
a_3Ga_3a_3A+a_1a_2a_1=a_1a_1a_2+a_2a_1a_1+a_2a_1a_2\delta+a_2a_2a_2-\frac{1}{\delta}a_2a_3Ga_3-\frac{1}{\delta}Ga_3a_1Ga_3-\frac{1}{\delta}Ga_3a_3a_2  \gra{1} \label{gg1}
\end{equation}

Note that $(\ref{gg14}) \iff (\ref{gg13})$; $(\ref{gg11}) \iff (\ref{gg10})$; $(\ref{gg8}) \iff (\ref{gg7})$; $(\ref{gg5}) \iff (\ref{gg4})$.

Equation (\ref{gg15}) always holds.

Since $F=0$, Equation (\ref{gg14}), (\ref{gg12}) hold.

By Equation (\ref{sumproduct}), $z_1$ and $z_2G$ are solutions of $z^2+az-E=0$.
Since $a_1/a_3=z_1$, $a_2/a_3=z_2$, we have
\begin{align*}
(a_2G)^2+aa_2a_3G-a_3^2E&=0\\
a_1^2+aa_1a_3-a_3^2E&=0
\end{align*}
Moreover, $a'=Ga$, so Equation (\ref{gg11}), (\ref{gg9}) hold.

Since $E=-GD$, Equation (\ref{gg8}), (\ref{gg6}) follow from (\ref{gg11}), (\ref{gg9}).

Since $C=0$, Equation (\ref{gg5}), (\ref{gg3}) hold.

Note that
\begin{align*}
&GB+z_1^2z_2+z_1z_2^2\delta+z_2^3-\frac{1}{\delta}z_2G-\frac{1}{\delta}z_1-\frac{1}{\delta}z_2G\\
=&-G\frac{a}{\delta}+z_1^2z_2+z_1z_2^2\delta+z_2^3-\frac{1}{\delta}z_2G-\frac{1}{\delta}z_1-\frac{1}{\delta}z_2G &&\text{since } B=-A=-G\frac{a}{\delta},\\
=&z_1^2z_2+z_1z_2^2\delta+z_2^3-\frac{1}{\delta}z_2G &&\text{by Equation (\ref{sumproduct}),}\\
=&(-a)^2+(\delta-2G)(-E)-\frac{G}{\delta} &&\text{by Equation (\ref{sumproduct}),}\\
=&0  &&\text{since } E=-GD, ~(G\delta^2-2\delta)D=1-Ga^2\delta.
\end{align*}
So Equation (\ref{gg2}) holds.

Since $\displaystyle B=-A$, Equation (\ref{gg1}) follows from Equation (\ref{gg2}).

Therefore $$R_U(1\otimes R_U)R_U=(1\otimes R_U)R_U(1\otimes R_U).$$
\subsection{Proof of Theorem \ref{class-1}}\label{Appendix:class-1}

Up to the complex conjugate, we only need to consider the case for $G=i$.

\begin{proof}
There are two different ways to evaluate the 3-box $\gra{17}$ as a linear sum over the basis.
Replacing $\gra{15}$ by $\gra{16}$ and lower terms, we have
\begin{align*}
\gra{17}&=\grb{18}\\
                 &=B\gra{7}+C\gra{9}-C\gra{11}\\
                 &+D\gra{14}+D(-\gra{2}+\frac{1}{\delta}\gra{4})+D\gra{12}\\
                 &+E(-\gra{3}+\frac{1}{\delta}\gra{1})+E(\gra{5}-\frac{1}{\delta}\gra{1})\\
                 &+F(-\gra{8}+\frac{1}{\delta}\gra{10})+F\gra{16}+F(-\gra{6}+\frac{1}{\delta}\gra{10})\\
                 &+G(-\gra{13}+\frac{1}{\delta}(\gra{4}-\frac{1}{\delta}\gra{1})).\\
\end{align*}
Replacing $\gra{16}$ by $\gra{15}$ and lower terms, we have
\begin{align*}
-G\gra{17}
&=-G\grb{19}\\
&=-(-\gra{13}+\frac{1}{\delta}(\gra{4}-\frac{1}{\delta}\gra{2}))\\
&-A\gra{10}+C\gra{6}-C\gra{8}\\
&+D(\gra{3}-\frac{1}{\delta}\gra{2})+D(-\gra{5}+\frac{1}{\delta}\gra{2})\\
&+E\gra{14}+E\gra{12}+E(-\gra{1}+\frac{1}{\delta}\gra{4})\\
&+F(\gra{11}+\frac{1}{\delta}\gra{7})-F\gra{15}+F(\gra{9}-\frac{1}{\delta}\gra{7}).\\
\end{align*}
Therefore
\begin{align*}
F\gra{15}
&=(-E+G^2\frac{1}{\delta^2})\gra{1}+(\frac{1}{\delta^2}+GD)\gra{2}\\
&+(D+GE)\gra{3}+(-D-GE)\gra{5}+(-\frac{1}{\delta}+E\frac{1}{\delta}-GD\frac{1}{\delta}-G^2\frac{1}{\delta})\gra{4}\\
&+(C+GF)\gra{6}+(-C+GF)\gra{8}-GB\gra{7}\\
&+(F-GC)\gra{9}+(F+GC)\gra{11}-A\gra{10}\\
&+(E-GD)(\gra{12}+\gra{14})+(-1-G^2)\gra{13}-GF\gra{16}.\\
\end{align*}

Comparing the coefficients of $\gra{16}$, we have
$$FG=-GF.$$
Note that $\mathscr{C}_{\bullet}$ is a Yang-Baxter relation planar algebra, so $G\neq0$. Then
$$F=0.$$
Comparing the coefficients of other diagrams,
we have
$$G^2=-1,A=0,B=0,C=0,D=-\frac{1}{G\delta^2}, E=-\frac{1}{\delta^2}.$$
Then $G=\pm i$.
\end{proof}

\subsection{Proof of Lemma \ref{solution of YBE -1}}\label{Appendix:solution of YBE -1}

\begin{proof}
Note that
\begin{align*}
ABA&=a_1b_1a_1\gra{3}+a_1b_1a_2\gra{2}+a_1b_1a_3\gra{6}\\
&+a_1b_2a_1\gra{1}+a_1b_2a_2\gra{5}+a_1b_2a_3\gra{11}\\
&-a_1b_3a_1\gra{9}+a_1b_3a_2\gra{8}+a_1b_3a_3\gra{13}\\
&+a_2b_1a_1\gra{2}+a_2b_1a_2\delta\gra{2}+a_2b_1a_3 0\\
&+a_2b_2a_1\gra{4}+a_2b_2a_2\gra{2}-a_2b_2a_3\gra{7}\\
&-a_2b_3a_1\gra{7}+a_2b_3a_2 0+a_2b_3a_3(\gra{4}-\frac{1}{\delta}\gra{2})\\
&+a_3b_1a_1\gra{6}+a_3b_1a_2 0+a_3b_1a_3(\gra{3}-\frac{1}{\delta}\gra{2})\\
&-a_3b_2a_1\gra{10}-a_3b_2a_2\gra{8}+a_3b_2a_3\gra{12}\\
&-a_3b_3a_1\gra{14}+a_3b_3a_2(-\gra{5}+\frac{1}{\delta}\gra{2})-a_3b_3a_3\gra{15},
\end{align*}
and
\begin{align*}
BAB&=b_1a_1b_1\gra{3}+b_1a_1b_2\gra{1}-b_1a_1b_3\gra{9}\\
&+b_1a_2b_1\gra{2}+b_1a_2b_2\gra{4}-b_1a_2b_3\gra{7}\\
&+b_1a_3b_1\gra{6}-b_1a_3b_2\gra{10}-b_1a_3b_3\gra{14}\\
&+b_2a_1b_1\gra{1}+b_2a_1b_2\delta\gra{1}+b_2a_1b_3 0\\
&+b_2a_2b_1\gra{5}+b_2a_2b_2\gra{1}+b_2a_2b_3\gra{11}\\
&+b_2a_3b_1\gra{11}+b_2a_3b_2 0+b_2a_3b_3(-\gra{5}+\frac{1}{\delta}\gra{1})\\
&-b_3a_1b_1\gra{9}+b_3a_1b_2 0+b_3a_1b_3(-\gra{3}+\frac{1}{\delta}\gra{1})\\
&+b_3a_2b_1\gra{8}+b_3a_2b_2\gra{10}-b_3a_2b_3\gra{12}\\
&+b_3a_3b_1\gra{13}+b_3a_3b_2(\gra{4}-\frac{1}{\delta}\gra{1})-b_3a_3b_3\gra{16}.
\end{align*}
If $\dim(\mathscr{C}_3)=15$, then the 15 diagrams in the above 16 diagrams excluding $\gra{15}$ forms a basis.
Replacing $\gra{15}$ by $\gra{16}$ and lower terms and comparing the coefficients of the basis, we have
$$ABA=BAB \iff$$
\begin{align}
&a_3b_3a_3i=b_3a_3b_3 & \gra{16} \label{hh15}\\
&a_3b_3a_1=b_1a_3b_3 & \gra{14} \label{hh14}\\
&a_1b_3a_3=b_3a_3b_1 & \gra{13} \label{hh13} \\
&-a_3b_2a_3=b_3a_2b_3 & \gra{12} \label{hh12}\\
&a_3b_3a_3\frac{1}{\delta^2}+a_1b_2a_3=b_2a_2b_3+b_2a_3b_1 & \gra{11} \label{hh11}\\
&a_3b_3a_3\frac{1}{\delta^2}-a_3b_2a_1=-b_1a_3b_2+b_3a_2b_2 & \gra{10} \label{hh10}\\
&a_3b_3a_3\frac{1}{\delta^2}-a_1b_3a_1=-b_1a_1b_3-b_3a_1b_1 & \gra{9} \label{hh9}\\
&a_3b_3a_3\frac{-i}{\delta^2}+a_1b_3a_2-a_3b_2a_2=b_3a_2b_1 & \gra{8} \label{hh8}\\
&a_3b_3a_3\frac{-i}{\delta^2}-a_2b_2a_3-a_2b_3a_1=-b_1a_2b_3 & \gra{7} \label{hh7}\\
&a_3b_3a_3\frac{-i}{\delta^2}+a_1b_1a_3+a_3b_1a_1=b_1a_3b_1 & \gra{6} \label{hh6}\\
&a_1b_2a_2-a_3b_3a_2=b_2a_2b_1-b_2a_3b_3 & \gra{5} \label{hh5}\\
&a_2b_2a_1+a_2b_3a_3=b_1a_2b_2+b_3a_3b_2 & \gra{4} \label{hh4}\\
&a_1b_1a_1+a_3b_1a_3=b_1a_1b_1-b_3a_1b_3 & \gra{3} \label{hh3}\\
&a_1b_1a_2+a_2b_1a_1+a_2b_1a_2\delta+a_2b_2a_2-\frac{1}{\delta}a_2b_3a_3-\frac{1}{\delta}a_3b_1a_3+\frac{1}{\delta}a_3b_3a_2=b_1a_2b_1 & \gra{2} \label{hh2}\\
&a_1b_2a_1=b_1a_1b_2+b_2a_1b_1+b_2a_1b_2\delta+b_2a_2b_2+\frac{1}{\delta}b_2a_3b_3+\frac{1}{\delta}b_3a_1b_3-\frac{1}{\delta}b_3a_3b_2 & \gra{1} \label{hh1}
\end{align}

Note that $a_3\neq 0$, $b_3\neq0$,
by equations (\ref{hh15}), (\ref{hh14}), (\ref{hh12}), we have
$$b_3=ia_3, a_1=b_1, a_2=b_2.$$
Then by equations (\ref{hh11}), (\ref{hh9}), we have
$$a_2^2=\frac{a_3^2}{\delta^2}, a_1^2=-\frac{a_3^2}{\delta^2}.$$
One can check that the other the equations hold under these conditions.
\end{proof}

\subsection{Proof of Proposition \ref{taubd}}\label{Appendix:taubd}

\begin{proof}
Recall that $s_n$ is the complement of the support of the basic construction ideal of $\mathscr{C}_{n}$.
Since $s_2\alpha=s_2\beta$, we have
\begin{align}\label{equtau1}
\varepsilon_n s_n &= \tau_n s_n.
\end{align}
Then
\begin{align*}
&\tilde{y}_{\lambda} \rho_{\lambda>\mu} (\tilde{y}_\mu\otimes 1)\tau_{n+1}\\
=&\tilde{y}_{\lambda} \rho_{\lambda>\mu} \tau_{n+1}(\tilde{y}_\mu\otimes 1) &&\text{by Proposition \ref{flat}},\\
=&\tilde{y}_{\lambda} \rho_{\lambda>\mu} \tau_{n+1}s_{n+1}(\tilde{y}_\mu\otimes 1) && \text{since } \tilde{y}=\tilde{y}s_{n+1},\\
=&\tilde{y}_{\lambda} \rho_{\lambda>\mu} \varepsilon_{n+1}s_{n+1}(\tilde{y}_\mu\otimes 1) && \text{by Equation (\ref{equtau1})}, \\
=&b_{\lambda-\mu}\tilde{y}_{\lambda} \rho_{\lambda>\mu} (\tilde{y}_\mu\otimes 1) && \text{by Proposition \ref{taub}}.\\
\end{align*}

Note that $(\tilde{y}_{\lambda}\otimes \cup) (\rho_{\lambda<\mu} \otimes 1) (\tilde{y}_\mu \otimes 1)\tau_{n+1}=(\tilde{y}_{\lambda}\otimes \cup) (\rho_{\lambda<\mu} \otimes 1) \tau_{n+1} (\tilde{y}_\mu \otimes 1)$, which is an intertwiner from $\lambda$ to $\mu \otimes 1$. Moreover, the intertwiner space in $\mathscr{C}_{\bullet}$ is one dimensional. So Equation (\ref{eqd}) holds for some coefficient. Furthermore, the coefficient $-b_{\mu-\lambda}$ is determined by computing the inner product as follows.

Take $V=(\tilde{y}_{\lambda}\otimes 1) \rho_{\lambda<\mu} \tilde{y}_\mu$ and $W=\tilde{y}_\mu \rho_{\mu>\lambda} (\tilde{y}_{\lambda}\otimes 1)$. Then
\begin{align*}
&tr_{n+1}\left((\tilde{y}_\mu \otimes 1) (\rho_{\mu>\lambda} \otimes 1)(\tilde{y}_{\lambda}\otimes \cap)(\tilde{y}_{\lambda}\otimes \cup) (\rho_{\lambda<\mu} \otimes 1) (\tilde{y}_\mu \otimes 1)\tau_{n+1} \right)\\
=&tr_{n+1}(Wh_{n}V\tau_{n+1})\\
=&\grd{coefd1}\\
=&\grd{coefd2} \text{by isotopy,}\\
=&\grd{coefd3} && \text{by sphericality,}\\
=&\grd{coefd4} && \text{by Proposition \ref{flat},}\\
=&-\grd{coefd5} && \text{by Proposition \ref{braidfourier},} \\
=&-b_{\mu-\lambda}tr_{n}(WV) &&\text{by Equation (\ref{eqb}),} \\
=&-b_{\mu-\lambda}tr_{n+1}(Wh_{n}V)\\
=&-b_{\mu-\lambda}tr_{n+1}\left((\tilde{y}_\mu \otimes 1) (\rho_{\mu>\lambda} \otimes 1)(\tilde{y}_{\lambda}\otimes \cap)(\tilde{y}_{\lambda}\otimes \cup) (\rho_{\lambda<\mu} \otimes 1) (\tilde{y}_\mu \otimes 1) \right).\\
\end{align*}

\end{proof}

\subsection{Proof of Lemma \ref{traceZ}}\label{Appendix:traceZ}

\begin{proof}
  By Wenzl's formula (\ref{wenzlformula}), we have
\begin{align*}
\tilde{y}_\mu \otimes 1 =\sum_{\lambda<\mu,\lambda>\mu} p_\lambda,
\end{align*}
where
$$
p_\lambda=\left\{
\begin{array}{ll}
\frac{<\lambda>}{<\mu>} (\tilde{y}_\mu\otimes 1)(\rho_{\mu>\lambda} \otimes 1) (\tilde{y}_{\lambda}\otimes \cap) (\tilde{y}_{\lambda}\otimes \cup) (\rho_{\lambda<\mu} \otimes 1) (\tilde{y}_\mu \otimes 1), & \lambda<\mu;\\
(\tilde{y}_\mu\otimes 1) \rho_{\mu<\lambda} \tilde{y}_{\lambda} \rho_{\lambda>\mu} (\tilde{y}_\mu\otimes 1), &\lambda>\mu.
\end{array}
\right.
$$
Then $p_{\lambda}$ is an idempotent in $\mathscr{C}_{n+1}$ with trace $<\lambda>$. Moreover, by Proposition \ref{taubd},
$$
\tau_{n+1}p_\lambda=\left\{
\begin{array}{ll}
-b_{\mu-\lambda}p_\lambda & \lambda<\mu;\\
b_{\lambda-\mu}p_\lambda &\lambda>\mu.
\end{array}
\right.
$$

By definitions, we have
\begin{align*}
Z(\mu,u)\tilde{y}_{\mu}=&Z_{n+1}(u)\tilde{y}_{\mu} \\
=&\sum_{i\geq0}Z_{n+1}^{(i)}\tilde{y}_{\mu}u^{-i}\\
=&\sum_{i\geq0}\Phi_{n+1}(\tau_{n+1}^i)\tilde{y}_{\mu}u^{-i}\\
=&\sum_{i\geq0}\Phi_{n+1}(\tau_{n+1}^i(\tilde{y}_{\mu}\otimes 1))u^{-i}\\
=&\sum_{i\geq0}\Phi_{n+1}(\tau_{n+1}^i(\sum_{\lambda<\mu,\lambda>\mu} p_\lambda))u^{-i}\\
=&\sum_{i\geq0}\Phi_{n+1}(\sum_{\lambda<\mu} (-b_{\mu-\lambda})^ip_\lambda+\sum_{\lambda>\mu} b_{\lambda-\mu}^ip_\lambda)u^{-i}\\
=&\sum_{i\geq0}\left(\sum_{\lambda<\mu} (-b_{\mu-\lambda})^i\frac{<\lambda>}{<\mu>}\tilde{y}_{\mu}+\sum_{\lambda>\mu} b_{\lambda-\mu}^i\frac{<\lambda>}{<\mu>}\tilde{y}_{\mu}\right)u^{-i}\\
=&\left(\sum_{\lambda<\mu} \frac{u}{u+b_{\mu-\lambda}}\frac{<\lambda>}{<\mu>}+\sum_{\lambda>\mu} \frac{u}{u-b_{\lambda-\mu}}\frac{<\lambda>}{<\mu>}\right)\tilde{y}_{\mu} && \text{Fubini's theorem}\\
\end{align*}
Therefore
$$\frac{Z(\mu,u)}{u}=\sum_{\lambda<\mu} \frac{1}{u+b_{\mu-\lambda}}\frac{<\lambda>}{<\mu>}+\sum_{\lambda>\mu} \frac{1}{u-b_{\lambda-\mu}}\frac{<\lambda>}{<\mu>}.$$
Recall that $b_{c}=q^{2\text{cn}(c)}$, so $\{-b_{\mu-\lambda}\}_{\lambda<\mu}$ and $\{b_{\lambda-\mu}\}_{\lambda>\mu}$ are distinct. Therefore
$$\frac{<\lambda>}{<\mu>}=\text{res}_{u=b_{\lambda-\mu}}\frac{Z(\mu,u)}{u}, \text{ for } \lambda>\mu$$
and
$$\frac{<\lambda>}{<\mu>}=\text{res}_{u=-b_{\mu-\lambda}}\frac{Z(\mu,u)}{u}, \text{ for } \lambda<\mu.$$
\end{proof}

\subsection{Proof of Lemma \ref{ZZ}}\label{Appendix:ZZ}
Before proving Lemma \ref{ZZ}, let us prove some technical results.

\begin{lemma}
For $n\geq1$, we have
\begin{align}
\beta_{n}^{-1}\tau_{n+1}&=\tau_{n}\alpha_{n} \label{non1};\\
\tau_{n+1}\alpha_{n}^{-1}&=\beta_{n}\tau_{n} \label{non2};\\
h_{n}\tau_{n+1}&=-h_{n} \tau_{n} \label{non3};\\
\tau_{n+1}h_{n}&=-\tau_{n} h_{n}\label{non4};\\
\tau_{n}\tau_{n+1}&=\tau_{n+1}\tau_{n}\label{non5};\\
h_{n}(u-\tau_{n+1})^{-1}&=h_{n}(u+\tau_{n})^{-1} \label{non6};\\
(u-\tau_{n+1})^{-1}h_{n}&=(u+\tau_{n})^{-1}h_{n} \label{non7};\\
\beta^{-1}-\alpha&=-(q-q^{-1})\gra{21}+i(q-q^{-1})\gra{22}\label{non8};\\
\beta-\alpha^{-1}&=(q-q^{-1})\gra{21}+i(q-q^{-1})\gra{22}\label{non9};\\
\Phi_{n+1}(\beta_{n}\frac{1}{u-\tau_{n}}\beta_{n}^{-1})&=\frac{Z_{n}}{u}\label{non10}.
\end{align}
\end{lemma}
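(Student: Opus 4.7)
The plan is to verify the ten identities by separating them into three types: two local algebraic identities in the $2$-box space, five global diagrammatic identities involving $\tau_{n+1}$, and three algebraic consequences of these. The Fourier-type relations (\ref{non8}) and (\ref{non9}) come straight from the definitions of $\alpha$, $\beta$ and their inverses: I would simply substitute the explicit expansions and collect the coefficients of the basis $\gra{21},\gra{22},\gra{23}$.

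For the diagrammatic identities (\ref{non1})--(\ref{non5}), the strategy is to draw $\tau_{n+1}$ as a nested ``lasso'' looping around the $(n+1)$-st strand, using one of the four equivalent descriptions of a half-braiding from Proposition \ref{braidfourier}. For (\ref{non1}) and (\ref{non2}) the factor $\beta_n^{-1}$ (resp. $\alpha_n^{-1}$) cancels one outer crossing of $\tau_{n+1}$ by a type-III Yang--Baxter move between adjacent strands, leaving $\tau_n\alpha_n$ (resp. $\beta_n\tau_n$) after planar isotopy. Commutativity (\ref{non5}) is immediate from Proposition \ref{flat}, since $\tau_n$ and $\tau_{n+1}$ are concentric lassos that slide past each other by the half-braiding flatness. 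For (\ref{non3}) and (\ref{non4}) the outer loop of $\tau_{n+1}$ is pulled across the Jones-projection cap of $h_n$; one application of $\mathcal{F}(R)=-iR$ at each of the two outer crossings reverses the orientation of the loop and produces the sign $-1$, yielding $\mp h_n\tau_n$.

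The formal power series identities (\ref{non6}) and (\ref{non7}) then follow immediately from (\ref{non3})--(\ref{non5}): expanding $(u-\tau_{n+1})^{-1}=\sum_{k\geq 0} u^{-k-1}\tau_{n+1}^k$ and using $h_n\tau_{n+1}^k=(-1)^k h_n\tau_n^k$, obtained by iterating (\ref{non3}) and applying the commutativity (\ref{non5}), gives $h_n(u-\tau_{n+1})^{-1}=h_n(u+\tau_n)^{-1}$; (\ref{non7}) is symmetric. For (\ref{non10}), the sandwich $\beta_n(\cdot)\beta_n^{-1}$ followed by the right-cap $\Phi_{n+1}$ straightens out via Proposition \ref{braidfourier}: rotating one of the half-braidings across the right cap pairs $\beta_n$ with $\beta_n^{-1}$ into the identity at the $n$-box level, so the outer layer disappears and what remains is $\Phi_n\bigl((u-\tau_n)^{-1}\bigr)$, which is $Z_n/u$ by definition.

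The main obstacle will be keeping the signs and orientations straight in (\ref{non3})--(\ref{non4}): the $-1$ that ultimately drives the recursion for $Z_{n+1}$ in Lemma \ref{ZZ} comes precisely from the Fourier phase $-i$ appearing twice, once when each outer crossing of $\tau_{n+1}$ is dragged onto $h_n$. Once that sign is correctly accounted for, together with the matching Fourier and flatness relations of Propositions \ref{braidfourier} and \ref{flat}, the remaining identities reduce to routine planar-isotopy manipulations.
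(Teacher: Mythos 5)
Your plan follows essentially the same route as the paper: (\ref{non8}), (\ref{non9}) directly from the definitions, (\ref{non1})--(\ref{non5}) by diagrammatic manipulation using the flatness and Fourier rotation properties (Propositions \ref{flat}, \ref{braidfourier}), (\ref{non6}), (\ref{non7}) as formal power series consequences, and (\ref{non10}) by rotating one half-braiding across the right cap so that $\beta_n,\beta_n^{-1}$ cancel and $\Phi_{n+1}$ collapses to $\Phi_n$. Two small imprecisions worth noting but not fatal: for (\ref{non1}), (\ref{non2}) the move is a planar isotopy and cancellation $\beta_n^{-1}\beta_n=\mathrm{id}$ (Reidemeister II), not a type-III Yang--Baxter move; and the paper's accounting of the sign $-1$ in (\ref{non3}), (\ref{non4}) runs through the Reidemeister I coefficient $r$ (giving $ir$) plus Proposition \ref{braidfourier}, rather than literally applying $\mathcal{F}(R)=-iR$ twice, though the net phase is $-1$ either way.
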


Recall that we identify an $n$-box as an $(n+1)$-box by adding a through string to the right.

\begin{proof}
Equation (\ref{non1}) follows from
$$\grc{btta1}=\grc{btta2}.$$
Equation (\ref{non2}) follows from
$$\grc{btta3}=\grc{btta4}.$$
Equation (\ref{non3}) follows from
\begin{align*}
\grc{tauh1}&=ir\grc{tauh2} \\
&=ir\grc{tauh3} &&\text{by Proposition \ref{flat},}\\
&=-\grc{tauh4} \\
&=-\grc{tauh5} &&\text{by Proposition \ref{braidfourier}.}\\
\end{align*}

Similarly we have equation (\ref{non4}).

Equation (\ref{non5}) follows from Proposition \ref{flat},

By Equations (\ref{non3}), (\ref{non5}), we have
$h_{n}\tau_{n+1}^k=h_{n}(-\tau_{n})^k$.
So Equation (\ref{non6}) holds.

Similarly by Equations (\ref{non4}), (\ref{non5}), Equation (\ref{non7}) holds.

Equation (\ref{non8}), (\ref{non9}) follow from the definitions.

By Proposition \ref{braidfourier}, we have
$$\grb{flatbb1}=\grb{flatbb2}=\grb{flatbb3}.$$
So by Equation (\ref{nonphi}),
$$\Phi_{n+1}(\beta_{n}\frac{1}{u-\tau_{n}}\beta_{n}^{-1})=\Phi_{n}(\frac{1}{u-\tau_{n}})=\frac{Z_{n}}{u}.$$
\end{proof}

\begin{proof}[\textbf{Proof of Lemma \ref{ZZ}}]
By Equation (\ref{non1}), we have
\begin{align*}
\beta_{n}^{-1}(u-\tau_{n+1})=(u-\tau_{n})\beta_{n}^{-1}+\tau_{n}(\beta_{n}^{-1}-\alpha_{n}).
\end{align*}
So
\begin{align}
\frac{1}{u-\tau_{n}}\beta_{n}^{-1}=\beta_{n}^{-1}\frac{1}{u-\tau_{n+1}}+\frac{\tau_{n}}{u-\tau_{n}}(\beta_{n}^{-1}-\alpha_{n})\frac{1}{u-\tau_{n+1}}.
\label{non11}
\end{align}
Therefore
$$\beta_{n}\frac{1}{u-\tau_{n}}\beta_{n}^{-1}=\frac{1}{u-\tau_{n+1}}+\beta_{n}\frac{\tau_{n}}{u-\tau_{n}}(\beta_{n}^{-1}-\alpha_{n})\frac{1}{u-\tau_{n+1}}.$$
Applying Equation (\ref{non8}), (\ref{non5}), (\ref{non6}) to the right side, we have
\begin{align}
\beta_{n}\frac{1}{u-\tau_{n}}\beta_{n}^{-1}
&=\frac{1}{u-\tau_{n+1}}-(q-q^{-1})\beta_{n}\frac{\tau_{n}}{u-\tau_{n}}\frac{1}{u-\tau_{n+1}}+i(q-q^{-1})\beta_{n}\frac{\tau_{n}}{u-\tau_{n}}h_{n}\frac{1}{u-\tau_{n+1}} \nonumber\\
&=\frac{1}{u-\tau_{n+1}}-(q-q^{-1})\beta_{n}\frac{1}{u-\tau_{n+1}}\frac{\tau_{n}}{u-\tau_{n}}+i(q-q^{-1})\beta_{n}\frac{\tau_{n}}{u-\tau_{n}}h_{n}\frac{1}{u+\tau_{n}} \label{eq1}.
\end{align}

By Equations (\ref{non11}), (\ref{non8}), (\ref{non6}),  we have
\begin{align}
\beta_{n}\frac{1}{u-\tau_{n+1}}&=(\beta_{n}-\beta_{n}^{-1})\frac{1}{u-\tau_{n+1}}+\beta_{n}^{-1}\frac{1}{u-\tau_{n+1}} \nonumber\\
&=(q-q^{-1})\frac{1}{u-\tau_{n+1}}+\frac{1}{u-\tau_{n}}\beta_{n}^{-1}-\frac{\tau_{n}}{u-\tau_{n}}(\beta_{n}^{-1}-\alpha_{n})\frac{1}{u-\tau_{n+1}} \nonumber\\
&=(q-q^{-1})\frac{1}{u-\tau_{n+1}}+\frac{1}{u-\tau_{n}}\beta_{n}^{-1} \nonumber\\
&+(q-q^{-1})\frac{\tau_{n}}{u-\tau_{n}}\frac{1}{u-\tau_{n+1}}-i(q-q^{-1})\frac{\tau_{n}}{u-\tau_{n}}h_{n}\frac{1}{u+\tau_{n}} \label{eq2}.
\end{align}

By Equation (\ref{non2}), we have
$$(u-\tau_{n+1})\beta_{n}=\beta_{n}(u-\tau_{n})-\tau_{n+1}(\beta_{n}-\alpha_{n}^{-1}).$$
So
$$\beta_{n}\frac{1}{u-\tau_{n}}=\frac{1}{u-\tau_{n+1}}\beta_{n}-\frac{\tau_{n+1}}{u-\tau_{n+1}}(\beta_{n}-\alpha_{n}^{-1})\frac{1}{u-\tau_{n}}.$$
Therefore
$$\beta_{n}\frac{\tau_{n}}{u-\tau_{n}}=\frac{\tau_{n+1}}{u-\tau_{n+1}}\beta_{n}-\frac{u\tau_{n+1}}{u-\tau_{n+1}}(\beta_{n}-\alpha_{n}^{-1})\frac{1}{u-\tau_{n}}.$$
Note that $\beta_{n}h_{n}=-q^{-1}h_{n}$, so
$$\beta_{n}\frac{\tau_{n}}{u-\tau_{n}}h_{n}=-q^{-1}\frac{\tau_{n+1}}{u-\tau_{n+1}}h_{n}-\frac{u\tau_{n+1}}{u-\tau_{n+1}}(\beta_{n}-\alpha_{n}^{-1})\frac{1}{u-\tau_{n}}h_{n}.$$
By Equations (\ref{non9}), (\ref{non5}), (\ref{non4}), (\ref{non7}), (\ref{nonphi}), we have
\begin{align}
\beta_{n}\frac{\tau_{n}}{u-\tau_{n}}h_{n}
=q^{-1}\frac{\tau_{n}}{u+\tau_{n}}h_{n}+(q-q^{-1})\frac{u\tau_{n}}{(u-\tau_{n})(u+\tau_{n})}h_{n}+i(q-q^{-1})\frac{u\tau_{n}}{u+\tau_{n}}\frac{Z_{n}}{u}h_{n} \label{eq3}.
\end{align}

Applying Equation (\ref{eq2}), (\ref{eq3}) to the right side of (\ref{eq1}), and applying $\Phi_{n+1}$ on both sides, we have
\begin{align*}
&\Phi_{n+1}(\beta_{n}\frac{1}{u-\tau_{n}}\beta_{n}^{-1})\\
=&\Phi_{n+1}(\frac{1}{u-\tau_{n+1}})\\
-&(q-q^{-1})^2\Phi_{n+1}(\frac{1}{u-\tau_{n+1}})\frac{\tau_{n}}{u-\tau_{n}}-(q-q^{-1})\frac{1}{u-\tau_{n}}\Phi_{n+1}(\beta_{n}^{-1})\frac{\tau_{n}}{u-\tau_{n}}\\
-&(q-q^{-1})^2\frac{\tau_{n}}{u-\tau_{n}}\Phi_{n+1}(\frac{1}{u-\tau_{n+1}})\frac{\tau_{n}}{u-\tau_{n}}+i(q-q^{-1})^2\frac{\tau_{n}}{u-\tau_{n}}\Phi_{n+1}(h_{n})\frac{1}{u+\tau_{n}}\frac{\tau_{n}}{u-\tau_{n}}\\
+&i(q-q^{-1})q^{-1}\frac{\tau_{n}}{u+\tau_{n}}\Phi_{n+1}(h_{n})\frac{1}{u+\tau_{n}}+i(q-q^{-1})^2\frac{u\tau_{n}}{(u-\tau_{n})(u+\tau_{n})}\Phi_{n+1}(h_{n})\frac{1}{u+\tau_{n}}\\
-&(q-q^{-1})^2\frac{u\tau_{n}}{u+\tau_{n}}\frac{Z_{n}}{u}\Phi_{n+1}(h_{n})\frac{1}{u+\tau_{n}}.
\end{align*}
By Proposition \ref{flat}, $\tau_{n}$, $Z_{n}$, $Z_{n+1}$ commutes with each other. By Equations (\ref{non10}), (\ref{nonphi}), we have
\begin{align*}
\frac{Z_{n}}{u}&=\frac{Z_{n+1}}{u}-(q-q^{-1})^2\frac{Z_{n+1}}{u}\frac{\tau_{n}}{u-\tau_{n}}-iq^{-1}(q-q^{-1})\frac{\tau_{n}}{(u-\tau_{n})^2}\\
&-(q-q^{-1})^2\frac{Z_{n+1}}{u}\frac{\tau_{n}^2}{(u-\tau_{n})^2}+i(q-q^{-1})^2\frac{\tau_{n}^2}{(u-\tau_{n})^2(u+\tau_{n})}\\
&+i(q-q^{-1})q^{-1}\frac{\tau_{n}}{(u+\tau_{n})^2}+i(q-q^{-1})^2\frac{u\tau_{n}}{(u-\tau_{n})(u+\tau_{n})^2}\\
&-(q-q^{-1})^2\frac{Z_{n}}{u}\frac{u\tau_{n}}{(u+\tau_{n})^2}.
\end{align*}
Recall that $\displaystyle \delta=\frac{i(q+q^{-1})}{q-q^{-1}}$.
The above equation can be simplified as
$$\frac{Z_{n}-\frac{\delta}{2}}{u}\left(1+(q-q^{-1})^2\frac{u\tau_{n}}{(u+\tau_{n})^2}\right)
=\frac{Z_{n+1}-\frac{\delta}{2}}{u}\left(1-(q-q^{-1})^2\frac{u\tau_{n}}{(u-\tau_{n})^2}\right).$$
Therefore
$$Z_{n+1}-\frac{\delta}{2}=(Z_{n}-\frac{\delta}{2})\frac{(u-\tau_{n})^2(u+q^{-2}\tau_{n})(u+q^2\tau_{n})}{(u+\tau_{n})^2(u-q^{-2}\tau_{n})(u-q^2\tau_{n})}.$$
\end{proof}

\section{Planar algebras modulo grading operators}\label{Appendix:modulo grading operators}
Suppose $\mathscr{S}_{\bullet}$ is an unshaded finite depth subfactor planar algebra, $g\in\mathscr{S}_{N}$ is a grading operator with periodicity $P$, and $(g,u)$ is a commutative grading. Let us construct the $\mathbb{Z}_{N}$ graded unitary fusion category $\mathscr{S}/g$.

Let us define the grading of $x\in \mathscr{S}_{m}$ by $m$, denoted by $|x|$.
Let $Ver$ be the set of vertices of the principal graph of $\mathscr{S}_{\bullet}$. For each vertex $\lambda\in Ver$, the distance between $\lambda$ and the distinguished vertex $\emptyset$ is denoted by $|\lambda|$. Take a representative $\tilde{y}_{\lambda}$, which is a minimal projection in $\mathscr{S}_{|\lambda|}$. For convenience, we can choose representatives such that the contragredient of $\tilde{y}_{\lambda}$ is $\tilde{y}_{\Omega(\lambda)}$ for a $\mathbb{Z}_2$ action $\Omega$ on $Ver$.

Note that the equivalence classes of minimal projections of $\mathscr{S}_{m}$ have representatives given by minimal projections $\tilde{y}_{\lambda} \otimes e^{\otimes k}$, for all $\lambda\in Ver$, $k\geq 0$, such that $|\lambda|+2k=m$.
Take
$$Ver_g=\{\tilde{y}_{\lambda}\otimes e^{\otimes k}, \lambda\in Ver, ~k\geq0 ~|~  \tilde{y}_\lambda e^{\otimes k} \nsim \tilde{y}_\mu e^{\otimes l}\otimes g \text{ in } \mathscr{S}_{|\lambda|+2k}, ~\forall~ \mu\in Ver, l\geq0\}.$$

Let us consider $\mathscr{S}_{\bullet}$ as a $\mathbb{N}\cup\{0\}$ graded semisimple strict monoidal category with simple objects $y_{\lambda} \otimes e^{\otimes k}\otimes g^{\otimes l}$ graded by $|\lambda|+2k+Nl$, for all $\lambda\in Ver$, $k<\frac{NP}{2}$, $l\geq0$.

Recall that $(g,u)$ is a commutative grading.
We simplify Equations (\ref{braiding}) and (\ref{commuting}) by the following notations,
$$\graa{grade5}=\graa{grade6}, \quad \graa{grade7}=\graa{grade8}.$$
For objects $Y_k$,$1\leq k\leq 3$, let us define $\iota_l:~\hom(Y_1\otimes Y_2, Y_3)\rightarrow \hom((Y_1\otimes g)\otimes Y_2, Y_3\otimes g)$ as
$$\iota_l(\grb{gradehom})=\grb{grade10},$$
and $\iota_r:~\hom(Y_1\otimes Y_2, Y_3)\rightarrow \hom(Y_1\otimes (Y_2\otimes g), Y_3\otimes g)$ as
$$\iota_r(\grb{gradehom})=\grb{grade11}.$$
Then $\iota_l\iota_r=\iota_r\iota_l$.
Recall that $g$ is a trace one projection, thus
$$\graa{grade8}=\graa{grade9}.$$
By this relation, we have that both $\iota_l$ and $\iota_r$ are invertible by capping off the $g$ string.

We define a relation $\sim$ for objects and morphisms of the $\mathbb{N}\cup\{0\}$ graded tensor category $\mathscr{S}/g$ as follows. For an object $Y$ and a morphism $x\in\hom(Y_1\otimes Y_2, Y_3)$,
\begin{align*}
&Y\sim Y\otimes g^l &&\text{for any object $Y$;}\\
&\iota_l^{k_1}\iota_r^{k_2}(x)\sim\iota_r^{k_3}\iota_r^{k_4}(x), &&\text{for any morphism $x$ and $k_j\geq0$, $1\leq 4$.}
\end{align*}
Since both $\iota_l$ and $\iota_r$ are invertible, we have that $\sim$ is an equivalence relation.
Modulo the equivalence relation, by Equations (\ref{braiding}) and (\ref{commuting}), the $6j$-symbol is well defined on the quotient. Moreover, the pentagon condition holds. Therefore the quotient is a $\mathbb{Z}_N$ graded semisimple strict monoidal category, denoted by $\mathscr{S}/g$.

The simple objects of $\mathscr{S}/g$ are given by $Ver_g$ and the simple object $\tilde{y}_{\lambda} \otimes e^{\otimes k}$ is graded by $|\lambda|+2k=m$ modulo $N$.

Since $g$ has periodicity $P$, we have a non-zero morphism $v$ from $g^{\otimes N+1}$ to $e^{\otimes \frac{NP}{2}}$.
For a simple object $\tilde{y}_{\lambda} \otimes e^{\otimes k}$, we construct the dual object as $\tilde{y}_{\Omega(\lambda)} \otimes e^{\otimes l}$, such that $2|\lambda|+2k+2l=N(N+1)$ or $2N(N+1)$. The evaluation map and the coevaluation map (up to a scalar) are given by
\begin{align*}
\grd{evaluation} \quad\quad\quad\quad \grd{coevaluation}.
\end{align*}
Thus $\mathscr{S}/g$ is a fusion category. The pivotal property and spherical property follows from the corresponding properties of planar algebras. Since $\mathscr{S}_{\bullet}$ has finite depth and $g$ has finite periodicity, $Ver_g$ is a finite set. Since $\mathscr{S}_{\bullet}$ is a subfactor planar algebra, $\mathscr{S}/g$ is a unitary fusion category.

\begin{remark}
The grading operator $g$ action on $Ver$ as a $\mathbb{Z}_P$ automorphism. Take a representative $[\lambda]_g$ for each orbit. It is easy to see that $[\lambda]_g \otimes e^{\otimes k}$, $0\leq k< \frac{NP}{2}$
also give representatives for $Ver_g$.
\end{remark}

  \bibliography{bibliography}
  \bibliographystyle{amsalpha}

\end{document}